\newtheorem{Corollary}[equation]{Corollary}
\theoremstyle{definition}
\theoremstyle{remark}
\newtheorem{Remark}[equation]{Remark}
\numberwithin{equation}{section}
\newtheorem{Claim}[equation]{Claim}
\DeclareMathOperator{\Hom}{Hom}
\DeclareMathOperator{\End}{End}
\DeclareMathOperator{\id}{id}
\DeclareMathOperator{\ad}{ad}
\newcommand{\relmiddle}[1]{\mathrel{}\middle#1\mathrel{}}
\newtheorem{theorem}{Theorem}[section]
\newtheorem{proposition}[theorem]{Proposition}
\newtheorem{corollary}[theorem]{Corollary}
\newtheorem{lemma}[theorem]{Lemma}
\newtheorem{definition}[theorem]{Definition}
\newtheorem{remark}[theorem]{Remark}
\numberwithin{equation}{section}
\newcommand{\nc}{\newcommand}
\nc{\around}{\scriptsize\rightturn}
\nc{\caround}{\scriptsize\leftturn}
\nc{\ot}{\otimes}
\nc{\op}{\oplus}
\nc{\ol}{\overline}
\nc{\un}{\underline}
\nc{\mc}{\mathcal}
\nc{\ms}{\mathsf}
\nc{\mf}{\mathfrak}
\nc{\mb}{\mathbf}
\nc{\bb}{\mathbb}
\nc{\mr}{\mathrm}
\nc{\al}{\alpha}
\nc{\bet}{\beta}
\nc{\eps}{\epsilon}
\nc{\del}{\delta}
\nc{\ga}{\gamma}
\nc{\Ga}{\Gamma}
\nc{\ka}{\kappa}
\nc{\la}{\lambda}
\nc{\om}{\omega}
\nc{\si}{\sigma}
\nc{\Si}{\Sigma}
\nc{\Ups}{\upsilon}
\nc{\vphi}{\varphi}
\nc{\gr}{\mathrm{gr}}
\nc{\Ug}{U\mathfrak{g}}
\nc{\Ub}{U\mathfrak{b}}
\nc{\Hk}{\mathsf{H}}
\nc{\ombH}{\overline{\mathbf{H}}}
\nc{\ud}{\underline}
\nc{\tl}{\tilde}
\nc{\wt}{\widetilde}
\nc{\wh}{\widehat}
\nc{\Ext}{\mathrm{Ext}}
\nc{\Ind}{\mathrm{Ind}}
\nc{\RHom}{\mathrm{RHom}}
\nc{\Sym}{\mathrm{Sym}}
\nc{\C}{\mathbb{C}}
\nc{\N}{\mathbb{N}}
\nc{\Z}{\mathbb{Z}}
\nc{\lan}{\langle}
\nc{\ran}{\rangle}
\nc{\equ}[1]{\begin{equation}#1\end{equation}}
\nc{\eqa}[1]{\begin{equation}\begin{alignedat}{50}#1\end{alignedat}\end{equation}}
\nc{\eqn}[1]{\begin{equation*}\begin{alignedat}{50}#1\end{alignedat}\end{equation*}}
\nc{\eqg}[1]{\begin{equation}\begin{gathered}#1\end{gathered}\end{equation}}
\nc{\ali}[1]{\begin{alignat}{50}#1\end{alignat}}
\nc{\als}[1]{\begin{subequations}\begin{alignat}{50}#1\end{alignat}\end{subequations}}
\nc{\aln}[1]{\begin{alignat*}{50}#1\end{alignat*}}
\nc{\gat}[1]{\begin{gather}#1\end{gather}}
\nc{\gas}[1]{\begin{subequations}\begin{gather}#1\end{gather}\end{subequations}}
\nc{\gan}[1]{\begin{gather*}#1\end{gather*}}
\nc\el{\nonumber\\}
\nc\nn{\nonumber}
\nc{\tx}[1]{\qu\text{#1}\qu}
\nc{\qu}{\quad}
\nc{\qq}{\qquad}
\nc{\iso}{\stackrel{\sim}{\longrightarrow}}
\nc{\into}{\hookrightarrow}
\nc{\onto}{\twoheadrightarrow}
\nc{\red}{\color{red}}
\nc{\blu}{\color{blue}}
\newcommand{\msk}{\mathsf{k}}
\newcommand{\msm}{\mathsf{m}}
\newcommand{\msq}{\mathsf{q}}
\newcommand{\mfgl}{\mathfrak{g}\mathfrak{l}}
\newcommand{\mfsl}{\mathfrak{s}\mathfrak{l}}
\nc{\DY}{DY^c_\hbar(\mfgl_n)}
\nc{\DYC}{\wh{DY^c_\hbar}(\mfgl_n)}
\title{Affine super Yangians and deformed double current superalgebras}
\author{Nicolas Guay, Peggy Jankovic, Mamoru Ueda}
\date{}
\begin{document}
\maketitle

\begin{abstract}
We extend to the super Yangian of the special linear Lie superalgebra $\mathfrak{sl}_{m|n}$ and its affine version certain results related to Schur-Weyl duality. We do the same for the deformed double current superalgebra of $\mathfrak{sl}_{m|n}$, which is introduced here for the first time. 
\end{abstract}

\section{Introduction}

The study of quantum groups and of Lie superalgebras has found a lot of inspiration in theoretical physics, although both of those families of mathematical objects are of interest from a purely mathematical point of view. Their intersection is the theory of quantum superalgebras, which include, for instance, super Yangians and quantum loop superalgebras associated to classical Lie superalgebras like $\mathfrak{sl}_{m|n}$, the special linear Lie superalgebra. It is also possible to consider quantum superalgebras associated to affine Lie superalgebras, one family being the affine super Yangians and another one the related quantum toroidal superalgebras. One goal of this paper is to increase our understanding of the super Yangians by extending to them some results known for the Yangian of the general linear Lie algebra, e.g. Schur-Weyl duality. In the case of the super Yangian associated to $\mathfrak{sl}_{m|n}$ and to $\mathfrak{gl}_{m|n}$, such a result appears in part in \cite{LuMu}, but we expand the results in \textit{loc. cit.} -  see also \cite{Flicker} for the related quantum loop superalgebra of $\mathfrak{sl}_{m|n}$. For the affine super Yangian of $\mathfrak{sl}_{m|n}$, our main results, Theorem \ref{main theorem} and Theorem \ref{reverse}, are similar to the main ones in \cite{Lu} for quantum toroidal algebras.

There is a family of quantum algebras related to affine Yangians and quantum toroidal algebras, namely the deformed double current algebras, which can be defined for any simple Lie algebra \cite{Gu1,Gu2,GuYa}. In the present paper, we introduce for the first time deformed double current superalgebras associated to $\mathfrak{sl}_{m|n}$, which should be worthy of further study. They are realized as deformations of enveloping superalgebras of certain Steinberg Lie superalgebras in such a way that a Schur-Weyl functor connecting them to rational Cherednik algebras can be constructed.

The super Yangian $Y(\mathfrak{gl}_{m|n})$ of the general linear Lie superalgebra $\mathfrak{gl}_{m|n}$ can be defined in more than one way \cite{Gow}. One presentation is via the RTT-relation and Section \ref{SWRTT} studies Schur-Weyl duality between $Y(\mathfrak{gl}_{m|n})$ and the degenerate affine Hecke algebra of the symmetric group $S_l$ in the spirit of \cite{ara}: see Theorem \ref{SW-tt} in Subsection \ref{SWdahaY}, which also appeared in \cite{LuMu}. The main goal of Subsection \ref{CPequivcat} is to show that, under the condition that $m,n \ge l+1$, the Schur-Weyl functor of Theorem \ref{SW-tt} provides an equivalence of categories: this is a consequence of Theorem \ref{floridakilos-1}, whose proof follows the arguments used in \cite{CP96}, and of Proposition \ref{thm-sup}. 

The approach in Section \ref{SWRTT} does not extend directly to affine super Yangians, so Schur-Weyl duality is established in Section \ref{SWmini} for the super Yangian of $\mathfrak{sl}_{m|n}$ using the ``minimalistic'' current presentation of $Y(\mathfrak{sl}_{m|n})$ \cite{Ue}: see Theorems \ref{finite case-1} and \ref{flor}. Generators $J(h_i),J(x_i^{\pm})$ of $Y(\mathfrak{sl}_{m|n})$, analogous to generators denoted similarly in \cite{driJ} and \cite{Ue}, play a role in that section. Moreover, we also need this construction for some parity sequences that are shifts of the standard one: see Theorem \ref{finite case}, which plays a role later in Subsection \ref{SWasYang}. The main results of Section \ref{asYangDDAHA} are Theorem \ref{main theorem} and Theorem \ref{reverse} whose proofs rely on the important Lemma \ref{tau} which, in turn, combines ideas from \cite{Lu} and \cite{Gu1}. 

The last section extends to the Lie superalgebra $\mathfrak{sl}_{m|n}$ some of the results in \cite{Gu1,Gu2} about deformed double current algebras. Subsection \ref{Steinberg} establishes a presentation of the Steinberg Lie superalgebra of $\mathfrak{sl}_{m|n}\otimes_{\C}\C[u,v]$ using generators and relations of low degree in the variables $u$ and $v$. This is relevant for Definition \ref{moneypowerglory} of the deformed double current superalgebra $\mathcal{D}(\mathfrak{sl}_{m|n})$, which is new. The main result in the last section, namely Theorem \ref{syth}, connects modules over $\mathcal{D}(\mathfrak{sl}_{m|n})$ to modules over the rational Cherednik algebra of $S_l$ as in \cite{Gu2} via the Schur-Weyl construction.

\section{Lie superalgebras}
In this section, we set some notations relating to the Lie superalgebras $\mathfrak{sl}_{m|n}$ and $\mathfrak{gl}_{m|n}$ and recall the Schur-Weyl duality for $\mathfrak{gl}_{m|n}$ and the symmetric group given by Sergeev \cite{sergeev}.
\subsection{The Lie superalgebras $\mathfrak{sl}_{m|n}$ and $\mathfrak{gl}_{m|n}$}
Let us set $M_{k,l}(\mathbb{C})$ as the set of $k\times l$ matrices over $\mathbb{C}$. We define the Lie superalgebras $\mathfrak{sl}_{m|n}$ and $\mathfrak{gl}_{m|n}$ as follows;
\begin{gather*}
\mathfrak{gl}_{m|n}=\left\{\begin{pmatrix}
A&B\\
C&D
\end{pmatrix} \relmiddle| A \in M_{m,m}(\mathbb{C}), B \in M_{m,n}(\mathbb{C}), C \in M_{n,m}(\mathbb{C}),\text{ and }D \in M_{n,n}(\mathbb{C})\right\},\\
\mathfrak{sl}_{m|n}=\left\{\begin{pmatrix}
A&B\\
C&D
\end{pmatrix}\in\mathfrak{gl}_{m|n} \relmiddle| \text{str}\begin{pmatrix}A&B\\C&D\end{pmatrix} =  \text{tr}(A)-\text{tr}(D)=0\right\},
\end{gather*}
where tr is the trace of a matrix and we set the commutator relation $\left[\begin{pmatrix}
A&B\\
C&D
\end{pmatrix},\begin{pmatrix}
E&F\\
G&H
\end{pmatrix}\right]$ as
\begin{gather*}
\left[\begin{pmatrix}
A&B\\
C&D
\end{pmatrix},\begin{pmatrix}
E&F\\
G&H
\end{pmatrix}\right]=\begin{pmatrix}
AE-EA+(BG+FC)&AF+BH-(EB+FD)\\
CE+DG-(GA+HC)&DH-HD+(CF+GB)
\end{pmatrix}.
\end{gather*}
We denote by $E_{i,j}$ the matrix unit, that is, the matrix whose $(i,j)$ component is $1$ and other components are $0$. Then, $\{E_{i,j}\}_{1\leq i,j\leq m+n}$ becomes a basis of $\mathfrak{gl}_{m|n}$. By the definition of $\mathfrak{gl}_{m|n}$ and $\mathfrak{sl}_{m|n}$, for $m\neq n$, we have a decomposition
\begin{equation*}
\mathfrak{gl}_{m|n}=\mathfrak{sl}_{m|n}\oplus\C I_{m+n},
\end{equation*}
where $I_{m+n}=\sum_{1\leq i\leq m+n}E_{i,i}$.

The notion of the Cartan subalgebra works in the superalgebra setting as well. The Cartan subalgebra $\mf{h}$ of $\mfgl_{m|n}$ is made up of all diagonal matrices.
Let the parity for an integer $i\in\{1,\dots,m+n\}$ be defined as $$|i| = \begin{cases} 0&\text{ if }1\leq i\leq m, \\ 1&\text{ if }m+1\leq i\leq m+n.\end{cases}$$ 
The Cartan matrix of $\mfsl_{m|n}$, $C=(a_{i,j})^{m+n-1}_{i,j=1}$, has entries 
\begin{equation*}
a_{i,j}=\begin{cases}
(-1)^{|i|}+(-1)^{|i+1|}&\text{ if }i=j,\\
-(-1)^{|i+1|}&\text{ if }j=i+1,\\
-(-1)^{|i|}&\text{ if }i=j+1,\\
0&\text{ otherwise}.
\end{cases}
\end{equation*}

We can use the supertrace to define a non-degenerate supersymmetric bilinear form on $\mfgl_{m|n}$ given by
$$(\cdot,\cdot  ):  \mfgl_{m|n} \times \mfgl_{m|n} \rightarrow \C, \ (E_{i,j},E_{k,l}) =\text{str}(E_{i,j}E_{k,l})=\delta_{i,l}\delta_{j,k}(-1)^{|i|}.$$
If we restrict the bilinear form $(\cdot,\cdot)$ to $\mf{h}$, we obtain a non-degenerate symmetric bilinear form on $\mf{h}$ given by 
	$$ (E_{i,i},E_{j,j})=\delta_{i,j}(-1)^{|i|}.$$

Denote by $\{\eps_i\}_{1\leq i\leq m+n}$ the basis of $\mf{h}^{*}$ dual to $\{E_{i,i} \}_{1\leq i\leq m+n}  $. We can identify  $\eps_i$ with $((-1)^{|i|}E_{i,i},\cdot)$. 
Then, the form $(\cdot , \cdot)$  on  $\mf{h}$ induces a non-degenerate bilinear form on $\mf{h}^{*}$, which we can  also denote $(\cdot,\cdot )$, given by 
$$ (\eps_i , \eps_ j ) = \delta_{i,j}(-1)^{|i|}.$$
				
	The root system of $\mfgl_{m|n}$ can be expressed as $\Phi=\Phi_0  \cup \Phi_1$ with
$$\Phi_0 = \{ \eps_i - \eps_j  | i\ne j, |i|=|j| \}, \ \Phi_1 = \{ \pm(\eps_i-\eps_j) | i\le m<j \} .$$			

Let $\alpha_i$ be $\eps_i - \eps_{i+1}$. The Lie superalgebra $\mfsl_{m|n}$ has positive simple roots $\{\alpha_1, \cdots,\alpha_{m+n-1}\},$ where $\alpha_m$ is odd and the rest are even.

\subsection{Schur-Weyl Duality for $\mathfrak{gl}_{m|n}$ and the Symmetric group} 

In the classical case, the Schur-Weyl duality describes the relationship between representations of the general linear Lie group (or Lie algebra) and representations of the symmetric group. 
This duality was generalized to the superalgebra setting by Sergeev in 1985 \cite{sergeev}.

We set a super vector space $\mathbb{C}(m|n)=\bigoplus_{i=1}^{m+n}\limits \mathbb{C}e_i$, where the vectors $e_1,\cdots,e_m$ are even and $e_{m+1},\cdots,e_{m+n}$ are odd. We denote by $|v|$ the parity for a homogeneous element $v\in\mathbb{C}(m|n)$.
Let us denote the symmetric group on $l$ letters by $S_l$ and the permutation of $\{i,j\}$ by $\sigma_{i,j}$. We define the actions of $\mfgl_{m|n}$ and the symmetric group $S_l$ on $\C({m|n})^{\otimes l}$ by
 \begin{align}
 X\left(\bigotimes_{i=1}^lv_i\right) &=(-1)^{|X|\sum_{j\leq i-1}|j|}\sum_{i=1}^l\left(\bigotimes_{j=1}^{i-1}v_j\right)\otimes Xv_i\otimes\left(\bigotimes_{k=i+1}^{l}v_k\right), \\
 \sigma_{i,i+1}\left(\bigotimes_{i=1}^lv_i\right) &= (-1)^{|v_i||v_{i+1}|} \left(\bigotimes_{j=1}^{i-1}v_j\right)\otimes v_{i+1} \otimes v_i \otimes \left(\bigotimes_{k=i+1}^{l}v_k\right)
 \end{align}
for a homogeneous element $X\in\mfgl_{m|n}$ and homogeneous elements $v_1,\cdots,v_l \in \C({m|n})$.
Let $\Phi_l:U(\mfgl_{m|n})\rightarrow \End (\C({m|n})^{\otimes l})$ and  $\Psi_l: \mathbb{C}[S_l ] \rightarrow \End (\C({m|n})^{\otimes l})$ be the maps defining the actions of $\mfgl_{m|n}$ and $S_l$ respectively.

\begin{lemma} \label{lem38}[Lemma 8 in \cite{sergeev} and Lemma 3.1 in \cite{chengbook}]
The actions of $\mfgl_{m|n}$ and $S_l$ on $\C({m|n})^{\otimes l}$ commute with each other.
\end{lemma}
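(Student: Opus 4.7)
The plan is to prove the commutation of the two actions by reducing to a check on generators. Since $S_l$ is generated by the adjacent transpositions $\sigma_{i,i+1}$, and since both $\Phi_l(\mfgl_{m|n})$ and $\Psi_l(\mathbb{C}[S_l])$ act by linear endomorphisms, it suffices to show that for any homogeneous $X\in\mfgl_{m|n}$, any $i\in\{1,\dots,l-1\}$, and any homogeneous pure tensor $v_1\otimes\cdots\otimes v_l$ in $\mathbb{C}(m|n)^{\otimes l}$, one has $\Phi_l(X)\Psi_l(\sigma_{i,i+1})(v_1\otimes\cdots\otimes v_l)=\Psi_l(\sigma_{i,i+1})\Phi_l(X)(v_1\otimes\cdots\otimes v_l)$.

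The main computation is then to expand both sides using the two formulas above and match terms. Writing $\varepsilon_k=|v_1|+\cdots+|v_{k-1}|$ for the running parity, the action of $X$ produces a sum indexed by $k=1,\dots,l$ with signs $(-1)^{|X|\varepsilon_k}$ on the term where $X$ hits position $k$. For the $l-2$ indices $k\notin\{i,i+1\}$, the swap $\sigma_{i,i+1}$ contributes a factor $(-1)^{|v_i||v_{i+1}|}$ in both orders and leaves the Koszul running-parity sign unchanged, since interchanging $v_i$ and $v_{i+1}$ does not alter $\varepsilon_k$ for $k\le i$ or $k\ge i+2$. These $l-2$ terms therefore match trivially in the two orders.

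The substantive check is the pair of terms $k=i$ and $k=i+1$. The cleanest way to organize it is to use two identities: first, $|Xv_j|=|X|+|v_j|$, which turns the swap sign $(-1)^{|Xv_i||v_{i+1}|}$ appearing in $\Psi_l(\sigma_{i,i+1})\Phi_l(X)$ into $(-1)^{|X||v_{i+1}|}(-1)^{|v_i||v_{i+1}|}$, and symmetrically for the $k=i+1$ case; second, after swapping, the running parity at position $i+1$ becomes $\varepsilon_i+|v_{i+1}|$ rather than $\varepsilon_i+|v_i|$, which produces exactly the compensating factor $(-1)^{|X|(|v_{i+1}|-|v_i|)}$ that is needed. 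With these two sign adjustments, the term in $\Phi_l(X)\Psi_l(\sigma_{i,i+1})$ coming from $X$ acting in position $i$ matches the term in $\Psi_l(\sigma_{i,i+1})\Phi_l(X)$ coming from $X$ acting in position $i+1$, and vice versa.

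Since all the work is a careful bookkeeping of Koszul signs rather than anything structural, there is no real obstacle; the only point where one could go wrong is conflating the grading $|j|$ of the index with the grading $|v_j|$ of the vector, so the argument should be written from the start for arbitrary homogeneous $v_1,\dots,v_l$ and then specialized to basis vectors only at the end.
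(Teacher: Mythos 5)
Your argument is correct. Note first that the paper itself gives no proof of this lemma --- it is imported from Sergeev and Cheng--Wang with only a citation --- so there is no in-text argument to compare against; your self-contained verification is exactly the computation those references carry out. The reduction to the adjacent transpositions $\sigma_{i,i+1}$ is legitimate, since both $\Phi_l$ and $\Psi_l$ are algebra maps into $\End(\C(m|n)^{\otimes l})$ and commuting with a generating set of $\Psi_l(\C[S_l])$ suffices; likewise restricting to homogeneous $X$ and pure homogeneous tensors loses nothing by linearity. Your sign bookkeeping at the two nontrivial positions checks out: in $\Psi_l(\sigma_{i,i+1})\Phi_l(X)$ the term with $X$ at position $i$ acquires the swap sign $(-1)^{|Xv_i||v_{i+1}|}=(-1)^{(|X|+|v_i|)|v_{i+1}|}$, which together with $(-1)^{|X|\varepsilon_i}$ equals the sign $(-1)^{|v_i||v_{i+1}|}(-1)^{|X|(\varepsilon_i+|v_{i+1}|)}$ of the term with $X$ at position $i+1$ in $\Phi_l(X)\Psi_l(\sigma_{i,i+1})$, and symmetrically for the other pairing; the terms with $k\notin\{i,i+1\}$ match because $\varepsilon_k$ is unchanged by the swap. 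Your closing caveat is also well taken and worth making explicit: the paper's displayed formula for the $\mfgl_{m|n}$-action carries the exponent $|X|\sum_{j\le i-1}|j|$, where $|j|$ must be read as $|v_j|$ (the parity of the vector in the $j$-th slot, not of the index $j$ in the sense defined for $\{1,\dots,m+n\}$); the commutation statement is only true with that reading, so working with arbitrary homogeneous $v_1,\dots,v_l$ from the outset, as you do, is the right discipline.
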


Berele and Regev \cite{BRhook} presented a generalization of Weyl's works into the superalgebra realm, introducing what they called ``hook analogues'' of classical objects. 
Weyl's Theorem provides a parametrization of irreducible representations of $GL_n(\C)$ by partitions whose Young diagrams lie inside the strip of a certain height $k$. The main result of Berele-Regev \cite{BRhook} provides a hook analogue of Weyl's Theorem. Berele-Regev \cite{BRhook} connect semistandard tableaux in two sets of variables to representations of $\mf{gl}_{m|n}(\C)$. The upshot of this is a super-analogue of Schur's double centralizing theorem.

\begin{theorem}\label{chengbook} 
(Schur-Sergeev Duality) [Theorem 1 in \cite{sergeev}, Theorem 3.3 in \cite{chengbook}, Theorem 3.9, 3.10 in \cite{CW12}]
\begin{enumerate}
\item The images of $\Phi_l$ and $\Psi_l$, being $\Phi_l(U(\mfgl_{m|n}))$ and $\Psi_l(\C [S_l])$, satisfy the double centralizer property, that is,
\begin{align}
	\Phi_l(U(\mfgl_{m|n}))&= \textnormal{End}_{\C[S_l]}((\C^{m|n})^{\otimes l}),\\
	\Psi_l( \C [S_l]) &=  \textnormal{End}_{U(\mfgl_{m|n})} ((\C^{m|n})^{\otimes l}).
\end{align}
\item For a right $\mathbb{C}[S_l]$-module $M$, we can define the left $U(\mathfrak{gl}_{m|n})$-module $SW(M)$ as $M\otimes_{\mathbb{C}[S_l]}\mathbb{C}(m|n)^{\otimes l}$.

We say that $N$ is of level $l$ as a $\mathfrak{gl}_{m|n}$-module if it is a direct sum of submodules of $\C(m|n)^{\otimes l}$. 
In the case that $l<(m+1)(n+1)$, the
functor 
\begin{equation*}
SW\colon M\mapsto M\otimes_{\mathbb{C}[S_l]}\mathbb{C}(m|n)^{\otimes l}
\end{equation*}
is an equivalence from the category of finite-dimensional right $\C[S_l]$-modules
to the category of finite-dimensional $U(\mathfrak{gl}_{m|n})$-modules of level $l$.
\end{enumerate}
\end{theorem}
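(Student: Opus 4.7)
The plan is to reduce both parts to the bimodule decomposition of $(\C^{m|n})^{\otimes l}$ and apply the classical double centralizer machinery. Lemma \ref{lem38} gives the inclusions $\Phi_l(U(\mfgl_{m|n})) \subseteq \End_{\C[S_l]}((\C^{m|n})^{\otimes l})$ and $\Psi_l(\C[S_l]) \subseteq \End_{U(\mfgl_{m|n})}((\C^{m|n})^{\otimes l})$. Since $\C[S_l]$ is semisimple (characteristic zero), $\Psi_l(\C[S_l])$ is a semisimple subalgebra of $\End_\C((\C^{m|n})^{\otimes l})$, so the double centralizer theorem is available.

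For part (1), the crucial input is Berele--Regev's super analogue of Weyl's decomposition \cite{BRhook}:
\[
(\C^{m|n})^{\otimes l} \;\cong\; \bigoplus_{\lambda \in H(m|n;\,l)} L^{m|n}_\lambda \otimes S^\lambda,
\]
where $H(m|n;l)$ denotes the set of partitions of $l$ whose Young diagram fits inside the $(m|n)$-hook (i.e.\ $\lambda_{m+1} \leq n$), $L^{m|n}_\lambda$ is the corresponding irreducible $\mfgl_{m|n}$-module, and $S^\lambda$ is the Specht module. Because the families $\{L^{m|n}_\lambda\}$ and $\{S^\lambda\}$ each consist of pairwise non-isomorphic simples, this decomposition exhibits $(\C^{m|n})^{\otimes l}$ as a direct sum of inequivalent external tensor products of simple bimodules; both equalities in part (1) then follow from the double centralizer theorem applied to $\Psi_l(\C[S_l])$.

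For part (2), I would note that the smallest partition not contained in the $(m|n)$-hook is the rectangle $((n+1)^{m+1})$, of size $(m+1)(n+1)$. Hence the hypothesis $l < (m+1)(n+1)$ forces $H(m|n; l)$ to coincide with the set of all partitions of $l$, so every simple $\C[S_l]$-module $S^\lambda$ appears in $(\C^{m|n})^{\otimes l}$ with nonzero multiplicity space $L^{m|n}_\lambda$. Using the decomposition above, the functor $SW$ sends $S^\lambda$ (viewed as a right $\C[S_l]$-module) to $L^{m|n}_\lambda$, giving a bijection between isomorphism classes of simple objects on the two sides. Both categories are semisimple (the right-hand side because any level-$l$ module is by definition a summand of a sum of copies of $(\C^{m|n})^{\otimes l}$, which is semisimple by the decomposition above), so this bijection of simples, together with the exactness and additivity of $SW$, promotes to an equivalence of categories.

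The main obstacle is the multiplicity-free hook decomposition itself; once it is in hand, the double centralizer theorem and standard semisimple category arguments finish the proof. That combinatorial input is the genuinely super-theoretic novelty coming from \cite{BRhook,sergeev}, as the naive purely-even analogue fails outside the hook.
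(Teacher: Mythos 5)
Your argument is correct and is essentially the proof of record for this theorem: the paper itself gives no proof but cites Sergeev, Berele--Regev and Cheng--Wang, and the route you take (commuting actions from Lemma \ref{lem38}, the multiplicity-free hook decomposition $(\C^{m|n})^{\otimes l}\cong\bigoplus_{\lambda\in H(m|n;l)}L^{m|n}_\lambda\otimes S^\lambda$, the double centralizer theorem, and the observation that the minimal non-hook partition is the $(m+1)\times(n+1)$ rectangle so that $l<(m+1)(n+1)$ makes every $S^\lambda$ appear) is exactly the argument in those references. The only point worth making explicit is that the equivalence on morphisms also uses $\End(S^\lambda)\cong\C\cong\End(L^{m|n}_\lambda)$, which holds here because the simples occurring in $(\C^{m|n})^{\otimes l}$ are of type $\mathtt{M}$; with that remark your proof is complete.
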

\begin{remark}
\begin{enumerate}
\item The module $\C(m|n)^{\otimes l}$ is semisimple as a representation of $\mathfrak{gl}_{m|n}$.
\item Since we have the decomposition $\mathfrak{gl}_{m|n}=\mathfrak{sl}_{m|n}\oplus I_{m+n}$ for $m\neq n$, Theorem~\ref{chengbook} holds if we replace $\mathfrak{gl}$ with $\mathfrak{sl}$.
\end{enumerate}
\end{remark}

\section{Schur-Weyl duality for the Degenerate Affine Hecke algebra and the Super Yangian of $\mathfrak{gl}_{m|n}$}\label{SWRTT}

Classical Schur-Weyl duality was extended to the degenerate affine Hecke algebra of $S_l$ and the Yangian of $\mathfrak{sl}_n$ by V. Drinfeld in \cite{driJ} using the so-called $J$-presentation; the analogous result for the RTT-presentation of $Y(\mathfrak{gl}_n)$ can be found in the paper \cite{ara} by T. Arakawa. In the setup of classical Lie superalgebras, the Yangian of $\mathfrak{gl}_n$ is replaced by the super Yangian $Y(\mathfrak{gl}_{m|n})$, however, the degenerate affine Hecke algebra remains the same. 

\subsection{Degenerate Affine Hecke Algebra}
\begin{definition}\label{dahadef}
		Let $\kappa\in\C$. The degenerate affine Hecke algebra $H^{\deg}_{\kappa}(S_l)$ is the associative algebra with generators 
		$\sigma_1^{\pm 1}, \sigma_2^{\pm 1}, \dots, \sigma_{l-1}^{\pm 1}, \mathsf{u}_1,\mathsf{u}_2, \dots, \mathsf{u}_l$, and the following defining relations:
			\begin{align}
				\sigma_i \sigma_i^{-1} &= \sigma_i^{-1} \sigma_i=1,\label{affHecdef1}\\
				\sigma_i^2 &= 1,\label{affHecdef2}\\
				\sigma_i \sigma_j &= \sigma_j \sigma_i \text{ if } |i-j|>1,\label{affHecdef3}\\
			\sigma_i \sigma_{i+1} \sigma_i &= \sigma_{i+1}\sigma_i \sigma_{i+1},\label{affHecdef4}\\	
			\mathsf{u}_i \sigma_i &= \sigma_i \mathsf{u}_{i+1} -\kappa, \label{ldrna}\\ 
                \mathsf{u}_{i+1} \sigma_i&= \sigma_i \mathsf{u}_i+\kappa,\label{Idrna2}\\
               \mathsf{u}_i \sigma_j &= \sigma_j \mathsf{u}_{i}\text{ if } i\neq j,j+1, \label{ldrna3}  \\ 
			\mathsf{u}_i \mathsf{u}_j &= \mathsf{u}_j \mathsf{u}_i.	\label{affHecdef11}		
		 \end{align}
		\end{definition}

We identify the group algebra $\C[S_l]$ with the subalgebra of $H^{\deg}_{\kappa}(S_l)$ generated by $\sigma_1, \sigma_2, \dots, \sigma_{l-1}$. For $1\leq k\leq l$, we define elements $\mathsf{z}_k \in H^{\deg}_{\kappa}(S_l)$ by $\mathsf{z}_k=\sigma_{1,k}\cdot  \mathsf{u}_1 \cdot \sigma_{1,k}$. The element $\mathsf{z}_k$ has another presentation.
\begin{lemma}
The following relation holds:
\begin{align} \mathsf{z}_k= \mathsf{u}_k - \kappa\sum_{j<k}\sigma_{j,k}. \label{defyk}\end{align}
In paticular, we have $\mathsf{z}_k=\mathsf{u}_k+\sum_{1\le j<k}\sigma_{j,k}$ if $\kappa=-1$.
\end{lemma}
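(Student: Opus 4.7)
The plan is to prove the formula by induction on $k$, using the fact that conjugation by $\sigma_k$ turns the transposition $(j,k)$ into $(j,k+1)$, together with the commutation relations \eqref{ldrna}--\eqref{ldrna3} of the degenerate affine Hecke algebra.

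For the base case $k=1$, the sum $\sum_{j<1}\sigma_{j,1}$ is empty, and $\sigma_{1,1}=1$, so $\mathsf{z}_1 = \mathsf{u}_1$ and the formula holds. For the inductive step, I would use the identity $\sigma_{1,k+1} = \sigma_k\sigma_{1,k}\sigma_k$ in $S_l$, from which
\eqn{
\mathsf{z}_{k+1} = \sigma_{1,k+1}\tss\mathsf{u}_1\tss\sigma_{1,k+1} = \sigma_k\sigma_{1,k}\sigma_k\tss\mathsf{u}_1\tss\sigma_k\sigma_{1,k}\sigma_k.
}
Since $\sigma_k$ commutes with $\mathsf{u}_1$ by \eqref{ldrna3} (as $k\ne 1$ for the inductive step $k\ge 1$), the two middle $\sigma_k$'s can be slid past $\mathsf{u}_1$ and cancel, giving $\mathsf{z}_{k+1} = \sigma_k\tss\mathsf{z}_k\tss\sigma_k$.

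Applying the inductive hypothesis yields
\eqn{
\mathsf{z}_{k+1} = \sigma_k\tss\mathsf{u}_k\tss\sigma_k - \kappa\sum_{j<k}\sigma_k\sigma_{j,k}\sigma_k.
}
For the first term I rewrite $\sigma_k\tss\mathsf{u}_k = \mathsf{u}_{k+1}\sigma_k - \kappa$ using \eqref{Idrna2}, so $\sigma_k\tss\mathsf{u}_k\tss\sigma_k = \mathsf{u}_{k+1} - \kappa\tss\sigma_k$. For the sum, each $\sigma_k\sigma_{j,k}\sigma_k$ equals $\sigma_{j,k+1}$ (conjugation of a transposition), since $j<k$ ensures the relevant indices are disjoint in the appropriate way. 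Combining,
\eqn{
\mathsf{z}_{k+1} = \mathsf{u}_{k+1} - \kappa\tss\sigma_{k,k+1} - \kappa\sum_{j<k}\sigma_{j,k+1} = \mathsf{u}_{k+1} - \kappa\sum_{j<k+1}\sigma_{j,k+1},
}
completing the induction. The final assertion for $\kappa = -1$ is then immediate by substitution.

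I do not expect any significant obstacle: the proof is a direct computation in the defining relations. The only mildly delicate point is keeping track of which defining relation is used where (in particular that \eqref{ldrna3} applies only when the $\mathsf{u}$-index differs from both $j$ and $j+1$), and verifying the $\sigma_{1,k+1} = \sigma_k\sigma_{1,k}\sigma_k$ identity in $S_l$ at the level of permutations before invoking it in the algebra via $\sigma_1,\dots,\sigma_{l-1}$.
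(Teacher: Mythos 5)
Your proof follows essentially the same route as the paper's: induction on $k$ via the conjugation $\mathsf{z}_{k+1}=\sigma_{k,k+1}\,\mathsf{z}_k\,\sigma_{k,k+1}$, using \eqref{ldrna3} to cancel the middle factors against $\mathsf{u}_1$ and \eqref{Idrna2} to compute $\sigma_k\mathsf{u}_k\sigma_k=\mathsf{u}_{k+1}-\kappa\sigma_k$. The one point that needs repair is where the induction starts. For $k=1$ both ingredients of your inductive step fail: the permutation identity $\sigma_{1,k+1}=\sigma_k\sigma_{1,k}\sigma_k$ is false when $k=1$ (the left-hand side is $\sigma_{1,2}$, while the right-hand side is $\sigma_1\cdot\sigma_{1,1}\cdot\sigma_1=\sigma_1\cdot 1\cdot\sigma_1=1$), and $\sigma_1$ does not commute with $\mathsf{u}_1$, since \eqref{ldrna3} requires the $\mathsf{u}$-index to differ from both $j$ and $j+1$. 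Your parenthetical ``as $k\ne 1$'' acknowledges this but then leaves the passage from $\mathsf{z}_1$ to $\mathsf{z}_2$ uncovered. The fix is a second base case done directly: by \eqref{ldrna}, $\mathsf{u}_1\sigma_1=\sigma_1\mathsf{u}_2-\kappa$, hence $\mathsf{z}_2=\sigma_1\mathsf{u}_1\sigma_1=\mathsf{u}_2-\kappa\sigma_{1,2}$, as required. With that added, your inductive step is valid for all $k\ge 2$ and the argument is complete; this matches the paper's proof, which likewise runs its displayed computation only ``for $k\geq 2$.''
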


\begin{proof}			
We can prove the relation \eqref{defyk} by induction on $k$ using
\begin{align*}
\sigma_{1,k+1}\mathsf{u}_1 \sigma_{1,k+1} &= \sigma_{k,k+1}\sigma_{1,k} \sigma_{k,k+1} \mathsf{u}_1 \sigma_{k,k+1}\sigma_{1,k}\sigma_{k,k+1} = \sigma_{k,k+1}\sigma_{1,k}  \mathsf{u}_1 \sigma_{1,k}\sigma_{k,k+1} \\
 &= \sigma_{k,k+1} \left( \mathsf{u}_k - \kappa \sum_{1\le j<k} \sigma_{j,k} \right) \sigma_{k,k+1}
\end{align*}
for $k\geq 2$.
\end{proof}

\begin{lemma}
The elements $\mathsf{z}_i$ satisfy
\begin{gather}
\sigma \mathsf{z}_i = \mathsf{z}_{\sigma(i)} \sigma, \label{gather159}\\
[\mathsf{z}_i,\mathsf{z}_j]=-\kappa (\mathsf{z}_i-\mathsf{z}_j)\sigma_{i,j}\label{gather160}
\end{gather}
for all $\sigma\in S_l$ and $1\leq i,j\leq l$.
\end{lemma}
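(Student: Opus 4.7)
The plan is to handle the two relations separately. For \eqref{gather159}, I exploit the original definition $\mathsf{z}_k = \sigma_{1,k}\mathsf{u}_1\sigma_{1,k}$; for \eqref{gather160}, I first reduce to the special case $i=1$ using \eqref{gather159} (already proved), then combine \eqref{defyk} with the Hecke relations \eqref{ldrna}, \eqref{Idrna2}, \eqref{ldrna3}, \eqref{affHecdef11}.

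For \eqref{gather159}, write $A:=\sigma_{1,\sigma(i)}$ (note $A^{-1}=A$) and $\pi:=A\sigma\sigma_{1,i}$. A direct manipulation gives
$$\sigma\mathsf{z}_i\sigma^{-1} \;=\; A\,\pi\,\mathsf{u}_1\,\pi^{-1}A.$$
The key observation is that $\pi(1)=A(\sigma(i))=1$, so $\pi$ lies in the stabilizer subgroup $S_{\{2,\dots,l\}}$, which is generated by simple transpositions $\sigma_{k,k+1}$ with $k\ge 2$. Each such generator commutes with $\mathsf{u}_1$ by \eqref{ldrna3}, hence so does $\pi$, and the displayed line collapses to $A\mathsf{u}_1 A = \mathsf{z}_{\sigma(i)}$, as required.

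For \eqref{gather160} the case $i=j$ is trivial, and the identity is antisymmetric under $i\leftrightarrow j$ (using \eqref{gather159} with $\sigma=\sigma_{i,j}$ one sees $(\mathsf{z}_i-\mathsf{z}_j)\sigma_{i,j}=-\sigma_{i,j}(\mathsf{z}_i-\mathsf{z}_j)$), so we may assume $i<j$. Conjugating by $\sigma_{1,i}$ and invoking \eqref{gather159} (with $\sigma_{1,i}\mathsf{z}_i\sigma_{1,i}=\mathsf{u}_1$ and $\sigma_{1,i}\mathsf{z}_j\sigma_{1,i}=\mathsf{z}_j$, since $\sigma_{1,i}$ fixes $j$ when $j\ne 1,i$), together with $\sigma_{1,i}\sigma_{i,j}\sigma_{1,i}=\sigma_{1,j}$, reduces the problem to proving
$$[\mathsf{u}_1,\mathsf{z}_j]=-\kappa(\mathsf{u}_1-\mathsf{z}_j)\sigma_{1,j}\qquad\text{for each }j\ge 2.$$
For this, substituting $\mathsf{z}_j=\mathsf{u}_j-\kappa\sum_{l<j}\sigma_{l,j}$ from \eqref{defyk}, using \eqref{affHecdef11} to kill $[\mathsf{u}_1,\mathsf{u}_j]$, and noting that for $2\le l<j$ the element $\sigma_{l,j}$ is a word in $\sigma_{k,k+1}$ with $k\ge 2$ and hence commutes with $\mathsf{u}_1$ by \eqref{ldrna3}, the whole computation boils down to $[\mathsf{u}_1,\sigma_{1,j}]$.

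To handle this last commutator I would set $w:=\sigma_{2,3}\sigma_{3,4}\cdots\sigma_{j-1,j}$, so that $\sigma_{1,j}=w^{-1}\sigma_{1,2}w$ and $w$ commutes with $\mathsf{u}_1$. Relations \eqref{ldrna} and \eqref{Idrna2} give $[\mathsf{u}_1,\sigma_{1,2}]=(\mathsf{u}_1-\mathsf{u}_2)\sigma_{1,2}+\kappa$, while applying \eqref{gather159} to $\mathsf{z}_2=\mathsf{u}_2-\kappa\sigma_{1,2}$ with the permutation $w^{-1}$ (which maps $2\mapsto j$ and fixes $1$) yields $w^{-1}\mathsf{u}_2 w=\mathsf{z}_j+\kappa\sigma_{1,j}$. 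Substituting these into $w^{-1}[\mathsf{u}_1,\sigma_{1,2}]w$ and simplifying produces $[\mathsf{u}_1,\sigma_{1,j}]=(\mathsf{u}_1-\mathsf{z}_j)\sigma_{1,j}$, and the factor $-\kappa$ gives the desired identity. The main technical point is the conjugation of $\mathsf{u}_2$ by the braid word $w$; what makes the proof work cleanly is the recognition that this is already governed by \eqref{gather159}, which generates precisely the correction term $\kappa\sigma_{1,j}$ needed to convert $\mathsf{u}_j$ into the Jucys--Murphy-like expression $\mathsf{z}_j$.
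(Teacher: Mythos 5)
Your argument is correct, and it diverges from the paper's proof in its details. For \eqref{gather159} the paper merely asserts an induction on the length of $\sigma$; your stabilizer argument (writing the conjugation as $A\pi\mathsf{u}_1\pi^{-1}A$ with $\pi$ fixing $1$, hence commuting with $\mathsf{u}_1$ by \eqref{ldrna3}) is a complete and clean way to carry that out. For \eqref{gather160} the paper computes the single base case $[\mathsf{z}_1,\mathsf{z}_2]$ directly from $\mathsf{z}_2=\sigma_{1,2}\mathsf{u}_1\sigma_{1,2}$ and \eqref{ldrna}, and then conjugates by $\sigma_{1,i}\sigma_{2,j}$ to reach general $i,j$; you instead normalize only the first index, reducing to $[\mathsf{u}_1,\mathsf{z}_j]=-\kappa(\mathsf{u}_1-\mathsf{z}_j)\sigma_{1,j}$, and then handle general $j$ by expanding $\mathsf{z}_j$ via \eqref{defyk}, killing $[\mathsf{u}_1,\mathsf{u}_j]$ with \eqref{affHecdef11} and the terms $\sigma_{p,j}$ with $p\ge 2$ with \eqref{ldrna3}, and evaluating $[\mathsf{u}_1,\sigma_{1,j}]$ by conjugating $[\mathsf{u}_1,\sigma_{1,2}]=(\mathsf{u}_1-\mathsf{u}_2)\sigma_{1,2}+\kappa$ by the word $w$ and recognizing $w^{-1}\mathsf{u}_2w=\mathsf{z}_j+\kappa\sigma_{1,j}$. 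I checked this last chain: it does collapse to $(\mathsf{u}_1-\mathsf{z}_j)\sigma_{1,j}$ after the $-\kappa\sigma_{1,j}^2+\kappa$ cancellation, so the computation closes. The paper's route is shorter because conjugating both indices leaves only one explicit commutator to compute; yours trades that for a slightly longer calculation but makes the role of the Jucys--Murphy correction term $\kappa\sum_{p<j}\sigma_{p,j}$ more transparent and also handles the edge cases of the double conjugation (e.g.\ $i=2$ or $j=1$, which the paper's $\sigma_{1,i}\sigma_{2,j}$ trick glosses over) more cleanly.
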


\begin{proof}
The relation \eqref{gather159} can be proven by the induction on the length of $\sigma$. For \eqref{gather160}, we verify it first when $i=1,j=2$, and extend to arbitrary $i,j$ using \eqref{gather159} and the permutation $\sigma_{1,i},\sigma_{2,j}$:
	\begin{align}
	[\mathsf{z}_1,\mathsf{z}_2]&=\mathsf{z}_1 \mathsf{z}_2 - \mathsf{z}_2 \mathsf{z}_1 =\mathsf{u}_1 \sigma_{1,2} \mathsf{u}_1 \sigma_{1,2} - \sigma_{1,2} \mathsf{u}_1 \sigma_{1,2} \mathsf{u}_1 = \mathsf{u}_1 \sigma_{1,2} \left(\sigma_{1,2} \mathsf{u}_2 -\kappa \right) - \sigma_{1,2} \left( \sigma_{1,2} \mathsf{u}_2 -\kappa \right)\mathsf{u}_1\nonumber\\
		&= \mathsf{u}_1 \mathsf{u}_2 -\kappa \mathsf{u}_1 \sigma_{1,2} - \mathsf{u}_2 \mathsf{u}_1 + \kappa\sigma_{1,2}\mathsf{u}_1 = -\kappa \mathsf{u}_1 \sigma_{1,2} +\kappa \sigma_{1,2} \mathsf{u}_1 \nonumber\\
  &= -\kappa \left(\mathsf{u}_1  - \sigma_{1,2} \mathsf{u}_1 \sigma_{1,2} \right) \sigma_{12} =-\kappa\left( \mathsf{z}_1 - \mathsf{z}_2\right) \sigma_{1,2}.\label{gather161}
	\end{align}
Conjugating the equality \eqref{gather161} by $\sigma_{2,j}$ and $\sigma_{1,i}$ gives
\begin{equation}
\sigma_{1,i} \sigma_{2,j} [\mathsf{z}_1,\mathsf{z}_2] \sigma_{2,j}\sigma_{1,i} = -\kappa\sigma_{1,i} \sigma_{2,j} (\mathsf{z}_1-\mathsf{z}_2)\sigma_{1,2}\sigma_{2,j}\sigma_{1,i}.\label{y1y2}
\end{equation}
The left-hand side of \eqref{y1y2} can be rewritten as $[\mathsf{z}_i,\mathsf{z}_j]$, and its right-hand side of \eqref{y1y2} can be rewritten as $-\kappa(\mathsf{z}_i - \mathsf{z}_j )\sigma_{i,j}$. \end{proof}

\begin{proposition}\label{prop201}
The relations \eqref{defyk}, \eqref{gather159} and \eqref{gather160} induce the defining relations \eqref{ldrna}-\eqref{affHecdef11}.
\end{proposition}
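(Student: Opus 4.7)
The plan is to solve \eqref{defyk} for $\mathsf{u}_k$, writing $\mathsf{u}_k = \mathsf{z}_k + \kappa\sum_{j<k}\sigma_{j,k}$, and substitute this expression into each of \eqref{ldrna}--\eqref{affHecdef11}, reducing each to a statement that follows from \eqref{gather159}, \eqref{gather160}, and the defining relations \eqref{affHecdef1}--\eqref{affHecdef4} of $S_l$.

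First I would treat \eqref{ldrna}. After substitution, both sides should reduce to $\mathsf{z}_i\sigma_i + \kappa\sum_{j<i}\sigma_{j,i}\sigma_i$, using the instance $\sigma_i\mathsf{z}_{i+1} = \mathsf{z}_i\sigma_i$ of \eqref{gather159} together with the conjugation identity $\sigma_i\sigma_{j,i+1}\sigma_i = \sigma_{j,i}$ for $j < i$; the $j = i$ summand arising from $\mathsf{u}_{i+1}$ produces a $+\kappa$ that cancels the explicit $-\kappa$ on the right-hand side. The relation \eqref{Idrna2} then follows from \eqref{ldrna} by right-multiplying by $\sigma_i$ and using $\sigma_i^2 = 1$. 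For \eqref{ldrna3}, observe that when $i \neq j, j+1$ the transposition $\sigma_j$ fixes $i$, so \eqref{gather159} gives $\sigma_j\mathsf{z}_i = \mathsf{z}_i\sigma_j$; since $\sigma_j$ also permutes $\{1,\ldots,i-1\}$ bijectively, conjugation by $\sigma_j$ stabilizes $\sum_{k<i}\sigma_{k,i}$, yielding $\sigma_j\mathsf{u}_i = \mathsf{u}_i\sigma_j$.

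The main obstacle is \eqref{affHecdef11}. Assuming WLOG $i < j$, after substitution I would expand
$$[\mathsf{u}_i,\mathsf{u}_j] = [\mathsf{z}_i,\mathsf{z}_j] + \kappa\Big[\mathsf{z}_i,\sum_{l<j}\sigma_{l,j}\Big] + \kappa\Big[\sum_{k<i}\sigma_{k,i},\mathsf{z}_j\Big] + \kappa^2\Big[\sum_{k<i}\sigma_{k,i},\sum_{l<j}\sigma_{l,j}\Big].$$
By \eqref{gather160} the first bracket equals $-\kappa(\mathsf{z}_i-\mathsf{z}_j)\sigma_{i,j}$. Using \eqref{gather159}, the third bracket vanishes since each $\sigma_{k,i}$ with $k < i < j$ fixes $j$, and in the second bracket only the $l = i$ summand contributes nontrivially, producing $(\mathsf{z}_i-\mathsf{z}_j)\sigma_{i,j}$. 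These two $\mathsf{z}$-involving terms cancel, leaving the purely combinatorial assertion
$$\Big[\sum_{k<i}\sigma_{k,i},\sum_{l<j}\sigma_{l,j}\Big] = 0 \quad \text{in } \C[S_l].$$
For each fixed $k < i$, only the $l = k$ and $l = i$ summands of the second sum fail to commute with $\sigma_{k,i}$; a direct computation in $S_l$ shows that $[\sigma_{k,i},\sigma_{k,j}]$ and $[\sigma_{k,i},\sigma_{i,j}]$ involve the same pair of three-cycles $(k\,i\,j)$ and $(k\,j\,i)$ with opposite signs, so their sum is zero. Summing over $k < i$ then completes the argument.
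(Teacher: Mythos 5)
Your proof is correct and follows essentially the same route as the paper: the easy relations \eqref{ldrna}--\eqref{ldrna3} are read off from \eqref{defyk} and \eqref{gather159}, and for $[\mathsf{u}_i,\mathsf{u}_j]=0$ one expands the substitution, cancels the two $(\mathsf{z}_i-\mathsf{z}_j)\sigma_{i,j}$ terms coming from \eqref{gather160} and from $[\mathsf{z}_i,\sum_{l<j}\sigma_{l,j}]$, and checks the purely combinatorial identity $\bigl[\sum_{k<i}\sigma_{k,i},\sum_{l<j}\sigma_{l,j}\bigr]=0$, which is exactly the paper's equation \eqref{gather160.4}. Your write-up is in fact somewhat more explicit than the paper's on the first three relations, but the underlying argument is the same.
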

\begin{proof}
The defining relations \eqref{ldrna}-\eqref{ldrna3} follow from the relation \eqref{defyk} and \eqref{gather159}. Thus, it is enough to show that \eqref{affHecdef11} is derived from \eqref{gather160} in the case when $i<j$. By \eqref{defyk}, we can rewrite \eqref{gather160} as
\begin{equation}
[\mathsf{u}_i,\mathsf{u}_j]-\kappa[\mathsf{u}_i,\sum_{j>k}\sigma_{j,k}]+\kappa[\mathsf{u}_j,\sum_{i>k}\sigma_{i,k}]+\kappa^2[\sum_{i>k}\sigma_{i,k},\sum_{j>k}\sigma_{j,k}]=-\kappa(\mathsf{z}_i-\mathsf{z}_j)\sigma_{i,j}.\label{gather160.1}
\end{equation}
We obtain
\begin{align}
[\mathsf{u}_i,\sum_{j>k}\sigma_{j,k}]&=[\mathsf{z}_i+\kappa\sum_{i>k}\sigma_{i,k},\sum_{j>k}\sigma_{j,k}]\nonumber\\
&=[\mathsf{z}_i,\sum_{j>k}\sigma_{j,k}]+\kappa[\sum_{i>k}\sigma_{i,k},\sum_{j>k}\sigma_{j,k}]=(\mathsf{z}_i-\mathsf{z}_j)\sigma_{i,j}+\kappa[\sum_{i>k}\sigma_{i,k},\sum_{j>k}\sigma_{j,k}]\label{gather160.2}
\end{align}
and
\begin{align}
[\mathsf{u}_j,\sum_{i>k}\sigma_{i,k}]&=[\mathsf{z}_j+\kappa\sum_{j>k}\sigma_{j,k},\sum_{i>k}\sigma_{i,k}]\nonumber\\
&=[\mathsf{z}_j,\sum_{i>k}\sigma_{i,k}]+\kappa[\sum_{j>k}\sigma_{j,k},\sum_{i>k}\sigma_{i,k}]=0+\kappa[\sum_{j>k}\sigma_{j,k},\sum_{i>k}\sigma_{i,k}],\label{gather160.3}
\end{align}
where the first equalities of \eqref{gather160.2} and \eqref{gather160.3} are due to \eqref{defyk} and the second equalities of \eqref{gather160.2} and \eqref{gather160.3} are due to \eqref{gather159}.
By the definition of the symmetric group, since $i<j$, we have
\begin{align}
[\sum_{i>k}\sigma_{i,k},\sum_{j>k}\sigma_{j,k}]&=\sum_{i>k}[\sigma_{i,k},\sigma_{j,i}]+\sum_{i>k}[\sigma_{i,k},\sigma_{j,k}]=0.\label{gather160.4}
\end{align}
By applying \eqref{gather160.2}-\eqref{gather160.4} to \eqref{gather160.1}, we obtain $[\mathsf{u}_i,\mathsf{u}_j]=0$.
\end{proof}

\subsection{The RTT presentation of the Super Yangian}

The super Yangian associated with $\mfgl_{m|n}$ was first introduced in \cite{N}. We will follow the notation used by Gow \cite{Gow}.

\begin{definition} 
The super Yangian $Y(\mfgl_{m|n})$ is the associative superalgebra generated by the elements $t_{i,j}^{(r)}$ for $1\leq i,j\leq m+n$ and $r \in \Z_{>0}$ satisfying:
\begin{gather}
[t_{i,j}(u), t_{k,l}(v)] = \frac{(-1)^{\nu (i,j;k,l)} }{(u-v)} \left( t_{k,j}(u)  t_{i,l}(v) - t_{k,j}(v) t_{i,l}(u) \right), \label{alizarin} \\
\nu (i,j;k,l)=|i||j|+|i||k|+|j||k|,
\end{gather}
where the generating power series $t_{i,j}(u)$ is defined by
\begin{equation*}
t_{i,j}(u)=\delta_{i,j}+\sum_{r\geq1}\limits t^{(r)}_{i,j}u^{-r}
\end{equation*}
and the parity of $t^{(r)}_{i,j}$ is $|i|+|j|$.
Expanding (\ref{alizarin}) yields useful explicit relations for the generators:
\begin{align}
[t_{i,j}^{(r)},t_{k,l}^{(s)}]&=  (-1)^{\nu (i,j;k,l)} \sum_{a=0}^{\textrm{min}(r,s)-1} 
		\left( t_{k,j}^{(a)}  t_{i,l}^{(r+s-1-a)} - t_{k,j}^{(r+s-1-a)} t_{i,l}^{(a)} \right), \label{RTT2}\\
[t_{i,j}^{(r+1)}, t_{k,l}^{(s)}] - [t_{ij}^{(r)}, t_{k,l}^{(s+1)}] &=  (-1)^{\nu (i,j;k,l)} \left(t_{k,j}^{(r)} t_{i,l}^{(s)} -t_{k,j}^{(s)} t_{i,l}^{(r)} \right),
\end{align}
where we set $t^{(0)}_{ij}=\delta_{i,j}$.
\end{definition}
Note the special case when $r=1$: \[ [t_{i,j}^{(1)},t_{k,l}^{(s)}]=\left(\delta_{k,j} (-1)^{|j|}t_{i,l}^{(s)} - \delta_{i,l}(-1)^{|i|} t_{k,j}^{(s)} \right).\]
This is compatible with the embedding $U(\mfgl_{m|n}) \hookrightarrow Y(\mfgl_{m|n})$ given by $E_{i,j} \mapsto (-1)^{|i|} t_{i,j}^{(1)}$. 

This presentation of the super Yangian is called the RTT presentation, due to the generators $t_{i,j}^{(r)}$ satisfying the following relation in $Y(\mfgl_{m|n}) \otimes \End(\C(m|n)) \otimes \End(\C(m|n))(u,v)$:
\begin{equation} T_2(v) T_1(u) R_{1,2}(u-v) = R_{1,2}(u-v)T_1(u)T_2(v), \end{equation}
where
\begin{align}
P&=\sum_{a,b=1}^{m+n} (-1)^{|b|} E_{a,b} \otimes E_{b,a}\in\End(\C(m|n)) \otimes \End(\C(m|n)),\\
R_{1,2}(u) &=\id\otimes\left( 1 - \frac{P}{u}\right),\\
T(u) &= \sum_{i,j=1}^{m+n} (-1)^{|j|(|i|+1)} t_{i,j}(u) \otimes E_{i,j},\\
T_1(u) &= \sum_{i,j=1}^{m+n} (-1)^{|j|(|i|+1)} t_{i,j}(u) \otimes E_{i,j}\otimes\id,\\
T_2(u) &= \sum_{i,j=1}^{m+n} (-1)^{|j|(|i|+1)} t_{i,j}(u) \otimes \id\otimes E_{i,j}.
\end{align}

\subsection{Schur-Weyl functor between the degenerate affine Hecke algebras and the super Yangian}\label{SWdahaY}

Hereafter, let us denote $H^{\textrm{deg}}_{-1}(S_l)$ by $H^{\textrm{deg}}(S_l)$. The goal of this subsection is to produce the functor $SW$ from the category of right $H^{\textrm{deg}}(S_l)$-modules to the category of left $Y(\mfgl_{m|n})$-modules determined on objects by
\begin{equation*}
M\mapsto SW(M)=M\otimes_{\C[S_l]} \C(m|n)^{\otimes l}
\end{equation*}
for any right $H^{\textrm{deg}}(S_l)$-module $M$.

Fix a right $H^{\textrm{deg}}(S_l)$-module $M$. Since $H^{\textrm{deg}}(S_l) \simeq \C[\mathsf{u}_1,\dots, \mathsf{u}_l] \otimes_{\C} \C[S_l]$ as vector spaces, $\mathsf{u}_i$ can be seen as an operator on $M$. Thus, we find that $R(u-\mathsf{u}_i)=1 - \dfrac{P}{u-\mathsf{u}_i} = 1 -\dfrac{u^{-1} P}{1-\mathsf{u}_i u^{-1}}$ belongs to $\End(M)\otimes_{\C}\textrm{End}(\C(m|n))^{\otimes 2}[[u^{-1}]]$. Set
\begin{equation*}
R_{i,l+1}(u-\mathsf{u}_i )= 1 -\frac{P_{i,l+1}}{u-\mathsf{u}_i},
\end{equation*}
where
\begin{equation*}
P_{i,l+1}= \sum_{a,b=1}^{m+n} (-1)^{|b|} \textrm{id}_M \otimes \textrm{id}^{\otimes i-1} \otimes E_{a,b} \otimes \textrm{id}^{\otimes l-i} \otimes E_{b,a}.
\end{equation*}
Let us denote $(-1)^{|b|+|a||b|}E_{a,b}$ by $E_{b,a}^t$. For $A=A_1\otimes\cdots \otimes A_l\in\End(\C(m|n))^{\otimes l}$, we define $A^{t_k}\in\End(\C(m|n))^{\otimes l}$ by
\begin{equation}
(A_1\otimes\cdots \otimes A_l)^{t_k}=A_1\otimes \cdots\otimes A_{k-1}\otimes A_k^t\otimes A_{k+1}\otimes\cdots\otimes A_l.\label{transpose}
\end{equation}
By the definition of $A^{t_k}$, we find that
\begin{align}
P^{t_k}_{k,l+1}&= \sum_{a,b=1}^{m+n} (-1)^{|b|} \textrm{id}_M \otimes \textrm{id}^{\otimes k-1} \otimes E_{a,b}^{t}\otimes \textrm{id}^{\otimes l-k} \otimes E_{b,a}\nonumber\\
&=\sum_{a,b=1}^{m+n} (-1)^{|a|+|b|+|a||b|} \textrm{id}_M \otimes \textrm{id}^{\otimes k-1} \otimes E_{b,a} \otimes \textrm{id}^{\otimes l-k} \otimes E_{b,a}.\label{transpose-1}
\end{align}
\begin{theorem}
For a right $H^{\textrm{deg}}(S_l)$-module $M$, we can define an action of $Y(\mfgl_{m|n})$ on $M\otimes_{\C} \C(m|n)^{\otimes l}$ by 
\begin{equation} \label{thrips} 
T(u) \mapsto R_{1,l+1}^{t_1}(\mathsf{u}_1-u) R_{2,l+1}^{t_2}(\mathsf{u}_2-u) \cdots R_{l,l+1}^{t_l}(\mathsf{u}_l-u), 
\end{equation}
where
\begin{gather*}
R_{i,l+1}^{t_i}(\mathsf{u}_i-u)=1 + \frac{P_{i,l+1}^{t_i}}{u-\mathsf{u}_i}.
\end{gather*}
\end{theorem}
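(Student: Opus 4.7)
The plan is to verify that the prescription \eqref{thrips}, abbreviated to $T(u)\mapsto L(u)$, respects the defining RTT relation of $Y(\mfgl_{m|n})$, namely
\[ R_{l+1,l+2}(u-v)\, T_1(u)\, T_2(v) = T_2(v)\, T_1(u)\, R_{l+1,l+2}(u-v), \]
where $T_1$ and $T_2$ act on two auxiliary copies of $\C(m|n)$ labelled $l+1$ and $l+2$. Writing $L_1(u)$ and $L_2(v)$ for the operators obtained from $L$ by placing the auxiliary factor in position $l+1$ or $l+2$ respectively, the task reduces to establishing
\[ R_{l+1,l+2}(u-v)\, L_1(u)\, L_2(v) = L_2(v)\, L_1(u)\, R_{l+1,l+2}(u-v). \]

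My first step is to establish, for each $i \in \{1,\ldots,l\}$, the partially transposed super Yang--Baxter identity
\[ R_{l+1,l+2}(u-v)\, R^{t_i}_{i,l+1}(\mathsf{u}_i-u)\, R^{t_i}_{i,l+2}(\mathsf{u}_i-v) = R^{t_i}_{i,l+2}(\mathsf{u}_i-v)\, R^{t_i}_{i,l+1}(\mathsf{u}_i-u)\, R_{l+1,l+2}(u-v). \]
I would derive this from the standard super Yang--Baxter equation
\[ R_{ab}(x-y)\, R_{ac}(x-z)\, R_{bc}(y-z) = R_{bc}(y-z)\, R_{ac}(x-z)\, R_{ab}(x-y) \]
for the rational R-matrix $R(u)=1-P/u$ of $Y(\mfgl_{m|n})$, specialised at $(a,b,c)=(l+1,l+2,i)$ and $(x,y,z)=(u,v,\mathsf{u}_i)$, by applying the partial supertranspose in slot $i$. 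The sign factors $(-1)^{|a|+|b|+|a||b|}$ displayed in \eqref{transpose-1} are exactly those required for the identity to survive this transposition; equivalently, one can verify it directly by expanding both sides and using the multiplication rules for $E_{a,b}$ in $\End(\C(m|n))$.

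Next, I would observe that for $i\ne j$ the operators $R^{t_i}_{i,l+1}(\mathsf{u}_i-u)$ and $R^{t_j}_{j,l+2}(\mathsf{u}_j-v)$ commute: they act on the disjoint tensor slots $\{i,l+1\}$ and $\{j,l+2\}$ of $\C(m|n)^{\otimes(l+2)}$, each operator $P^{t_i}_{i,l+1}$ is even (its summands $E_{b,a}\otimes E_{b,a}$ have total parity zero), and their $\End(M)$-valued coefficients involve only $\mathsf{u}_i$ and $\mathsf{u}_j$, which commute by \eqref{affHecdef11}. This allows me to interleave the two products as
\[ L_1(u)\,L_2(v) = \prod_{i=1}^{l}\bigl( R^{t_i}_{i,l+1}(\mathsf{u}_i-u)\, R^{t_i}_{i,l+2}(\mathsf{u}_i-v) \bigr), \]
with the factors ordered by $i$. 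Multiplying on the left by $R_{l+1,l+2}(u-v)$ and applying the Yang--Baxter identity of the previous step successively for $i=1,2,\ldots,l$ transports $R_{l+1,l+2}(u-v)$ past every factor to the right-hand end. Re-interleaving in reverse then yields $L_2(v)\,L_1(u)\,R_{l+1,l+2}(u-v)$, which is the desired RTT identity.

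The main obstacle will be the first step: the careful bookkeeping of parity signs produced by the transposes $P^{t_i}$, together with Koszul signs that arise when composing operators on different tensor slots of $\C(m|n)^{\otimes(l+2)}$, is what separates the super case from its purely even prototype. Once the partially transposed super YBE is secured, the remaining two steps reduce to formal manipulations with disjoint-support, commuting-coefficient operators.
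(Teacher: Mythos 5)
Your proposal is correct and follows essentially the same route as the paper's proof: your partially transposed Yang--Baxter identity is precisely the paper's relation \eqref{heatwave}, obtained there from the ordinary super YBE together with the computation \eqref{244-1} showing that partial transposition in slot $i$ reverses the order of the two factors acting in that slot, and your interleaving and successive-application steps reproduce the paper's displayed chain of equalities verbatim. The only detail to adjust is your specialization $(x,y,z)=(u,v,\mathsf{u}_i)$, which produces $R_{i,l+1}(u-\mathsf{u}_i)=1-P_{i,l+1}/(u-\mathsf{u}_i)$ rather than the required $R_{i,l+1}(\mathsf{u}_i-u)=1+P_{i,l+1}/(u-\mathsf{u}_i)$; starting instead from $R_{i,l+1}(\mathsf{u}_i-u)R_{i,l+2}(\mathsf{u}_i-v)R_{l+1,l+2}(u-v)=R_{l+1,l+2}(u-v)R_{i,l+2}(\mathsf{u}_i-v)R_{i,l+1}(\mathsf{u}_i-u)$ and then transposing slot $i$ gives exactly the identity you need.
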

\begin{proof}
By the definition \eqref{transpose}, we obtain
\begin{align}
&\quad\left( (E_{a,b}\otimes E_{b,a}\otimes 1)(E_{c,d}\otimes 1 \otimes E_{d,c})\right)^{t_1}=  (-1)^{(|a|+|b|)(|c|+|d|)}(E_{a,b} E_{c,d})^{t_1} \otimes E_{b,a}\otimes E_{d,c} \nonumber\\
&=E_{c,d}^{t_1} E_{a,b}^{t_1} \otimes E_{b,a} \otimes E_{d,c} = (E_{c,d}\otimes 1 \otimes E_{d,c})^{t_1} (E_{a,b}\otimes E_{b,a}\otimes 1)^{t_1}.\label{244-1}
		\end{align}
By the relation $R_{1,2}(u-v)R_{1,3}(u-w)R_{2,3}(v-w)=R_{2,3}(v-w)R_{1,3}(u-w)R_{1,2}(u-v)$ and \eqref{244-1}, we obtain
\begin{equation} R_{1,3}^{t_1}(u-w)R_{1,2}^{t_1}(u-v)R_{2,3}(v-w)=R_{2,3}(v-w)R_{1,2}^{t_1}(u-v)R_{1,3}^{t_1}(u-w). \label{heatwave} \end{equation}

In the tensor product
		$$\End (M)\otimes_{\C} \End_{\C}(\C(m|n))^{\otimes l+2},$$
		we refer to the first factor as the $0^{th}$ position and the following ones as the $1^{st}$ to $(l+2)^{th}$ positions. 
  Let us set
\begin{align*}
T_{l+1}(u)&=\sum_{i,j=1}^{m+n} (-1)^{|j|(|i|+1)} t_{i,j}(u) \otimes\id^{\otimes l}\otimes E_{i,j}\otimes\id,\\
T_{l+2}(u)&=\sum_{i,j=1}^{m+n} (-1)^{|j|(|i|+1)} t_{i,j}(u) \otimes\id^{\otimes l}\otimes\id\otimes E_{i,j}.
\end{align*}
It is enough to show that
\begin{equation}
R_{l+1,l+2}(u-v)T_{l+1}(u)T_{l+2}(v)=T_{l+2}(v)T_{l+1}(u)R_{l+1,l+2}(u-v).\label{transpose3}
\end{equation}
  By using (\ref{heatwave}), we obtain
		\begin{align*} 
&\quad R_{l+1,l+2}(u-v)T_{l+1}(u)T_{l+2}(v)\\
&=R_{l+1,l+2}(u-v)\Big(R_{1,l+1}^{t_1} (\mathsf{u}_1-u)   \cdots R_{l,l+1}^{t_l} (\mathsf{u}_l-u)\Big)\Big(R_{1,l+2}^{t_1}(\mathsf{u}_1-v)\cdots R_{l,l+2}^{t_l}(\mathsf{u}_l-v)\Big) \text{ by } \eqref{thrips}\\
&=R_{l+1,l+2}(u-v)\Big(R_{1,l+1}^{t_1} (\mathsf{u}_1-u)    R_{1,l+2}^{t_1} (\mathsf{u}_1-v)\Big) \cdots\Big(R_{l,l+1}^{t_l}(\mathsf{u}_l-u)\cdots R_{l,l+2}^{t_l}(\mathsf{u}_l-v)\Big) \\
&=\Big( R_{1,l+2}^{t_1} (\mathsf{u}_1-v) R_{1,l+1}^{t_1} (\mathsf{u}_1-u) \Big) \cdots\Big(R_{l,l+2}^{t_l}(\mathsf{u}_l-v)\cdots R_{l,l+1}^{t_l}(\mathsf{u}_l-u)\Big)R_{l+1,l+2}(u-v) \text{ by } \eqref{heatwave}\\
&=T_{l+2}(v)T_{l+1}(u)R_{l+1,l+2}(u-v) \text{ by } \eqref{thrips}.
\end{align*}
\end{proof}
The map (\ref{thrips}) can be simplified as in Proposition 5 in \cite{ara} or Proposition 5.2 in \cite{nazQ}. For the preparation, we note the following lemma.
\begin{lemma}\label{P-1}
The following relation holds:
\begin{equation} 
P_{i,l+1}^{t_i} P_{k,l+1}^{t_k} = P_{i,k}  P_{k,l+1}^{t_k}. \label{llbean} \end{equation}
\end{lemma}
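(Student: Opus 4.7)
The plan is to prove the identity by a direct expansion in matrix units. Using formula \eqref{transpose-1}, write
$$P_{i,l+1}^{t_i} = \sum_{a,b}(-1)^{|a|+|b|+|a||b|}\, E_{b,a}^{(i)}\otimes E_{b,a}^{(l+1)}, \qquad P_{k,l+1}^{t_k} = \sum_{c,d}(-1)^{|c|+|d|+|c||d|}\, E_{d,c}^{(k)}\otimes E_{d,c}^{(l+1)},$$
and $P_{i,k}=\sum_{e,f}(-1)^{|f|}\, E_{e,f}^{(i)}\otimes E_{f,e}^{(k)}$. I would assume without loss of generality that $i<k$ (the case $i>k$ being analogous with an extra cancelling pair of Koszul signs).

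For the left-hand side, the two factors overlap only at position $l+1$, where $E_{b,a}E_{d,c}=\delta_{a,d}E_{b,c}$; additionally, the super-tensor product multiplication introduces a Koszul sign $(-1)^{(|c|+|d|)(|a|+|b|)}$ coming from moving the $E_{d,c}$ at position $k$ of the second factor past the $E_{b,a}$ at position $l+1$ of the first factor. Setting $a=d$ and reducing modulo $2$, the combined exponent simplifies (using $|a|^2\equiv|a|$) to $|a|+|b|+|c|+|b||c|$, yielding
$$P_{i,l+1}^{t_i}\, P_{k,l+1}^{t_k} = \sum_{a,b,c} (-1)^{|a|+|b|+|c|+|b||c|}\, E_{b,a}^{(i)}\otimes E_{a,c}^{(k)}\otimes E_{b,c}^{(l+1)}.$$
For the right-hand side, the two factors overlap at position $k$, where $E_{f,e}E_{d,c}=\delta_{e,d}E_{f,c}$, and no Koszul sign appears since the first factor is the identity at every position strictly greater than $k$. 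After setting $e=d$,
$$P_{i,k}\, P_{k,l+1}^{t_k} = \sum_{d,f,c} (-1)^{|f|+|c|+|d|+|c||d|}\, E_{d,f}^{(i)}\otimes E_{f,c}^{(k)}\otimes E_{d,c}^{(l+1)}.$$
Renaming $d=b$ and $f=a$ makes the tensor parts agree, and the sign exponents coincide modulo $2$.

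The main obstacle is bookkeeping: one must correctly identify all three sources of signs—the internal sign built into the definition of $P^{t_\cdot}$, the single sign built into $P$ itself, and the Koszul sign produced by multiplication in the super-tensor algebra—and check that their combination behaves as predicted after each Kronecker delta is applied. Once these are handled, the lemma reduces to comparing two sums over indices.
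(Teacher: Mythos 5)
Your proof is correct and takes essentially the same route as the paper, which also verifies the identity by a direct expansion in matrix units for three distinct positions; your expression $\sum_{a,b,c}(-1)^{|a|+|b|+|c|+|b||c|}E_{b,a}^{(i)}\otimes E_{a,c}^{(k)}\otimes E_{b,c}^{(l+1)}$ for both sides is exactly the paper's intermediate formula. The only added content in your write-up is the explicit bookkeeping of the Koszul signs, which is carried out correctly.
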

\begin{proof}
It is enough to check this for three different indices. We show the case $i=1,k=2,l+1=3$.
By a direct computation, we obtain
  		\begin{equation}
	P_{1,3}^{t_1} P_{2,3}^{t_2} = \sum_{a,b,c}(-1)^{|a|+|b|+|c|+|b||c|} E_{b,a}^{(1)} \otimes E_{a,c}^{(2)} \otimes E_{b,c}^{(3)} = P_{1,2} P_{2,3}^{t_2}.\label{PPP1}
		\end{equation}	
\end{proof}
Let $V$ be the subspace of $M \otimes_{\C} \C(m|n)^{\otimes l}$ spanned by $\wt{\mathbf{m}}\sigma\otimes\wt{\underline{\mathbf{v}}} - \wt{\mathbf{m}} \otimes \sigma \wt{\underline{\mathbf{v}}}$ for all $\sigma \in S_l$, $\wt{m} \in M$. Let us set $SW(M):= (M\otimes_{\C}\C(m|n)^{\otimes l})/V=M\otimes_{\mathbb{C}[S_l]}\C(m|n)^{\otimes l}$.

\begin{proposition} \label{glerup}
The difference between (\ref{thrips}) and $I+\sum_{k=1}^l \dfrac{1}{u-\mathsf{z}_k} P^{t_k}_{k,l+1}$ as operators on $M \otimes_{\C} \C(m|n)^{\otimes l}$
\begin{equation}
		T(u) - \left(I + \sum_{k=1}^l \frac{1}{u-\mathsf{z}_k} P_{k,l+1}^{t_k}\right)
		\end{equation}
maps $M \otimes_{\C} \C(m|n)^{\otimes l}$ to $V$.
	\end{proposition}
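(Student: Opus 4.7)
The strategy is to expand $T(u)$ in terms of $(u-\msu_i)^{-1}$'s, use Lemma~\ref{P-1} iteratively to reduce products of the $P^{t_{k_i}}_{k_i,l+1}$'s to a single such factor preceded by $P_{i,j}$-operators acting on $\C(m|n)^{\otimes l}$ alone, and then transfer those to right multiplication on $M$ via the Schur-Weyl quotient.

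First, I would write
\begin{equation*}
T(u) = I + \sum_{s\geq 1}\sum_{k_1<\cdots<k_s}(u-\msu_{k_1})^{-1}\cdots(u-\msu_{k_s})^{-1}\, P^{t_{k_1}}_{k_1,l+1}\cdots P^{t_{k_s}}_{k_s,l+1},
\end{equation*}
having moved each $(u-\msu_{k_i})^{-1}$ (acting only on $M$) past the $P^{t_{k_j}}$-operators (acting only on the remaining tensor factors), since they commute. Iterating Lemma~\ref{P-1} rewrites $P^{t_{k_1}}_{k_1,l+1}\cdots P^{t_{k_s}}_{k_s,l+1}=P_{k_1,k_2}P_{k_2,k_3}\cdots P_{k_{s-1},k_s}P^{t_{k_s}}_{k_s,l+1}$. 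A direct Koszul-sign computation verifies that $P_{i,j}$ acts on $\C(m|n)^{\otimes l}$ identically to the transposition $\sigma_{i,j}$ for $1\leq i<j\leq l$; hence modulo $V$ the composite $\sigma_{k_1,k_2}\cdots\sigma_{k_{s-1},k_s}$ may be transferred from the $\C(m|n)^{\otimes l}$-factor to right multiplication on the $M$-factor. Grouping terms by $k_s=k$ produces
\begin{equation*}
T(u) - I \equiv \sum_{k=1}^l X_k\cdot P^{t_k}_{k,l+1} \pmod{V\otimes\C(m|n)[[u^{-1}]]},
\end{equation*}
where
\begin{equation*}
X_k := \sum_{s\geq 1}\sum_{k_1<\cdots<k_{s-1}<k}(u-\msu_{k_1})^{-1}\cdots(u-\msu_k)^{-1}\sigma_{k_1,k_2}\cdots\sigma_{k_{s-1},k}\in H^{\textrm{deg}}(S_l)[[u^{-1}]].
\end{equation*}

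The heart of the proof is then to show $X_k=(u-\msz_k)^{-1}$, which I would prove by induction on $k$. The case $k=1$ is immediate since $\msz_1=\msu_1$. For the inductive step, I would group the summands of $X_k$ by $k_{s-1}=j$: since $\msu_k$ commutes with every $\msu_{k_i}$ for $k_i<k$ (by \eqref{affHecdef11}) and with each $\sigma_{a,b}$ with $a,b<k$ (via reduction to adjacent transpositions using \eqref{ldrna3}), this grouping yields the recursion
\begin{equation*}
X_k = (u-\msu_k)^{-1} + \sum_{j<k}X_j(u-\msu_k)^{-1}\sigma_{j,k}.
\end{equation*}
Because $X_j$ only involves indices below $k$, it commutes with $(u-\msu_k)^{-1}$. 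Substituting the inductive hypothesis $X_j=(u-\msz_j)^{-1}$ and applying the identity $(u-\msz_j)^{-1}\sigma_{j,k}=\sigma_{j,k}(u-\msz_k)^{-1}$ (which follows from \eqref{gather159}, giving $\sigma_{j,k}\msz_k=\msz_j\sigma_{j,k}$), one obtains
\begin{equation*}
X_k = (u-\msu_k)^{-1}\Bigl[(u-\msz_k)+\textstyle\sum_{j<k}\sigma_{j,k}\Bigr](u-\msz_k)^{-1} = (u-\msu_k)^{-1}(u-\msu_k)(u-\msz_k)^{-1} = (u-\msz_k)^{-1},
\end{equation*}
using $\msz_k=\msu_k+\sum_{j<k}\sigma_{j,k}$ from \eqref{defyk} with $\kappa=-1$.

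The main obstacle is the inductive step: the noncommutative bookkeeping in $H^{\textrm{deg}}(S_l)$ is delicate, and the cancellation works only because $(u-\msz_k)^{-1}$ ends up on the right of $\sigma_{j,k}$ in precisely the position needed to combine with $\sum_{j<k}\sigma_{j,k}$ and rebuild $(u-\msu_k)$. Combining the identity $X_k=(u-\msz_k)^{-1}$ with the congruence established in the first step yields $T(u)-I-\sum_k\tfrac{1}{u-\msz_k}P^{t_k}_{k,l+1}\equiv 0$ modulo $V$, proving the proposition.
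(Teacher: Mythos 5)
Your argument is correct and is essentially the paper's own proof in a slightly different packaging: the paper establishes by induction on $k$ that the partial product $\prod_{i=1}^{k}\bigl(\mathrm{Id}+\tfrac{P^{t_i}_{i,l+1}}{u-\mathsf{u}_i}\bigr)$ is congruent to $\mathrm{Id}+\sum_{i=1}^{k}\tfrac{1}{u-\mathsf{z}_i}P^{t_i}_{i,l+1}$ modulo $V$, whereas you expand fully and resum the coefficient $X_k$ of each $P^{t_k}_{k,l+1}$. The ingredients are identical in both versions --- Lemma~\ref{P-1}, the transfer of $P_{j,k}$ to $\sigma_{j,k}$ acting on $M$ modulo $V$, the conjugation identity \eqref{gather159}, and the telescoping via $\mathsf{z}_k=\mathsf{u}_k+\sum_{j<k}\sigma_{j,k}$ --- so no further comparison is needed.
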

This is essentially the same result as Lemma 4.6 in \cite{LuMu}.

Let us denote $1^{\otimes (k-1)}\otimes E_{a,b}\otimes1^{\otimes(l-k)}$ by $E^{(k)}_{a,b}$. By Proposition~\ref{glerup}, as an operator on $SW(M)$, we can write that $$T(u)=\sum_{k=1}^l \left(\frac{1}{u-\mathsf{z}_k}\right) P_{k,l+1}^{t_k},$$ which gives $$t_{ab}^{(2)}=\sum_{k=1}^l (-1)^{|a|}  \mathsf{z}_k \otimes E_{ab}^{(k)}.$$ The following theorem is naturally derived from Proposition~\ref{glerup}.
\begin{theorem}\label{SW-tt}
We can define a functor $SW$ from the category of right modules over $H^{deg}(S_l)$ to the category of left modules over the super Yangian $Y(\mathfrak{gl}_{m|n})$ by
\begin{equation*}
M\mapsto SW(M)=M \otimes_{\C[S_l]} \C(m|n)^{\otimes l},\ \varphi\mapsto\varphi\otimes\id^{\otimes l}
\end{equation*}
for a right  $H^{deg}(S_l)$-module $M$ and any homomorphism $\varphi:M_1\rightarrow M_2$ between two right $H^{deg}(S_l)$-modules $M_1$ and $M_2$.
\end{theorem}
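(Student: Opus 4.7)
The plan is to deduce the theorem directly from Proposition \ref{glerup} by checking two things: that the $Y(\mfgl_{m|n})$-action on $M\otimes_{\C}\C(m|n)^{\otimes l}$ descends to $SW(M) = M\otimes_{\C[S_l]}\C(m|n)^{\otimes l} = (M\otimes_\C\C(m|n)^{\otimes l})/V$, and that $H^{\deg}(S_l)$-module maps induce intertwiners, functorially. The previous theorem already produced an honest action of $Y(\mfgl_{m|n})$ on $M\otimes_\C\C(m|n)^{\otimes l}$, so all that is needed is well-definedness on the quotient and compatibility with morphisms.

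For well-definedness, I would use the simplified form from Proposition \ref{glerup}, namely that on $SW(M)$ the series $T(u)$ is represented by
\begin{equation*}
T(u) \;=\; I + \sum_{k=1}^l \frac{1}{u-\mathsf{z}_k}\,P^{t_k}_{k,l+1}.
\end{equation*}
It then suffices to check that the right-hand side preserves $V$, which amounts to showing that $\sum_k\frac{1}{u-\mathsf{z}_k}P^{t_k}_{k,l+1}$ commutes with the diagonal action of $S_l$ on $M\otimes_\C \C(m|n)^{\otimes l}$ (where $S_l$ acts on $M$ on the right and on $\C(m|n)^{\otimes l}$ by the super-permutation defined in Subsection 2.2). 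This commutation reduces to a single identity: for each simple transposition $\sigma$,
\begin{equation*}
\sigma\cdot P^{t_k}_{k,l+1}\cdot \sigma^{-1} \;=\; P^{t_{\sigma(k)}}_{\sigma(k),l+1},
\end{equation*}
which is a direct computation from \eqref{transpose-1} and the super-permutation formula, combined with the intertwining relation $\sigma \mathsf{z}_k = \mathsf{z}_{\sigma(k)}\sigma$ from \eqref{gather159}. Reindexing $k \mapsto \sigma(k)$ in the sum then yields the desired invariance.

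For functoriality, any morphism $\varphi\colon M_1\to M_2$ of right $H^{\deg}(S_l)$-modules commutes with both the $\C[S_l]$-action and with each $\mathsf{z}_k\in H^{\deg}(S_l)$. The first property ensures that $\varphi\otimes \id^{\otimes l}$ descends to a well-defined map $SW(\varphi)\colon SW(M_1)\to SW(M_2)$, and the second, together with the fact that $P^{t_k}_{k,l+1}$ acts only on the $\C(m|n)^{\otimes l}$-and-auxiliary factors, shows that $SW(\varphi)$ intertwines the operators $T(u)$ on both sides. The identities $SW(\id_M)=\id_{SW(M)}$ and $SW(\varphi_2\circ\varphi_1)=SW(\varphi_2)\circ SW(\varphi_1)$ are then immediate from the corresponding identities for $\varphi\otimes \id^{\otimes l}$.

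The only nontrivial step is the $S_l$-equivariance verification, and the main bookkeeping obstacle there is tracking the sign factors $(-1)^{|a|+|b|+|a||b|}$ from \eqref{transpose-1} through the super-transposition of tensor positions; once the identity $\sigma P^{t_k}_{k,l+1}\sigma^{-1}=P^{t_{\sigma(k)}}_{\sigma(k),l+1}$ is confirmed, everything else is formal. Since the proposition identifies $T(u)$ with a manifestly $S_l$-equivariant expression, no further Yangian relation needs to be reverified on the quotient.
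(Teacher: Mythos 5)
Your proposal is correct and follows exactly the route the paper intends: the paper's entire proof of Theorem \ref{SW-tt} is the remark that it is ``naturally derived from Proposition~\ref{glerup}'', and your argument simply supplies the routine details of that derivation (descent to the quotient by $V$ via the $S_l$-equivariance of $\sum_k (u-\mathsf{z}_k)^{-1}P^{t_k}_{k,l+1}$, plus functoriality). The key identities you invoke, $\sigma P^{t_k}_{k,l+1}\sigma^{-1}=P^{t_{\sigma(k)}}_{\sigma(k),l+1}$ and $\sigma\mathsf{z}_k=\mathsf{z}_{\sigma(k)}\sigma$, do hold and close the gap the paper leaves implicit.
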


\subsection{Equivalence of categories}\label{CPequivcat}

The goal of this section is to strengthen Theorem \ref{SW-tt} using ideas from \cite{CP96}: see Theorem \ref{floridakilos-1}. 
\begin{definition}
We say that a $Y(\mathfrak{gl}_{m|n})$-module $N$ is of level $l$ if $N$ is of level $l$ as a $\mathfrak{gl}_{m|n}$-module. 
\end{definition}
\begin{theorem}\label{floridakilos-1}
		Let $N$ be  a $Y(\mathfrak{gl}_{m|n})$-module that is finite-dimensional of level $l$. Suppose that $m,n\geq l+1$. Then, there exists a module $M$ over the degenerate affine Hecke algebra $H^{\deg}(S_l)$ such that \[N\cong M \otimes_{\C[S_l]} \C(m|n)^{\otimes l}=SW(M).\]
\end{theorem}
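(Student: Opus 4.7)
The plan is to equip the multiplicity space
\[
M := \Hom_{U(\mathfrak{gl}_{m|n})}\!\left(\C(m|n)^{\otimes l},\, N\right)
\]
with an $H^{\deg}(S_l)$-module structure compatible with its natural right $\C[S_l]$-action, in such a way that the canonical isomorphism $M \otimes_{\C[S_l]} \C(m|n)^{\otimes l} \cong N$ of $U(\mathfrak{gl}_{m|n})$-modules upgrades to a $Y(\mathfrak{gl}_{m|n})$-module isomorphism $SW(M) \cong N$.

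First, I would invoke Theorem \ref{chengbook}: since $m,n \geq l+1$ implies $l < (m+1)(n+1)$, the Schur-Sergeev functor is an equivalence at the $\mathfrak{gl}_{m|n}$-level, yielding the decomposition $N \cong M \otimes_{\C[S_l]} \C(m|n)^{\otimes l}$ of $U(\mathfrak{gl}_{m|n})$-modules. Under this identification, the actions of the degree-one RTT-generators $t^{(1)}_{i,j}$ on $N$ and on $SW(M)$ automatically coincide.

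Next, guided by the formula $t^{(2)}_{i,j} = \sum_{k=1}^{l} (-1)^{|i|}\, \mathsf{z}_k \otimes E^{(k)}_{i,j}$ obtained in Proposition \ref{glerup} for $SW$-type modules, I would use the given action of $t^{(2)}_{i,j}$ on $N$ to \emph{define} operators $\mathsf{z}_k \in \End(M)$. Concretely, fix an $l$-tuple of pairwise distinct indices $(i_1,\ldots,i_l)$ in $\{1,\ldots,m+n\}$; the hypothesis $m,n \geq l+1$ guarantees that for every parity one may moreover pick an auxiliary index $a \notin \{i_1,\ldots,i_l\}$ of that parity. For any $k$, applying $t^{(2)}_{a,i_k}$ to $\mathbf{m} \otimes e_{i_1} \otimes \cdots \otimes e_{i_l}$ isolates the $k$-th summand of the target formula, and projecting the result onto the one-dimensional weight component spanned by $e_{i_1} \otimes \cdots \otimes e_{i_{k-1}} \otimes e_a \otimes e_{i_{k+1}} \otimes \cdots \otimes e_{i_l}$ yields a well-defined endomorphism of $M$, which I take as the action of $\mathsf{z}_k$. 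The independence from the choice of $a$ and of the reference tuple follows from the $\mathfrak{gl}_{m|n}$-equivariance of the $Y(\mathfrak{gl}_{m|n})$-action together with the low-order bracket $[t^{(1)}_{i,j}, t^{(2)}_{k,l}]$.

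The third step is to verify that $\mathsf{z}_1,\ldots,\mathsf{z}_l$ together with the $S_l$-action satisfy the relations \eqref{gather159} and \eqref{gather160}, which by Proposition \ref{prop201} is equivalent to making $M$ an $H^{\deg}(S_l)$-module. Relation \eqref{gather159} is built into the definition via the $S_l$-equivariance of the tensor factors, while \eqref{gather160} reduces to the degree-$(2,2)$ component of the RTT-relation \eqref{RTT2} specialised to the weight vectors used above. Once $M$ is an $H^{\deg}(S_l)$-module, Theorem \ref{SW-tt} endows $SW(M) = M \otimes_{\C[S_l]} \C(m|n)^{\otimes l}$ with a $Y(\mathfrak{gl}_{m|n})$-module structure whose $t^{(1)}_{i,j}$- and $t^{(2)}_{i,j}$-actions agree with those on $N$ by construction. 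Since $Y(\mathfrak{gl}_{m|n})$ is generated as an algebra by $\{t^{(1)}_{i,j}, t^{(2)}_{i,j}\}$ --- a standard consequence of the RTT-expansion \eqref{RTT2}, which expresses each $t^{(r+1)}_{i,j}$ inductively as a commutator involving a $t^{(2)}$-element modulo lower-order terms --- the two $Y(\mathfrak{gl}_{m|n})$-actions on $N$ agree on all of $Y(\mathfrak{gl}_{m|n})$, completing the proof.

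\emph{Main obstacle.} The most delicate points are the independence claim in the second step and the verification of the commutation relation \eqref{gather160}: both require careful bookkeeping of the signs arising from the super structure, and the hypothesis $m,n \geq l+1$ enters precisely in guaranteeing enough auxiliary indices of each parity so that the extraction of $\mathsf{z}_k$ from $t^{(2)}_{i,j}$ is unambiguous and the weight-space projections are non-degenerate.
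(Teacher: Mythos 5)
Your overall architecture is the same as the paper's: use Schur--Sergeev duality (Theorem \ref{chengbook}) to write $N\cong M\otimes_{\C[S_l]}\C(m|n)^{\otimes l}$ as $\mathfrak{gl}_{m|n}$-modules, read off candidate operators $\mathsf{z}_k$ on $M$ from the action of $t^{(2)}_{a,b}$ on weight vectors with pairwise distinct entries, verify the relations \eqref{gather159}--\eqref{gather160} so that Proposition \ref{prop201} applies, and conclude using the generation of $Y(\mathfrak{gl}_{m|n})$ by the $t^{(1)}_{i,j}$ and $t^{(2)}_{i,j}$. However, one step is asserted rather than proved, and it is the technical heart of the paper's argument: the identity $t^{(2)}_{a,b}(\mathbf{m}\otimes\underline{\mathbf{v}})=(-1)^{|a|}\sum_{k}\chi^{a,b}_k(\mathbf{m})\otimes E^{(k)}_{a,b}(\underline{\mathbf{v}})$ must be established for \emph{all} basis vectors $\underline{\mathbf{v}}$, including those in which $e_a$ and $e_b$ occur with multiplicity, not only on the reference vectors used to define $\mathsf{z}_k$. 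Your claim that the two $t^{(2)}$-actions ``agree by construction'' is true only on those reference vectors; without the general identity you can neither prove the independence of $\chi^{a,b}_k$ from $(a,b)$ (your second step) nor identify the two $Y(\mathfrak{gl}_{m|n})$-module structures on all of $N$.

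The natural shortcut --- both operators have the same commutators with the $\mathfrak{gl}_{m|n}$-action and agree on a vector generating $\C(m|n)^{\otimes l}$ by Lemma \ref{chp}, hence agree everywhere --- does not go through directly, because by \eqref{RTT2} the bracket $[t^{(1)}_{c,d},t^{(2)}_{a,b}]$ produces \emph{other} generators $t^{(2)}_{c,b}$ and $t^{(2)}_{a,d}$; matching commutators for the candidate operators $\sum_k\chi^{a,b}_k\otimes E^{(k)}_{a,b}$ therefore presupposes exactly the independence of $\chi^{a,b}_k$ from $(a,b)$ that one is trying to establish. The paper breaks this circle with Lemma \ref{idbits-lem}: a double induction on the multiplicities of $e_a$ and $e_b$ in $\underline{\mathbf{v}}$, commuting at each step with carefully chosen elements $t^{(1)}_{a,c}$ or $t^{(1)}_{b,c}$ for an auxiliary index $c$ with $|c|=|b|$ --- available precisely because $m,n\ge l+1$ --- so that the offending extra terms vanish (e.g. $[t^{(1)}_{b,c},[t^{(1)}_{b,c},t^{(2)}_{a,b}]]=0$). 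You correctly flag the sign bookkeeping and the role of $m,n\ge l+1$ as the main obstacles, but the mechanism actually needed is this induction, and your plan should include it (or an equivalent) explicitly before the final identification of module structures can be made.
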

\begin{remark}
In \cite{CP96}, the authors describe a functor from the category of finite-dimensional representations of the affine Hecke algebra of $S_l$ to the category of finite-dimensional representations of the quantum loop algebra $U_q(\mc{L}\mf{sl}_n)$. In particular, if $l <n$, this functor is an equivalence of categories between all finite-dimensional representations of the affine Hecke algebra of $S_l$ and a certain subcategory of representations of $U_q(\mc{L}\mf{sl}_n)$. 
Theorem~\ref{floridakilos-1} is an analogue of Theorem 4.2 and Lemma 4.5 in \cite{CP96}.
\end{remark}
The rest of this section is devoted to the proof of Theorem~\ref{floridakilos-1}.
The following lemma is used for the proof of Theorem~\ref{floridakilos-1}.
\begin{lemma} \label{chp} (Proposition 14.1 in \cite{Flicker}) Suppose that $\underline{\mathbf{w}}=e_{i_1} \otimes \cdots \otimes e_{i_l}$ for $i_j$ all distinct. Then, $\underline{\mathbf{w}}$ generates $\C(m|n)^{\otimes l}$ under the action of $U(\mathfrak{gl}_{m|n})$.
\end{lemma}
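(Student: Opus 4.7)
The plan is to proceed by induction on a Hamming-type distance from $\underline{\mathbf{w}}$. Let $V$ denote the $U(\mathfrak{gl}_{m|n})$-submodule of $\C(m|n)^{\otimes l}$ generated by $\underline{\mathbf{w}}$. For each tuple $\mathbf{j} = (j_1, \ldots, j_l)$ with $j_s \in \{1, \ldots, m+n\}$, set $d(\mathbf{j}) = \#\{s : j_s \neq i_s\}$, and write $e_{\mathbf{j}} = e_{j_1} \otimes \cdots \otimes e_{j_l}$. The goal is to show that $e_{\mathbf{j}} \in V$ for every $\mathbf{j}$, by induction on $d(\mathbf{j})$. The base case $d(\mathbf{j}) = 0$ gives $e_{\mathbf{j}} = \underline{\mathbf{w}} \in V$.

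For the inductive step, suppose $d(\mathbf{j}) > 0$. I would pick a position $p$ with $j_p \neq i_p$ and let $\mathbf{j}'$ denote the tuple obtained from $\mathbf{j}$ by replacing $j_p$ with $i_p$, so that $d(\mathbf{j}') = d(\mathbf{j}) - 1$ and, by the inductive hypothesis, $e_{\mathbf{j}'} \in V$. I then apply the matrix unit $E_{j_p, i_p} \in \mathfrak{gl}_{m|n}$ to $e_{\mathbf{j}'}$. By the signed Leibniz rule from Section~2, the result is a sum, over positions $q$ where $\mathbf{j}'$ has entry $i_p$, of tensors obtained from $e_{\mathbf{j}'}$ by replacing the $q$-th entry by $j_p$, each with coefficient $\pm 1$. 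The $q = p$ contribution is precisely $\pm e_{\mathbf{j}}$.

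The remaining terms come from positions $q \neq p$ with $j_q = i_p$, producing tensors $e_{\mathbf{j}^{(q)}}$ in which position $p$ now matches $\mathbf{i}$ while position $q$ carries $j_p$ in place of the original $i_p$. Since the $i_s$ are pairwise distinct by hypothesis, $i_p \neq i_q$, so position $q$ contributed to $d(\mathbf{j})$; depending on whether $j_p = i_q$ or not, position $q$ contributes $0$ or $1$ to $d(\mathbf{j}^{(q)})$, and in either case $d(\mathbf{j}^{(q)}) \leq d(\mathbf{j}) - 1$. By the inductive hypothesis, each $e_{\mathbf{j}^{(q)}}$ lies in $V$, and rearranging the identity for $E_{j_p, i_p} e_{\mathbf{j}'}$ puts $e_{\mathbf{j}} \in V$.

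The distinctness hypothesis on the indices $i_s$ enters exactly through the inequality $i_p \neq i_q$, which is what ensures that the extra terms are strictly closer to $\underline{\mathbf{w}}$ in the distance $d$; without distinctness the recursion would stall, since the extra terms could then have equal or larger $d$. I do not anticipate any serious obstacle: the only bookkeeping nuisance is the parity signs of the super Leibniz rule, but since these affect the coefficient of $e_{\mathbf{j}}$ in $E_{j_p, i_p} e_{\mathbf{j}'}$ only by a factor $\pm 1$ they never vanish, so the argument goes through unchanged.
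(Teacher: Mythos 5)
Your argument is correct. The paper does not prove Lemma \ref{chp} at all --- it simply cites Proposition 14.1 of \cite{Flicker} --- so there is nothing internal to compare against; what you have written is a self-contained, elementary replacement. The induction on $d(\mathbf{j})$ is sound: the coefficient of $e_{\mathbf{j}}$ in $E_{j_p,i_p}\,e_{\mathbf{j}'}$ is a sign $\pm 1$ coming from the super Leibniz rule and hence nonzero, the stray terms $e_{\mathbf{j}^{(q)}}$ are basis vectors distinct from $e_{\mathbf{j}}$ (they agree with $\mathbf{i}$ at position $p$ while $\mathbf{j}$ does not), and your accounting showing $d(\mathbf{j}^{(q)})\le d(\mathbf{j})-1$ uses the distinctness of the $i_s$ exactly where it is needed. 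One cosmetic remark: you only ever apply single off-diagonal matrix units $E_{j_p,i_p}\in\mathfrak{gl}_{m|n}$, so your proof in fact shows $\underline{\mathbf{w}}$ is cyclic already for $U(\mathfrak{sl}_{m|n})$ when $j_p\neq i_p$ is off-diagonal --- which is consistent with how the lemma is invoked later in the paper.
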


Since $N$ is finite-dimensional of level $l$,  $N$ is the direct sum of certain $\mathfrak{gl}_{m|n}$-modules $N_i$, where each $N_i$ is an irreducible representation of $\mf{gl}_{m|n}$ contained in $\C(m|n)^{\otimes l}$. By Theorem~\ref{chengbook} (Schur-Sergeev Duality), there exists a right $\mathbb{C}[S_l]$-module $M$ satisfying 
\begin{equation*}
N=M \otimes_{\C[S_l]} \C(m|n)^{\otimes l}.
\end{equation*}
It is enough to show that $M$ is a right module over $H^{\textrm{deg}}(S_l)$. Let $1 \le k \le l$ and $1\le a, b \le m+n$, and choose indices $i_1,\ldots,i_{k-1}, i_{k+1},\ldots,i_l$ such that $i_1,\ldots,i_{k-1}, a, i_{k+1},\ldots,i_l$ are all distinct and so are $i_1,\ldots,i_{k-1}, b, i_{k+1},\ldots,i_l$. As in \cite{CP96}, we introduce two elements in $\mathbb{C}(m|n)^{\otimes l}$:
		$$ \underline{\mathbf{v}}^{(k)} = \left(\bigotimes_{j=1}^{k-1}e_{i_j}\right)\otimes e_b \otimes \left(\bigotimes_{j=k+1}^{l}e_{i_j}\right),\  \underline{\mathbf{w}}^{(k)} =\left(\bigotimes_{j=1}^{k-1}e_{i_j}\right)\otimes e_a \otimes \left(\bigotimes_{j=k+1}^{l}e_{i_j}\right).$$

	The $\mf{h}$-weight of $\underline{\mathbf{v}}^{(k)}$ is $\sum_{i=1}^{k-1}\limits\eps_i+\eps_b+\sum_{j=k+1}^{l}\limits\eps_j$, while the $\mf{h}$-weight of $t_{a,b}^{(r)}(\mathbf{m}\otimes \underline{\mathbf{v}}^{(k)})$ for $\mathbf{m} \in M$ is $\eps_a -\eps_b + \sum_{i=1}^{k-1}\limits\eps_i+\eps_b + \sum_{j=k+1}^{l}\limits\eps_j$, which equals $\sum_{i=1}^{k-1}\limits\eps_i+\eps_a+\sum_{j=k+1}^{l}\limits\eps_j$, the $\mf{h}$-weight of $\underline{\mathbf{w}}^{(k)}$. 
Let us set
\begin{equation*}
\widetilde{S}=\{\widetilde{\underline{\mathbf{w}}}=\bigotimes_{i=1}^le_{a_i}|1\leq a_i\leq m+n,\text{ $\widetilde{\underline{\mathbf{w}}}$ has a weight $\sum_{i=1}^{k-1}\limits\eps_i+\eps_a+\sum_{j=k+1}^{l}\limits\eps_j$}\}.
\end{equation*}
We can write down
\begin{equation*}
t_{a,b}^{(r)}(\mathbf{m} \otimes \underline{\mathbf{v}}^{(k)}) = \sum_{\widetilde{\underline{\mathbf{w}}}\in\widetilde{S}}\limits \widetilde{\mathbf{m}}_{\widetilde{\underline{\mathbf{w}}}}\otimes\widetilde{\underline{\mathbf{w}}}
\end{equation*}
for some elements $\widetilde{\mathbf{m}}_{\widetilde{\underline{\mathbf{w}}}}\in M$. By the definition of the action of $S_l$ on $\mathbb{C}(m|n)^{\otimes l}$, any element $\widetilde{\underline{\mathbf{w}}}$ in $\widetilde{S}$ can be written as $\tau(\underline{\mathbf{w}}^{(k)})$ for some $\tau\in S_l$. Thus, we obtain
\begin{align*}
\sum_{\widetilde{\underline{\mathbf{w}}}\in\widetilde{S}}\limits \widetilde{\mathbf{m}}_{\widetilde{\underline{\mathbf{w}}}}\otimes\widetilde{\underline{\mathbf{w}}}&=\sum_{\tau\in S_l}  \widetilde{\mathbf{m}}_{\tau} \otimes \tau(\underline{\mathbf{w}}^{(k)})=\sum_{\tau\in S_l}\mathbf{m}_{\tau} \otimes\underline{\mathbf{w}}^{(k)}
\end{align*}
for some elements $\mathbf{m}_{\tau}\in M$. We thus obtain a linear map $\mathbf{m}\mapsto\sum_{\tau\in S_l} \mathbf{m}_{\tau}$ and denote it by $(-1)^{|a|}\chi^{a,b}_{k,r}$. In particular, when $r=2$, we sometimes denote $\chi^{a,b}_{k,2}$ by $\chi^{a,b}_{k}$. It has the property that $$t_{a,b}^{(r)}(\mathbf{m} \otimes \underline{\mathbf{v}}^{(k)}) = (-1)^{|a|} \chi^{a,b}_{k,r}(\mathbf{m}) \otimes \underline{\mathbf{w}}^{(k)}.$$

\begin{lemma}\label{idbits-lem}
			(Analogue of Lemma 4.5 \cite{CP96}) 
			Assume that $m,n\geq l+1$. For all $\underline{\mathbf{v}} \in \C(m|n)^{\otimes l}$, we obtain
			\begin{align} t_{a,b}^{(2)}(\mathbf{m} \otimes \underline{\mathbf{v}}) = (-1)^{|a|}\sum_{j=1}^l \chi_j^{a,b}(\mathbf{m}) \otimes E_{a,b}^{(j)} (\underline{\mathbf{v}}). \label{idbits} \end{align}
		\end{lemma}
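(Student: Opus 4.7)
The plan is to prove the formula by extending from the case $\underline{\mathbf{v}} = \underline{\mathbf{v}}^{(k)}$ (where it holds essentially by definition of $\chi^{a,b}_k$) to all of $\C(m|n)^{\otimes l}$ using Lemma \ref{chp} together with the RTT commutation relations of the super Yangian.

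For the base case $\underline{\mathbf{v}} = \underline{\mathbf{v}}^{(k)}$, since the indices $i_1,\ldots,i_{k-1}, b, i_{k+1},\ldots,i_l$ are pairwise distinct, the only nonzero term on the right-hand side of \eqref{idbits} is the $j=k$ summand (all other positions contain some $e_{i_r} \neq e_b$, so $E^{(j)}_{a,b}$ kills the vector there), and it evaluates to $(-1)^{|a|}\chi^{a,b}_k(\mathbf{m}) \otimes \underline{\mathbf{w}}^{(k)}$, which matches the left-hand side by the very definition of $\chi^{a,b}_{k,2}$.

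To extend to general $\underline{\mathbf{v}}$, Lemma \ref{chp} (which applies because $m,n \ge l+1$ guarantees the existence of tuples with pairwise distinct indices) tells us that any $\underline{\mathbf{v}} \in \C(m|n)^{\otimes l}$ can be produced from such a $\underline{\mathbf{v}}^{(k)}$ by the iterated action of matrix units $E_{c,d} \in U(\mathfrak{gl}_{m|n})$. By linearity in $\underline{\mathbf{v}}$, it therefore suffices to show that if \eqref{idbits} holds for $\underline{\mathbf{v}}$, it also holds for $E_{c,d} \cdot \underline{\mathbf{v}}$. One rewrites $\mathbf{m}\otimes E_{c,d} \cdot \underline{\mathbf{v}} = (-1)^{|c|}\, t^{(1)}_{c,d}(\mathbf{m} \otimes \underline{\mathbf{v}})$ in the $Y(\mathfrak{gl}_{m|n})$-module $M \otimes_{\C} \C(m|n)^{\otimes l}$ and then applies the super-commutator $[t^{(1)}_{c,d}, t^{(2)}_{a,b}] = \delta_{a,d}(-1)^{|d|} t^{(2)}_{c,b} - \delta_{c,b}(-1)^{|c|} t^{(2)}_{a,d}$ to move $t^{(2)}_{a,b}$ past $t^{(1)}_{c,d}$; the inductive hypothesis is then used on each $t^{(2)}_{\bullet,\bullet}(\mathbf{m} \otimes \underline{\mathbf{v}})$ term. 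The right-hand side of \eqref{idbits} for $E_{c,d}\cdot\underline{\mathbf{v}}$ is expanded using the super-Leibniz rule for $E_{c,d}$ acting on $\C(m|n)^{\otimes l}$, and the two expansions are compared term by term.

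The main obstacle will be sign bookkeeping: one must carefully track the Koszul signs from the super-Leibniz action of $E_{c,d}$ on each tensor factor, the signs in the super-bracket on $Y(\mathfrak{gl}_{m|n})$, and verify that the commutator correction terms arising on the left (the $\delta_{a,d}$ and $\delta_{c,b}$ summands) account exactly for the ``collision'' terms on the right in which $E^{(j)}_{a,b}$ and $E^{(j)}_{c,d}$ act on the same tensor position. Once the signs are aligned, the matching reduces to the inductive hypothesis together with the identity $E^{(j)}_{a,b} E^{(j)}_{c,d} = \delta_{b,c} E^{(j)}_{a,d}$ (up to the appropriate super sign) in each tensor slot.
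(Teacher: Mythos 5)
Your overall strategy (verify the formula on a seed vector with distinct entries, then propagate to all of $\C(m|n)^{\otimes l}$ via Lemma \ref{chp} by commuting $t^{(2)}_{a,b}$ past the matrix units $E_{c,d}=(-1)^{|c|}t^{(1)}_{c,d}$) is in the same spirit as the paper's, and the part of your argument where $[t^{(1)}_{c,d},t^{(2)}_{a,b}]=0$ (i.e. $d\neq a$ and $c\neq b$) is exactly the paper's first reduction, namely to the case where the tensor factors other than $e_a,e_b$ are pairwise distinct. The difficulty is concentrated in the cases where the commutator does \emph{not} vanish, and there your proposal has a genuine gap. The correction terms $\delta_{a,d}(-1)^{|d|}t^{(2)}_{c,b}-\delta_{c,b}(-1)^{|c|}t^{(2)}_{a,d}$ involve \emph{new} index pairs, so your induction on the length of the generating word must carry the hypothesis ``\eqref{idbits} holds for $\underline{\mathbf{v}}$ for all pairs'' rather than for the single pair $(a,b)$. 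This breaks in two places. First, the base case: a fixed seed vector $\underline{\mathbf{v}}^{(k)}$ satisfies \eqref{idbits} by definition of $\chi^{a,b}_{k}$ only for the pair it was adapted to; for a pair $(a',b')$ with both $a'$ and $b'$ occurring among its entries the identity is not definitional (this is the $r=s=1$ case, which the paper must already treat by a separate commutation argument). Second, and more seriously, when the applied generator is $E_{b,a}$ the commutator equals $(-1)^{|a|}t^{(2)}_{b,b}-(-1)^{|b|}t^{(2)}_{a,a}$: these are \emph{diagonal} generators, for which \eqref{idbits} is neither stated nor available (knowing how $t^{(2)}_{e,e}$ acts is essentially equivalent to knowing the action of $\mathsf{z}_k$, which is what the whole argument is trying to construct). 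Since the induction must range over all pairs, every off-diagonal generator is ``bad'' for some pair, so this cannot be evaded by reordering the generating word.

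The paper avoids both problems by a more rigid double induction on the multiplicities $r=|S_a|$ and $s=|S_b|$ of $e_a$ and $e_b$ among the tensor factors, increasing $r$ by applying $E_{a,c}$ with $c\neq a,b$ fresh (which genuinely commutes with $t^{(2)}_{a,b}$ since $[t^{(1)}_{a,c},t^{(2)}_{a,b}]=0$), and increasing $s$ by applying $\tfrac12 E_{b,c}^2$ with $|c|=|b|$ (which is where $m,n\geq l+1$ enters) together with the vanishing of the iterated bracket $[t^{(1)}_{b,c},[t^{(1)}_{b,c},t^{(2)}_{a,b}]]=0$; in this way only $t^{(2)}_{a,b}$ and $t^{(2)}_{a,c}$-type terms with $a\neq c$ ever appear, all covered by the inductive hypothesis. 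To repair your proof you would need to restrict the generators used at each step to ones whose commutators with the relevant $t^{(2)}$'s stay within the off-diagonal, already-established range --- which is precisely the content of the paper's choice of $E_{a,c}$ and $E_{b,c}^2$.
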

\begin{proof}
Let $\underline{\mathbf{v}}$ be $e_{i_1} \otimes \cdots \otimes e_{i_l}$. In the case that $\{i_1,\cdots,i_l\}$ does not contain $b$, we obtain $t_{ab}^{(2)}(\mathbf{m}\otimes \underline{\mathbf{v}})=0$ by weight considerations. We consider the case that $\{i_1,\cdots,i_l\}$ contains $b$. Let $S_a$ (respectively $S_b$) be the subset of $\{1,\cdots,l\}$ satisfying that $j\in S_a\ (\text{respectively }S_b)$ if and only if $e_{i_j}=e_a\ (\text{respectively }e_b)$. By Lemma~\ref{chp} and the defining relation of $Y(\mathfrak{gl}_{m|n})$, it is enough to consider the case when the $e_i\ (i\not\in S_a\cup S_b)$ are all distinct. 

Set $r=|S_a|$ and $s=|S_b|$. Let us label the elements in the sets $S_a$ and $S_b$ by $j_1,\ldots,j_r$ and $j'_1, \ldots, j'_s$ such that 
\begin{equation*}
S_a=\{j_p\mid 1\leq p\leq r,j_p<j_{p+1}\},\ S_b=\{j'_p\mid 1\leq p\leq s,j'_p<j'_{p+1}\}.
\end{equation*}
We prove the formula (\ref{idbits}) by double induction on $r$ and $s$.
	
First, we show the case when $s=1$. If $r=0$ and $s=1$, (\ref{idbits}) follows from the definition of $\chi_k^{ab}$. Assume the formula \eqref{idbits} holds with $s=1$ and up to $r-1$. Let us consider $j_1<j_2<\cdots<j_r$. Pick $c\neq a,b$. We take $\underline{\mathbf{v}'}=\bigotimes_{j=1}^le_{i_j}$ satisfying that $e_{i_{j_p}}=e_a$ for $1\leq p\leq r-1$, $e_{i_{j'_1}}=e_b$, $e_{i_{j_r}}=e_c$ and $e_{i_j}\neq e_a,e_b,e_c$ if $j\neq j'_1,j_p$. It is enough to show that \eqref{idbits} hold when $\underline{\mathbf{v}}$ equals $E_{a,c}\underline{\mathbf{v}'}$.

Since we have $(-1)^{|a|}E_{a,c}=t_{a,c}^{(1)}$ and $[t_{a,b}^{(2)},E_{a,c}]=0$ by \eqref{RTT2}, we obtain
		\begin{equation} t_{a,b}^{(2)} (\mathbf{m} \otimes \underline{\mathbf{v}}) =t_{a,b}^{(2)} E_{a,c}(\mathbf{m}\otimes \underline{\mathbf{v}}') =  (-1)^{(|a|+|c|)(|a|+|b|)} E_{ac} t_{ab}^{(2)}(\mathbf{m}\otimes\underline{\mathbf{v}}') \label{rel1}
   \end{equation}
and 
\begin{align}
E_{a,c} t_{a,b}^{(2)}(\mathbf{m}\otimes\underline{\mathbf{v}}')&=(-1)^{|a|}E_{a,c}(\chi_{j_1'}^{a,b}(\mathbf{m})\otimes E_{a,b}^{(j_1')}(\underline{\mathbf{v}}')) =  (-1)^{|a|+}\chi_{j_1'}^{a,b}(\mathbf{m})\otimes E_{a,c}E^{(j_1')}_{a,b}(\underline{\mathbf{v}}')\nonumber \\
&= (-1)^{|a|+(|a|+|b|)(|a|+|c|)} \chi_{j_1'}^{a,b}(\mathbf{m})\otimes E_{a,b}^{(j_1')}E_{a,c}\underline{\mathbf{v}}' =(-1)^{|a|+(|a|+|b|)(|a|+|c|)} \chi_{j_1'}^{a,b}(\mathbf{m})\otimes E_{a,b}^{(j_1')}\underline{\mathbf{v}},\label{rel2}
\end{align}
where the first equality is due to the induction hypothesis. Applying \eqref{rel2} to \eqref{rel1}, we have obtained
$$t^{(2)}_{a,b}(\mathbf{m}\otimes\underline{\mathbf{v}}) =  (-1)^{|a|}\chi_{j_1'}^{ab}(\mathbf{m})\otimes E_{a,b}^{(j_1')}(\underline{\mathbf{v}}). $$
Thus, we have proved that \eqref{idbits} holds in the case when $s=1$.
		
Assume that \eqref{idbits} holds up to $s-1$ of $e_b$. By the assumption that $m,n\geq l+1$, we can choose $c\neq a,b$ such that $|b|=|c|$. We take $\underline{\mathbf{v}'}=\bigotimes_{j=1}^l e_{i_j}$ satisfying that $e_{i_{j_p}}=e_a$ for $1\leq p\leq r$, $e_{i_{j'_1}}=e_{i_{j'_2}}=e_c$, $e_{i_{j'_p}}=e_b$ for $3\leq p\leq s$ and $e_{i_j}\neq e_a,e_b,e_c$ if $i\neq j_p,j'_p$. 
Observe that
\begin{equation}
\underline{\mathbf{v}} =\frac{1}{2} E_{b,c}^{2} (\underline{\mathbf{v}}').\label{gazz93}
\end{equation}
and let us show that \eqref{idbits} holds for $\underline{\mathbf{v}}$.

Since $a,b,c$ are all distinct and $(-1)^{|b|}E_{b,c}=t_{b,c}^{(1)}$ holds, we have 
$[t_{b,c}^{(1)},[t_{b,c}^{(1)},t_{a,b}^{(1)}]]=0$. Further, by the defining relations of the super Yangian, we obtain $[t_{b,c}^{(1)},[t_{b,c}^{(1)},t_{a,b}^{(2)}]]=0$. This expands to 
\begin{equation}
0=[t_{b,c}^{(1)},[t_{b,c}^{(1)},t_{a,b}^{(2)}]] =(t_{b,c}^{(1)} )^2 t_{a,b}^{(2)} -2t_{b,c}^{(1)} t_{a,b}^{(2)} t_{b,c}^{(1)}+  t_{a,b}^{(2)} (t_{b,c}^{(1)})^2,\label{gazz94}
\end{equation}
since, by assumption $|b|=|c|$.	
By \eqref{gazz93} and \eqref{gazz94}, we obtain
	\begin{align}
	t_{a,b}^{(2)} &(\mathbf{m}\otimes \underline{\mathbf{v}})= 
	 \frac{1}{2} t_{a,b}^{(2)} (t_{b,c}^{(1)})^2 (\mathbf{m} \otimes \underline{\mathbf{v}}' ) = \left( t_{b,c}^{(1)} t_{ab}^{(2)} t_{b,c}^{(1)}  - \frac12
	 (t_{b,c}^{(1)})^2 t_{a,b}^{(2)} \right) (\mathbf{m} \otimes \underline{\mathbf{v}}').\label{gazz95}
	\end{align}
Since $\underline{\mathbf{v}}'$ has $(s-1)$ vectors equal to $e_b$ in the tensor product, 
we obtain
	\begin{align}
		(t_{b,c}^{(1)})^2 t_{a,b}^{(2)} (\mathbf{m} \otimes \underline{\mathbf{v}}') 
		&= (-1)^{|a|}(t_{b,c}^{(1)})^2 \sum_{k=1}^l \chi^{a,b}_{k} (\mathbf{m})
				\otimes E_{a,b}^{(k)} (\underline{\mathbf{v}}') = 2  (-1)^{|a|}\sum_{t \ge 3}  \chi^{a,b}_{j'_t} (\mathbf{m})
		\otimes E_{a,b}^{(j_t')}(\underline{\mathbf{v}})\label{gazz96}
	\end{align}
by induction.
Let us set
\begin{gather*}
\underline{\mathbf{v}}''=E^{(j_1')}_{b,c}\underline{\mathbf{v}}',\ 
\underline{\mathbf{v}}'''=E^{(j_2')}_{b,c}\underline{\mathbf{v}}'.
\end{gather*}
By the definition of $\underline{\mathbf{v}}''$ and $\underline{\mathbf{v}}'''$, we have
$t_{b,c}^{(1)}(\underline{\mathbf{v}}')= (-1)^{|b|} (\underline{\mathbf{v}}''+  \underline{\mathbf{v}}''')$.
Then, by the induction hypothesis, we have
	\begin{align}
&\quad t_{a,b}^{(2)} (t_{b,c}^{(1)}(\mathbf{m}\otimes\underline{\mathbf{v}}'))=t_{a,b}^{(2)} ( \mathbf{m}\otimes( \underline{\mathbf{v}}''+\underline{\mathbf{v}}'''))\nonumber\\
&=(-1)^{|a|+|b|} \left( 
	\sum_{t\ge 1, t\ne 2}  \chi^{a,b}_{j'_t} (\mathbf{m}) \otimes E_{a,b}^{(j_t')} (\underline{\mathbf{v}}'') 
	+ 
	 \sum_{t\ge 2}  \chi^{a,b}_{j'_t} (\mathbf{m}) \otimes E_{a,b}^{(j'_t)}(\underline{\mathbf{v}}''')\right).\label{gazz90}
\end{align}
By the definition of $\underline{\mathbf{v}}''$ and $\underline{\mathbf{v}}'''$ and $|b|=|c|$, we obtain
\begin{align}
t_{b,c}^{(1)}&\left( 
	 \sum_{t\ge 1, t\ne 2}  \chi^{a,b}_{j'_t} (\mathbf{m})\otimes E_{a,b}^{(j_t')} (\underline{\mathbf{v}}'')  \right) 
	  = (-1)^{|b|}
	\sum_{t\ge 1, t\ne 2} \chi^{a,b}_{j'_t} (\mathbf{m})\otimes E_{a,b}^{(j_t')} (\underline{\mathbf{v}})\label{align89}
\end{align}
and
\begin{align}
t_{b,c}^{(1)} &\left( 
	 \sum_{t\ge 2}  \chi^{a,b}_{j'_t} (\mathbf{m})\otimes E_{a,b}^{(j'_t)}(\underline{\mathbf{v}}''') \right)
	=(-1)^{|b|} \sum_{t\ge 2} \chi^{a,b}_{j'_t} (\mathbf{m})  \otimes E_{a,b}^{(j'_t)}(\underline{\mathbf{v}}).\label{align88}
\end{align}
By \eqref{gazz90}-\eqref{align88}, we find that
\begin{equation}
 t_{b,c}^{(1)} t_{a,b}^{(2)} t_{b,c}^{(1)}(\mathbf{m}\otimes\underline{\mathbf{v}}) =(-1)^{|a|}\sum_{t\ge 1, t\ne 2} \chi^{a,b}_{j'_t} (\mathbf{m})\otimes E_{a,b}^{(j_t')} (\underline{\mathbf{v}})+ (-1)^{|a|}\sum_{t\ge 2} \chi^{a,b}_{j'_t} (\mathbf{m})  \otimes E_{a,b}^{(j'_t)}(\underline{\mathbf{v}}).\label{align87}
\end{equation}	
By applying \eqref{gazz96}-\eqref{align87} to \eqref{gazz95}, we obtain \begin{equation*}
t_{a,b}^{(2)}(\mathbf{m} \otimes\underline{\mathbf{v}}) = (-1)^{|a|}\sum_{t \ge 1}  \chi^{a,b}_{j'_t} (\mathbf{m}) \otimes E_{a,b}^{(j_t')} (\underline{\mathbf{v}}).
\end{equation*}

By the induction, we find that (\ref{idbits}) holds $\forall \, r\ge 1$ and $\forall \, s\ge 1$. \end{proof}
\begin{proposition}\label{Prop256}
Assume that $m,n\geq l+1$. The map $\chi^{a,b}_{k}$ does not depend on the choice of $a$ and $b$.
\end{proposition}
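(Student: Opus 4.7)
The plan is to prove independence in the upper and lower index separately, using in each case a Yangian commutation relation that expresses $t_{a,b}^{(2)}$ as a super-commutator of a $t^{(1)}$ generator with another $t^{(2)}_{a'',b''}$, and then applying Lemma \ref{idbits-lem} on both sides.

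For independence in the upper index, pick $a' \ne a, b$ and specialize the $r=1$ commutator formula from the excerpt with $(i,j,k,l) = (a, a', a', b)$ to obtain
$$[t_{a,a'}^{(1)}, t_{a',b}^{(2)}] = (-1)^{|a'|}\ts t_{a,b}^{(2)}.$$
Apply both sides to $\mathbf{m} \otimes \underline{\mathbf{v}}$, where $\underline{\mathbf{v}} = e_{i_1} \otimes \cdots \otimes e_{i_l}$ has $e_b$ in position $k$ and all other entries distinct and different from $a, a', b$. Using Lemma \ref{idbits-lem}, the right-hand side equals $(-1)^{|a|+|a'|} \chi^{a,b}_k(\mathbf{m}) \otimes E^{(k)}_{a,b}(\underline{\mathbf{v}})$. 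On the left, the term $t_{a',b}^{(2)} t_{a,a'}^{(1)}$ of the super-commutator contributes nothing since $t_{a,a'}^{(1)} = (-1)^{|a|} E_{a,a'}$ annihilates $\underline{\mathbf{v}}$ (which contains no $e_{a'}$); the surviving term equals
$$t_{a,a'}^{(1)}\ts t_{a',b}^{(2)}(\mathbf{m} \otimes \underline{\mathbf{v}}) = (-1)^{|a'|}\ts t_{a,a'}^{(1)}\!\left(\chi^{a',b}_k(\mathbf{m}) \otimes E^{(k)}_{a',b}(\underline{\mathbf{v}})\right),$$
which (up to the Koszul sign from $E_{a,a'}$ moving through the first $k-1$ tensor factors) collapses onto $e_a$ in position $k$, producing $(-1)^{|a|+|a'|}\chi^{a',b}_k(\mathbf{m}) \otimes E^{(k)}_{a,b}(\underline{\mathbf{v}})$. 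Comparing the two expressions yields $\chi^{a,b}_k = \chi^{a',b}_k$.

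For independence in the lower index, specialize with $(i,j,k,l) = (b', b, a, b')$ to obtain
$$[t_{b',b}^{(1)}, t_{a,b'}^{(2)}] = -(-1)^{|b'|}\ts t_{a,b}^{(2)},$$
valid for $a \ne b$. Apply both sides to $\mathbf{m} \otimes \underline{\mathbf{v}}$ with $\underline{\mathbf{v}}$ carrying $e_b$ at position $k$ and distinct entries (none equal to $a, b, b'$) elsewhere. Now the term $t_{b',b}^{(1)} t_{a,b'}^{(2)}$ in the super-commutator vanishes by weight reasons (no $e_{b'}$ in $\underline{\mathbf{v}}$), while the other term is computed by first moving $e_b$ to $e_{b'}$ via $t_{b',b}^{(1)}$ and then applying $t_{a,b'}^{(2)}$, which by Lemma \ref{idbits-lem} produces $\chi^{a,b'}_k$. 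Comparing signs gives $\chi^{a,b}_k = \chi^{a,b'}_k$. Composing the two types of moves yields the full statement $\chi^{a,b}_k = \chi^{a',b'}_k$.

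The main technical subtlety is the bookkeeping of super-signs: the Koszul signs produced by Lemma \ref{idbits-lem}, by the super-commutator bracket $[A, B] = AB - (-1)^{|A||B|} BA$, and by the $\mfgl_{m|n}$-action on tensor powers must all combine correctly. The hypothesis $m, n \geq l+1$ enters twice: once through the invocation of Lemma \ref{idbits-lem} itself, and once to guarantee that the auxiliary index $a'$ (resp.\ $b'$) can always be chosen distinct from the $l-1$ background entries of $\underline{\mathbf{v}}$ and from $a, b$. Since this bound allows us to pick auxiliary indices of any parity and among $m + n \ge 2l + 2$ possibilities, the requisite freedom is guaranteed.
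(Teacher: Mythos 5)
Your proposal is correct and follows essentially the same route as the paper: the paper likewise reduces to changing one index at a time (it writes out $\chi_k^{a,b}=\chi_k^{e,b}$ via $t_{e,b}^{(2)}=\pm[t_{e,a}^{(1)},t_{a,b}^{(2)}]$ and declares the other case similar), using the $r=1$ commutation relation together with Lemma \ref{idbits-lem} applied to both sides. The only cosmetic difference is that you specialize $\underline{\mathbf{v}}$ so that one term of the super-commutator visibly vanishes, whereas the paper applies the identity to an arbitrary $\underline{\mathbf{v}}$ and sums over $k$; both are fine.
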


\begin{proof}
 Let us take $e\neq a,b$. It is enough to show that $\chi_k^{a,b}=\chi_k^{a,e}$ and $\chi_k^{a,b}=\chi_k^{e,b}$. We only show the second equation: the first equation can be proven in a similar way.
By the assumption that $b\neq e$, we have $E_{e,b}=[E_{e,a},E_{a,b}]$ and $t_{e,b}^{(2)}=(-1)^{|e|}[t_{e,a}^{(1)},t_{a,b}^{(2)}]$. We obtain
\begin{align*}
(-1)^{|e|}\sum_{k=1}^l \chi_k^{e,b} (\mathbf{m}) \otimes E_{e,b}^{(k)} (\underline{\mathbf{v}}) &= t_{e,b}^{(2)}(\mathbf{m}\otimes \underline{\mathbf{v}})\\
&= (-1)^{|e|}[t_{e,a}^{(1)}, t_{a,b}^{(2)}](\mathbf{m}\otimes \underline{\mathbf{v}}) \\
&= (-1)^{|e|}\sum_{k=1}^l \chi_k^{a,b}(\mathbf{m})\otimes [E^{(k)}_{e,a} , E_{a,b}^{(k)} ](\underline{\mathbf{v}})\\ 
&= (-1)^{|e|}\sum_{k=1}^l \chi_{k}^{a,b}(\mathbf{m})\otimes E_{e,b}^{(k)} (\underline{\mathbf{v}}),
\end{align*}
where the first and third equalities are due to \eqref{idbits}. Thus, we find that $\chi_k^{a,b}=\chi_k^{e,b}$.
\end{proof}
Due to the previous proposition, we can define $\chi_k$ as being equal to $\chi_k^{a,b}$ for any choice of $a$ and $b$. We can now prove Theorem~\ref{floridakilos-1}.
\begin{proof}[Proof of Theorem~\ref{floridakilos-1}]
Since $M$ is an $\C[S_l]$-module, it is enough to define an action of $\mathsf{z}_k$ on $M$ which is compatible with \eqref{gather159} and \eqref{gather160} by Proposition~\ref{prop201}. We set $\mathbf{m} \mathsf{z}_k = \chi_k (\mathbf{m})$ for $\mathbf{m}\in M$. By the definition of $\chi_k$, we have
\begin{equation}
t_{a,b}^{(2)}(\mathbf{m}\otimes\underline{\mathbf{v}})= (-1)^{|a|}\sum_{k=1}^{l} \mathbf{m}\mathsf{z}_k \otimes E_{a,b}^{(k)} (\underline{\mathbf{v}})\label{gazz1234}
\end{equation}
for any $a\neq b$, $\mathbf{m}\in M$, and $\underline{\mathbf{v}}\in\mathbb{C}(m|n)^{\otimes l}$.

First, we prove the compatibility with \eqref{gather160}. Let us take $d\neq a\neq b\neq c$. By the assumption that $m+n\leq 2l+2>l+2$, we can choose $\underline{\mathbf{v}}$ to be $\underline{\mathbf{v}} =\bigotimes_{1\leq p\leq l}e_{i_p}$ satisfying that the $i_p$ are distinct, $i_{k_1}=b$, $i_{k_2}=d$ and $i_p\neq b,d$ if $p\neq k_1,k_2$.
By Lemma~\ref{chp}, it is enough to show that
\begin{equation}
\mathbf{m}[\mathsf{z}_{k_1},\mathsf{z}_{k_2}]\otimes E_{a,b}^{(k_2)} E_{c,d}^{(k_1)} (\underline{\mathbf{v}})=\mathbf{m}\left(\mathsf{z}_{k_1}-  \mathsf{z}_{k_2}  \right)\sigma_{k_1,k_2} \otimes  E_{a,b}^{(k_2)} E_{c,d}^{(k_1)}(\underline{\mathbf{v}})\label{gazz551}
\end{equation}
for any $\mathbf{m}\in M$ and for our specific choice of $\underline{\mathbf{v}}$. We can rewrite the left hand side of \eqref{gazz551} as follows:
\begin{align}
\mathbf{m}[\mathsf{z}_{k_1},\mathsf{z}_{k_2}]\otimes & E_{a,b}^{(k_2)} E_{c,d}^{(k_1)} (\underline{\mathbf{v}})\nonumber\\
&=\mathbf{m}\mathsf{z}_{k_1}\mathsf{z}_{k_2}\otimes E_{a,b}^{(k_2)} E_{c,d}^{(k_1)} (\underline{\mathbf{v}}) - \mathbf{m}\mathsf{z}_{k_2} \mathsf{z}_{k_1} \otimes E_{a,b}^{(k_2)}  E_{c,d}^{(k_1)} (\underline{\mathbf{v}})\nonumber\\
&=\mathbf{m}\mathsf{z}_{k_1}\mathsf{z}_{k_2}\otimes E_{a,b}^{(k_2)} E_{c,d}^{(k_1)} (\underline{\mathbf{v}})-(-1)^{(|a|+|b|)(|c|+|d|)}\mathbf{m}\mathsf{z}_{k_2} \mathsf{z}_{k_1} \otimes E_{c,d}^{(k_1)} E_{a,b}^{(k_2)} (\underline{\mathbf{v}})\nonumber\\
&=(-1)^{|a|}t_{a,b}^{(2)} \left( \sum_{j=1}^{l}\mathbf{m}y_j \otimes E_{c,d}^{(j)} (\underline{\mathbf{v}})\right) 
-(-1)^{|c|+(|a|+|b|)(|c|+|d|)}	t_{c,d}^{(2)} \left( \sum_{i=1}^{l}\mathbf{m}\mathsf{z}_i \otimes E_{a,b}^{(i)} (\underline{\mathbf{v}}) \right) \nonumber\\
&=(-1)^{|a|+|c|}[t_{a,b}^{(2)},t_{c,d}^{(2)}](\mathbf{m}\otimes \underline{\mathbf{v}}),\label{11r}
\end{align}
where the second equality is due to the assumption that $d\neq a\neq b\neq c$ and the third equality is due to the definition of $\underline{\mathbf{v}}$ and the action of $\mathsf{z}_i$. 
By a direct computation, we obtain
\begin{align}
(-1)^{|a|+|c|}[t_{a,b}^{(2)},t_{c,d}^{(2)}]& (\mathbf{m}\otimes \underline{\mathbf{v}}) \nonumber \\
& =(-1)^{|a||b|+|a||c|+|b||c|+|a|+|c|} \left( t_{c,b}^{(1)} t_{a,d}^{(2)} - t_{c,b}^{(2)} t_{a,d}^{(1)} \right)(\mathbf{m}\otimes \underline{\mathbf{v}}) \text{ by \eqref{RTT2};}\nonumber\\
&=(-1)^{|a||b|+|a||c|+|b||c|+|c|}\left( t_{c,b}^{(1)} \sum_{j=1}^{l}\mathbf{m}\mathsf{z}_j \otimes E_{a,d}^{(j)} (\underline{\mathbf{v}}) - t_{c,b}^{(2)} \sum_{j=1}^l \mathbf{m} \otimes E_{a,d}^{(j)}(\underline{\mathbf{v}}) \right) \text{ by \eqref{gazz1234};} \nonumber \\ 
&=(-1)^{|a||b|+|a||c|+|b||c|}\left(\sum_{i,j=1}^{l} \mathbf{m}\mathsf{z}_j \otimes E_{c,b}^{(i)} E_{a,d}^{(j)} (\underline{\mathbf{v}})-\sum_{i,j=1}^l \mathbf{m}\mathsf{z}_j \otimes E_{c,b}^{(i)} E_{a,d}^{(j)}(\underline{\mathbf{v}})\right) \text{ by \eqref{gazz1234};}\nonumber \\
&=(-1)^{|a||b|+|a||c|+|b||c|}\left( \mathbf{m}(\mathsf{z}_{k_1} - \mathsf{z}_{k_2})\otimes E_{c,b}^{(k_2)} E_{a,d}^{(k_1)}(\underline{\mathbf{v}})
			\right),\label{11r3}
\end{align}
where the last equality is due to the choice of $\underline{\mathbf{v}}$.
By a direct computation, we have
	\begin{equation}
	\sigma_{k_1,k_2} E_{a,b}^{(k_2)} E_{c,d}^{(k_1)}(\underline{\mathbf{v}}) = (-1)^{|a|} E_{c,a}^{(k_2)} E_{a,c}^{(k_1)} E_{a,b}^{(k_2)}E_{c,d}^{(k_1)} (\underline{\mathbf{v}}) =(-1)^{|a||b|+|b||c|+|a||c|} E_{c,b}^{(k_2)} E_{a,d}^{(k_1)} (\underline{\mathbf{v}}).\label{11r4}
	\end{equation} 
Applying \eqref{11r}-\eqref{11r4} to \eqref{gazz551}, we have
\begin{align}
\mathbf{m}[\mathsf{z}_{k_1},\mathsf{z}_{k_2}]\otimes E_{a,b}^{(k_2)} E_{c,d}^{(k_1)} (\underline{\mathbf{v}})&=(-1)^{|a|+|c|}[t_{a,b}^{(2)},t_{c,d}^{(2)}](\mathbf{m}\otimes \underline{\mathbf{v}}) \nonumber\\
&=\mathbf{m}(\mathsf{z}_{k_1} -\mathsf{z}_{k_2})\otimes\sigma_{k_1,k_2} E_{a,b}^{(k_2)} E_{c,d}^{(k_1)}(\underline{\mathbf{v}})\nonumber\\
&= \mathbf{m}(\mathsf{z}_{k_1} -\mathsf{z}_{k_2})\sigma_{k_1,k_2}\otimes E_{a,b}^{(k_2)} E_{c,d}^{(k_1)}(\underline{\mathbf{v}}).\label{11r5}
\end{align}
By \eqref{11r} and \eqref{11r5}, we have obtained \eqref{gazz551}.

Next, we check the compatibility of the action of $\mathsf{z}_i$ on $M$ with \eqref{gather159}. We take $\underline{\mathbf{v}}=\bigotimes_{j=1}^l v_j$ satisfying that the $v_j$ are all pairwise distinct, $v_j\neq a$ and $v_i=e_b$ for one fixed index $i$. 
By Lemma~\ref{chp}, it is enough to show the following relation:
\begin{equation}
\mathbf{m}\sigma \mathsf{z}_i \otimes E_{a,b}\underline{\mathbf{v}}=\mathbf{m}\mathsf{z}_{\sigma(i)}  \sigma\otimes E_{a,b}\underline{\mathbf{v}}.\label{gazz5110}
\end{equation}
By \eqref{gazz1234}, we can rewrite the left hand side of \eqref{gazz5110} as:
\begin{equation} 
		\mathbf{m}\sigma \mathsf{z}_i \otimes E_{a,b}\underline{\mathbf{v}} = (-1)^{|a|} t_{a,b}^{(2)} (\mathbf{m}\sigma \otimes \underline{\mathbf{v}}). \label{tt1}
\end{equation}
We can also rewrite the right hand side of \eqref{gazz5110} as follows:
\begin{equation}
\mathbf{m}\mathsf{z}_{\sigma(i)}  \sigma\otimes E_{a,b}\underline{\mathbf{v}}=\mathbf{m}\mathsf{z}_{\sigma(i)} \otimes \sigma E_{a,b}\underline{\mathbf{v}}=\mathbf{m}\mathsf{z}_{\sigma(i)} \otimes E_{a,b}\sigma \underline{\mathbf{v}}=(-1)^{|a|}t_{a,b}^{(2)} (\mathbf{m} \otimes \sigma\underline{\mathbf{v}})=(-1)^{|a|}t_{a,b}^{(2)} (\mathbf{m}\sigma \otimes \underline{\mathbf{v}}),\label{tt2}
\end{equation}
where the second equality is due to Lemma~\ref{lem38} and the third equality is due to the definition of the action of $\mathsf{z}_i$.
By (\ref{tt1}) and (\ref{tt2}), we have obtained \eqref{gazz5110}. 
\end{proof}
\begin{proposition}\label{thm-sup}
The functor
 	 $$SW: \Hom_{H^{\deg}_{\kappa}(S_l)} (M_1,M_2) \rightarrow \Hom_{Y(\mfgl_{m|n})} (SW(M_1),SW(M_2)),\ f\mapsto f\otimes 1^{\otimes l}$$
is bijective.
\end{proposition}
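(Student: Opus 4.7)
The plan is to split the statement into injectivity and surjectivity. In both parts the strategy is to reduce to the Schur--Sergeev equivalence of categories provided by Theorem~\ref{chengbook}(2), which applies here since $m,n\ge l+1$ forces $l<(m+1)(n+1)$, and then to use the explicit formula from Lemma~\ref{idbits-lem} to deal with the extra $\mathsf{z}_k$-structure on top.

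Injectivity is immediate: any $f\in\Hom_{H^{\deg}(S_l)}(M_1,M_2)$ is in particular $\mathbb{C}[S_l]$-linear, and Schur--Sergeev says the functor $M\mapsto M\otimes_{\mathbb{C}[S_l]}\mathbb{C}(m|n)^{\otimes l}$ is fully faithful on the relevant $\mathbb{C}[S_l]$-modules, so $f\otimes \id^{\otimes l}=0$ already forces $f=0$.

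For surjectivity, given $g\in\Hom_{Y(\mfgl_{m|n})}(SW(M_1),SW(M_2))$, I would restrict $g$ to the $U(\mfgl_{m|n})$-action via the embedding $E_{i,j}\mapsto(-1)^{|i|}t_{i,j}^{(1)}$ and apply Schur--Sergeev to obtain the unique $\mathbb{C}[S_l]$-homomorphism $f\colon M_1\to M_2$ with $g=f\otimes\id^{\otimes l}$. By Proposition~\ref{prop201}, it is enough to check that $f$ commutes with each $\mathsf{z}_k$. Expanding $g\circ t_{a,b}^{(2)}=t_{a,b}^{(2)}\circ g$ on $\mathbf{m}\otimes\underline{\mathbf{v}}$ using Lemma~\ref{idbits-lem} twice yields, for every $a\ne b$, every $\mathbf{m}\in M_1$, and every $\underline{\mathbf{v}}\in\mathbb{C}(m|n)^{\otimes l}$,
$$\sum_{k=1}^l \bigl(f(\mathbf{m}\,\mathsf{z}_k)-f(\mathbf{m})\,\mathsf{z}_k\bigr)\otimes E_{a,b}^{(k)}(\underline{\mathbf{v}})=0 \quad \text{in } SW(M_2).$$

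To isolate a single index $k=k_0$, I would use the hypothesis $m,n\ge l+1$ to choose pairwise distinct indices $i_1,\dots,i_l\in\{1,\dots,m+n\}\setminus\{a\}$ with $i_{k_0}=b$, and set $\underline{\mathbf{v}}=e_{i_1}\otimes\cdots\otimes e_{i_l}$; then only the $k=k_0$ summand survives, and $\underline{\mathbf{w}}:=E_{a,b}^{(k_0)}(\underline{\mathbf{v}})$ is a tensor of $l$ pairwise distinct basis vectors. Such a tensor has trivial $S_l$-stabilizer (its $S_l$-orbit consists of scalar multiples of pairwise different basis tensors), so $\mathbb{C}[S_l]\cdot\underline{\mathbf{w}}$ is free of rank one as a left $\mathbb{C}[S_l]$-module and, by semisimplicity of $\mathbb{C}[S_l]$, is a direct summand of $\mathbb{C}(m|n)^{\otimes l}$. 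Consequently the assignment $\mathbf{m}'\mapsto\mathbf{m}'\otimes\underline{\mathbf{w}}$ is injective from $M_2$ into $SW(M_2)$ and forces $f(\mathbf{m}\,\mathsf{z}_{k_0})=f(\mathbf{m})\,\mathsf{z}_{k_0}$. The main obstacle is precisely this last freeness/separation step; the hypothesis $m,n\ge l+1$ is what guarantees that enough distinct basis vectors (with enough freedom on parities of $a$ and $b$) are always available to make the extraction work.
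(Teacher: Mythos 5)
Your proof is correct and follows essentially the same route as the paper's: reduce to Schur--Sergeev to obtain $f$ as a $\C[S_l]$-homomorphism, then verify $f(\mathbf{m}\mathsf{z}_k)=f(\mathbf{m})\mathsf{z}_k$ by evaluating the intertwining relation for $t_{a,b}^{(2)}$ on a pure tensor with pairwise distinct entries (the paper takes the specific choice $a=m+n$, $b=k$). Your explicit justification of the final separation step --- that $\mathbf{m}'\mapsto\mathbf{m}'\otimes\underline{\mathbf{w}}$ is injective because $\C[S_l]\cdot\underline{\mathbf{w}}$ is a free rank-one summand --- is a detail the paper leaves implicit, but it is the same argument.
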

It is a consequence of this proposition and of Theorem \ref{floridakilos-1} that the function $SW$ provides an equivalence between the category of finite-dimensional right modules over $H^{\deg}_{\kappa}(S_l)$ and the category of finite-dimensional left modules over $Y(\mfgl_{m|n})$ of level $l$ when $m,n\ge l+1$.

\begin{proof}
The injectivity follows from that if $SW(f)=0$, then $f\otimes 1^{\otimes l}=0$ which is equivalent to $f=0$. For the surjectivity, we take $\varphi \in\Hom_{Y(\mfgl_{m|n})} (SW(M_1),SW(M_2))$.  Since the super Yangian contains the universal enveloping algebra of $\mfgl_{m|n}$, $\varphi$ is a homomorphism of $\mfgl_{m|n}$-modules. 
By Theorem~\ref{chengbook} (Schur-Sergeev duality), we can rewrite $\varphi=f \otimes 1^{\otimes l}$, where $f$ is a homomorphism of right $S_l$-modules.
Thus, it is enough to show the relation
\begin{equation*}
f(\mathbf{m} \mathsf{z}_i)=f(\mathbf{m})\mathsf{z}_i
\end{equation*}
for $1\leq i\leq l$ and $\mathbf{m}\in M$. By the assumption that $l<2l+2\leq m+n$, we can take $\mathbf{v}=\bigotimes_{1\leq u\leq l}e_{j_u}$ satisfying that $j_i=m+n$ and $j_u\neq m+n$ if $u\neq i$. By the definition of the action of $y_i$, we obtain
\begin{align}
		 f(\mathbf{m} \mathsf{z}_i )\otimes e_1 \otimes \cdots \otimes e_{i-1} \otimes e_{m+n} \otimes e_{i+1} & \otimes\cdots\otimes e_l \nonumber\\
		  &=\varphi( m \mathsf{z}_i \otimes e_1 \otimes \cdots \otimes e_{i-1} \otimes e_{m+n} \otimes e_{i+1} \otimes\cdots\otimes e_l )\nonumber\\ 
		 &=\varphi ( t_{m+n,i}^{(2)} (m\otimes e_1 \otimes \cdots \otimes e_{i-1}  \otimes e_i \otimes e_{i+1} \otimes \cdots\otimes e_l )).\label{9788}
\end{align}
Since $\varphi\in\Hom_{Y(\mfgl_{m|n})} (SW(M_1),SW(M_2))$, $t_{m+n,i}^{(2)}$ and $\varphi$ commute with each other. Thus, we obtain
\begin{align}
\varphi ( t_{m+n,i}^{(2)} (m\otimes e_1 \otimes \cdots \otimes e_{i-1}  \otimes e_i \otimes e_{i+1} & \otimes \cdots\otimes e_l ))\nonumber\\
		 &=t_{m+n,i}^{(2)} ( \varphi ( m\otimes e_1 \otimes \cdots \otimes e_{i-1}  \otimes e_i \otimes e_{i+1}\otimes \cdots \otimes e_l )	 ) \nonumber\\
		   &=t_{m+n,i}^{(2)} ( f (m) \otimes e_1  \otimes \cdots \otimes e_l )	 \nonumber\\
		   &= f(m) \mathsf{z}_i \otimes e_1 \otimes \cdots \otimes e_{i-1} \otimes e_{m+n} \otimes e_{i+1} \otimes\cdots\otimes e_l,\label{9799}
		 \end{align} 
where the last equality is due to the definition of the action of $y_i$. By \eqref{9788} and \eqref{9799}, we obtain $f(\mathbf{m} \mathsf{z}_i)\otimes \underline{\mathbf{v}}=f(m) \mathsf{z}_i\otimes \underline{\mathbf{v}}$. This completes the proof. 
\end{proof}

\section{Schur-Weyl duality for the Degenerate Affine Hecke algebra and the Super Yangian of $\mathfrak{sl}_{m|n}$}\label{SWmini}

In \cite{driJ}, Drinfeld established Schur-Weyl duality for the degenerate affine Hecke algebra of $S_l$ and the Yangian associated with $\mathfrak{sl}_{n}$ using the original so-called $J$-presentation of $Y_{\lambda}(\mathfrak{sl}_{n})$. Since $Y_{\lambda}(\mathfrak{sl}_{n})$ can be viewed as a subalgebra of $Y_{\lambda}(\mathfrak{gl}_{n})$ in the RTT-presentation and we have the tensor decomposition $$Y_{\lambda}(\mathfrak{gl}_{n}) \cong Y_{\lambda}(\mathfrak{sl}_{n})\otimes Z(Y_{\lambda}(\mathfrak{gl}_{n}))$$ where $Z(Y_{\lambda}(\mathfrak{gl}_{n}))$ is the center of the Yangian $Y_{\lambda}(\mathfrak{gl}_{n})$, which is generated by the coefficients of the quantum determinant, the results in \cite{ara} are applicable to $Y_{\lambda}(\mathfrak{sl}_{n})$. The same can be said about the super Yangians $Y(\mathfrak{gl}_{m|n})$ and $Y_{\lambda}(\mathfrak{sl}_{m|n})$, with the quantum determinant replaced by the Berezinian when $m\neq n$: see \cite{Gow}. It should then be possible to translate them to the (minimalistic) current presentation of $Y(\mathfrak{sl}_{m|n})$ and determine the action of the generators of the latter on $SW(M)$, however we prefer to do this directly without relying on the results of the previous section, in part because we will need to consider in the next section parity sequences other than the standard one and extending results to affine super Yangians will require us to use the current presentation. We also take this opportunity to introduce generators $J(x)$ for $x\in\mathfrak{sl}_{m|n}$ analogous to the original generators of $Y_{\lambda}(\mathfrak{sl}_{n})$ in \cite{driJ}.

\subsection{The super Yangian associated with $\mathfrak{sl}_{m|n}$}
Let us recall the definition of the super Yangian associated with $\mathfrak{sl}_{m|n}$ (see Definition 2.1 in \cite{St1}, Proposition 5 in \cite{Gow2}). In this section and the next, we will always assume that $m, n\geq2$ and $m\neq n$.

\begin{definition}[]\label{deffinsup}
Let $\lambda\in\mathbb{C}$.  The super Yangian $Y_{\lambda}(\mathfrak{sl}(m|n))$ is the associative superalgebra over $\C$ generated by $X_{i,r}^{+}, X_{i,r}^{-}, H_{i,r}$ $(i \in \{1,\cdots, m+n-1\}, r = 0,1)$ subject to the following defining relations:
\begin{gather}
[H_{i,r}, H_{j,s}] = 0,\label{Eq2.1-1}\\
[X_{i,0}^{+}, X_{j,0}^{-}] = \delta_{i,j} H_{i, 0},\label{Eq2.2-1}\\
[X_{i,1}^{+}, X_{j,0}^{-}] = \delta_{i,j} H_{i, 1} = [X_{i,0}^{+}, X_{j,1}^{-}],\label{Eq2.3-1}\\
[H_{i,0}, X_{j,r}^{\pm}] = \pm a_{i,j} X_{j,r}^{\pm},\label{Eq2.4-1}\\
[\tilde{H}_{i,1}, X_{j,0}^{\pm}] = \pm a_{i,j} X_{j,1}^{\pm},\label{Eq2.5-1-1}\\
[X_{i, 1}^{\pm}, X_{j, 0}^{\pm}] - [X_{i, 0}^{\pm}, X_{j, 1}^{\pm}] = \pm \dfrac{\lambda a_{ij}}{2} \{X_{i, 0}^{\pm}, X_{j, 0}^{\pm}\},\label{Eq2.8-1}\\
(\ad X_{i,0}^{\pm})^{1+|a_{i,j}|} (X_{j,0}^{\pm})= 0 \ \ (i \neq j), \label{Eq2.10-1}\\ 
[X^\pm_{m,0},X^\pm_{m,0}]=0,\label{Eq2.11-1}\\
[[X^\pm_{m-1,0},X^\pm_{m,0}],[X^\pm_{m,0},X^\pm_{m+1,0}]]=0,\label{Eq2.12-1}
\end{gather}
where $\{X_{i, 0}^{\pm}, X_{j, 0}^{\pm}\} = X_{i, 0}^{\pm} X_{j, 0}^{\pm} + X_{j, 0}^{\pm} X_{i, 0}^{\pm}$,  the generators $X^\pm_{m, r}$ are odd, all other generators are even and we have set $\wt{H}_{i,1} = H_{i,1}-\dfrac{\lambda}2 H_{i,0}^2$. 
\end{definition}
\begin{Remark}
In \cite{Gow2}, the generators of $Y_{\lambda}(\mathfrak{sl}(m|n))$ are $\{H_{i,r},X^\pm_{i,r}\mid 1\leq i,j\leq m+n-1,r\in\mathbb{Z}_{\geq0}\}$. By the same way as Theorem~3.13 in \cite{Ue}, we can obtain the presentation given in Definition~\ref{deffinsup}.
\end{Remark}
 We note that there exists a homomorphism from $U(\mathfrak{sl}(m|n))$ to $Y_{\lambda}(\mathfrak{sl}(m|n))$ given by $x^\pm_i\mapsto X^\pm_{i,0},\ h_i\mapsto H_{i,0}$, where $x_i^+ = E_{i,i+1}$, $x_i^- = (-1)^{|i|} E_{i+1,i}$ and $h_i = (-1)^{|i|}E_{i,i} - (-1)^{|i+1|} E_{i+1,i+1}$. We denote the image of $x\in U(\mathfrak{sl}(m|n))$ also by $x$.

For $1\leq i\leq m+n$, let $i^{(0)}$ be $\sum_{j=i+2}^{m+n}\limits (-1)^{|j|}$.
For $1\leq i\leq m+n-1$, we define new generators of $Y_{\lambda}(\mathfrak{sl}_{m|n})$ as follows:
\begin{align*}
J(h_i)&=\widetilde{H}_{i,1}+ \lambda \vartheta_i,\ 
J(x^\pm_i)=X^\pm_{i,1}+ \lambda \varpi^\pm_i,
\end{align*}
where
\begin{align*}
\vartheta_i&=-\dfrac{(i-1)^{(0)}}{2}H_{i,0}+\dfrac{(-1)^{|i|}}{2}\sum_{k\neq i}\limits (-1)^{|k|}\text{sign}(k-i)E_{i,k}E_{k,i}\\
&\quad-(-1)^{|i+1|}\dfrac{\lambda}{2}\sum_{k\neq i+1}\limits (-1)^{|k|}\text{sign}(k-i-1)E_{i+1,k}E_{k,i+1},\\
\varpi^+_i&=-\dfrac{i^{(0)}}{2} X^+_{i,0}+\dfrac{1}{2}\sum_{k\neq i,i+1}\limits (-1)^{|k|}\text{sign}(k-i)E_{i,k}E_{k,i+1}-\dfrac{1}{2}H_{i,0}X^+_{i,0},\\
\varpi^-_i&=-\dfrac{i^{(0)}}{2} X^-_{i,0}+\dfrac{(-1)^{|i|}}{2}\sum_{k\neq i,i+1}\limits (-1)^{|k|}\text{sign}(k-i)E_{i+1,k}E_{k,i}-\dfrac{1}{2}X^-_{i,0}H_{i,0}.
\end{align*}
\begin{Remark}
Since we obtain
\begin{align*}
\vartheta_i&=\dfrac{1}{2}\sum_{l<k}\limits(-1)^{|l|}E_{k,l}[(-1)^{|i|}E_{i,i}-(-1)^{|i+1|}E_{i+1,i+1},E_{l,k}],\\
\varpi^+_i&=-\dfrac{1}{2}\sum_{l<k}\limits(-1)^{|l|}[E_{i,i+1},E_{k,l}]E_{l,k},\\
\varpi^-_i&=-(-1)^{|i|}\dfrac{1}{2}\sum_{l<k}\limits(-1)^{|l|}E_{k,l}[E_{l,k},E_{i+1,i}]
\end{align*}
 by a direct computation, we find that the generators $J(h_i)$ and $J(x_i^{\pm})$ are natural extensions those in Section~3 of \cite{GNW} to the super setting.
\end{Remark}
\begin{lemma}[Section 3 in \cite{GNW} and Section 3 in \cite{Ue}]
Relations in the super Yangian $Y_{\lambda}(\mathfrak{sl}(m|n))$ can be rewritten in terms of the generators $J(h_i)$ and $J(x_i^{\pm})$ as follows.
\begin{gather}
[H_{i,0},H_{j,1}]=0\iff[h_{i},J(h_j)]=0,\label{DriJ-1}\\
[H_{i,0},X^\pm_{j,1}]=\pm a_{i,j}x^\pm_{j,1}\iff[h_{i},J(x^\pm_j)]=\pm a_{i,j}J(x^\pm_j),\label{DriJ-2}\\
[\widetilde{H}_{i,1},X^\pm_{j,0}]=a_{i,j}X^\pm_{j,1}\iff[J(h_i),x^\pm_{j}]=\pm a_{i,j}J(x^+_j),\label{DriJ-2.5}\\
[X_{i,1}^{\pm},X_{j,0}^{\pm}]-[X_{i,0}^{\pm},X_{j,1}^{\pm}] = \pm a_{i,j} \frac{\lambda}{2} \{ X_{i,0}^{\pm},X_{j,0}^{\pm}\}\iff[x^\pm_{i},J(x^\pm_j)]=[J(x^\pm_i),x^\pm_{j}],\label{DriJ-3}\\
[X^+_{i,1},X^-_{j,0}]=\delta_{i,j}H_{i,1}\iff[J(x^+_i),x^-_j]=\delta_{i,j}J(h_i).\label{DriJ-4}
\end{gather}

\end{lemma}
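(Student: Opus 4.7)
The plan is to expand each equivalence \eqref{DriJ-1}--\eqref{DriJ-4} by substituting $J(h_i) = \widetilde{H}_{i,1} + \lambda \vartheta_i$ and $J(x_j^\pm) = X_{j,1}^\pm + \lambda \varpi_j^\pm$ on the right-hand sides, and reduce each claim to either one of the defining relations of $Y_\lambda(\mathfrak{sl}_{m|n})$ already listed on the left or an identity purely inside $U(\mathfrak{sl}_{m|n})$ involving the classical correction terms $\vartheta_i$ and $\varpi_j^\pm$.

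Equivalences \eqref{DriJ-1} and \eqref{DriJ-2} reduce to weight considerations. Every summand of $\vartheta_j$ has weight zero under the adjoint action of $\mf{h}$, and $\widetilde{H}_{j,1} - H_{j,1} = -\tfrac{\lambda}{2} H_{j,0}^2$ is also of weight zero, so $[h_i, J(h_j)] = [H_{i,0}, H_{j,1}]$, giving \eqref{DriJ-1}. For \eqref{DriJ-2}, $\varpi_j^\pm$ has weight $\pm \alpha_j$, so $[h_i, \varpi_j^\pm] = \pm a_{ij}\, \varpi_j^\pm$ inside $U(\mathfrak{sl}_{m|n})$, and combining with \eqref{Eq2.4-1} at $r=1$ yields the equivalence.

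The three remaining equivalences \eqref{DriJ-2.5}, \eqref{DriJ-3} and \eqref{DriJ-4} reduce, after cancelling the $\widetilde{H}_{i,1}, X_{i,1}^\pm$-level terms against \eqref{Eq2.5-1-1}, \eqref{Eq2.8-1} and \eqref{Eq2.3-1} respectively, to the following three identities inside $U(\mathfrak{sl}_{m|n})$:
\begin{gather*}
[\vartheta_i, x_j^\pm] = \pm a_{ij}\, \varpi_j^\pm,\\
[x_i^\pm, \varpi_j^\pm] - [\varpi_i^\pm, x_j^\pm] = \pm \tfrac{a_{ij}}{2} \{x_i^\pm, x_j^\pm\},\\
[\varpi_i^+, x_j^-] = \delta_{ij}\bigl(\vartheta_i - \tfrac{1}{2} H_{i,0}^2\bigr).
\end{gather*}
The compact forms of $\vartheta_i, \varpi_i^\pm$ stated in the remark just before the lemma exhibit them as components of the Casimir-type two-tensor $\Omega = \sum_{k,l} (-1)^{|l|} E_{k,l} \otimes E_{l,k}$ paired against $h_i$, $x_i^+$ and $x_i^-$ respectively. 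The $\mathfrak{gl}_{m|n}$-invariance of $\Omega$ under the super-Leibniz diagonal adjoint action then produces each of the three identities after multiplying the two tensor factors and specialising. This is exactly the strategy used in Section 3 of \cite{GNW} for the classical $\mathfrak{sl}_n$ case, and in Section 3 of \cite{Ue} for the minimalistic current presentation of $Y_\lambda(\mathfrak{sl}_n)$.

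The main obstacle is tracking the super signs produced by matrix-unit commutators that cross the odd simple root index $m$: each application of $[E_{a,b}, E_{c,d}] = \delta_{b,c} E_{a,d} - (-1)^{(|a|+|b|)(|c|+|d|)} \delta_{a,d} E_{c,b}$ introduces a parity-dependent factor, and these must combine correctly with the $(-1)^{|l|}$ appearing in $\Omega$ (as well as with the $(-1)^{|i|}$'s that relate $x_i^-$ to $E_{i+1,i}$). Provided the parity conventions of Section~2 are applied uniformly, the computation reproduces its non-super version verbatim and the lemma follows.
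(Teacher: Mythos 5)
Your proposal is correct and follows essentially the same route as the paper, which gives no written proof of this lemma but defers to Section 3 of \cite{GNW} and Section 3 of \cite{Ue}: substituting $J(h_i)=\widetilde{H}_{i,1}+\lambda\vartheta_i$ and $J(x_j^{\pm})=X_{j,1}^{\pm}+\lambda\varpi_j^{\pm}$ reduces \eqref{DriJ-1} and \eqref{DriJ-2} to weight considerations and reduces \eqref{DriJ-2.5}, \eqref{DriJ-3}, \eqref{DriJ-4} to exactly the three identities in $U(\mathfrak{sl}_{m|n})$ that you list, which are the super-analogues of the Casimir-invariance identities established in those references. The only delicate point, which you correctly flag, is the bookkeeping of parity signs when contracting the invariant two-tensor, and this affects only the computation, not the structure of the argument.
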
For the proof of Theorem~\ref{flor} below, we construct the elements $J(E_{a,b})\in Y_\lambda(\mathfrak{sl}(m|n))$ for $a\neq b$. By \eqref{DriJ-2.5} and \eqref{DriJ-3}, we obtain
\begin{equation}
[J(x^\pm_i),X^\pm_{j,0}]=0\text{ if }i\neq j\pm1.\label{DriJ-6}
\end{equation}
Since $E_{a,b}$ can be written as $\prod_{c=i}^{j-1}\ad(X^\pm_{c,0})X^\pm_{j,0}$ for some $i,j$, the following lemma follows from \eqref{DriJ-6}.
\begin{lemma}
Suppose that $x_\alpha\in\mathfrak{sl}(m|n)$ is a root vector associated with the root $\alpha$. Then, we obtain
\begin{equation}
    (\alpha_j,\alpha)[J(h_i),x_\alpha]-(\alpha_i,\alpha)[J(h_j),x_\alpha]=0.\label{DriJ-7}
\end{equation}
\end{lemma}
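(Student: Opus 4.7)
The plan is to handle simple and non-simple roots separately. When $\alpha=\alpha_k$ is simple, so $x_\alpha = x_k^\pm$, identity \eqref{DriJ-2.5} already gives $[J(h_i),x_k^\pm]=\pm(\alpha_i,\alpha_k)J(x_k^\pm)$ (using $a_{i,k}=(\alpha_i,\alpha_k)$), and substituting into \eqref{DriJ-7} yields $\pm(\alpha_j,\alpha_k)(\alpha_i,\alpha_k)J(x_k^\pm)\mp(\alpha_i,\alpha_k)(\alpha_j,\alpha_k)J(x_k^\pm)=0$ by commutativity of scalars. For a non-simple root, write $x_\alpha = E_{a,b}$ with $a\ne b$; the strategy is to construct an element $J(E_{a,b})\in Y_\lambda(\mathfrak{sl}_{m|n})$ such that $[J(h_i),E_{a,b}]=(\alpha_i,\alpha)J(E_{a,b})$ for every $i$, from which \eqref{DriJ-7} follows exactly as in the simple case.

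Take $a<b$ (the case $a>b$ is entirely symmetric, using $x_c^-$). The root vector $E_{a,b}$ is realized, up to a sign, as the iterated super-bracket $[x_a^+,[x_{a+1}^+,[\ldots,[x_{b-2}^+,x_{b-1}^+]\ldots]]]$. For each $c$ with $a\le c\le b-1$, define $J_c(E_{a,b})$ to be the same iterated bracket with the single factor $x_c^+$ replaced by $J(x_c^+)$. The first key step is to prove the equality $J_c = J_{c+1}$ for every $a\le c\le b-2$. The three necessary ingredients are: the Serre-type relation \eqref{Eq2.10-1}, which gives $[x_c^+,x_k^+]=0$ for $|c-k|>1$ and hence $[x_c^+,E_{c+2,b}]=0$; identity \eqref{DriJ-6}, which by the same reasoning yields $[J(x_c^+),E_{c+2,b}]=0$; and identity \eqref{DriJ-3}, which gives $[J(x_c^+),x_{c+1}^+]=[x_c^+,J(x_{c+1}^+)]$. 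Two applications of the super Jacobi identity together with these vanishings then produce
\[
[J(x_c^+),[x_{c+1}^+,E_{c+2,b}]] \;=\; [[J(x_c^+),x_{c+1}^+],E_{c+2,b}] \;=\; [[x_c^+,J(x_{c+1}^+)],E_{c+2,b}] \;=\; [x_c^+,[J(x_{c+1}^+),E_{c+2,b}]].
\]
Inserting this into the outer iterated bracket yields $J_c=J_{c+1}$, so all the $J_c$ agree; denote their common value by $J(E_{a,b})$.

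The second step computes $[J(h_i),E_{a,b}]$ by iterated use of the super Jacobi identity, noting that $J(h_i)$ is even and therefore acts as a derivation with no signs. This produces
\[
[J(h_i),E_{a,b}] \;=\; \sum_{c=a}^{b-1} \bigl[x_a^+,[\ldots,[[J(h_i),x_c^+],\ldots,x_{b-1}^+]\ldots]\bigr].
\]
Applying \eqref{DriJ-2.5} to rewrite $[J(h_i),x_c^+]=(\alpha_i,\alpha_c)J(x_c^+)$, the $c$-th summand equals $(\alpha_i,\alpha_c)J_c(E_{a,b})$, and since the $J_c$ all coincide and $\alpha=\sum_{c=a}^{b-1}\alpha_c$, the total is $(\alpha_i,\alpha)J(E_{a,b})$. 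Combining with the same formula for $j$ makes \eqref{DriJ-7} collapse to $(\alpha_j,\alpha)(\alpha_i,\alpha)J(E_{a,b}) - (\alpha_i,\alpha)(\alpha_j,\alpha)J(E_{a,b}) = 0$.

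The main obstacle is the well-definedness statement $J_c=J_{c+1}$: it is there that the interplay between \eqref{DriJ-3} (symmetry of $J$ under adjacent simple generators) and \eqref{DriJ-6} (commutativity of $J(x_c^\pm)$ with generators at distance greater than one), together with the Serre relation, is precisely what allows $J$ to be slid through the iterated bracket one slot at a time. The remainder of the argument is routine super-Jacobi bookkeeping and scalar manipulation.
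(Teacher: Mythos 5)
Your proposal is correct and follows the same route the paper takes: the paper's proof is the single remark that $E_{a,b}$ is an iterated bracket $\prod_{c}\ad(X^\pm_{c,0})X^\pm_{j,0}$ of simple root vectors and that the claim then follows from \eqref{DriJ-6} (together with \eqref{DriJ-2.5} and \eqref{DriJ-3}). You have simply supplied the details the paper leaves implicit — in particular the well-definedness argument $J_c=J_{c+1}$ and the derivation expansion of $[J(h_i),\cdot]$ — and these are carried out correctly.
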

Then, we can define $J(x_\alpha)=\dfrac{1}{(\alpha_i,\alpha)}[J(h_i),x_\alpha]$.
\begin{lemma}
The following relation holds;
\begin{equation}
    [J(x_\alpha),x_\beta]=J([x_\alpha,x_\beta]).\label{DriJ-8}
\end{equation}
\end{lemma}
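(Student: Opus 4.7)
My plan is to exploit the definition $J(x_\alpha) = (\alpha_i, \alpha)^{-1}[J(h_i), x_\alpha]$ together with super-Jacobi in order to reduce the claim to identities already established in the excerpt. Extending $J$ linearly from the $h_i$ to all of the Cartan $\mathfrak{h}$, one has $J(x_\alpha) = (h, \alpha)^{-1}[J(h), x_\alpha]$ for any $h \in \mathfrak{h}$ with $(h, \alpha) \neq 0$, and $[J(h), x_\beta] = (h, \beta) J(x_\beta)$ for any root $\beta$, by \eqref{DriJ-2.5} and the definition of $J(x_\beta)$.

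\textbf{Generic case.} When $\alpha$ is not a scalar multiple of $\beta$, non-degeneracy of the form on $\mathfrak{h}^{\ast}$ produces $h \in \mathfrak{h}$ with $(h, \alpha) \neq 0$ and $(h, \beta) = 0$. Since $|J(h)| = 0$, super-Jacobi gives
\[
(h,\alpha)[J(x_\alpha), x_\beta] = [J(h), [x_\alpha, x_\beta]] - [x_\alpha, [J(h), x_\beta]] = (h, \alpha + \beta) J([x_\alpha, x_\beta]) - (h,\beta)[x_\alpha, J(x_\beta)],
\]
treating $[x_\alpha, x_\beta]$ as either zero or a root vector for $\alpha + \beta$. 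The second term vanishes and $(h, \alpha + \beta) = (h, \alpha)$, so dividing yields $[J(x_\alpha), x_\beta] = J([x_\alpha, x_\beta])$.

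\textbf{Singular case $\beta = \pm \alpha$.} Induct on the height of $\alpha$. At height one (so $\alpha = \alpha_i$ simple): if $\beta = -\alpha_i$, the claim is \eqref{DriJ-4}; if $\beta = \alpha_i$ with $i \neq m$, \eqref{DriJ-3} together with super-antisymmetry forces $2[J(x_i^{\pm}), x_i^{\pm}] = 0$; if $\beta = \alpha_m$, the generic Jacobi computation still applies (only requiring $(h, \alpha_m) \neq 0$), giving $[J(x_m^{\pm}), x_m^{\pm}] = -[x_m^{\pm}, J(x_m^{\pm})]$ because $[x_m^{\pm}, x_m^{\pm}] = 0$ by \eqref{Eq2.11-1}, while super-antisymmetry for an odd-odd pair produces the opposite sign, so again $2[J(x_m^{\pm}), x_m^{\pm}] = 0$. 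At height $\geq 2$, decompose $\alpha = \alpha' + \alpha''$ with $\alpha', \alpha''$ positive roots of smaller height so that $x_\alpha$ is a nonzero scalar multiple of $[x_{\alpha'}, x_{\alpha''}]$; the generic case applied to $(\alpha', \alpha'')$ identifies $J(x_\alpha)$ with the corresponding multiple of $[J(x_{\alpha'}), x_{\alpha''}]$. Expanding $[J(x_\alpha), x_{\pm \alpha}]$ via super-Jacobi and invoking the generic case wherever the root arguments are non-proportional reduces to instances at strictly smaller height: for $\beta = \alpha$, the intermediate brackets $[x_\alpha, x_{\alpha'}]$ and $[x_\alpha, x_{\alpha''}]$ vanish because $\alpha + \alpha'$ and $\alpha + \alpha''$ are not roots of $\mathfrak{sl}_{m|n}$; for $\beta = -\alpha$, the computation terminates at $[J(x_{\alpha'}), x_{-\alpha'}]$ and $[x_{\alpha''}, J(x_{-\alpha''})]$, both covered by the inductive hypothesis after at most one use of super-antisymmetry.

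\textbf{Expected obstacle.} The principal difficulty is the singular case $\beta = \pm \alpha$: the orthogonality trick of the generic case fails there, and one has to juggle super-signs via \eqref{DriJ-3}, \eqref{Eq2.11-1}, and super-antisymmetry. Once the height-one picture is nailed down, the inductive step is essentially super-Jacobi bookkeeping, but checking that the intermediate brackets land inside the regime covered by the generic case requires care.
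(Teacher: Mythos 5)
Your argument is correct and follows essentially the same two-step strategy as the paper: for non-proportional $\alpha,\beta$ you bracket with $J(\mathfrak{h})$ and apply super-Jacobi (the paper solves a $2\times 2$ system in $[J(x_\alpha),x_\beta]$ and $[x_\alpha,J(x_\beta)]$ using two indices $i,j$ with nonvanishing Gram determinant, whereas you choose a single $h$ with $(h,\beta)=0$; this is the same non-degeneracy input, just packaged differently), and for $\beta=-\alpha$ you induct on the height of $\alpha$, exactly as the paper inducts on $k_\alpha$. One point to tighten in your inductive step: you decompose the \emph{first} argument, $x_\alpha\propto[x_{\alpha'},x_{\alpha''}]$, which produces the term $[J(x_{-\alpha''}),x_{\alpha''}]$ with a \emph{negative} root vector inside $J$; super-antisymmetry only converts this to $\pm[x_{\alpha''},J(x_{-\alpha''})]$, not to the inductive hypothesis $[J(x_{\alpha''}),x_{-\alpha''}]$, so you must either run the induction symmetrically over $\pm\gamma$ (and then the base case needs $[J(x_i^-),x_i^+]=J([x_i^-,x_i^+])$, which is not literally \eqref{DriJ-4} and requires a parallel computation with $\varpi_i^-$), or decompose the second argument $x_{-\alpha}\propto[x_{-\gamma},X^-_{j,0}]$ as the paper does, which keeps every $J$-slot occupied by a positive root vector and lets \eqref{DriJ-4} serve directly as the base case. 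Your explicit treatment of $\beta=\alpha$ is a genuine improvement: that subcase has $\alpha+\beta\neq 0$ but the paper's determinant condition cannot be satisfied there, so it falls outside the paper's first case as literally stated.
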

\begin{proof}
In the case that $\alpha+\beta\neq0$, we choose $i,j$ satisfying that $\begin{vmatrix}
(\alpha_i,\alpha) & (\alpha_i,\beta) \\
(\alpha_j,\alpha) & (\alpha_j,\beta) \\
\end{vmatrix}\neq0
$. By computing $[J(h_i),[x_\alpha,x_\beta]]$ and $[J(h_j),[x_\alpha,x_\beta]]$, we have
\begin{align*}
(\alpha_i,\alpha)[J(x_\alpha),x_\beta]+(\alpha_i,\beta)[x_\alpha,J(x_\beta)]=(\alpha_i,\alpha+\beta)J([x_\alpha,x_\beta]),\\
(\alpha_j,\alpha)[J(x_\alpha),x_\beta]+(\alpha_j,\beta)[x_\alpha,J(x_\beta)]=(\alpha_j,\alpha+\beta)J([x_\alpha,x_\beta])
\end{align*}
by \eqref{DriJ-7}. Then, we obtain \eqref{DriJ-8}.

Let us consider the case that $\alpha+\beta=0$. We assume that $\alpha$ is a positive root. We prove by the induction on $k_\alpha$ such that $\alpha=\alpha_i+\alpha_{i+1}+\cdots+\alpha_{i+k_\alpha}$. In the case that $k_\alpha=0$, \eqref{DriJ-8} is nothing but \eqref{DriJ-4}. Suppose that \eqref{DriJ-8} holds for $k_\alpha=M$. When $k_\alpha=M+1$, $\alpha$ can be written as $\gamma+\alpha_j$ for some positive root $\gamma$ such that $k_\gamma=M$. Then, it is enough to show that
\begin{align}
[J(x_\alpha),[x_{-\gamma},X^-_{j,0}]]=J([x_\alpha,[x_{-\gamma},X^-_{j,0}]]). \label{DriJ-9}
\end{align}
Since we have already proven that \eqref{DriJ-8} holds when $\alpha+\beta\neq0$, we can prove \eqref{DriJ-9} by the induction hypothesis. This completes the proof.
\end{proof}
\subsection{Schur-Weyl duality for the Degenerate Affine Hecke algebra and the Super Yangian of $\mathfrak{sl}_{m|n}$}
Let $\mathfrak{z}_i\in H^{\text{deg}}_\lambda(S_l)$ be 
\begin{gather*}
\mathfrak{z}_i=\mathsf{u}_i+\dfrac{\lambda}{2}\sum_{j\neq i}\limits\text{sign}(j-i)\sigma_{i,j}.
\end{gather*}
It follows from this definition and \eqref{ldrna} - \eqref{ldrna3} that
\begin{gather}
    \sigma \mathfrak{z}_i\sigma^{-1}=\mathfrak{z}_{\sigma(i)} \; \forall \, \sigma\in S_l ,\label{prop96-0}\\
 \mathsf{z}_i=\mathfrak{z}_i-\dfrac{\lambda}{2}\sum_{j\neq i}\sigma_{j,i}.\label{prop96-0.5}   
\end{gather} 

\begin{theorem}\label{finite case-1}
Suppose that $M$ is a right $H_{\lambda}^{\text{deg}}(S_{\ell})$-module. 
We can define an action of $Y_{\lambda}(\mathfrak{sl}(m|n))$ on $M\otimes_{\C[S_l]}\mathbb{C}(m|n)^{\otimes l}$ by
\begin{align}
H_{i,0} (\mathbf{m}\ot\mathbf{v})&=\mathbf{m}\ot ((-1)^{|i|}E_{i,i}-(-1)^{|i+1|}E_{i+1,i+1})\mathbf{v}, \label{defacth0-1} \\ 
X_{i,0}^{+} (\mathbf{m}\ot\mathbf{v})& =\mathbf{m}\ot E_{i,i+1}\mathbf{v}, \;\; 
X_{i,0}^{-} (\mathbf{m}\ot\mathbf{v})=(-1)^{|i|}\mathbf{m}\ot E_{i+1,i}\mathbf{v},\label{defactx0-1}\\
J(h_i)(\mathbf{m}\ot\mathbf{v})&=\sum_{k=1}^l\limits\mathbf{m}\mathfrak{z}_{k}\otimes ((-1)^{|i|}E^{(k)}_{i,i}-(-1)^{|i+1|}E^{(k)}_{i+1,i+1})\mathbf{v},\label{defacth1-1}\\
J(x^+_i)(\mathbf{m}\ot\mathbf{v})&=\sum_{k=1}^l\limits\mathbf{m}\mathfrak{z}_{k}\otimes E^{(k)}_{i,i+1}\mathbf{v},\;\; 
J(x^-_i)(\mathbf{m}\ot\mathbf{v})=(-1)^{|i|}\sum_{k=1}^l\limits\mathbf{m}\mathfrak{z}_{k}\otimes E^{(k)}_{i+1,i}\mathbf{v}\label{defactx1-1}
\end{align}
for $\mathbf{m}\in M$ and $\mathbf{v}\in\C(m|n)^{\otimes l}$.
\end{theorem}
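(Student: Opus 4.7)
The plan is to show that the formulas \eqref{defacth0-1}--\eqref{defactx1-1} descend to a well-defined action on the coequalizer $SW(M) = M\otimes_{\C[S_l]}\C(m|n)^{\otimes l}$ and then to verify the defining relations \eqref{Eq2.1-1}--\eqref{Eq2.12-1} of Definition~\ref{deffinsup}. Since the generators whose action is specified are $H_{i,0}, X_{i,0}^{\pm}, J(h_i), J(x_i^{\pm})$, I will work throughout with the equivalent form of the relations provided by \eqref{DriJ-1}--\eqref{DriJ-4}. Two ingredients drive essentially every computation: the action of $\mathfrak{gl}(m|n)$ on $\C(m|n)^{\otimes l}$ commutes with the $S_l$-action by Lemma~\ref{lem38}, and the conjugation rule $\sigma\mathfrak{z}_k = \mathfrak{z}_{\sigma(k)}\sigma$ in $H_\lambda^{\deg}(S_l)$ from \eqref{prop96-0}. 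Combining these by reindexing makes the $J$-formulas descend to $SW(M)$: starting from $J(h_i)(\mathbf{m}\sigma\otimes\mathbf{v})=\sum_k \mathbf{m}\mathfrak{z}_{\sigma(k)}\sigma\otimes h_i^{(k)}\mathbf{v}$, moving $\sigma$ across the tensor product and substituting $k\mapsto\sigma^{-1}(k)$ yields $J(h_i)(\mathbf{m}\otimes\sigma\mathbf{v})$, and the argument for $J(x_i^{\pm})$ is identical.

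The relations in Definition~\ref{deffinsup} involving only $H_{i,0}, X_{i,0}^{\pm}$---namely \eqref{Eq2.1-1} at $r=s=0$, \eqref{Eq2.2-1}, \eqref{Eq2.4-1} at $r=0$, and \eqref{Eq2.10-1}--\eqref{Eq2.12-1}---hold because these generators act through the usual representation of $U(\mathfrak{sl}(m|n))$. Among the reformulated relations, \eqref{DriJ-1}, \eqref{DriJ-2} and \eqref{DriJ-2.5} are immediate: the $\mathfrak{z}_k$-factors act only on $M$ and are inert on the $\C(m|n)^{\otimes l}$-side, while the desired bracket identities reduce position-by-position to the standard ones such as $[h_i,x_j^{\pm}]=\pm a_{i,j}x_j^{\pm}$. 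For \eqref{DriJ-3}, the key observation is that the super-bracket of the Leibniz-rule operator $x_i^{\pm}$ with a single-position operator $E_{j,j+1}^{(k)}$ vanishes at every position $l\ne k$; consequently $[x_i^{\pm},J(x_j^{\pm})]$ and $[J(x_i^{\pm}),x_j^{\pm}]$ both collapse to $\sum_k \mathbf{m}\mathfrak{z}_k\otimes [x_i^{\pm},x_j^{\pm}]^{(k)}\mathbf{v}$ and are therefore equal.

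The main obstacle is relation \eqref{DriJ-4}, $[J(x_i^+),x_j^-]=\delta_{i,j}J(h_i)$. After the same diagonal reduction, the verification boils down to the supercommutator identity
\begin{equation*}
(-1)^{|j|}[E_{i,i+1},E_{j+1,j}]^{(k)} = \delta_{i,j}\bigl((-1)^{|i|}E_{i,i}^{(k)}-(-1)^{|i+1|}E_{i+1,i+1}^{(k)}\bigr)
\end{equation*}
in $\End(\C(m|n))$, which is the standard matrix-unit supercommutator in $\mathfrak{gl}(m|n)$ but requires careful tracking of the signs $(-1)^{|i|}$ and $(-1)^{|j|}$ arising both from the definitions of $x_i^{\pm}$ and from the super Leibniz rule on $\C(m|n)^{\otimes l}$. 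A secondary subtlety is the Cartan--Cartan relation $[H_{i,1},H_{j,1}]=0$ implicit in \eqref{Eq2.1-1}: via $J(h_i)=\widetilde{H}_{i,1}+\lambda\vartheta_i$ this reduces to showing that the contributions of $[\mathfrak{z}_k,\mathfrak{z}_l]$, which by an identity analogous to \eqref{gather160} are proportional to $\sigma_{k,l}$, cancel after $\sigma_{k,l}$ is moved across the tensor product and paired with the symmetric combination $h_i^{(k)}h_j^{(l)}+h_i^{(l)}h_j^{(k)}$ of commuting Cartan operators. This sign-and-symmetry bookkeeping, together with the $\vartheta_i$ correction, is the delicate part of the argument; all other checks are formal manipulations inside $H_\lambda^{\deg}(S_l)$ and $U(\mathfrak{sl}(m|n))$.
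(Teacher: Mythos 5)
Your outline coincides with the paper's proof on everything except the last step: well-definedness is obtained from \eqref{prop96-0} together with Lemma~\ref{lem38}, the degree-zero relations hold because those generators act through $U(\mathfrak{sl}(m|n))$, and the reformulated relations \eqref{DriJ-1}--\eqref{DriJ-4} reduce position-by-position exactly as you say. The genuine gap is in your treatment of $[H_{i,1},H_{j,1}]=0$, which is precisely the one nontrivial relation remaining. Your mechanism rests on the claim that $[\mathfrak{z}_k,\mathfrak{z}_{k'}]$ is ``proportional to $\sigma_{k,k'}$ by an identity analogous to \eqref{gather160}.'' That is false: \eqref{gather160} holds for the elements $\mathsf{z}_k$, not for the symmetrized elements $\mathfrak{z}_k=\mathsf{z}_k+\frac{\lambda}{2}S_k$ with $S_k=\sum_{j\neq k}\sigma_{k,j}$. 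Using \eqref{gather159} and \eqref{gather160} one finds that the $\mathsf{z}$-contributions cancel and $[\mathfrak{z}_k,\mathfrak{z}_{k'}]=\frac{\lambda^2}{4}[S_k,S_{k'}]$, which is a nonzero linear combination of differences of three-cycles $(k\,k'\,p)-(k\,p\,k')$ over $p\neq k,k'$; already for three tensor factors, $[\mathfrak{z}_1,\mathfrak{z}_2]=\frac{\lambda^2}{4}\bigl((1\,2\,3)-(1\,3\,2)\bigr)\neq 0$. So the $\mathfrak{z}_k$ neither commute nor have commutators supported on a single transposition, and the ``move $\sigma_{k,k'}$ across the tensor product and pair with a symmetric combination of Cartan operators'' cancellation does not get off the ground. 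Note also that the relation to be checked is $[\widetilde{H}_{i,1},\widetilde{H}_{j,1}]=0$ and not $[J(h_i),J(h_j)]=0$; the quadratic corrections $\vartheta_i$ contribute nontrivially and must cancel against the three-cycle terms, which is a substantially harder bookkeeping exercise than the one you sketch.

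The paper sidesteps this entirely with a different device, which you would need to adopt (or replace by the full three-cycle cancellation). It first uses well-definedness to reduce to pure tensors $\mathbf{v}=\bigotimes_{r}e_{i_r}$ with $i_1\leq\cdots\leq i_l$, and then proves the Claim \eqref{aaa}: on such a sorted vector, $\widetilde{H}_{i,1}=J(h_i)-\lambda\vartheta_i$ acts as $(-1)^{|i|}\sum_{k=j_i}^{\widetilde{j}_i}\mathbf{m}\mathsf{u}_k\otimes\mathbf{v}-(-1)^{|i+1|}\sum_{k=j_{i+1}}^{\widetilde{j}_{i+1}}\mathbf{m}\mathsf{u}_k\otimes\mathbf{v}$ plus a scalar multiple of $H_{i,0}(\mathbf{m}\otimes\mathbf{v})$, i.e.\ purely through the commuting generators $\mathsf{u}_k$ and an operator diagonal on $\mathbf{v}$. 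The vanishing of $[\widetilde{H}_{i,1},\widetilde{H}_{j,1}]$ on such vectors is then immediate from \eqref{affHecdef11}, and hence holds everywhere. Until you supply an argument of this kind, your proposal does not establish the Cartan--Cartan relation.
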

\begin{proof}  We must first verify that those operator are well-defined and for this it is enough to see why \begin{equation}
J(A)(\mathbf{m}\otimes\sigma\mathbf{v})=J(A)(\mathbf{m}\sigma\otimes\mathbf{v})\label{well-defined}
\end{equation}
for $A=h_i,x_i^\pm$. Since the action is compatible with \eqref{DriJ-2.5}, it is enough to show the case when $A=h_i$, that is, that the following equality holds: \begin{equation}
\sum_{k=1}^l\limits \mathbf{m}\mathfrak{z}_k\otimes((-1)^{|i|}E^{(k)}_{i,i}-(-1)^{|i+1|}E^{(k)}_{i+1,i+1})\sigma\mathbf{v}=\sum_{k=1}^l\limits \mathbf{m}\mathfrak{z}_k\sigma\otimes((-1)^{|i|}E^{(k)}_{i,i}-(-1)^{|i+1|}E^{(k)}_{i+1,i+1})\mathbf{v}.\label{well-definedness2}
\end{equation} This is a consequence of \eqref{prop96-0}.

The operators defined in \eqref{defacth0-1}-\eqref{defactx0-1} are compatible with the relations \eqref{Eq2.11-1} and \eqref{Eq2.12-1}. For all the operators defined in \eqref{defacth0-1}-\eqref{defactx1-1}, we find that they are compatible with the right hand sides of \eqref{DriJ-1}-\eqref{DriJ-4}. Thus, we need to check the well-definedness of the action and show the compatibility with $[\widetilde{H}_{i,1},\widetilde{H}_{j,1}]=0$. This will be a consequence of the following claim.

\begin{Claim}
    Suppose that $\mathbf{v}=\bigotimes_{1\leq r\leq l}v_{i_r}$ satisfying that $i_1\leq i_2\leq \cdots\leq i_l$. We denote by $j_k$ (resp. $\widetilde{j}_k$) by the first (resp. last) term of $j$ such that $i_j=k$. Then, we have
\begin{align}
\widetilde{H}_{i,1}(\mathbf{m}\ot\mathbf{v})&=(-1)^{|i|}\sum_{k=j_i}^{\widetilde{j}_i}\limits\mathbf{m}\mathsf{u}_k\otimes\mathbf{v}-(-1)^{|i+1|}\sum_{k=j_{i+1}}^{\widetilde{j}_{i+1}}\limits\mathbf{m}\mathsf{u}_k\otimes\mathbf{v}+\dfrac{\lambda}{2}(i-1)^{(0)}H_{i,0}(\msm\ot\mathbf{v}).\label{aaa}
\end{align}
\end{Claim}
\begin{proof}
By the definition of $J(h_i)$, $$
\widetilde{H}_{i,1}(\mathbf{m}\ot\mathbf{v})
=(J(h_i)-\vartheta_i)(\mathbf{m}\ot\mathbf{v}).$$
By the definition of $\mathfrak{z}_i$,
\begin{align}
J(h_i)(\mathbf{m}\ot\mathbf{v}) &=\sum_{k=1}^l\limits\mathbf{m}\left (\mathsf{u}_k+\dfrac{\lambda}{2}\sum_{p\neq k}\text{sign}(p-k)\sigma_{v,k}\right)\otimes ((-1)^{|i|}E^{(k)}_{i,i}-(-1)^{|i+1|}E_{i+1,i+1})\mathbf{v}.
\end{align}
By a direct computations, we obtain
\begin{align}
\sum_{k=1}^l\limits\mathbf{m} & \left(\sum_{p\neq k}\text{sign}(p-k)\sigma_{p,k}\right)\otimes (-1)^{|i|}E^{(k)}_{i,i}\mathbf{v}
\nonumber\\
&=\sum_{k=1}^l\sum_{\substack{p=1\\p\neq k}}^l\limits\sum_{g=1}^{m+n}\limits \text{sign}(p-k) (-1)^{|i|+|g|} \mathbf{m}\otimes E^{(p)}_{i,g}E^{(k)}_{g,i}\mathbf{v}\nonumber\\
&=\sum_{k,p=1}^l\sum_{\substack{g=1\\g\neq i}}^{m+n}\limits \text{sign}(g-i)(-1)^{|i|+|g|}\mathbf{m}\otimes E^{(p)}_{i,g}E^{(k)}_{g,i}\mathbf{v}+\sum_{\substack{j_i\leq k,p\leq \widetilde{j}_k\\p\neq k}}\limits\text{sign}(p-k)\mathbf{m}\otimes \mathbf{v}\nonumber\\
&=\sum_{k,p=1}^l\sum_{\substack{g=1\\g\neq i}}^{m+n}\limits \text{sign}(g-i)(-1)^{|i|+|g|}\mathbf{m}\otimes E^{(p)}_{i,g}E^{(k)}_{g,i}\mathbf{v}+0,\label{align913}
\end{align}
where the second equality is due to the assumption that $i_1\leq i_2\leq \cdots\leq i_l$.
Thus, we have
\begin{align}
J(h_i)(\mathbf{m}\ot\mathbf{v}) &=\sum_{k=1}^l\limits\mathbf{m}\mathsf{u}_k\otimes ((-1)^{|i|}E^{(k)}_{i,i}-(-1)^{|i+1|}E^{(k)}_{i+1,i+1})\mathbf{v}\nonumber\\
&\quad+\dfrac{\lambda}{2}\sum_{k,p=1}^l\sum_{g\neq i}\limits \text{sign}(g-i) (-1)^{|i|+|g|} \mathbf{m}\otimes E^{(p)}_{i,g}E^{(k)}_{g,i}\mathbf{v}\nonumber\\
&\quad-\dfrac{\lambda}{2}\sum_{k,p=1}^l\sum_{g\neq i+1}\limits \text{sign}(g-i) (-1)^{|i+1|+|g|} \mathbf{m}\otimes E^{(p)}_{i+1,g}E^{(k)}_{g,i+1}\mathbf{v}.\label{align912}
\end{align}
We find that the sum of the last four terms of the right hand side of \eqref{align912} is equal to $\la\vartheta_i(\msm\ot\mathbf{v})+\dfrac{\lambda}{2}(i-1)^{(0)}H_{i,0}(\msm\ot\mathbf{v})$.
This prove \eqref{aaa} and thus the previous claim.
\end{proof}
By \eqref{well-defined}, it is enough to show $[\widetilde{H}_{i,1},\widetilde{H}_{j,1}](\mathbf{m}\otimes\mathbf{v})=0$ in the case when $\mathbf{v}=\bigotimes_{r=1}^le_{i_r}$ satisfies that $i_1\leq i_2\leq\cdots\leq i_l$.
In this case, $[\widetilde{H}_{i,1},\widetilde{H}_{j,1}](\mathbf{m}\otimes\mathbf{v})=0$ follows from \eqref{aaa} and Proposition~\ref{Prop87} (2). \end{proof}

As before, let us set $SW(M) = M \otimes_{\C[S_l]} \C(m|n)^{\otimes l}$. Theorem~\ref{floridakilos-1} can be reformulated in the present context of this section. 
\begin{theorem}\label{flor}
Let $N$ be a $Y_\lambda(\mathfrak{sl}(m|n))$-module that is finite-dimensional of level $l$. Suppose that $m,n\geq l+1$. Then there exists a module $M$ over the degenerate affine Hecke algebra $H^{\deg}_\lambda(S_l)$ such that $N\cong SW(M)$.
\end{theorem}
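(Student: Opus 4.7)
The plan is to adapt the strategy used for Theorem \ref{floridakilos-1}, but working throughout with the $J$-presentation of $Y_{\lambda}(\mathfrak{sl}_{m|n})$ and the elements $J(E_{a,b})$ introduced after \eqref{DriJ-8}. Since $m,n\ge l+1$ forces $l<(m+1)(n+1)$, Schur--Sergeev Duality (Theorem \ref{chengbook}, extended to $\mathfrak{sl}_{m|n}$ in the subsequent remark) yields a right $\mathbb{C}[S_l]$-module $M$ with $N\cong M\otimes_{\mathbb{C}[S_l]}\mathbb{C}(m|n)^{\otimes l}$ as an $\mathfrak{sl}_{m|n}$-module, and the task reduces to extending $M$ to a module over $H^{\deg}_{\lambda}(S_l)$. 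In view of \eqref{prop96-0.5} and Proposition \ref{prop201} (applied with $\kappa=\lambda$), it suffices to produce, for each $1\le k\le l$, a linear operator $\mathfrak{z}_k$ on $M$, to verify $\sigma\mathfrak{z}_k\sigma^{-1}=\mathfrak{z}_{\sigma(k)}$ together with the commutator identity that yields the DAHA relations, and to check that the formulas \eqref{defactx1-1}--\eqref{defacth1-1} from Theorem \ref{finite case-1} then recover the original $Y_{\lambda}(\mathfrak{sl}_{m|n})$-action on $N$.

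To construct $\mathfrak{z}_k$, I would copy the construction of $\chi_k^{a,b}$ from Subsection \ref{CPequivcat}, replacing $t_{a,b}^{(2)}$ by $J(E_{a,b})$: starting from vectors $\underline{\mathbf{v}}^{(k)},\underline{\mathbf{w}}^{(k)}$ with pairwise distinct basis entries that differ only at the $k$-th slot ($e_b$ versus $e_a$), a weight argument produces a well-defined linear map $\chi_k^{a,b}:M\to M$ characterised by $J(E_{a,b})(\mathbf{m}\otimes\underline{\mathbf{v}}^{(k)})=(-1)^{|a|}\chi_k^{a,b}(\mathbf{m})\otimes\underline{\mathbf{w}}^{(k)}$. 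The central technical step, replacing Lemma \ref{idbits-lem}, is the identity
\[
J(E_{a,b})(\mathbf{m}\otimes\underline{\mathbf{v}})=(-1)^{|a|}\sum_{j=1}^l \chi_j^{a,b}(\mathbf{m})\otimes E_{a,b}^{(j)}(\underline{\mathbf{v}}), \qquad \forall\,\underline{\mathbf{v}}\in\mathbb{C}(m|n)^{\otimes l}.
\]
I would prove this by double induction on the multiplicities of $e_a$ and $e_b$ in $\underline{\mathbf{v}}$, transporting the two key identities from the proof of Lemma \ref{idbits-lem}: $[J(E_{a,b}),E_{a,c}]=0$ for $a\neq c$, which by \eqref{DriJ-8} reduces to the trivial super-bracket $[E_{a,b},E_{a,c}]=0$ in $U(\mathfrak{sl}_{m|n})$; and $\ad(E_{b,c})^2 J(E_{a,b})=0$ when $|b|=|c|$, which by two applications of \eqref{DriJ-8} reduces to $\ad(E_{b,c})^2 E_{a,b}=0$ in $U(\mathfrak{sl}_{m|n})$. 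The hypothesis $m,n\ge l+1$ supplies the auxiliary index $c\notin\{a,b\}$ with $|c|=|b|$ needed to make the second identity available.

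Once that identity is available, a commutator computation in the spirit of Proposition \ref{Prop256}, now using $J(E_{e,b})=[E_{e,a},J(E_{a,b})]$ in place of the RTT identity $t_{e,b}^{(2)}=(-1)^{|e|}[t_{e,a}^{(1)},t_{a,b}^{(2)}]$, shows that $\chi_k^{a,b}$ is independent of $(a,b)$. Writing $\chi_k$ for the common value, I set $\mathbf{m}\mathfrak{z}_k:=\chi_k(\mathbf{m})$. The $S_l$-equivariance $\sigma\mathfrak{z}_k=\mathfrak{z}_{\sigma(k)}\sigma$ then follows exactly as in the proof of Theorem \ref{floridakilos-1}, by testing on vectors of the form $\mathbf{m}\sigma\otimes\underline{\mathbf{v}}$ and using that $J(E_{a,b})$ commutes with $\varphi$ in the appropriate sense. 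The remaining commutator relation between the $\mathfrak{z}_k$ is extracted from $[J(E_{a,b}),J(E_{c,d})]$ with $\{a,b\}\cap\{c,d\}=\emptyset$ (or equivalently from $[J(h_i),J(h_j)]=0$ together with an analogue of \eqref{aaa}), expanded on both sides via the generalised identity just established, mimicking the final portion of the proof of Theorem \ref{floridakilos-1}.

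The main obstacle is the careful tracking of sign factors throughout the inductive step of the analogue of Lemma \ref{idbits-lem}: every application of a super-bracket or of \eqref{DriJ-8} introduces parity contributions from the exchange of $E_{a,b}$, $E_{b,c}$ and the surrounding tensor factors, and these must be reconciled with the single $(-1)^{|a|}$ prefactor appearing in the statement. Once this super bookkeeping is under control, the remaining verifications reduce to transparent super-analogues of the arguments already carried out in Sections \ref{SWRTT} and \ref{SWmini}.
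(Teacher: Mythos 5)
Your proposal is correct and follows essentially the same route as the paper: the paper likewise invokes Schur--Sergeev duality to obtain the $\C[S_l]$-module $M$, then transplants the proofs of Lemma \ref{idbits-lem} and Proposition \ref{Prop256} with $t^{(2)}_{a,b}$ replaced by $(-1)^{|a|}J(E_{a,b})$ to define the action of $\mathfrak{z}_k$, checks $S_l$-equivariance as in Theorem \ref{floridakilos-1}, and derives the remaining commutation relation from $[\widetilde{H}_{i,1},\widetilde{H}_{j,1}]=0$ via \eqref{aaa} and Proposition \ref{prop201}. The parenthetical alternative you mention (using $[J(h_i),J(h_j)]=0$ together with \eqref{aaa}) is exactly the paper's chosen route for that last step.
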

\begin{proof}
By the Schur-Sergeev duality, there exists a $\mathbb{C}[S_l]$-module $M$ such that $N\simeq M\otimes_{\C[S_l]}\C(m|n)^{\otimes l}$ as modules over $\mathfrak{sl}(m|n)$. 
Changing $\mathsf{z}_k$, $t^{(1)}_{a,b}$ and $t^{(2)}_{a,b}$ to $\mathfrak{z}_k$, $(-1)^{|a|}E_{a,b}$ and $(-1)^{|a|}J(E_{a,b})$ in the proof of Proposition~\ref{Prop256} and Lemma~\ref{idbits-lem}, we find that we can define the action of $\mathfrak{z}_k$ on $M$ for which
\begin{equation*}
J(E_{a,b})(\mathbf{m}\otimes\mathbf{v})=\sum_{k=1}^l\limits \mathbf{m}\mathfrak{z}_k\otimes E^{(k)}_{a,b}\mathbf{v}
\end{equation*}
for $a\neq b$. 
As in the second half of the proof of Theorem~\ref{floridakilos-1}, we can show the compatibility of that action with \eqref{prop96-0}. By \eqref{prop96-0.5}, we can prove the compatibility with \eqref{gather159} since, for $i$ fixed, 
\begin{equation*}
\sigma\left(\sum_{j\neq i}\sigma_{i,j}\right)=\left(\sum_{j\neq \sigma(i)}\sigma_{\sigma(i),j}\right)\sigma \;\; \forall \, \sigma\in S_l.
\end{equation*}

By using the proof of Proposition~\ref{prop201}, we see that it is enough to show the compatibility of the action of $\mathfrak{z}_k$ on $M$ with $[\mathsf{u}_a,\mathsf{u}_b]=0$. By the assumption that $l<m,n$, we can take $\mathbf{v}=\bigotimes_{1\leq r\leq l}e_{k_r}$ satisfying that $k_r$ are distinct and $k_a=i$, $k_b=j$ and $k_r\neq i+1,j+1$.
By \eqref{aaa}, we have
\begin{align*}
[\widetilde{H}_{i,1},\widetilde{H}_{j,1}](\mathbf{m}\otimes\mathbf{v})
&=\mathbf{m}(\mathsf{u}_b\mathsf{u}_a-\mathsf{u}_a\mathsf{u}_b)\otimes\mathbf{v}.
\end{align*}
Then, the relation $[\widetilde{H}_{i,1},\widetilde{H}_{i,1}]=0$ yields $[\mathsf{u}_b,\mathsf{u}_a]=0$.
\end{proof}
Finally, Proposition \ref{thm-sup} remains true in the context of the present section, thus we have obtained a different, but equivalent, point of view on Schur-Weyl duality for the degenerate affine Hecke algebra and the super Yangian $Y_\lambda(\mathfrak{sl}(m|n))$.

\section{Affine super Yangians and degenerate double affine Hecke algebras}\label{asYangDDAHA}

\subsection{The affine Lie superalgebra $\widehat{\mfsl}(p|m|n-p)$}
Let $\mathbf{s}=(s_0,s_1,\dots,s_{m+n})$ where $s_i=0$ or $1$ and $1$ occurs exactly $n+1$ times. Then, $\mathbf{s}$ is called a \emph{parity sequence}. The standard one is denoted by $$s^{(0)}=(1,\underbrace{0,\cdots,0}_m ,\underbrace{1 \cdots, 1}_n).$$
For $0\le p\le n$, we define the parity sequence $s^{(p)}$ by
$$s^{(p)}=(\underbrace{1, \dots, 1}_{p+1}, \underbrace{0, \dots, 0}_m, \underbrace{1,\dots,1}_{n-p} ).$$
For the sake of simplicity, hereafter, we sometimes identify $\{1,\cdots,m+n\}$ with $\mathbb{Z}/(m+n)\mathbb{Z}$.

For $0\leq p\leq n$, let $\mathfrak{gl}(p|m|n-p)=\bigoplus_{1\leq i,j\leq m+n}\limits \mathbb{C}E_{i,j}$ is the Lie superalgebra whose commutator relations are given by
\begin{equation*}
[E_{i,j},E_{k,l}]=\delta_{j,k}E_{i,l}-\delta_{i,l}(-1)^{(s^{(p)}_i+s^{(p)}_j)(s^{(p)}_k+s^{(p)}_l)}E_{k,j}.
\end{equation*}
In the case that $p=0$, $\mathfrak{gl}(p|m|n-p)$ coincides with $\mathfrak{gl}(m|n)$. We can define a non-degenerate supersymmetric bilinear form on $\mathfrak{gl}(p|m|n-p)$ by
\begin{equation*}
(E_{i,j},E_{k,l})=\delta_{j,k}\delta_{i,l}(-1)^{s^{(p)}_i}.
\end{equation*}
We introduce the Lie sub-superalgebra
\begin{equation*}
 \mathfrak{sl}(p|m|n-p)=\{X=(x_{i,j})_{1\leq i,j\leq m+n}\in\mathfrak{gl}(p|m|n-p)\mid\text{str}^{(p)}(X)=\sum_{i=1}^{m+n}\limits(-1)^{s^{(p)}_i} x_{i,i}=0\},
\end{equation*}
where $\text{str}^{(p)}$ is the supertrace of $\mathfrak{gl}(p|m|n-p)$.
The affine Lie superalgebra $\widehat{\mfsl}(p|m|n-p)$ is
\begin{equation*}
\widehat{\mfsl}(p|m|n-p)=\mfsl(p|m|n-p)\otimes_{\C}\C[t,t^{-1}]\oplus\C \mathfrak{c}
\end{equation*}
for a central element $\mathfrak{c}$ that is then equipped with the superbracket:
\begin{align}
[X_1\otimes t^r,X_2 \otimes t^s] = [X_1,X_2] \otimes t^{r+s} + r\delta_{r+s,0} (X_1,X_2) \mathfrak{c}\text{ for all }X_1,X_2\in\mfsl_{m|n}.
\end{align}
Let us set the $(m+n)\times (m+n)$-matrix $(a^{(p)}_{i,j})_{0\leq i,j\leq m+n-1}$ to be:
\begin{align}
		a^{(p)}_{i,j}=
			\begin{cases}
				(-1)^{s^{(p)}_i} +(-1)^{s^{(p)}_{i+1}} & \textrm{ if }  i=j,\\
				-(-1)^{s^{(p)}_{i+1}} & \textrm{ if } j=i+1,\\
				-(-1)^{s^{(p)}_i}& \textrm{ if }  j=i-1,\\
				-(-1)^{s^{(p)}_{m+n}}& \textrm{ if }  (i,j)=(m+n-1,0),(0,m+n-1),\\
				0& \textrm{ otherwise}.
			\end{cases}
\end{align}
We note that $a_{i,j}^{(0)}=a_{i,j}$ for $1\leq i,j\leq m+n-1$.
\begin{proposition}[Theorem 4.1.1 in \cite{Y}]
		Suppose that $m,n \ge 2$ and $m\ne n$.
		Then $\widehat{\mathfrak{sl}}(p|m|n-p)$ is isomorphic to the Lie superalgebra over $\C$ defined by the generators $\{x_i^{\pm},h_i\}$ for $0 \le i \le m+n-1$ subject to the relations:	
			\begin{align}
				[h_i,h_j]&=0, \label{oatmilk} \quad
				[h_i,x_j^{\pm}] = \pm a^{(p)}_{i,j} x_j^{\pm},\\
				[x_i^+,x_j^-] &= \delta_{i,j}h_i,\quad
				\textnormal{ad}(x_i^{\pm})^{1+|a^{(p)}_{i,j}| } x_j^{\pm}=0,\\
				[x_{i}^{\pm},x_i^{\pm}] &=0\text{ if }s^{(p)}_i\neq s^{(p)}_{i+1},\\
				[[x_{i-1}^{\pm},x_i^{\pm}],&[x_{i+1}^{\pm},x_i^{\pm}]]=0\text{ if }s^{(p)}_i\neq s^{(p)}_{i+1}, \label{latte}
			\end{align}
			where $h_i$ is even and $x_i^{\pm}$ is even (resp. odd) if $s^{(p)}_i=s^{(p)}_{i+1}$ (resp. $s^{(p)}_i\neq s^{(p)}_{i+1}$).
   
	The isomorphism $\xi$ is given by
			\begin{align*}
				\xi(h_i)&=\begin{cases}
    -(-1)^{s^{(p)}_1}E_{1,1}+(-1)^{s^{(p)}_{m+n}}E_{m+n,m+n}+\mathfrak{c} &\text{ if } i=0, \\ (-1)^{s^{(p)}_i} E_{i,i} - (-1)^{s^{(p)}_{i+1}} E_{i+1,i+1} &\text{ if } 1 \le i \le m+n-1, \end{cases}\\
				\xi(x_i^+)&=	\begin{cases}
    E_{m+n,1}\otimes t &\text{ if } i=0, \\
    E_{i,i+1} &\text{ if } 1 \le i \le m+n-1, \end{cases}\\
				\xi(x_i^-)&= 	\begin{cases} 
    (-1)^{s^{(p)}_{m+n}}E_{1,m+n}\otimes t^{-1}&\text{ if } i=0, \\
    (-1)^{s^{(p)}_i} E_{i+1,i} &\text{ if } 1 \le i \le m+n-1.  \end{cases}
			\end{align*}
	\end{proposition}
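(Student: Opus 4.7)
The plan is to construct a homomorphism from the abstract Lie superalgebra $\widetilde{\mathfrak{g}}$ defined by the presentation to $\widehat{\mathfrak{sl}}(p|m|n-p)$ via the formulas in the statement, then to prove successively that this map is well-defined, surjective, and injective. This is a Serre-type presentation of the affine Kac-Moody Lie superalgebra associated with the generalized Cartan matrix $(a^{(p)}_{i,j})$, and once well-definedness and surjectivity are handled the third step follows from Yamane's Theorem 4.1.1 in \cite{Y}.

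For well-definedness, I would verify each of \eqref{oatmilk}--\eqref{latte} by direct computation. The relations $[\xi(h_i),\xi(h_j)]=0$ are clear since every $\xi(h_i)$ lies in the abelian span of the diagonal matrix units and $\mathfrak{c}$. The eigenvalue relations $[\xi(h_i),\xi(x_j^{\pm})]=\pm a^{(p)}_{i,j}\xi(x_j^{\pm})$ reduce, for $1\le i,j\le m+n-1$, to the Cartan action on the Chevalley generators of $\mathfrak{sl}(p|m|n-p)$; the cases $i=0$ or $j=0$ additionally use the cocycle term $r\delta_{r+s,0}(X_1,X_2)\mathfrak{c}$ together with the supertrace formula to produce the correct values of $a^{(p)}_{0,j}$ and $a^{(p)}_{0,m+n-1}$. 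The Chevalley relations $[\xi(x_i^+),\xi(x_j^-)]=\delta_{i,j}\xi(h_i)$ reduce to bracket computations among matrix units, with the central contribution appearing precisely in the $i=j=0$ case through $[E_{m+n,1}\otimes t,E_{1,m+n}\otimes t^{-1}]=(-1)^{s^{(p)}_{m+n}}E_{m+n,m+n}-(-1)^{s^{(p)}_1}E_{1,1}+\mathfrak{c}$. The standard Serre relations, the self-bracket relation at the odd nodes, and the higher Serre relation \eqref{latte} are classical identities in $\mathfrak{sl}(p|m|n-p)$ extended trivially to the loop algebra.

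Surjectivity I would establish by a generation argument. The images $\xi(x_i^{\pm}),\xi(h_i)$ for $1\le i\le m+n-1$ generate $\mathfrak{sl}(p|m|n-p)$ via iterated brackets: from the root vectors $E_{i,i+1}$ and $E_{i+1,i}$ one builds every off-diagonal $E_{a,b}$, and the Cartan part is spanned by the $\xi(h_i)$ together with the brackets $[\xi(x_i^+),\xi(x_i^-)]$. Incorporating $\xi(x_0^{\pm})$, brackets of the form $[E_{m+n,1}\otimes t,E_{1,j}]$ and their iterated compositions yield all $E_{a,b}\otimes t$, and induction on the Laurent degree produces $\mathfrak{sl}(p|m|n-p)\otimes t^{\pm k}$ for every $k\ge 0$; the central element $\mathfrak{c}$ is then recovered from the $0^{\text{th}}$ Chevalley relation as above.

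Injectivity is the main obstacle. The relations \eqref{oatmilk}--\eqref{latte} are exactly the Drinfeld--Kac--Serre relations for the affine Lie superalgebra with Cartan matrix $(a^{(p)}_{i,j})$, and the different parity sequences $s^{(p)}$ are all covered by Yamane's framework (they correspond to cyclic relabelings of the nodes of the distinguished affine Dynkin diagram). I would therefore appeal to \cite[Theorem 4.1.1]{Y} to conclude that the abstract algebra with these relations is isomorphic to $\widehat{\mathfrak{sl}}(p|m|n-p)$. The substantive content of that theorem is a triangular decomposition of the abstract algebra combined with the construction of faithful loop-type representations, which is what makes this step considerably deeper than the verification and generation arguments carried out above.
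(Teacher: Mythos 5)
The paper gives no proof of this proposition: it is imported verbatim as Theorem 4.1.1 of Yamane \cite{Y}, and your outline ultimately rests the substantive step (injectivity, i.e.\ that the Serre-type relations are defining) on that same theorem, so the two approaches coincide in essence. The well-definedness and surjectivity verifications you add are routine and correct (modulo a small sign slip: the displayed bracket $[E_{m+n,1}\otimes t,E_{1,m+n}\otimes t^{-1}]$ carries an overall factor $(-1)^{s^{(p)}_{m+n}}$ relative to what you wrote, which cancels against the prefactor in $\xi(x_0^-)$ to give $[\xi(x_0^+),\xi(x_0^-)]=\xi(h_0)$ as needed).
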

 Hereafter, we sometimes denote $\widehat{\mfsl}(0|m|n)$ by $\widehat{\mfsl}(m|n)$. The affine version of Theorem~\ref{chengbook} was given in Theorem 10.1 in \cite{Flicker}. Let us set
$\mathbb{C}[W^{\text{aff}}_l]=\mathbb{C}[z^{\pm}_1,\cdots,z^{\pm}_l]\rtimes\mathbb{C}[S_l]$.
\begin{definition}
A left $\mathfrak{sl}_{m|n}$-module is said to be integrable if it is a direct sum of finite-dimensional modules. We say that a left module $N$ over $\widehat{\mathfrak{sl}}(m|n)$ (or over the affine Yangian) is integrable of level $l$ if this is so when $N$ is viewed as a left $\mathfrak{sl}_{m|n}$-module.
\end{definition}

\begin{theorem}[Theorem 10.1 in \cite{Flicker}]\label{Flicker}
\begin{enumerate}
\item Let $M$ be a right $\C[W^{\text{aff}}_l]$-module. Then we can define an action of $\widehat{\mathfrak{sl}}(m|n)$ on $M\otimes_{\mathbb{C}[S_l]}\mathbb{C}(m|n)^{\otimes l}$ by
\begin{align*}
(E_{i,j}t^s)(\mathbf{m}\otimes\mathbf{v})&=\sum_{r=1}^l\limits\mathbf{m}z^s_r\otimes E^{(r)}_{i,j}\mathbf{v}\text{ if }i\neq j,
\end{align*}
where $E^{(r)}_{i,j}=\id^{\otimes r-1}\otimes E_{i,j}\otimes\id^{\otimes l-r}$.
\item Suppose that $m\neq n$ and $l<m+n$. We have an equivalence from the category of right $\C[W^{\text{aff}}_l]$-modules to the category of integrable left $U(\widehat{\mathfrak{sl}}(m|n))$-modules of level $l$ (with trivial action of the central element $\mathfrak{c}$) given by $M\mapsto M\otimes_{\mathbb{C}[S_l]} \mathbb{C}(m|n)^{\otimes l}$ and $f\mapsto f\otimes1^{\otimes l}$ for any right $\C[W^{\text{aff}}_l]$-module $M$ and any homomorphism of right $\C[W^{\text{aff}}_l]$-modules $f$.
\end{enumerate}
\end{theorem}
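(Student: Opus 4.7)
The plan is to mimic the strategies developed in Sections~\ref{SWRTT} and~\ref{SWmini}, adapted to the affine setting. For part (1), well-definedness of the formula on $M \otimes_{\C[S_l]} \C(m|n)^{\otimes l}$ follows from the semidirect product relation $\sigma z_r = z_{\sigma(r)} \sigma$ in $\C[W^{\text{aff}}_l]$ combined with Lemma~\ref{lem38}. The Chevalley--Serre relations among the generators whose loop grading vanishes reduce to those that hold by Schur--Sergeev duality on $\C(m|n)^{\otimes l}$. For the loop relation
\begin{equation*}
[E_{i,j} \otimes t^r,\, E_{k,l} \otimes t^s] = [E_{i,j},E_{k,l}] \otimes t^{r+s} + r\delta_{r+s,0}(E_{i,j},E_{k,l})\,\mathfrak{c},
\end{equation*}
I would expand both sides as operators on $M \otimes_{\C[S_l]} \C(m|n)^{\otimes l}$: the diagonal $p=q$ terms of $\sum_{p,q}\mathbf{m}\, z_p^r z_q^s \otimes E_{i,j}^{(p)} E_{k,l}^{(q)}\mathbf{v}$ yield the bracket tensored by $z_p^{r+s}$ thanks to $z_p^r z_p^s = z_p^{r+s}$, while the off-diagonal terms $p \neq q$ super-commute on both the $z$-side and the matrix side and hence cancel in the graded commutator. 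The central contribution is therefore automatically zero, consistent with the theorem's ``trivial action of $\mathfrak{c}$'' clause.

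For part (2), suppose $N$ is an integrable $\widehat{\mfsl}(m|n)$-module of level $l$ with $\mathfrak{c}$ acting as $0$. By Theorem~\ref{chengbook} applied to $N$ restricted to $\mfsl(m|n)$, there exists a right $\C[S_l]$-module $M$ with $N \cong M \otimes_{\C[S_l]} \C(m|n)^{\otimes l}$. It remains to equip $M$ with a compatible right action of $z_1^{\pm 1}, \ldots, z_l^{\pm 1}$. For each $s \in \mathbb{Z}$ and each $i \neq j$, pick a pure tensor $\mathbf{v} = \bigotimes_p e_{i_p}$ whose entries $i_1, \ldots, i_{r-1}, j, i_{r+1}, \ldots, i_l$ are all distinct, which is possible because $l < m+n$; a weight argument then shows that only the summand at position $r$ on the right-hand side of the target formula can contribute, so $(E_{i,j} \otimes t^s)(\mathbf{m} \otimes \mathbf{v}) = \chi_{r,s}(\mathbf{m}) \otimes E_{i,j}^{(r)}\mathbf{v}$ defines a linear map $\chi_{r,s}\colon M \to M$. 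The extension of this formula to arbitrary $\mathbf{v}$ and the independence of $\chi_{r,s}$ from the choice of $(i,j)$ are established by arguments parallel to Lemma~\ref{idbits-lem} and Proposition~\ref{Prop256}, using commutators such as $[E_{e,i},\, E_{i,j} \otimes t^s] = E_{e,j} \otimes t^s$ for an auxiliary index $e \neq i, j$.

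The operators $\chi_{r,s}$ must then satisfy the defining relations of $\C[W^{\text{aff}}_l]$ acting on $M$ from the right: pairwise commutativity, $\sigma \chi_{r,s} = \chi_{\sigma(r),s}\sigma$, and multiplicativity $\chi_{r,s}\chi_{r,s'} = \chi_{r,s+s'}$ (so in particular $\chi_{r,0} = \mathrm{id}$). Commutativity and $S_l$-equivariance repeat nearly verbatim the arguments in the second half of the proof of Theorem~\ref{floridakilos-1}, with $\mathsf{z}_k$ replaced by $\chi_{k,s}$ and $t^{(2)}$ replaced by $E_{i,j} \otimes t^s$. Multiplicativity is obtained by applying the identity $[E_{i,k} \otimes t^s,\, E_{k,j} \otimes t^{s'}] = E_{i,j} \otimes t^{s+s'}$ for an auxiliary index $k \neq i, j$ to suitably chosen pure tensors. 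The bijectivity of $SW$ on morphisms is a direct transcription of Proposition~\ref{thm-sup}. The main obstacle I anticipate is the verification of multiplicativity for mixed-sign exponents and the identification $\chi_{r,0} = \mathrm{id}$: when $s+s'=0$, one must track how the central cocycle in the loop bracket cancels on $N$ by the triviality of $\mathfrak{c}$, while simultaneously producing the correct composition on $M$; this requires careful bookkeeping of weights and of auxiliary index choices constrained by $l < m+n$.
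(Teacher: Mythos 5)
The paper does not actually prove this statement: it is imported as Theorem 10.1 of \cite{Flicker} and used as a black box, so there is no internal proof to compare against. Judged on its own terms, your part (1) is sound. The supercommutator computation is exactly right: the $p\neq q$ terms cancel because the $z$'s commute and the $E^{(p)}_{i,j}$, $E^{(q)}_{k,l}$ supercommute, the $p=q$ terms give $\sum_p \mathbf{m}z_p^{r+s}\otimes [E_{i,j},E_{k,l}]^{(p)}\mathbf{v}$, and since $(E_{i,j},E_{k,l})=\delta_{i,l}\delta_{j,k}(-1)^{|i|}$ vanishes unless the bracket lands in the Cartan, the central charge is forced to be zero. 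Two small things you should still say: why checking brackets of off-diagonal loop generators suffices to define an action of the whole presented algebra (either verify the Chevalley--Serre relations of the presentation quoted from \cite{Y}, or note that the off-diagonal elements generate and the diagonal loop elements' action is then forced and consistent); and note that your worry about the cocycle when $s+s'=0$ in the multiplicativity check is vacuous, since $(E_{i,k},E_{k,j})=\delta_{i,j}(-1)^{|i|}=0$ for $i\neq j$.

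The genuine gap is in part (2), precisely at the step you describe as ``parallel to Lemma~\ref{idbits-lem}''. That lemma's induction on the number of tensor factors equal to $e_b$ hinges on choosing an auxiliary index $c\neq a,b$ with $|c|=|b|$, which is why it carries the hypothesis $m,n\geq l+1$. Theorem~\ref{Flicker} assumes only $l<m+n$, which allows for instance $m=1$, $n=l$; if $b$ is then the unique index of its parity, no admissible $c$ exists and the induction does not start. So transplanting Lemma~\ref{idbits-lem} verbatim establishes the equivalence only under the stronger hypothesis $m,n\geq l+1$ (which is in fact all this paper ever uses, cf.\ Theorems~\ref{floridakilos-1} and~\ref{reverse}), not under the hypothesis $l<m+n$ actually stated in Theorem~\ref{Flicker}. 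To recover the sharp bound you would need a replacement for the parity-matched auxiliary-index trick, e.g.\ one exploiting the full supply of loop generators $E_{i,j}\otimes t^s$ rather than only the even moves available in the finite-type argument; as written, your proposal proves a weaker theorem than the one claimed.
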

\subsection{Affine super Yangians}
We can define the affine super Yangian associated with $(a^{(p)}_{i,j})_{0\leq i,j\leq m+n-1}$ as in Proposition 2.23 in \cite{Ue2}. 
\begin{definition}\label{defaffsup}
Let $\lambda,\beta\in\mathbb{C}$. Suppose that $m, n\geq2$ and $m\neq n$. The affine super Yangian $Y^{(p)}_{\lambda,\beta}(\widehat{\mathfrak{sl}}(m|n))$ is the associative superalgebra over $\C$ generated by $X_{i,r}^{+}, X_{i,r}^{-}, H_{i,r}$ $(i \in \{0,1,\cdots, m+n-1\}, r = 0,1)$ subject to the following defining relations:
\begin{gather}
[H_{i,r}, H_{j,s}] = 0,\label{Eq2.1}\\
[X_{i,0}^{+}, X_{j,0}^{-}] = \delta_{i,j} H_{i, 0},\label{Eq2.2}\\
[X_{i,1}^{+}, X_{j,0}^{-}] = \delta_{i,j} H_{i, 1} = [X_{i,0}^{+}, X_{j,1}^{-}],\label{Eq2.3}\\
[H_{i,0}, X_{j,r}^{\pm}] = \pm a^{(p)}_{i,j} X_{j,r}^{\pm},\label{Eq2.4}\\
[\tilde{H}_{i,1}, X_{j,0}^{\pm}] = \pm a^{(p)}_{i,j} X_{j,1}^{\pm}\text{ if }(i,j)\neq(0,m+n-1),(m+n-1,0),\label{Eq2.5}\\
[\tilde{H}_{0,1}, X_{m+n-1,0}^{\pm}] = \pm a^{(p)}_{0,m+n-1}\left(X_{m+n-1,1}^{\pm}-\beta X_{m+n-1, 0}^{\pm}\right),\label{Eq2.6}\\
[\tilde{H}_{m+n-1,1}, X_{0,0}^{\pm}] = \pm a^{(p)}_{m+n-1,0} \left(X_{0,1}^{\pm}+\beta X_{0, 0}^{\pm}\right),\label{Eq2.7}\\
[X_{i, 1}^{\pm}, X_{j, 0}^{\pm}] - [X_{i, 0}^{\pm}, X_{j, 1}^{\pm}] = \pm a^{(p)}_{i,j}\dfrac{\lambda}{2} \{X_{i, 0}^{\pm}, X_{j, 0}^{\pm}\}\text{ if }(i,j)\neq(0,m+n-1),(m+n-1,0),\label{Eq2.8}\\
[X_{0, 1}^{\pm}, X_{m+n-1, 0}^{\pm}] - [X_{0, 0}^{\pm}, X_{m+n-1, 1}^{\pm}]=\pm a^{(p)}_{0,m+n-1}\dfrac{\lambda}{2} \{X_{0, 0}^{\pm}, X_{m+n-1, 0}^{\pm}\} -\beta[X_{0, 0}^{\pm}, X_{m+n-1, 0}^{\pm}],\label{Eq2.9}\\
(\ad X_{i,0}^{\pm})^{1+|a^{(p)}_{i,j}|} (X_{j,0}^{\pm})= 0 \text{ if }i \neq j, \label{Eq2.10}\\
[X^\pm_{i,0},X^\pm_{i,0}]=0\text{ if }s^{(p)}_i\neq s^{(p)}_{i+1},\label{Eq2.11}\\
[[X^\pm_{i-1,0},X^\pm_{i,0}],[X^\pm_{i,0},X^\pm_{i+1,0}]]=0\text{ if }s^{(p)}_i\neq s^{(p)}_{i+1},\label{Eq2.12}
\end{gather}
where the generators $X^\pm_{m+p, r}$ and $X^\pm_{p, r}$ are odd, all other generators are even and we set $\wt{H}_{i,1} = H_{i,1}-\dfrac{\lambda}2 H_{i,0}^2$. 
\end{definition}
In the case when $p=0$, Definition~\ref{defaffsup} is the same as the one in Proposition 2.23 in \cite{Ue2}. We also note that there exists a homomorphism from $U(\widehat{\mathfrak{sl}}(p|m|n-p))$ to $Y^{(p)}_{\lambda,\beta}(\widehat{\mathfrak{sl}}(m|n))$ given by $x^\pm_i\mapsto X^\pm_{i,0},\ h_i\mapsto H_{i,0}$.
We sometimes denote the image of $x\in U(\widehat{\mathfrak{sl}}(p|m|n-p))$ via this homomorphism by $x$.

By the definition of the affine super Yangian $Y^{(p)}_{\lambda,\beta}(\widehat{\mathfrak{sl}}(m|n))$, we obtain the following lemma.
\begin{lemma}
Let $a$ be a complex number. There exists an isomorphism $\tau_a\colon Y^{(p)}_{\lambda,\beta}(\widehat{\mathfrak{sl}}(m|n))\to Y^{(p-1)}_{\lambda,\beta}(\widehat{\mathfrak{sl}}(m|n))$ given by
\begin{align*}
\tau_a(A_{i,0})&=A_{i-1,0},\\
\tau_a(A_{i,1})&=\begin{cases}
A_{i-1,1}+aA_{i-1,0}&\text{ if }1\leq i\leq m+n-1,\\
A_{m+n-1,1}+(a-\beta)A_{m+n-1,0}&\text{ if }i=0
\end{cases}
\end{align*}
for $A=H,X^\pm$.
\end{lemma}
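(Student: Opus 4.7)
The plan is to show that $\tau_a$ extends from generators to a homomorphism of associative superalgebras by verifying each defining relation of $Y^{(p)}_{\lambda,\beta}(\widehat{\mathfrak{sl}}(m|n))$ is respected, and then to exhibit an explicit two-sided inverse. The starting observation is that under the cyclic shift $i\mapsto i-1$ of indices (modulo $m+n$), the parity sequences satisfy $s^{(p)}_i=s^{(p-1)}_{i-1}$ and consequently the Cartan entries satisfy
\[
a^{(p)}_{i,j}=a^{(p-1)}_{i-1,j-1}\qquad (\text{indices mod }m+n),
\]
and the parity of every generator is transported correctly since $s^{(p)}_i\neq s^{(p)}_{i+1}$ iff $s^{(p-1)}_{i-1}\neq s^{(p-1)}_i$. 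Given these, the level-zero relations \eqref{Eq2.2}, \eqref{Eq2.4}, \eqref{Eq2.10}-\eqref{Eq2.12} hold in the image tautologically, and the Cartan relation \eqref{Eq2.1} reduces to the fact that the corrections $aH_{i-1,0}$ all commute with each other.

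The bulk of the work lies in the level-one relations \eqref{Eq2.3}, \eqref{Eq2.5}-\eqref{Eq2.9}. I would split into an \emph{interior} regime and a \emph{boundary} regime, where by boundary I mean the choices of $(i,j)$ for which either $(i,j)$ or the shifted pair $(i-1,j-1)$ equals $(0,m+n-1)$ or $(m+n-1,0)$ in its respective Yangian. In the interior regime, applying $\tau_a$ to $[\tilde H_{i,1},X^\pm_{j,0}]=\pm a^{(p)}_{i,j}X^\pm_{j,1}$ produces on the left a correction $a[H_{i-1,0},X^\pm_{j-1,0}]$ and on the right a correction $\pm a\,a^{(p)}_{i,j}X^\pm_{j-1,0}$; these match by \eqref{Eq2.4} together with the Cartan identity above. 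The analogous bookkeeping handles \eqref{Eq2.3} and \eqref{Eq2.8} in the interior regime. In the boundary regime, the only nontrivial cases are $(i,j)\in\{(0,m+n-1),(m+n-1,0)\}$ (where a $\beta$-deformed relation of $Y^{(p)}$ must map to a generic relation of $Y^{(p-1)}$) and $(i,j)\in\{(1,0),(0,1)\}$ (where a generic relation of $Y^{(p)}$ must map to a $\beta$-deformed relation of $Y^{(p-1)}$). A direct expansion using \eqref{Eq2.6} shows, for instance, that
\[
[\tilde H_{0,1}+aH_{0,0},\,X^\pm_{m+n-1,0}]=\pm a^{(p-1)}_{0,m+n-1}\bigl(X^\pm_{m+n-1,1}-\beta X^\pm_{m+n-1,0}+aX^\pm_{m+n-1,0}\bigr),
\]
which equals $\pm a^{(p)}_{1,0}\,\tau_a(X^\pm_{0,1})$ precisely because the coefficient $(a-\beta)$ in the definition of $\tau_a(X^\pm_{0,1})$ is engineered to absorb the $-\beta$ from the affine relation; the symmetric computation handles \eqref{Eq2.7}. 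The most delicate case is the quadratic affine relation \eqref{Eq2.9}, where the same $(a-\beta)-a=-\beta$ cancellation applied to both $X^\pm_{m+n-1,0}$ and $X^\pm_{0,0}$ after the shift reduces \eqref{Eq2.9} in $Y^{(p)}$ to \eqref{Eq2.8} in $Y^{(p-1)}$.

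To upgrade the resulting homomorphism to an isomorphism, I would define in parallel a map $\tau'\colon Y^{(p-1)}_{\lambda,\beta}(\widehat{\mathfrak{sl}}(m|n))\to Y^{(p)}_{\lambda,\beta}(\widehat{\mathfrak{sl}}(m|n))$ by $\tau'(A_{i,0})=A_{i+1,0}$ (indices mod $m+n$), $\tau'(A_{i,1})=A_{i+1,1}-aA_{i+1,0}$ for $0\leq i\leq m+n-2$, and $\tau'(A_{m+n-1,1})=A_{0,1}-(a-\beta)A_{0,0}$. The identical case analysis (with the roles of the two Yangians reversed and the sign of $a$ flipped) shows $\tau'$ is a homomorphism, and then $\tau'\circ\tau_a$ and $\tau_a\circ\tau'$ are homomorphisms that restrict to the identity on generators, hence are the identity. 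The main technical obstacle throughout is the bookkeeping in the boundary regime: keeping track of which index in which Yangian plays the role of the affine node under the shift, and on which side of each relation the $\beta$ deformation resides. Once that correspondence is pinned down, the verifications are all mechanical.
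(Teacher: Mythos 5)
Your proposal is correct and is essentially the argument the paper intends: the paper states this lemma with no proof at all (``by the definition of the affine super Yangian\dots''), and what you write out is exactly the direct verification that assertion hides --- the shifted Cartan identity $a^{(p)}_{i,j}=a^{(p-1)}_{i-1,j-1}$, the transport of parities, the split into interior and boundary pairs, and the $(a-\beta)$ bookkeeping that trades the $\beta$-deformed relations \eqref{Eq2.6}, \eqref{Eq2.7}, \eqref{Eq2.9} of one Yangian for the undeformed relations \eqref{Eq2.5}, \eqref{Eq2.8} of the other (and vice versa at the pairs $(1,0)$ and $(0,1)$). Your explicit inverse $\tau'$ with the sign of $a$ reversed is likewise the standard and correct way to conclude bijectivity.
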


\subsection{DDAHA and Cherednik algebra}
Let us recall the definitions of the degenerate double affine Hecke algebra (DDAHA) and the rational Cherednik algebra.
\begin{definition}\label{ddahadef1}
Let $t,c\in\C$. The \emph{degenerate double affine Hecke algebra} (DDAHA) $\mathbb{H}_{t,c}(S_{\ell})$  of type $\mathfrak{gl}_l$ is the associative algebra over $\C$ generated by $H^{\text{deg}}_c(S_l)=\C[\mathsf{x}_1^{\pm 1},\dots,\mathsf{x}_l^{\pm  1}] \rtimes\mathbb{C}[S_l$] and $\mathbb{C}[u_1,\cdots,u_l]$ with the following relations:
		\begin{align}
		\sigma_i u_i &= u_{i+1} \sigma_i -c, \\
		\sigma_i u_j &= u_j \sigma_i \textrm{ for $1 \le j \le  l$, $j  \ne  i,i+1$},\\
		[u_i,\mathsf{x}_j] &=  \begin{cases} t \mathsf{x}_i +c \sum_{1\le k <i} \mathsf{x}_k \sigma_{k,i} + c\sum_{i< k \le l} \mathsf{x}_i \sigma_{i,k} & i=j, \\
		-c\mathsf{x}_j \sigma_{j,i} & i>j,\\
		-c\mathsf{x}_i \sigma_{i,j} & i<j.
		\end{cases}
		\end{align}
	\end{definition}
 \begin{definition}\label{defrc}
Let $t,c\in\C$. The rational Cherednik algebra $\mathsf{H}_{t,c}(S_{\ell})$ of type $\mfgl_{\ell}$ is the associative algebra generated by commuting elements $\mathsf{x}_1,\ldots,\mathsf{x}_{\ell}$, commuting elements $\mathsf{y}_1,\ldots,\mathsf{y}_{\ell}$, and $\C[S_{\ell}]$, subject to the following relations:
\begin{gather}
\sigma \mathsf{x}_i \sigma^{-1}= \mathsf{x}_{\sigma(i)},\label{defrna-1}\\
\sigma \mathsf{y}_j \sigma^{-1} =  \mathsf{y}_{\sigma(j)}\label{defrna-2},\\
[\mathsf{y}_j,\mathsf{x}_i]=-c \sigma_{i,j}\text{ if }i\neq j,\label{defrna-3}\\
[\mathsf{y}_i,\mathsf{x}_i]=t+c\sum_{k\neq i} \sigma_{k,i}.\label{defrna-4}
\end{gather}
\end{definition}
Let us introduce new elements of $\mathsf{H}_{t,c}(S_{\ell})$ as follows:
\begin{align*}
\mathcal{U}_i&=\dfrac{t}{2}+\mathsf{x}_i \mathsf{y}_i+c\sum_{j<i}\sigma_{i,j},\ 
\mathfrak{y}_i=\mathcal{U}_i+\dfrac{c}{2}\sum_{j\neq i}\limits\text{sign}(j-i)\sigma_{i,j}.
\end{align*}
As for $\mathcal{U}_i$ and $\mathfrak{y}_i$, we obtain the following lemma.
\begin{proposition}[Theorem~3.8 in \cite{DO}, Proposition~ 4.1 in \cite{suz} and Proposition 2.7 in \cite{Gu1}]\label{Prop87} 
\begin{enumerate}
\item There is an embedding from $H^{\deg}_{c}(S_l)$ into $\mathsf{H}_{t,c}(S_{\ell})$ given by
\begin{equation*}
\sigma_i\mapsto\sigma_{i,i+1},\ \mathsf{u}_i\mapsto\mathcal{U}_i.
\end{equation*}
\item 
By using $\mathfrak{y}_k$, we can rewrite \eqref{defrna-2}-\eqref{defrna-4} as
\begin{gather}
\sigma \mathfrak{y}_i\sigma^{-1}=\mathfrak{y}_{\sigma(i)},\label{prop96-1}\\
[\mathfrak{y}_j,\mathsf{x}_i]=-\dfrac{c}{2}(\mathsf{x}_i+\mathsf{x}_j)\sigma_{i,j}\text{ if }i\neq j,\label{prop96-3}\\
[\mathfrak{y}_i,\mathsf{x}_i]=t\mathsf{x}_i+\dfrac{c}{2}\sum_{j\neq i}\limits(\mathsf{x}_i+\mathsf{x}_j)\sigma_{i,j}\label{prop96-4}
\end{gather}
for $\sigma\in S_l$ and $1\leq i,j\leq l$.
\end{enumerate}
\end{proposition}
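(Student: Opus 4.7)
For part (1), the plan is to verify that $\sigma_i\mapsto\sigma_{i,i+1}$, $\mathsf{u}_i\mapsto\mathcal{U}_i$ respects each defining relation of $H^{\deg}_c(S_l)$ and then deduce injectivity. The symmetric-group relations \eqref{affHecdef1}--\eqref{affHecdef4} are immediate since $\mathbb{C}[S_l]$ sits inside $\mathsf{H}_{t,c}(S_l)$. For \eqref{ldrna}--\eqref{ldrna3}, the main calculation is $\sigma_{i,i+1}\mathcal{U}_i = \mathcal{U}_{i+1}\sigma_{i,i+1}-c$, obtained by moving $\sigma_{i,i+1}$ through each summand of $\mathcal{U}_i$ via \eqref{defrna-1}--\eqref{defrna-2} together with the identity $\sigma_{i,i+1}\sigma_{i,j}=\sigma_{i+1,j}\sigma_{i,i+1}$ for $j<i$; this matches \eqref{ldrna} at $\kappa=c$, and the remaining cross relations follow by analogous direct manipulations.

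The main technical obstacle is \eqref{affHecdef11}, the commutativity $[\mathcal{U}_i,\mathcal{U}_j]=0$. My plan is to expand the commutator using only \eqref{defrna-1}--\eqref{defrna-4}: the cross terms between the two $\mathsf{x}\mathsf{y}$-factors produce contributions of the form $-c\mathsf{x}_i\mathsf{y}_j\sigma_{i,j}$ via \eqref{defrna-3}, while the commutators of the reflection sums with the $\mathsf{x}_k\mathsf{y}_k$ parts contribute compensating terms via \eqref{defrna-1}--\eqref{defrna-2}, and all the reflection-reflection pieces cancel in the manner of the classical Jucys--Murphy calculation in the rational Cherednik algebra. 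Once the homomorphism property is in hand, injectivity follows from the PBW isomorphisms $H^{\deg}_c(S_l)\cong\mathbb{C}[\mathsf{u}_1,\ldots,\mathsf{u}_l]\otimes\mathbb{C}[S_l]$ and $\mathsf{H}_{t,c}(S_l)\cong\mathbb{C}[\mathsf{x}]\otimes\mathbb{C}[S_l]\otimes\mathbb{C}[\mathsf{y}]$: filtering by total $\mathsf{y}$-degree, the image of a PBW monomial $\mathsf{u}_1^{a_1}\cdots\mathsf{u}_l^{a_l}\sigma$ has leading symbol $\mathsf{x}_1^{a_1}\cdots\mathsf{x}_l^{a_l}\mathsf{y}_1^{a_1}\cdots\mathsf{y}_l^{a_l}\sigma$, so these images are linearly independent.

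For part (2), my first step is to rewrite $\mathfrak{y}_i$ in manifestly $S_l$-equivariant form as $\mathfrak{y}_i=\tfrac{t}{2}+\mathsf{x}_i\mathsf{y}_i+\tfrac{c}{2}\sum_{j\neq i}\sigma_{i,j}$, obtained from the definition by combining the half-sum $\sum_{j<i}\sigma_{i,j}$ appearing in $\mathcal{U}_i$ with the antisymmetric correction $\tfrac{c}{2}\sum_j\mathrm{sign}(j-i)\sigma_{i,j}$. Relation \eqref{prop96-1} is then immediate from \eqref{defrna-1}--\eqref{defrna-2}. For \eqref{prop96-3} with $i\neq j$, I would compute $[\mathsf{x}_j\mathsf{y}_j,\mathsf{x}_i]=-c\mathsf{x}_j\sigma_{i,j}$ using \eqref{defrna-3}, and $[\tfrac{c}{2}\sum_{k\neq j}\sigma_{j,k},\mathsf{x}_i]=\tfrac{c}{2}(\mathsf{x}_j-\mathsf{x}_i)\sigma_{i,j}$ since only the $k=i$ term contributes; summing yields $-\tfrac{c}{2}(\mathsf{x}_i+\mathsf{x}_j)\sigma_{i,j}$ as required. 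Relation \eqref{prop96-4} follows by the same scheme, with \eqref{defrna-4} additionally producing the $t\mathsf{x}_i$ term. The converse implication, recovering \eqref{defrna-2}--\eqref{defrna-4} from \eqref{prop96-1}--\eqref{prop96-4}, is immediate by substituting the definition of $\mathfrak{y}_i$ back in. Thus the bookkeeping in part (2) is routine provided one tracks signs and the $i$-versus-$j$ asymmetry carefully; the real difficulty, as above, is the commutativity of the $\mathcal{U}_i$ in part (1).
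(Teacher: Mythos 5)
Your proof is correct. The paper does not actually prove this proposition --- it is quoted from \cite{DO}, \cite{suz} and \cite{Gu1} --- and your argument (direct verification of the degenerate affine Hecke relations with $\kappa=c$, the pairwise cancellation of the $\mathsf{x}\mathsf{y}$--$\mathsf{x}\mathsf{y}$ and $\mathsf{x}\mathsf{y}$--reflection cross terms together with Jucys--Murphy commutativity for $[\mathcal{U}_i,\mathcal{U}_j]=0$, the PBW filtration argument for injectivity, and the rewriting $\mathfrak{y}_i=\tfrac{t}{2}+\mathsf{x}_i\mathsf{y}_i+\tfrac{c}{2}\sum_{j\neq i}\sigma_{i,j}$ for part (2)) is exactly the standard one carried out in those references.
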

Observe that, under the embedding in the first part of the previous proposition, $\mathfrak{z}_i \mapsto \mathfrak{y}_i$. Moreover, we must have $[\mathcal{U}_i,\mathcal{U}_j]=0$ for the first part to be true.

By using the rational Cherednik algebra $\mathsf{H}_{t,c}(S_{\ell})$, Suzuki \cite{suz} gave another presentation of the DDAHA $\mathbb{H}_{t,c}(S_{\ell})$.
\begin{proposition}[Proposition 4.1 in \cite{suz}]\label{Prop9800}
The degenerate double affine Hecke algebra $\mathbb{H}_{t,c}(S_{\ell})$ is the localization of $\mathsf{H}_{t,c}$ at $\mathsf{x}_1\cdots \mathsf{x}_{\ell}$. In other words, $\mathbb{H}_{t,c}(S_{\ell})$ is obtained from $\mathsf{H}_{t,c}(S_{\ell})$ by adding the inverses $\mathsf{x}_1^{-1}, \mathsf{x}_2^{-1}, \ldots, \mathsf{x}_{\ell}^{-1}$.
\end{proposition}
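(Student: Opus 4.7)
The plan is to construct mutually inverse homomorphisms between $\mathbb{H}_{t,c}(S_{\ell})$ and the localization $\mathsf{H}_{t,c}(S_{\ell})[\mathsf{x}_{1}^{-1},\ldots,\mathsf{x}_{\ell}^{-1}]$. First, define $\psi:\mathbb{H}_{t,c}(S_{\ell})\to\mathsf{H}_{t,c}(S_{\ell})[\mathsf{x}^{-1}]$ on generators by $\mathsf{x}_{i}^{\pm 1}\mapsto\mathsf{x}_{i}^{\pm 1}$, $\sigma\mapsto\sigma$ for $\sigma\in S_{\ell}$, and $u_{i}\mapsto\mathcal{U}_{i}$. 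The DDAHA relations involving only the $u_{i}$'s and $\mathbb{C}[S_{\ell}]$ (namely $\sigma_{i}u_{i}=u_{i+1}\sigma_{i}-c$, $\sigma_{i}u_{j}=u_{j}\sigma_{i}$ for $j\neq i,i+1$, and commutativity $[u_{i},u_{j}]=0$) are precisely the defining relations of the degenerate affine Hecke algebra $H^{\deg}_{c}(S_{\ell})$ under $\mathsf{u}_{i}\leftrightarrow u_{i}$, so they are inherited from the DAHA embedding of Proposition~\ref{Prop87}(1). The three cases of $[u_{i},\mathsf{x}_{j}]$ must be checked directly via \eqref{defrna-1}, \eqref{defrna-3}, \eqref{defrna-4}; for instance, when $i=j$, combining $\mathsf{x}_{i}[\mathsf{y}_{i},\mathsf{x}_{i}] + c\sum_{k<i}[\sigma_{i,k},\mathsf{x}_{i}]$ with $[\sigma_{i,k},\mathsf{x}_{i}] = (\mathsf{x}_{k}-\mathsf{x}_{i})\sigma_{i,k}$ yields
\begin{equation*}
[\mathcal{U}_{i},\mathsf{x}_{i}] = t\mathsf{x}_{i} + c\sum_{k<i}\mathsf{x}_{k}\sigma_{k,i} + c\sum_{k>i}\mathsf{x}_{i}\sigma_{i,k},
\end{equation*}
the required right-hand side; the off-diagonal cases are analogous.

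In the other direction, define $\varphi:\mathsf{H}_{t,c}(S_{\ell})\to\mathbb{H}_{t,c}(S_{\ell})$ by $\mathsf{x}_{i}\mapsto\mathsf{x}_{i}$, $\sigma\mapsto\sigma$, and $\mathsf{y}_{i}\mapsto\mathsf{x}_{i}^{-1}\bigl(u_{i}-t/2-c\sum_{j<i}\sigma_{i,j}\bigr)$, which is well-defined since $\mathsf{x}_{i}$ is invertible in $\mathbb{H}_{t,c}(S_{\ell})$. Relation \eqref{defrna-1} is immediate; \eqref{defrna-3} and \eqref{defrna-4} follow by reversing the above computation using the DDAHA commutators $[u_{i},\mathsf{x}_{j}]$; \eqref{defrna-2} reduces to verifying $\sigma_{i}\varphi(\mathsf{y}_{i})\sigma_{i}=\varphi(\mathsf{y}_{i+1})$ for simple transpositions (which suffices since they generate $S_{\ell}$), a direct consequence of $\sigma_{i}u_{i}=u_{i+1}\sigma_{i}-c$ together with the conjugation rule $\sigma_{i}\mathsf{x}_{k}^{-1}\sigma_{i}=\mathsf{x}_{\sigma_{i}(k)}^{-1}$. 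By the universal property of Ore localization, $\varphi$ extends uniquely to $\tilde{\varphi}:\mathsf{H}_{t,c}(S_{\ell})[\mathsf{x}^{-1}]\to\mathbb{H}_{t,c}(S_{\ell})$. The maps $\psi$ and $\tilde{\varphi}$ are then mutually inverse by direct inspection: $\psi\bigl(\tilde{\varphi}(\mathsf{y}_{i})\bigr)=\mathsf{x}_{i}^{-1}(\mathcal{U}_{i}-t/2-c\sum_{j<i}\sigma_{i,j})=\mathsf{y}_{i}$ and $\tilde{\varphi}\bigl(\psi(u_{i})\bigr)=t/2+u_{i}-t/2-c\sum_{j<i}\sigma_{i,j}+c\sum_{j<i}\sigma_{i,j}=u_{i}$.

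The main technical obstacle is the commutativity $[\varphi(\mathsf{y}_{i}),\varphi(\mathsf{y}_{j})]=0$ in $\mathbb{H}_{t,c}(S_{\ell})$ --- the one Cherednik relation that does not translate transparently into a single DDAHA relation. To handle it, I would write $\varphi(\mathsf{y}_{i})=\mathsf{x}_{i}^{-1}A_{i}$ with $A_{i}=u_{i}-t/2-c\sum_{k<i}\sigma_{i,k}$, expand $[\mathsf{x}_{i}^{-1}A_{i},\mathsf{x}_{j}^{-1}A_{j}]$, move all $\mathsf{x}^{-1}$ factors to the left using $[u_{i},\mathsf{x}_{j}^{-1}]=-\mathsf{x}_{j}^{-1}[u_{i},\mathsf{x}_{j}]\mathsf{x}_{j}^{-1}$ and the conjugation rule $\sigma\mathsf{x}_{k}^{-1}\sigma^{-1}=\mathsf{x}_{\sigma(k)}^{-1}$, and finally combine $[u_{i},u_{j}]=0$ with the $S_{\ell}$-symmetrization of the remaining combinatorial sum to obtain zero.
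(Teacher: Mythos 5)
The paper does not actually prove this proposition: it is quoted directly from Suzuki (Proposition 4.1 of \cite{suz}), so there is no internal argument to compare yours against. Your construction is the standard one and is essentially Suzuki's: invert the change of variables $\mathcal{U}_i=\tfrac{t}{2}+\mathsf{x}_i\mathsf{y}_i+c\sum_{j<i}\sigma_{i,j}$ and check the two presentations against each other. The computations you write out are correct, and the step you leave as a sketch, namely $[\varphi(\mathsf{y}_i),\varphi(\mathsf{y}_j)]=0$ in $\mathbb{H}_{t,c}(S_{\ell})$, does close along exactly the lines you indicate. With $A_i=u_i-\tfrac{t}{2}-c\sum_{k<i}\sigma_{i,k}$, the three cases of the DDAHA commutator $[u_i,\mathsf{x}_j]$ collapse to the single identity
\begin{equation*}
[A_i,\mathsf{x}_j]=-c\,\mathsf{x}_i\sigma_{i,j},\qquad\text{hence}\qquad [A_i,\mathsf{x}_j^{-1}]=c\,\mathsf{x}_j^{-1}\sigma_{i,j}\quad(i\neq j),
\end{equation*}
while $A_i+\tfrac{t}{2}$ is the element $\mathsf{z}_i$ of the degenerate affine Hecke subalgebra generated by the $u_k$ and $S_{\ell}$, so that $[A_i,A_j]=-c(A_i-A_j)\sigma_{i,j}$ by \eqref{gather160} and $\sigma_{i,j}(A_j-A_i)=(A_i-A_j)\sigma_{i,j}$. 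Pulling the inverses to the left then gives $[\mathsf{x}_i^{-1}A_i,\mathsf{x}_j^{-1}A_j]=\mathsf{x}_i^{-1}\mathsf{x}_j^{-1}\bigl([A_i,A_j]+c(A_i-A_j)\sigma_{i,j}\bigr)=0$, so your symmetrization step is a two-line check rather than a genuine obstacle. Two minor points to tidy: for \eqref{defrna-2} you should also record that $\sigma_i$ fixes $\varphi(\mathsf{y}_j)$ for $j\neq i,i+1$ (immediate, since $\sigma_i$ then commutes with $u_j$, with $\mathsf{x}_j^{-1}$, and with $\sum_{k<j}\sigma_{j,k}$); and the appeal should be to the universal property of universal localization (formally inverting the $\mathsf{x}_i$), which is exactly what ``obtained by adding the inverses'' asserts --- the Ore condition is not needed for the isomorphism argument as you have set it up.
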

By \eqref{prop96-3} and \eqref{prop96-4}, we obtain the following corollary:
\begin{corollary}
We obtain
\begin{gather}
[\mathsf{x}_i^{-1},\mathfrak{y}_j]=-\dfrac{c}{2}(\mathsf{x}_i^{-1}+\mathsf{x}_j^{-1})\sigma_{i,j}\text{ if }i\neq j,\label{prop96-5}\\
[\mathsf{x}_i^{-1},\mathfrak{y}_i]=t\mathsf{x}_i^{-1}+\dfrac{c}{2}\sum_{j\neq i}\limits(\mathsf{x}_i^{-1}+\mathsf{x}_j^{-1})\sigma_{i,j}\label{prop96-6}
\end{gather}
for $1\leq i,j\leq l$.
\end{corollary}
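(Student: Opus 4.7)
The plan is to derive the two identities from their non-inverted counterparts \eqref{prop96-3} and \eqref{prop96-4} in Proposition~\ref{Prop87}, using the universal ``differentiation'' identity
\[
[\mathsf{x}_i^{-1},\mathfrak{y}_j] = -\mathsf{x}_i^{-1}\,[\mathsf{x}_i,\mathfrak{y}_j]\,\mathsf{x}_i^{-1},
\]
which follows by applying $\ad(\mathfrak{y}_j)$ to $\mathsf{x}_i\mathsf{x}_i^{-1}=1$. By Proposition~\ref{Prop9800}, $\mathsf{x}_i^{-1}$ lives in $\mathbb{H}_{t,c}(S_\ell)$, so this is legitimate inside that algebra.

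First I would record the auxiliary commutation $\sigma_{i,j}\mathsf{x}_i^{-1}=\mathsf{x}_j^{-1}\sigma_{i,j}$, which is obtained by inverting the relation $\sigma_{i,j}\mathsf{x}_j=\mathsf{x}_i\sigma_{i,j}$ coming from \eqref{defrna-1}. Then for $i\neq j$, plugging $[\mathsf{x}_i,\mathfrak{y}_j]=\tfrac{c}{2}(\mathsf{x}_i+\mathsf{x}_j)\sigma_{i,j}$ (from \eqref{prop96-3}) into the boxed identity gives
\[
[\mathsf{x}_i^{-1},\mathfrak{y}_j] = -\tfrac{c}{2}\mathsf{x}_i^{-1}(\mathsf{x}_i+\mathsf{x}_j)\sigma_{i,j}\mathsf{x}_i^{-1} = -\tfrac{c}{2}(1+\mathsf{x}_i^{-1}\mathsf{x}_j)\mathsf{x}_j^{-1}\sigma_{i,j} = -\tfrac{c}{2}(\mathsf{x}_i^{-1}+\mathsf{x}_j^{-1})\sigma_{i,j},
\]
which is \eqref{prop96-5}. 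For $i=j$, substituting $[\mathsf{x}_i,\mathfrak{y}_i]=-t\mathsf{x}_i-\tfrac{c}{2}\sum_{j\neq i}(\mathsf{x}_i+\mathsf{x}_j)\sigma_{i,j}$ (which is $-1$ times \eqref{prop96-4}) and applying the same move term-by-term yields
\[
[\mathsf{x}_i^{-1},\mathfrak{y}_i] = t\mathsf{x}_i^{-1} + \tfrac{c}{2}\sum_{j\neq i}(1+\mathsf{x}_i^{-1}\mathsf{x}_j)\mathsf{x}_j^{-1}\sigma_{i,j} = t\mathsf{x}_i^{-1}+\tfrac{c}{2}\sum_{j\neq i}(\mathsf{x}_i^{-1}+\mathsf{x}_j^{-1})\sigma_{i,j},
\]
which is \eqref{prop96-6}.

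There is essentially no serious obstacle here: the only care needed is the bookkeeping of how $\sigma_{i,j}$ passes across $\mathsf{x}_i^{-1}$ (turning it into $\mathsf{x}_j^{-1}$), and the sign that appears when converting $[\mathfrak{y}_i,\mathsf{x}_i]$ into $[\mathsf{x}_i,\mathfrak{y}_i]$. Both cases of the corollary then follow by a one-line computation from the already-established relations.
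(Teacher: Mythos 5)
Your proof is correct and is precisely the computation the paper leaves implicit (the corollary is stated with no proof beyond the phrase ``By \eqref{prop96-3} and \eqref{prop96-4}''): conjugating $[\mathsf{x}_i,\mathfrak{y}_j]$ by $\mathsf{x}_i^{-1}$ via the identity $[\mathsf{x}_i^{-1},\mathfrak{y}_j]=-\mathsf{x}_i^{-1}[\mathsf{x}_i,\mathfrak{y}_j]\mathsf{x}_i^{-1}$ and using $\sigma_{i,j}\mathsf{x}_i^{-1}=\mathsf{x}_j^{-1}\sigma_{i,j}$ is exactly the intended route, and your sign bookkeeping checks out in both cases.
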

For the proof of the Schur-Weyl duality in the affine setting, we will need the super Yangian of $\mathfrak{sl}(m|n)$ associated to a shift of the standard parity sequence.
\begin{definition}
We define $Y^{(p)}_\lambda(\mathfrak{sl}(m|n))$ as an associative superalgebra generated by $\{X^\pm_{i,r},H_{i,r}\mid 1\leq i\leq m+n-1,r=0,1\}$ with the relations \eqref{Eq2.1}-\eqref{Eq2.5}, \eqref{Eq2.8}, \eqref{Eq2.10}, \eqref{Eq2.11} and \eqref{Eq2.12}.
\end{definition}
\begin{remark}
In the case $p=0$, $Y^{(p)}_\lambda(\mathfrak{sl}(m|n))$ is nothing but $Y_{\lambda}(\mathfrak{sl}(m|n))$ defined in Definition~\ref{deffinsup}.
\end{remark}
For $1\leq i\leq m+n$, let $i^{(p)}$ be $\sum_{j=i+1}^{m+n}\limits (-1)^{s^{(p)}_j}$.
For $1\leq i\leq m+n-1$, we can define two elements of $Y_{\lambda}^{(p)}(\mathfrak{sl}_{m|n})$:
\begin{align*}
J(h_i)&=\widetilde{H}_{i,1}+\lambda\vartheta_i,\ 
J(x^\pm_i)=X^\pm_{i,1}+\lambda\varpi^\pm_i,
\end{align*}
by the same way as in the case $p=0$.

By the same way as Theorem~\ref{finite case-1}, we can prove the following theorem.
\begin{theorem}\label{finite case}
Suppose that $M$ is a right $\mathbb{H}_{t,\lambda}(S_{\ell})$-module. 
We can define an action of $Y_{\lambda}^{(p)}(\mathfrak{sl}(m|n))$ on $M\otimes_{\C[S_l]}\mathbb{C}(p|m|n-p)^{\otimes l}$ by
\begin{align}
H_{i,0} (\mathbf{m}\ot\mathbf{v})&=\mathbf{m}\ot ((-1)^{s^{(p)}_{i}}E_{i,i}-(-1)^{s^{(p)}_{i+1}}E_{i+1,i+1})\mathbf{v},\label{defacth0}\\
X_{i,0}^{+} (\mathbf{m}\ot\mathbf{v})&=\mathbf{m}\ot E_{i,i+1}\mathbf{v},\label{defactx+0}\\
X_{i,0}^{-} (\mathbf{m}\ot\mathbf{v})&=(-1)^{s^{(p)}_{i}}\mathbf{m}\ot E_{i+1,i}\mathbf{v},\label{defactx-0}\\
J(h_i)(\mathbf{m}\ot\mathbf{v})&=\sum_{k=1}^l\limits\mathbf{m}\mathfrak{y}_{k}\otimes ((-1)^{s^{(p)}_{i}}E^{(k)}_{i,i}-(-1)^{s^{(p)}_{i+1}}E^{(k)}_{i+1,i+1})\mathbf{v},\label{defacth1}\\
J(x^+_i)(\mathbf{m}\ot\mathbf{v})&=\sum_{k=1}^l\limits\mathbf{m}\mathfrak{y}_{k}\otimes E^{(k)}_{i,i+1}\mathbf{v},\label{defactx+1}\\
J(x^-_i)(\mathbf{m}\ot\mathbf{v})&=(-1)^{s^{(p)}_{i}}\sum_{k=1}^l\limits\mathbf{m}\mathfrak{y}_{k}\otimes E^{(k)}_{i+1,i}\mathbf{v}\label{defactx-1}
\end{align}
for $\textbf{m}\in M$ and $\mathbf{v}\in\C(p|m|n-p)^{\otimes l}$.
\end{theorem}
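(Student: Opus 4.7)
The plan is to mirror the argument of Theorem~\ref{finite case-1}, systematically replacing the standard parity $|i|$ by $s^{(p)}_i$ and using that the source $M$ is now a module over the DDAHA $\mathbb{H}_{t,\lambda}(S_l)$ rather than over $H^{\deg}_\lambda(S_l)$. Under the embedding of Proposition~\ref{Prop87}(1), the element $\mathsf{u}_k\in H^{\deg}_\lambda(S_l)$ maps to $\mathcal{U}_k\in\mathbb{H}_{t,\lambda}(S_l)$, and hence $\mathfrak{z}_k$ maps to $\mathfrak{y}_k$. Consequently, every identity for $\mathfrak{z}_k$ used in the earlier proof has an immediate counterpart for $\mathfrak{y}_k$, notably the conjugation rule $\sigma\mathfrak{y}_i\sigma^{-1}=\mathfrak{y}_{\sigma(i)}$ from \eqref{prop96-1} and the commutativity $[\mathcal{U}_i,\mathcal{U}_j]=0$ built into Proposition~\ref{Prop87}(1).

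First I would verify well-definedness of the operators \eqref{defacth0}--\eqref{defactx-1} on the balanced tensor product $M\otimes_{\C[S_l]}\mathbb{C}(p|m|n-p)^{\otimes l}$. For the degree-zero generators this is the super-analogue of Lemma~\ref{lem38} for $\mathfrak{gl}(p|m|n-p)$. For $J(h_i)$ and $J(x_i^{\pm})$, relation \eqref{DriJ-2.5} reduces the check to $J(h_i)$, and the required identity follows from \eqref{prop96-1} exactly as in \eqref{well-defined}--\eqref{well-definedness2}. Next, relations \eqref{Eq2.10-1}, \eqref{Eq2.11-1}, \eqref{Eq2.12-1} hold because $\mathbb{C}(p|m|n-p)^{\otimes l}$ is a module over the Lie superalgebra $\mathfrak{sl}(p|m|n-p)$. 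The mixed relations \eqref{DriJ-2}, \eqref{DriJ-2.5}, \eqref{DriJ-3}, \eqref{DriJ-4} then follow by direct computation, since the $\mathfrak{y}_k$ act only on the $M$-slot and the signs in \eqref{defacth1}--\eqref{defactx-1} are tuned exactly so that the corresponding right-hand sides are reproduced.

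The main obstacle is the relation $[\widetilde{H}_{i,1},\widetilde{H}_{j,1}]=0$, corresponding to \eqref{Eq2.1-1}. I would reuse the sorting trick from the proof of Theorem~\ref{finite case-1}: by well-definedness it suffices to test on vectors of the form $\mathbf{v}=\bigotimes_{r=1}^{l}e_{i_r}$ with $i_1\le\cdots\le i_l$. On such $\mathbf{v}$ the derivation of \eqref{aaa} goes through verbatim after replacing $|i|$ by $s^{(p)}_i$ and adjusting $(i-1)^{(0)}$ to the analogous sum in the parity $s^{(p)}$. Crucially, the telescoping in \eqref{align913} depends only on the sorted ordering of the $i_r$ and not on the parity convention. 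The resulting formula expresses $\widetilde{H}_{i,1}(\mathbf{m}\otimes\mathbf{v})$ as a linear combination of terms of the form $\mathbf{m}\mathcal{U}_k\otimes\mathbf{v}$ together with a scalar multiple of $H_{i,0}(\mathbf{m}\otimes\mathbf{v})$. Since the $\mathcal{U}_k$ pairwise commute inside $\mathbb{H}_{t,\lambda}(S_l)$ by Proposition~\ref{Prop87}(1) and since $H_{i,0}$ commutes with $H_{j,0}$, the commutator $[\widetilde{H}_{i,1},\widetilde{H}_{j,1}]$ annihilates $\mathbf{m}\otimes\mathbf{v}$, as required.

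The only genuinely new bookkeeping is tracking the parity signs $s^{(p)}_i$ throughout the expansion of $\vartheta_i$ and $\varpi_i^\pm$; since these signs always enter in the combination $(-1)^{s^{(p)}_i}$ exactly where $(-1)^{|i|}$ appeared before, none of the computations of Theorem~\ref{finite case-1} is structurally altered. I expect this sign-checking to be tedious but routine, with no unexpected cancellations to monitor beyond the one in the sorted-vector computation outlined above.
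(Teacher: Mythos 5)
Your proposal is correct and is essentially the paper's own argument: the paper proves Theorem~\ref{finite case} only by the remark that it follows ``by the same way as Theorem~\ref{finite case-1}'', and your elaboration --- transporting the $\mathfrak{z}_k$-identities to $\mathfrak{y}_k$ via the embedding of Proposition~\ref{Prop87}(1), checking well-definedness through \eqref{prop96-1}, and reducing the key relation $[\widetilde{H}_{i,1},\widetilde{H}_{j,1}]=0$ to the sorted-tensor formula \eqref{aaa} together with $[\mathcal{U}_i,\mathcal{U}_j]=0$ --- is exactly what that remark presupposes. I see no gaps.
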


\subsection{Schur-Weyl duality for the Affine Super Yangian and DDAHA}\label{SWasYang}

For $p\in\{0,1,\ldots,n\}$, Let $\C(p|m|n-p)$ be the $(m+n)$-dimensional super vector space $\bigoplus_{1\leq i\leq m+n}\C e_i$, where $e_i$ is even if $p+1\leq i\leq p+m$ and otherwise odd. We define an isomorphism
\begin{equation*}
\phi^{(p)}\colon\C(p|m|n-p)^{\otimes l}\to\C(p+1|m|n-p-1)^{\otimes l},\ \bigotimes_{g=1}^le_{i_g}\mapsto \bigotimes_{g=1}^le_{i_g+1}.
\end{equation*}
Let us assume that $M$ is a right $\mathbb{H}_{t,\lambda}(S_{\ell})$-module.
We extend $\phi^{(p)}$ to an isomorphism from $M\otimes_{\C[S_l]}\mathbb{C}(p|m|n-p)$ to $M\otimes_{\C[S_l]}\mathbb{C}(p+1|m|n-p-1)$ by
\begin{equation*}
\phi^{(p)}\left(\mathbf{m}\otimes\bigotimes_{g=1}^le_{i_g}\right)=\mathbf{m} \left(\prod_{k=1}^l\limits\mathsf{x}_{k}^{-\delta_{i_k,m+n-1}}\right)\otimes\bigotimes_{g=1}^le_{i_g+1}. 
\end{equation*}
Hereafter, in order to simplify the notation, we will sometimes denote $\phi^{(p)}$ by $\phi$. Moreover, we denote the $i$-th term of the right-hand side of equation numbered $N$ as $(N)_i$.
\begin{lemma}\label{tau} \begin{enumerate}
\item Suppose that $a=-\dfrac{\lambda}{2}$. The following relation holds for $2\leq i\leq m+n-1$, $\mathbf{m}\in M$, $\mathbf{v}\in\mathbb{C}(p|m|n-p)^{\otimes l}$ and $A=H,X^\pm$:
\begin{equation}
        \phi\Big(\tau_a(A_{i,1})(\mathbf{m} \otimes \mathbf{v})\Big) = A_{i,1}\Big(\phi(\mathbf{m}\ot\mathbf{v})\Big).\label{dark1}
\end{equation}
\item Suppose that $a=-\dfrac{\lambda}{2}$ and $\beta=-t+\dfrac{(m-n)\lambda}{2}$. We obtain
\begin{equation}
\phi^2\Big(\tau_a^2(A_{1,1})(\mathbf{m} \otimes \mathbf{v})\Big) = A_{1,1}\Big(\phi^2(\mathbf{m}\ot\mathbf{v})\Big)\label{dark2}
 \end{equation}
for $\mathbf{m}\in M$, $\mathbf{v}\in\mathbb{C}(p|m|n-p)^{\otimes l}$ and $A=H,X^\pm$.
\end{enumerate}
\end{lemma}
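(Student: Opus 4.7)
The identity in (1) says that $\phi$ intertwines the $Y^{(p)}_\lambda$-action on $M \otimes_{\C[S_l]} \C(p|m|n-p)^{\otimes l}$ obtained by pullback through $\tau_a\colon Y^{(p+1)}_\lambda \to Y^{(p)}_\lambda$ with the $Y^{(p+1)}_\lambda$-action on the target. My strategy is to reduce (1) to a single class of generators and then verify it by direct computation. First, I would establish the level-$0$ analogue $\phi\bigl(A_{j-1,0}(\mathbf{m}\otimes\mathbf{v})\bigr) = A_{j,0}\bigl(\phi(\mathbf{m}\otimes\mathbf{v})\bigr)$ for $2\le j \le m+n-1$; this is an immediate consequence of the identity $s^{(p+1)}_{j} = s^{(p)}_{j-1}$ (from the definitions of the parity sequences) together with the explicit formulas \eqref{defacth0}--\eqref{defactx-0}, since both sides produce the same scalar multiple of $\mathbf{m}\prod_{l:\,i_l=m+n}\mathsf{x}_l^{-1}$ tensored with the shifted basis vector. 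Using the bracket $[\widetilde H_{i,1},X^{\pm}_{j,0}] = \pm a^{(p+1)}_{i,j}X^{\pm}_{j,1}$ and this level-$0$ intertwining, (1) for $A = X^{\pm}_{j,1}$ follows from (1) for $A = \widetilde H_{i,1}$ after choosing $i\in\{2,\dots,m+n-1\}$ with $a^{(p+1)}_{i,j}\ne 0$ (always possible since $m+n\ge 5$); combined with $H_{i,1} = \widetilde H_{i,1}+(\lambda/2)H_{i,0}^2$, this reduces (1) to proving the single case $A = \widetilde H_{i,1}$.

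For this case, write $\widetilde H_{i,1} = J(h_i) - \lambda\vartheta_i$ so that $\tau_a(\widetilde H_{i,1}) = J(h_{i-1}) - \lambda\vartheta_{i-1} + aH_{i-1,0}$. Expanding $J(h_{i-1})(\mathbf{m}\otimes\mathbf{v})$ via Theorem~\ref{finite case} on a weight vector $\mathbf{v} = \bigotimes_g e_{i_g}$ and then applying $\phi$, each $\mathfrak{y}_k$ appearing on the $M$-side must be commuted through the factors $\prod_{l:\,i_l=m+n}\mathsf{x}_l^{-1}$ introduced by $\phi$; by \eqref{prop96-5} this contributes $(\lambda/2)(\mathsf{x}_k^{-1}+\mathsf{x}_l^{-1})\sigma_{k,l}$ per crossing. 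On the target side, $J(h_i)(\phi(\mathbf{m}\otimes\mathbf{v}))$ already has $\mathfrak{y}_k$ placed to the right of those factors, so no such commutator is produced. The resulting discrepancy, combined with the index-shift discrepancy between the quadratic sums in $\vartheta_i$ and $\vartheta_{i-1}$, must exactly cancel the correction $aH_{i-1,0}$, and the balance forces $a = -\lambda/2$.

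Part (2) follows the same template with $\phi^2$ and $\tau_a^2$, now using $\tau_a^2(A_{1,1}) = A_{m+n-1,1}+(2a-\beta)A_{m+n-1,0}$. The reduction step works identically (one has $a^{(p+2)}_{1,1} = 2 \ne 0$, which yields $X^{\pm}_{1,1}$ from $\widetilde H_{1,1}$ via the bracket with $X^\pm_{1,0}$ and level-$0$ intertwining under $\phi^2,\tau_a^2$), so only the $\widetilde H_{1,1}$ case requires direct computation. Here $\phi^2$ accumulates the factors $\prod_l \mathsf{x}_l^{-\delta_{i_l,m+n}-\delta_{i_l,m+n-1}}$ and the analogous bookkeeping applies, but now also involves a diagonal contribution $t\,\mathsf{x}_k^{-1}$ from \eqref{prop96-6} when $l=k$, and the partial supertrace $\sum_j(-1)^{s^{(p)}_j} = m-n$ enters in matching the accumulated boundary terms. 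Together these force $\beta = -t+(m-n)\lambda/2$ as the unique value making the identity hold. The main obstacle in both parts is the combinatorial bookkeeping: carefully tracking the parity signs $(-1)^{s^{(p)}_\bullet}$, the $\mathrm{sign}(j-i)$ factors in $\vartheta$ and $\mathfrak{y}$, and the many cross-terms produced by the DDAHA commutators, and verifying that everything assembles precisely into the $aH_{\bullet,0}$ and $(2a-\beta)H_{\bullet,0}$ corrections.
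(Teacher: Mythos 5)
Your plan is essentially the paper's own proof: the paper likewise first reduces to a single family of generators via the defining relations and the level-$0$ intertwining (it reduces to $A=X^{+}$ using \eqref{Eq2.3} and \eqref{Eq2.4}, where you reduce to $\widetilde H_{i,1}$ using \eqref{Eq2.5}; both reductions are valid), and then performs exactly the bookkeeping you describe --- expanding the $J$-generators via Theorem~\ref{finite case}, commuting each $\mathfrak{y}_k$ past the factors $\mathsf{x}_{j_s}^{-1}$ introduced by $\phi$ (resp.\ $\phi^2$) using \eqref{prop96-5}--\eqref{prop96-6}, and matching the resulting cross-terms and the diagonal $t\mathsf{x}^{-1}$ contribution against the index shift in $\varpi^{\pm}_i$ (resp.\ your $\vartheta_i$) and the corrections $aA_{i-1,0}$ and $(2a-\beta)A_{m+n-1,0}$, which forces $a=-\lambda/2$ and $\beta+t=(m-n)\lambda/2$. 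The only divergence is which generator carries the direct computation (and the harmless sign slip $a^{(p+2)}_{1,1}=-2$, not $2$), which does not change the substance of the argument.
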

\begin{proof}
By \eqref{Eq2.3} and \eqref{Eq2.4}, it is enough to show the case $A=X^+$.
\begin{enumerate}
\item
By \eqref{well-defined}, we can assume that $\mathbf{v}=\bigotimes_{g=1}^le_{i_g}$ and write $j_1<j_2<\cdots<j_S$ for all the sub-indices such that $i_{j_s}=m+n$.
By the definition of $J(x^+_i)$ and $\tau_a$, we need to show that $X^+_{i,1}\big(\phi(\mathbf{m}\ot\mathbf{v})\big)-\phi\left(\tau_a(X^+_{i,1})(\mathbf{m}\ot\mathbf{v})\right)$ vanishes. Let us start with
\begin{align}
 X^+_{i,1}\big(\phi(\mathbf{m}\ot\mathbf{v})\big)-\phi\big(\tau_a(X^+_{i,1})(\mathbf{m}\ot\mathbf{v})\big) & =J(x^+_i)\left(\phi(\mathbf{m}\ot\mathbf{v})\right)-\phi\big(J(x^+_{i-1})(\mathbf{m}\ot\mathbf{v})\big)\nonumber\\
&\quad-\lambda\varpi^+_i\big(\phi(\mathbf{m}\ot\mathbf{v})\big)+\lambda\phi\big(\varpi^+_{i-1}(\mathbf{m}\ot\mathbf{v})\big)\nonumber\\
&\quad+\dfrac{\lambda}{2}\phi\big(X^+_{i-1,0}(\mathbf{m}\ot\mathbf{v})\big).\label{eq23-1}
\end{align}
By the definition of the action of $J(x^+_i)$ and $\phi$, we obtain
\begin{align}
J(x^+_i)\Big(\phi(\mathbf{m}\ot\mathbf{v})\Big)
&=\sum_{k=1}^l\limits \mathbf{m}\left(\prod_{s=1}^S\limits \mathsf{x}_{j_s}^{-1}\right)\mathfrak{y}_{k}\otimes E^{(k)}_{i,i+1}\big(\phi(\mathbf{v})\big)\nonumber\\
&=\sum_{\substack{k=1\\k\neq j_s}}^l\limits \sum_{r=1}^S\limits\mathbf{m}\mathsf{x}_{j_1}^{-1}\cdots \mathsf{x}_{j_{r-1}}^{-1}[\mathsf{x}_{j_{r}}^{-1},\mathfrak{y}_{k}]\mathsf{x}_{j_{r+1}}^{-1}\cdots \mathsf{x}_{j_S}^{-1}\otimes E^{(k)}_{i,i+1}\big(\phi(\mathbf{v})\big)\nonumber\\
&\quad+\sum_{\substack{k=1\\k\neq j_s}}^l\limits \mathbf{m}\mathfrak{y}_{k}\left(\prod_{s=1}^S\limits \mathsf{x}_{j_s}^{-1}\right)\otimes E^{(k)}_{i,i+1}\big(\phi(\mathbf{v})\big).\label{eq23-2}
\end{align}
Observe that, since we are assuming that $2\le i\le m+n-1$, we can consider only the values of $k$ that are $\neq j_s$ for $s=1,2,\ldots, S$. By \eqref{prop96-5}, we have
\begin{align}
\eqref{eq23-2}_1&=-\dfrac{\lambda}{2}\sum_{\substack{k=1\\k\neq j_r}}^l\limits \sum_{r=1}^S\limits\mathbf{m}\mathsf{x}_{j_1}^{-1}\cdots \mathsf{x}_{j_{r-1}}^{-1}(\mathsf{x}_{j_{r}}^{-1}+\mathsf{x}_{k}^{-1})\sigma_{j_r,k}\mathsf{x}_{j_{r+1}}^{-1}\cdots \mathsf{x}_{j_S}^{-1}\otimes E^{(k)}_{i,i+1}\Big(\phi(\mathbf{v})\Big)\nonumber\\
&=-\dfrac{\lambda}{2}\sum_{\substack{k=1\\k\neq j_r}}^l\limits \sum_{r=1}^S\limits\mathbf{m}\mathsf{x}_{j_1}^{-1}\cdots \mathsf{x}_{j_{r-1}}^{-1}\mathsf{x}_{j_{r}}^{-1}\mathsf{x}_{j_{r+1}}^{-1}\cdots \mathsf{x}_{j_S}^{-1}\otimes \sigma_{j_r,k}\Big(E^{(k)}_{i,i+1}\big(\phi(\mathbf{v})\big)\Big)\nonumber\\
&\quad-\dfrac{\lambda}{2}\sum_{\substack{k=1\\k\neq j_r}}^l\limits \sum_{r=1}^S\limits\mathbf{m}\mathsf{x}_{j_1}^{-1}\cdots \mathsf{x}_{j_{r-1}}^{-1}\mathsf{x}_{k}^{-1}\mathsf{x}_{j_{r+1}}^{-1}\cdots \mathsf{x}_{j_S}^{-1}\otimes \sigma_{j_r,k}\Big(E^{(k)}_{i,i+1}\big(\phi(\mathbf{v})\big)\Big).\label{align959}
\end{align}
By the assumption that $i\neq 0$, the definition of $\phi^{(p)}$ and the action of $J(x^+_{i-1})$, we have
\begin{align}
\eqref{eq23-2}_2&=\sum_{\substack{k=1\\k\neq j_s}}^l\limits \mathbf{m}\mathfrak{y}_{k}\left(\prod_{s=1}^S\limits \mathsf{x}_{j_s}^{-1}\right)\otimes \phi\big(E^{(k)}_{i-1,i}(\mathbf{v})\big)=\phi\Big(J(x^+_{i-1})\big(\mathbf{m}\ot\mathbf{v}\big)\Big).\label{eq23-2.5}
\end{align}
By the definition of $\varpi^+_i$ and $\phi^{(p)}$, we obtain
\begin{align}
\lambda\varpi^+_i\big(\phi(\mathbf{m}\otimes \mathbf{v})\big)&=\left(\dfrac{\lambda}{2}\sum_{j\neq i,i+1}\limits\text{sign}(j-i)(-1)^{s^{(p+1)}_j}-\dfrac{\lambda}{2}i^{(p+1)}\right)\mathbf{m}\left(\prod_{s=1}^S\limits \mathsf{x}_{j_s}^{-1}\right)\otimes E_{i,i+1}\big(\phi(\mathbf{v})\big)\nonumber\\
&\quad+\dfrac{\lambda}{2}\sum_{j\neq i,i+1}\limits\sum_{\substack{k=1\\i_k=i}}^l\limits\sum_{\substack{d=1\\i_d=j-1}}^l\limits\text{sign}(j-i)\mathbf{m}\left(\prod_{s=1}^S\limits \mathsf{x}_{j_s}^{-1}\right)\otimes \sigma_{k,d}\Big(E^{(k)}_{i,i+1}\big(\phi(\mathbf{v})\big)\Big)\nonumber\\
&\quad-\dfrac{\lambda}{2}\mathbf{m}\left(\prod_{s=1}^S\limits \mathsf{x}_{j_s}^{-1}\right)\otimes h_ix^+_i\left(\phi(\mathbf{v})\right)\label{align960}
\end{align}
and
\begin{align}
 \lambda\phi \Big(\varpi^+_{i-1}(\mathbf{m}\otimes \mathbf{v})\Big)
&=\left(\dfrac{\lambda}{2}\sum_{j\neq i-1,i}\limits\text{sign}(j-i+1)(-1)^{s^{(p)}_j}-\dfrac{\lambda}{2}(i-1)^{(p)}\right)\mathbf{m}\left(\prod_{s=1}^S\limits \mathsf{x}_{j_s}^{-1}\right)\otimes \phi\big(E_{i-1,i}(\mathbf{v})\big)\nonumber\\
&\quad+\dfrac{\lambda}{2}\sum_{j\neq i-1,i}\limits\sum_{\substack{k=1\\i_k=i}}^l\limits\sum_{\substack{d=1\\i_d=j}}^l\limits\text{sign}(j-i+1)\phi\Big(\mathbf{m}\otimes \sigma_{k,d}\big(E^{(k)}_{i-1,i}(\mathbf{v})\big)\Big)\nonumber\\
&\quad-\dfrac{\lambda}{2}\mathbf{m}\left(\prod_{s=1}^S\limits \mathsf{x}_{j_s}^{-1}\right)\otimes \phi\big(h_{i-1}x^+_{i-1}(\mathbf{v})\big).\label{align961}
\end{align}
Thus, we find that
\begin{align*}
\eqref{eq23-1}&=\eqref{align959}-\eqref{align960}+\eqref{align961}+\eqref{eq23-1}_5.
\end{align*}
We easily obtain
\begin{equation}
-\eqref{align960}_3+\eqref{align961}_3=0.\label{align961.1}
\end{equation}
Since the relations
\begin{align*}
\sum_{j\neq i,i+1}\limits\text{sign}(j-i)(-1)^{s^{(p+1)}_j}&=\sum_{j\neq i-1,i}\limits\text{sign}(j-i+1)(-1)^{s^{(p)}_j}+2,\\
i^{(p+1)}&=(i-1)^{(p)}+1,
\end{align*}
hold by a direct computation, we have
\begin{align}
-\eqref{align960}_1+\eqref{align961}_1+\eqref{eq23-1}_5&=0.\label{align961.2}
\end{align}
Thus, by \eqref{align961.1} and \eqref{align961.2}, we obtain
\begin{align*}
\eqref{eq23-1}&=\eqref{align959}-\eqref{align960}_2+\eqref{align961}_2.
\end{align*}
We can divide $\eqref{align960}_2$ and $\eqref{align961}_2$ as follows:
\begin{align}
\eqref{align960}_2&=\dfrac{\lambda}{2}\sum_{j\neq 1,i,i+1}\limits\sum_{\substack{k=1\\i_k=i}}^l\limits\sum_{\substack{d=1\\i_d=j-1}}^l\limits\text{sign}(j-i)\mathbf{m}\left(\prod_{s=1}^S\limits\mathsf{x}_{j_s}^{-1}\right)\otimes \sigma_{k,d}E^{(k)}_{i,i+1}\phi(\mathbf{v})\nonumber\\
&\quad-\dfrac{\lambda}{2}\sum_{\substack{k=1\\i_k=i}}^l\limits\sum_{\substack{d=1\\i_d=m+n}}^l\limits\mathbf{m}\left(\prod_{s=1}^S\limits\mathsf{x}_{j_s}^{-1}\right)\otimes \sigma_{k,d}E^{(k)}_{i,i+1}\phi(\mathbf{v}),\label{align960-1}\\
\eqref{align961}_2&=\dfrac{\lambda}{2}\sum_{j\neq i-1,i,m+n}\limits\sum_{\substack{k=1\\i_k=i}}^l\limits\sum_{\substack{d=1\\i_d=j}}^l\limits\text{sign}(j-i+1)\mathbf{m}\left(\prod_{s=1}^S\limits\mathsf{x}_{j_s}^{-1}\right)\otimes \phi\big(\sigma_{k,d}E^{(k)}_{i-1,i}(\mathbf{v})\big)\nonumber\\
&\quad+\dfrac{\lambda}{2}\sum_{\substack{k=1\\i_k=i}}\limits\sum_{\substack{d=1\\i_d=m+n}}^l\limits\mathbf{m}\left(\prod_{\substack{s=1\\j_s\neq d}}^S\limits\mathsf{x}_{j_s}^{-1}\right)\mathsf{x}_{i_k}^{-1}\otimes \phi\big(\sigma_{k,d}E^{(k)}_{i-1,i}(\mathbf{v})\big).\label{align961-1}
\end{align}
By a direct computation, we obtain
\begin{align}
\eqref{align960-1}_1-\eqref{align961-1}_1&=0,\\
\eqref{align960-1}_2-\eqref{align959}_1&=0,\\
\eqref{align961-1}_2+\eqref{align959}_2&=0.\label{last-1}
\end{align}
By \eqref{eq23-2}-\eqref{last-1}, we find that \eqref{eq23-1} is equal to zero, which completes the proof of part \textit{1} of the previous lemma.

\item To establish that \eqref{dark2} holds, it is enough to show that $X^+_{1,1}\big(\phi^2(\mathbf{m}\ot{\mathbf{v}})\big)-\phi^2\big(\tau_a^2(X^+_{1,1})(\mathbf{m} \otimes \mathbf{v})\big)$ vanishes. Let us start with
\begin{align}
X^+_{1,1}\big(\phi^2(\mathbf{m}\ot{\mathbf{v}})\big) & -\phi^2\big(\tau_a^2(X^+_{1,1})(\mathbf{m} \otimes \mathbf{v})\big)\nonumber\\
&=J(x^+_1)\big(\phi^2(\mathbf{m}\ot{\mathbf{v}})\big)-\phi^2\big(J(x^+_{m+n-1})(\mathbf{m} \otimes \mathbf{v})\big)\nonumber\\
&\quad-\lambda\varpi^+_1\big(\phi^2(\mathbf{m}\ot{\mathbf{v}})\big)+\lambda\phi^2\big(\varpi^+_{m+n-1}(\mathbf{m} \otimes {\mathbf{v}})\big)+(\beta+\lambda)\phi^2\big(X^+_{m+n-1,0}(\mathbf{m} \otimes \mathbf{v})\big)\label{eq569}
\end{align}
is equal to zero.

Suppose that $j_1 < \cdots < j_S$ (resp. $\gamma_1 < \cdots < \gamma_R$) are exactly the values of $j$ (resp. of $\gamma$) satisfying $i_j=m+n$ (resp. $i_\gamma=m+n-1$).
By the definition of the action of $J(x^+_1)$, we have
\begin{align}
J(x^+_1)\big(\phi^2(\mathbf{m}\otimes \mathbf{v})\big)
&=\sum_{s=1}^S\limits\sum_{r=1}^R\limits\mathbf{m} \mathsf{x}^{-1}_{j_1}\cdots \mathsf{x}^{-1}_{j_S}\mathsf{x}^{-1}_{\gamma_1}\cdots \mathsf{x}^{-1}_{\gamma_{r-1}}[\mathsf{x}^{-1}_{\gamma_r},\mathfrak{y}_{j_s}]\mathsf{x}^{-1}_{\gamma_{r+1}}\cdots \mathsf{x}^{-1}_{\gamma_{R}}\otimes E^{(j_s)}_{1,2}\big(\phi^2(\mathbf{v})\big)\nonumber\\
&\quad+\sum_{s=1}^S\limits\sum_{q=1}^S\limits \mathbf{m}\mathsf{x}^{-1}_{j_1}\cdots \mathsf{x}^{-1}_{j_{q-1}}[\mathsf{x}^{-1}_{j_q},\mathfrak{y}_{j_s}]\mathsf{x}^{-1}_{j_{q+1}}\cdots \mathsf{x}^{-1}_{j_{S}}\mathsf{x}^{-1}_{\gamma_1}\cdots \mathsf{x}^{-1}_{\gamma_R}\otimes E^{(j_s)}_{1,2}\big(\phi^2(\mathbf{v})\big)\nonumber\\
&\quad+\sum_{s=1}^S\limits \mathbf{m}\mathfrak{y}_{j_s}\mathsf{x}^{-1}_{j_1}\cdots \mathsf{x}^{-1}_{j_S}\mathsf{x}^{-1}_{\gamma_1}\cdots \mathsf{x}^{-1}_{\gamma_R}\otimes E^{(j_s)}_{1,2}\big(\phi^2(\mathbf{v})\big).\label{6.35}
\end{align}
By the definition of the action of $J(x^+_{m+n-1})$, we have
\begin{align}
\eqref{6.35}_3&=\phi^2\big(J(x^+_{m+n-1})(\mathbf{m}\otimes \mathbf{v})\big).\label{6.36-1}
\end{align}
By \eqref{prop96-5}, we can rewrite
\begin{align}
\eqref{6.35}_1&=-\dfrac{\lambda}{2}\sum_{s=1}^S\limits\sum_{r=1}^R\limits \mathbf{m}\mathsf{x}^{-1}_{j_1}\cdots \mathsf{x}^{-1}_{j_S}\mathsf{x}^{-1}_{\gamma_1}\cdots \mathsf{x}^{-1}_{\gamma_{r-1}}(\mathsf{x}^{-1}_{\gamma_r} + \mathsf{x}_{j_s}^{-1})\sigma_{j_s,\gamma_r} \mathsf{x}^{-1}_{\gamma_{r+1}}\cdots \mathsf{x}^{-1}_{\gamma_{R}}\otimes E^{(j_s)}_{1,2}\big(\phi^2(\mathbf{v})\big) \label{6.36}
\end{align}
and
\begin{align}
\eqref{6.35}_2&=-\dfrac{\lambda}{2}\sum_{s=1}^S\limits\sum_{\substack{q=1\\q\neq s}}^S\limits \mathbf{m}\mathsf{x}^{-1}_{j_1}\cdots \mathsf{x}^{-1}_{j_{q-1}}(\mathsf{x}^{-1}_{j_q}+\mathsf{x}^{-1}_{j_s})\sigma_{j_s,j_q}\mathsf{x}^{-1}_{j_{q+1}}\cdots \mathsf{x}^{-1}_{j_{S}}\mathsf{x}^{-1}_{\gamma_1}\cdots \mathsf{x}^{-1}_{\gamma_R}\otimes E^{(j_s)}_{1,2}\big(\phi^2(\mathbf{v})\big)\nonumber\\
&\quad+\sum_{s=1}^S\limits \mathbf{m}\mathsf{x}^{-1}_{j_1}\cdots \mathsf{x}^{-1}_{j_{s-1}}\left( t\mathsf{x}^{-1}_{j_s}+\dfrac{\lambda}{2}\sum_{\substack{q=1\\q\neq j_s}}^l (\mathsf{x}^{-1}_{j_s}+\mathsf{x}^{-1}_q)\sigma_{j_s,q}\right)\nonumber\\
&\qquad\qquad\qquad\qquad\qquad\qquad\qquad\qquad\qquad\qquad\qquad\cdot\mathsf{x}^{-1}_{j_{s+1}}\cdots \mathsf{x}^{-1}_{j_{S}}\mathsf{x}^{-1}_{\gamma_1}\cdots \mathsf{x}^{-1}_{\gamma_R}\otimes E^{(j_s)}_{1,2}\big(\phi^2(\mathbf{v})\big).\label{6.37}
\end{align}
By \eqref{6.36-1}-\eqref{6.37}, we obtain
\begin{align}
J(x^+_1)(\phi^2(\mathbf{m}\otimes \mathbf{v})) & -\phi^2(J(x^+_{m+n-1})(\mathbf{m}\otimes \mathbf{v}))\nonumber\\
&=t\sum_{s=1}^S\limits \mathbf{m}\mathsf{x}^{-1}_{j_1}\cdots \mathsf{x}^{-1}_{j_S}\mathsf{x}^{-1}_{\gamma_1}\cdots \mathsf{x}^{-1}_{\gamma_R}\otimes E^{(j_s)}_{1,2}(\phi^2(\mathbf{v}))\nonumber\\
&\quad+\dfrac{\lambda}{2}\sum_{s=1}^S\limits\sum_{\substack{q=1\\q\neq j_d,\gamma_h}}\limits^l\mathbf{m}\mathsf{x}^{-1}_{j_1}\cdots \mathsf{x}^{-1}_{j_{s-1}}\mathsf{x}^{-1}_{j_s}\mathsf{x}^{-1}_{j_{s+1}}\cdots \mathsf{x}^{-1}_{j_S}\mathsf{x}^{-1}_{\gamma_1}\cdots \mathsf{x}^{-1}_{\gamma_R}\otimes \sigma_{j_s,q}\Big(E^{(j_s)}_{1,2}\big(\phi^2(\mathbf{v})\big)\Big)\nonumber\\
&\quad+\dfrac{\lambda}{2}\sum_{s=1}^S\limits\sum_{\substack{q=1\\q\neq j_d,\gamma_h}}\limits^l\mathbf{m}\mathsf{x}^{-1}_{j_1}\cdots \mathsf{x}^{-1}_{j_{s-1}}\mathsf{x}^{-1}_q\mathsf{x}^{-1}_{j_{s+1}}\cdots \mathsf{x}^{-1}_{j_S}\mathsf{x}^{-1}_{\gamma_1}\cdots \mathsf{x}^{-1}_{\gamma_R}\otimes \sigma_{j_s,q}\Big(E^{(j_s)}_{1,2}(\phi^2(\mathbf{v})\big)\Big).\label{6.38}
\end{align}
By the definition of $\varpi^+_i$ and since $\sum_{j\neq 1,2}\limits(-1)^{s^{(p+2)}_j} = m-n+2$, we have
\begin{align}
\lambda\varpi^+_1(\phi^2(\mathbf{m}\otimes\mathbf{v})) 
&=\dfrac{\lambda}{2}\sum_{j=3}^{m+n}\limits\sum_{\substack{q=1\\i_q+2=j}}^l\limits\sum_{s=1}^S\limits\text{sign}(j-1)\mathbf{m}\mathsf{x}^{-1}_{j_1}\cdots \mathsf{x}^{-1}_{j_S}\mathsf{x}^{-1}_{\gamma_1}\cdots \mathsf{x}^{-1}_{\gamma_R}\otimes \sigma_{q,j_s}\Big(E^{(j_s)}_{1,2}\big(\phi^2(\mathbf{v})\big)\Big)\nonumber\\
&\quad-\dfrac{\lambda}{2}\mathbf{m}\mathsf{x}^{-1}_{j_1}\cdots \mathsf{x}^{-1}_{j_S}\mathsf{x}^{-1}_{\gamma_1}\cdots \mathsf{x}^{-1}_{\gamma_R}\otimes h_1x^+_1\phi^2(\mathbf{v})\label{6.39}
\end{align}
and
\begin{align}
\lambda\phi^2(\varpi^+_{m+n-1}(\mathbf{m}\otimes\mathbf{v}))&=\left(\dfrac{\lambda}{2}\sum_{j=1}^{m+n-2}\limits\text{sign}(j-m-n+1)(-1)^{s^{(p)}_j}\right)\phi^2\Big(\mathbf{m}\otimes E_{m+n-1,m+n}(\mathbf{v})\Big)\nonumber\\
&\quad+\dfrac{\lambda}{2}\sum_{j=1}^{m+n-2}\sum_{\substack{q=1\\i_q=j}}^l\limits\sum_{s=1}^S\limits\text{sign}(j-m-n+1)\phi^2\Big(\mathbf{m}\otimes \sigma_{q,j_s}(E^{(j_s)}_{m+n-1,m+n}(\mathbf{v}))\Big)\nonumber\\
&\quad-\dfrac{\lambda}{2}\mathbf{m}\otimes \phi^2\Big(h_{m+n-1}x^+_{m+n-1}(\mathbf{v})\Big).\label{6.40}
\end{align}
By the definition of $\phi$, we can rewrite $\eqref{6.40}_2$ as follows:
\begin{align}
\eqref{6.40}_2&=-\dfrac{\lambda}{2}\sum_{j=1}^{m+n-2}\sum_{\substack{q=1\\i_q=j}}^l\limits\sum_{s=1}^S\limits\mathbf{m}\mathsf{x}^{-1}_{j_1}\cdots \mathsf{x}^{-1}_{j_{s-1}}\mathsf{x}^{-1}_q\mathsf{x}^{-1}_{j_{s+1}}\cdots \mathsf{x}^{-1}_{j_S}\mathsf{x}^{-1}_{\gamma_1}\cdots \mathsf{x}^{-1}_{\gamma_R}\otimes \phi^2\Big(\sigma_{q,j_s}(E^{(j_s)}_{m+n-1,m+n}(\mathbf{v}))\Big).
\end{align}
Since \begin{align*}
\eqref{eq569}&=\eqref{6.38}-\eqref{6.39}+\eqref{6.40}+\eqref{eq569}_5.
\end{align*} and 
\begin{align}
\eqref{6.38}_2-\eqref{6.39}_1&=0,\\
\eqref{6.38}_3+\eqref{6.40}_2&=0,\\
\eqref{eq569}_5+\eqref{6.38}_1+\eqref{6.40}_1&=0, \label{btl}\\
-\eqref{6.39}_2+\eqref{6.40}_3&=0,
\end{align}
we have shown that \eqref{eq569} is equal to zero. Note that the assumption that $\beta+t=\frac{(m-n)\lambda}{2}$ was needed to obtain \eqref{btl}.
\end{enumerate}
\end{proof}
\begin{Corollary}
\begin{enumerate}
\item Suppose that $a=-\dfrac{\lambda}{2}$. The following relation holds for $1\leq i\leq m+n-2$, $\mathbf{m}\in M$, $\mathbf{v}\in\mathbb{C}(p|m|n-p)^{\otimes l}$ and $A=H,X^\pm$:
\begin{equation}
        \phi\Big(A_{i,1}(\mathbf{m} \otimes \mathbf{v})\Big) = \tau_a^{-1}(A_{i,1})\Big(\phi(\mathbf{m}\ot\mathbf{v})\Big).\label{dark11}
\end{equation}
\item Suppose that $a=-\dfrac{\lambda}{2}$ and $\beta=-t+\dfrac{(m-n)\lambda}{2}$. We obtain
\begin{equation}
\phi^2\Big(A_{m+n-1,1}(\mathbf{m} \otimes \mathbf{v})\Big) = \tau_a^{-2}(A_{m+n-1,1})\Big(\phi^2(\mathbf{m}\ot\mathbf{v})\Big)\label{dark21}
 \end{equation}
for $\mathbf{m}\in M$, $\mathbf{v}\in\mathbb{C}(p|m|n-p)^{\otimes l}$ and $A=H,X^\pm$.
\end{enumerate}
\end{Corollary}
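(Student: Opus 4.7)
The corollary is a direct algebraic consequence of Lemma \ref{tau} combined with the easy intertwining of the degree-$0$ generators through $\phi$. My plan is to carry out the rearrangement in three short steps.

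First, I would establish that $\phi$ intertwines the degree-$0$ action up to the appropriate index shift: for $1\le j\le m+n-2$, one has $\phi(A_{j,0}(\mathbf{m}\otimes\mathbf{v}))=A_{j+1,0}(\phi(\mathbf{m}\otimes\mathbf{v}))$, and similarly $\phi^2(A_{m+n-1,0}(\mathbf{m}\otimes\mathbf{v}))=A_{1,0}(\phi^2(\mathbf{m}\otimes\mathbf{v}))$. Both identities follow by direct inspection from the formulas \eqref{defacth0}--\eqref{defactx-0} and the definition of $\phi^{(p)}$: $\phi$ shifts basis indices by $1$ with a compensating $\mathsf{x}_k^{-1}$ exactly for the slots where $e_{m+n}$ wraps to $e_1$, and the degree-$0$ generators $A_{j,0}$ in the stated range act by matrix units not involving that wraparound, so they commute with the shift; for the second identity one verifies directly that the single $\mathsf{x}_k^{-1}$ produced by the wraparound of $e_{m+n-1}\mapsto e_{m+n}\mapsto e_1$ appears on both sides.

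For part (1), I would start from Lemma \ref{tau}(1), expand $\tau_a(A_{i,1})=A_{i-1,1}+aA_{i-1,0}$ (valid for $2\le i\le m+n-1$), and use Step 1 to push the degree-$0$ contribution through $\phi$, obtaining
\[
\phi(A_{i-1,1}(\mathbf{m}\otimes\mathbf{v}))=(A_{i,1}-aA_{i,0})(\phi(\mathbf{m}\otimes\mathbf{v})).
\]
A direct inversion of the formula defining $\tau_a$ gives $\tau_a^{-1}(A_{j,1})=A_{j+1,1}-aA_{j+1,0}$ for $0\le j\le m+n-2$, so after relabelling $i-1\mapsto i$ one recovers exactly \eqref{dark11}. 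For part (2) I would run the same argument with $\phi^2$ applied to Lemma \ref{tau}(2): I compute $\tau_a^2(A_{1,1})=A_{m+n-1,1}+(2a-\beta)A_{m+n-1,0}$ (the $\beta$-correction appears because the cyclic shift of $\tau_a$ crosses the index $0$ once), transport the degree-$0$ summand through $\phi^2$ via Step 1, and identify the resulting expression as $\tau_a^{-2}(A_{m+n-1,1})(\phi^2(\mathbf{m}\otimes\mathbf{v}))$, where the same inversion shows $\tau_a^{-2}(A_{m+n-1,1})=A_{1,1}-(2a-\beta)A_{1,0}$.

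The only step that needs genuine care is the bookkeeping of the $\beta$-term characteristic of the affinisation: it appears precisely when $\tau_a$ crosses the index $0$, so it enters the formula for both $\tau_a^2(A_{1,1})$ and $\tau_a^{-2}(A_{m+n-1,1})$ in part (2), and the consistency between the two sides is tied to the standing hypothesis $\beta=-t+(m-n)\lambda/2$ already imposed in Lemma \ref{tau}(2). Once these symbolic identifications are recorded, the corollary is merely a reformulation of the lemma.
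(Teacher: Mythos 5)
Your derivation is correct and is exactly the rearrangement the paper intends: the corollary is stated without proof as an immediate consequence of Lemma \ref{tau}, and your three steps (intertwining of the degree-$0$ generators through $\phi$, the explicit formulas $\tau_a^{-1}(A_{i,1})=A_{i+1,1}-aA_{i+1,0}$ and $\tau_a^{-2}(A_{m+n-1,1})=A_{1,1}-(2a-\beta)A_{1,0}$, and the index relabelling) supply precisely the omitted bookkeeping. In particular your tracking of the $\beta$-correction across the index $0$ and of the single $\mathsf{x}_k^{-1}$ factor in the wraparound case agrees with the conventions used in the proof of Lemma \ref{tau}.
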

By \eqref{Eq2.5}, in order to define the action of $Y^{(0)}_{\lambda,\beta}(\widehat{\mathfrak{sl}}(m|n))$ on $M\otimes_{\C[S_l]}\mathbb{C}(m|n)^{\otimes l}$, it is enough to set compatible actions of $X^\pm_{0,0}$ and $H_{i,r},X_{i,r}^{\pm}$ for $1\leq i\leq m+n-1$ and $r=0,1$.
\begin{theorem}\label{main theorem}
Assume that $c=\lambda$ and $\beta+t =\dfrac{(m-n)\la}2$.
For a right $\mathbb{H}_{t,c}(S_{\ell})$-module $M$, we can define the action of the affine super Yangian $Y^{(0)}_{\lambda,\beta}(\widehat{\mathfrak{sl}}(m|n))$ on $M\otimes_{\C[S_l]}\mathbb{C}(m|n)^{\otimes l}$ by \eqref{defacth0}-\eqref{defactx-1} and
\begin{gather}
X^+_{0,0}(\mathbf{m}\otimes\mathbf{v})=\sum_{r=1}^l\limits\mathbf{m}\mathsf{x}_r\otimes E^{(r)}_{m+n,1}\mathbf{v},\ 
X^-_{0,0}(\mathbf{m}\otimes\mathbf{v})=-\sum_{r=1}^l\limits\mathbf{m}\mathsf{x}_r^{-1}\otimes E^{(r)}_{1,m+n}\mathbf{v}.\label{0-}
\end{gather}
\end{theorem}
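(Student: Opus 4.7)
The strategy is to define the action of all generators of $Y^{(0)}_{\lambda,\beta}(\widehat{\mathfrak{sl}}(m|n))$ and then verify every defining relation in Definition~\ref{defaffsup}. For the finite-type generators $H_{i,r}$ and $X^\pm_{i,r}$ with $1 \le i \le m+n-1$ and $r = 0, 1$, we take the action given in Theorem~\ref{finite case} (specialized to $p=0$); this theorem already delivers every defining relation in which no index equals $0$. The operators $X^\pm_{0,0}$ are defined by \eqref{0-}, and well-definedness on $M \otimes_{\C[S_l]} \C(m|n)^{\otimes l}$ follows from $\sigma \mathsf{x}_r = \mathsf{x}_{\sigma(r)} \sigma$ in $\mathbb{H}_{t,c}(S_l)$. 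Combined with the level-$0$ finite generators, these yield an action of $U(\widehat{\mathfrak{sl}}(m|n))$ by Theorem~\ref{Flicker}, which disposes of every defining relation at level $0$, including the Serre relations \eqref{Eq2.10}--\eqref{Eq2.12} at node $0$.

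The missing generators $H_{0,1}$ and $X^\pm_{0,1}$ are then defined by the only choice compatible with \eqref{Eq2.3} and \eqref{Eq2.7}:
\begin{equation*}
X^\pm_{0,1} := \pm (a^{(0)}_{m+n-1,0})^{-1}[\tilde H_{m+n-1,1},\, X^\pm_{0,0}] - \beta X^\pm_{0,0}, \qquad H_{0,1} := [X^+_{0,1},\, X^-_{0,0}].
\end{equation*}
What remains to verify are precisely those relations that mix an index-$0$ level-$1$ generator with another generator: namely \eqref{Eq2.1}, \eqref{Eq2.4} at $r=1$, \eqref{Eq2.5} for $i=0$, \eqref{Eq2.6}, \eqref{Eq2.8} involving node $0$, the twisted identity \eqref{Eq2.9}, and the Serre relation \eqref{Eq2.10} at node $0$ and $r=1$.

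The main tool for this verification is Lemma~\ref{tau} together with its Corollary: an index-$0$ level-$1$ generator acting on $M \otimes \C(0|m|n)^{\otimes l}$ corresponds, via $\phi^2$ and the shift isomorphism $\tau_a^{-2}$ with $a = -\lambda/2$, to a finite-type generator in $Y^{(2)}_\lambda(\mathfrak{sl}(m|n))$ plus a $\lambda$-multiple of a level-$0$ generator, acting on $M \otimes \C(2|m|n-2)^{\otimes l}$. Similarly $A_{m+n-1,1}$ acting on $\C(0|m|n)^{\otimes l}$ transports to $\tau_a^{-2}(A_{m+n-1,1}) = A_{1,1} + (\beta + \lambda)A_{1,0}$ on $\C(2|m|n-2)^{\otimes l}$. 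Since Theorem~\ref{finite case} furnishes the relevant finite-type relations on the shifted space, pulling them back through $\phi^2$ produces the sought relations on $M \otimes \C(m|n)^{\otimes l}$; the hypotheses $c = \lambda$ and $\beta + t = (m-n)\lambda/2$ are precisely those required by Lemma~\ref{tau} so that this transport closes up.

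The main obstacle is the twisted Drinfeld-type identity \eqref{Eq2.9} and the mixed Serre relation at node $0$, both of which couple $A_{0,\cdot}$ with $A_{m+n-1,\cdot}$ and involve the $\beta$-correction. The delicate point is to track the lower-order corrections $(\beta+\lambda)A_{1,0}$ and $\lambda A_{2,0}$ coming from $\tau_a^{-2}(A_{m+n-1,1})$ and $\tau_a^{-2}(A_{0,1})$ and to verify that their contributions, combined with the corresponding finite-type identity from Theorem~\ref{finite case} in $Y^{(2)}_\lambda(\mathfrak{sl}(m|n))$, reproduce the $\beta$-twisted right-hand side of \eqref{Eq2.9}. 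The constraint $\beta + t = (m-n)\lambda/2$ between the affine super Yangian and DDAHA parameters enters essentially here, since it is what makes the $\phi^2$-transport a genuine intertwiner of the $Y^{(0)}_{\lambda,\beta}$-action with the finite-type action on the doubly shifted space.
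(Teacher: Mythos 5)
Your proposal is correct and follows essentially the same route as the paper: the level-zero relations are handled via Theorem~\ref{Flicker}, the finite-type relations via Theorem~\ref{finite case}, and every relation touching the affine node is transported through $\phi$ or $\phi^2$ and $\tau_a^{-1}$ or $\tau_a^{-2}$ (Lemma~\ref{tau} and its Corollary, which is exactly where $c=\lambda$ and $\beta+t=\tfrac{(m-n)\lambda}{2}$ enter) to a finite-type identity in a shifted parity sequence. The only cosmetic difference is that the paper introduces $A_{0,1}$ directly as $(\phi^{(p)})^{-1}\circ\tau_a^{-1}(A_{0,1})\circ\phi^{(p)}$ rather than via the commutator \eqref{Eq2.7}, but the two definitions agree by the same transport identities.
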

\begin{proof}
We prove this under the assumption that $n\geq3$. The case when $n=2$ and $m\geq3$ can be proven in a similar way. 
By Theorem~\ref{Flicker}, we can naturally define the action of $\widehat{\mathfrak{sl}}(p|m|n-p)$ on $M\otimes_{\C[S_l]}\C(p|m|n-p)^{\otimes l}$. This action coincides with \eqref{0-} in the case when $p=0$.
By the definition of that action, we have
\begin{gather}
A_{0,0}(\mathbf{m}\ot{\mathbf{v}})=(\phi^{(p)})^{-1}\Big(A_{1,0}\big(\phi^{(p)}(\mathbf{m}\ot{\mathbf{v}})\big)\Big)\label{dark31}
\end{gather}
for $A=H,X^\pm$.
We  define the action of $A_{0,1}\in Y^{(p)}_{\lambda,\beta}(\widehat{\mathfrak{sl}}(m|n))$ on $M\otimes_{\C[S_l]}\C(p|m|n-p)^{\otimes l}$ as follows:
\begin{gather}
A_{0,1}(\mathbf{m}\ot{\mathbf{v}})=(\phi^{(p)})^{-1}\Big(\tau_a^{-1}(A_{0 ,1})\big(\phi^{(p)}(\mathbf{m}\ot{\mathbf{v}})\big)\Big) \;\text{ for } A=H,X^{\pm}.\label{dark32}
\end{gather}
By \eqref{dark11}, we deduce that
\begin{gather}
A_{0,r}(\mathbf{m}\ot{\mathbf{v}})=\phi^{-2}\Big(\tau_a^{-2}(A_{0,r})\left(\phi^2(\mathbf{m}\ot{\mathbf{v}})\right)\Big)\label{dark33}
\end{gather}
for $r=0,1$ and $A=H,X^{\pm}$. Consider a defining relation of $Y^{(0)}_{\lambda,\beta}(\widehat{\mathfrak{sl}}(m|n))$ involving $A_{0,r_1}^{\pm}$ and $B_{i,r_2}$ for $A,B=H,X^\pm$, $r_1,r_2=0,1$ and $0\leq i\leq m+n-2$. The same relation holds for the corresponding operators on $M\ot_{\C[S_l]} \C(m|n)^{\ot l}$ if and only if it holds for $\tau_a^{-1}(A_{0,r_1}^{\pm})$ and $\tau_a^{-1}(B_{i,r_2})$ by \eqref{dark11}, \eqref{dark31} and \eqref{dark32}. Since
 $\tau_a^{-1}(A_{0,r_1}^{\pm})$ and $\tau_a^{-1}(B_{i,r_2})$ do not depend on $A_{0,0}^{\pm}$ or $A_{0,1}^{\pm}$, those relations are known to hold by Theorem~\ref{finite case}. The same argument applies to the relations involving $A_{0,r_1}^{\pm}$ and $B_{m+n-1,r_2}^{\pm}$ by replacing $\phi$ by $\phi^2$, $\tau_a$ by $\tau_a^2$ and using \eqref{dark21} and \eqref{dark33} instead.
\end{proof}

\begin{theorem}\label{reverse}
Suppose that $m,n>l$ and $c=\lambda$, $\beta=-t+\dfrac{(m-n)\lambda}{2}$. We obtain an equivalence of categories from the category of right $\mathbb{H}_{t,c}(S_{\ell})$-modules to the category of integrable left modules over $Y^{(0)}_{\lambda,\beta}(\widehat{\mathfrak{sl}}(m|n))$ of level $l$ determined by $M\mapsto M\otimes_{\C[S_l]} \C(m|n)^{\otimes l}$ and $f\mapsto f\otimes 1^{\otimes l}$ for any right $\mathbb{H}_{t,c}(S_{\ell})$-module $M$ and any homomorphism of right $\mathbb{H}_{t,c}(S_{\ell})$-modules $f$.
\end{theorem}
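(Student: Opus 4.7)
The plan is to reduce Theorem~\ref{reverse} to the two Schur--Weyl equivalences already established, namely Theorem~\ref{flor} for the finite super Yangian $Y_\lambda(\mfsl(m|n))$ and Theorem~\ref{Flicker} for the loop superalgebra $U(\widehat{\mfsl}(m|n))$, and then to glue them using the mixed defining relations of $Y^{(0)}_{\lambda,\beta}(\widehat{\mfsl}(m|n))$ that couple the generators indexed by $1\le i\le m+n-1$ with $X^\pm_{0,0}$. The functor $SW$ is well-defined by Theorem~\ref{main theorem}, so the real content is essential surjectivity and full faithfulness.

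For essential surjectivity, let $N$ be any integrable left $Y^{(0)}_{\lambda,\beta}(\widehat{\mfsl}(m|n))$-module of level $l$. Restricting to $U(\mfsl(m|n))$ and applying Theorem~\ref{chengbook} (the assumption $m,n>l$ ensures $l<(m+1)(n+1)$) produces a unique right $\C[S_l]$-module $M$ with $N\cong M\otimes_{\C[S_l]}\C(m|n)^{\otimes l}$; this $M$ will be the preimage. Restricting $N$ further to the subalgebra $Y_\lambda(\mfsl(m|n))$ and applying Theorem~\ref{flor} endows $M$ with a right $H^{\deg}_\lambda(S_l)$-module structure and, via the embedding of Proposition~\ref{Prop87}, operators $\mathcal{U}_k$ on $M$. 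Restricting $N$ instead to $U(\widehat{\mfsl}(m|n))$ and applying Theorem~\ref{Flicker}(2) equips $M$ with a right $\C[W^{\text{aff}}_l]$-module structure, hence with operators $\mathsf{x}_r^{\pm 1}:=z_r^{\pm 1}$. Since the symmetric group action on $M$ is uniquely determined by the $\mfsl(m|n)$-structure of $N$, both enrichments restrict to the same $\C[S_l]$-action on $M$, so together they give an action on $M$ of every generator of $\mathbb{H}_{t,c}(S_l)$.

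The main obstacle is to verify the mixed commutation relations of $\mathbb{H}_{t,c}(S_l)$ from Definition~\ref{ddahadef1}, namely $[\mathsf{u}_i,\mathsf{x}_j]$ for each pair $i,j$. These will be extracted from the affine super Yangian relations \eqref{Eq2.6}--\eqref{Eq2.7}: on $SW(M)$ the operator $\widetilde{H}_{i,1}$ is expressed via the $\mathcal{U}_k$'s and $S_l$ by \eqref{defacth1} (using $\mathfrak{z}_k\mapsto\mathfrak{y}_k$), while $X^\pm_{0,0}$ is expressed via the $\mathsf{x}_r^{\pm 1}$'s and $S_l$ by \eqref{0-}, so the commutator is on one hand a specific combination of $\mathcal{U}_k$'s and $\mathsf{x}_r^{\pm 1}$'s, and on the other hand equals $\pm a^{(0)}_{i,0}(X^\pm_{0,1}+\beta X^\pm_{0,0})$, where $X^\pm_{0,1}$ is computed by Lemma~\ref{tau} and its corollary as an operator mixing $\mathsf{x}_r^{\pm 1}$ and $\mathfrak{z}_k$. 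Matching the two sides on a sufficiently generic weight vector of $\C(m|n)^{\otimes l}$ whose tensor components are pairwise distinct (such vectors exist because $m,n>l$, exactly as in Lemma~\ref{idbits-lem} and Proposition~\ref{Prop256}) reads off an identity in $\End(M)$ that coincides with the DDAHA commutator. The hypothesis $c=\lambda$ and $\beta+t=(m-n)\lambda/2$ is precisely the scalar constraint needed for the two sides to agree: this is the same parameter identification that forced Theorem~\ref{main theorem} to work, and here it is being read in reverse.

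For full faithfulness, given $\varphi\in\Hom_{Y^{(0)}_{\lambda,\beta}(\widehat{\mfsl}(m|n))}(SW(M_1),SW(M_2))$, its restriction to $Y_\lambda(\mfsl(m|n))$ yields, by Proposition~\ref{thm-sup} (which remains valid in the $\mfsl$-setting, as noted at the end of Section~\ref{SWmini}), a unique $H^{\deg}_\lambda(S_l)$-morphism $f\colon M_1\to M_2$ with $\varphi=f\otimes 1^{\otimes l}$; its restriction to $U(\widehat{\mfsl}(m|n))$ together with Theorem~\ref{Flicker}(2) then forces the same $f$ to be a morphism of right $\C[W^{\text{aff}}_l]$-modules. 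Since $H^{\deg}_\lambda(S_l)$ and $\C[\mathsf{x}_1^{\pm 1},\ldots,\mathsf{x}_l^{\pm 1}]$ generate $\mathbb{H}_{t,c}(S_l)$, this $f$ is an $\mathbb{H}_{t,c}(S_l)$-homomorphism, and injectivity of $SW$ on $\Hom$-spaces is immediate from the faithfulness of $-\otimes_{\C[S_l]}\C(m|n)^{\otimes l}$ on the relevant weight components.
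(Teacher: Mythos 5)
Your overall architecture coincides with the paper's: obtain $M$ from Schur--Sergeev duality, endow it with the $H^{\deg}_\lambda(S_l)$-structure via Theorem~\ref{flor} and the $\C[W^{\text{aff}}_l]$-structure via Theorem~\ref{Flicker}, note that both restrict to the same $\C[S_l]$-action so that all generators of $\mathbb{H}_{t,c}(S_l)$ act on $M$, and then verify the mixed relations by evaluating affine super Yangian relations on vectors with pairwise distinct tensor components and invoking Lemma~\ref{chp}. The full-faithfulness argument is likewise the one the paper uses.

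There is, however, a genuine gap in your key step. You propose to identify the right-hand side $\pm a^{(0)}_{i,0}(X^\pm_{0,1}+\beta X^\pm_{0,0})$ by using Lemma~\ref{tau} and its corollary to express $X^\pm_{0,1}$ as an explicit operator in the $\mathsf{x}_r^{\pm 1}$ and $\mathfrak{z}_k$. But Lemma~\ref{tau} is proved under the hypothesis that $M$ is already a right $\mathbb{H}_{t,\lambda}(S_{\ell})$-module --- its proof relies on \eqref{prop96-5} and \eqref{prop96-6}, which are exactly the commutators between the $\mathsf{x}_i^{\pm 1}$ and the $\mathfrak{y}_j$ that you are trying to establish --- so invoking it at this point is circular. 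In the essential-surjectivity direction the only thing known about $X^\pm_{0,1}$ is that it is some operator on $N$; no formula for it over $M$ is available a priori. The paper circumvents this by choosing relations in which the unknown operator either does not occur or cancels: for \eqref{prop96-3} it uses $[X^-_{2,1},X^+_{0,0}]=0$, a case of \eqref{Eq2.3} with $i\neq j$, whose right-hand side is zero; for \eqref{prop96-4} it adds $[\widetilde{H}_{1,1},X^+_{0,0}]=a^{(0)}_{1,0}X^+_{0,1}$ from \eqref{Eq2.5} to $[\widetilde{H}_{m+n-1,1},X^+_{0,0}]=a^{(0)}_{m+n-1,0}\bigl(X^+_{0,1}+\beta X^+_{0,0}\bigr)$ from \eqref{Eq2.7}, and since $a^{(0)}_{1,0}=-1$ while $a^{(0)}_{m+n-1,0}=+1$ the $X^+_{0,1}$ contributions cancel, leaving only the computable term $\beta X^+_{0,0}$. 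Your argument needs this (or an equivalent) elimination of $X^\pm_{0,1}$ to close; once that is supplied, the rest of your proposal is the paper's proof.
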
 
\begin{proof}
We prove this in the case that $m\geq3$. The cases $m=2$ and $n\geq3$ can be proven in a similar way. That $SW$ is full and faithful follows from Proposition \ref{thm-sup} and Theorem \ref{Flicker}, so it is enough to explain why it is essentially surjective.

Let us fix an integrable left $Y^{(0)}_{\lambda,\beta}(\widehat{\mathfrak{sl}}(m|n))$-module $N$ of level $l$.
Since there exists a natural homomorphism from $U(\widehat{\mathfrak{sl}}(m|n))$ to $Y^{(0)}_{\lambda,\beta}(\widehat{\mathfrak{sl}}(m|n))$, we can regard $N$ as a left $\widehat{\mathfrak{sl}}(m|n)$-module of level $l$.
By Theorem~\ref{Flicker}, there exists a $\mathbb{C}[W^{\text{aff}}_l]$-right module $M_1$ satisfying $M_1\otimes_{\mathbb{C}[S_l]} \mathbb{C}(m|n)^{\otimes l}\cong N$ as $\widehat{\mathfrak{sl}}(m|n)$-modules. Moreover, since there exists a natural homomorphism from $Y_{\lambda}(\mathfrak{sl}(m|n))$ to $Y^{(0)}_{\lambda,\beta}(\widehat{\mathfrak{sl}}(m|n))$, we can regard $N$ as a left $Y_{\lambda}(\mathfrak{sl}(m|n))$-module. Thus, by Theorem~\ref{flor}, we find that there exists an $H^{\deg}_\lambda(S_l)$-right module $M_2$ such that $M_2\otimes_{\mathbb{C}[S_l]} \mathbb{C}(m|n)^{\otimes l}\simeq N$. Since $\mathbb{C}[S_l]\subset\mathbb{C}[W^{\text{aff}}_l]$ and $\mathbb{C}[S_l]\subset H^{\deg}_\lambda(S_l)$, we have
an isomorphism $M_1\simeq M_2$ of $\mathbb{C}[S_l]$-right modules. We can denote them simply by $M$. It is enough to show that $M$ is an $\mathbb{H}_{t,c}(S_l)$-module when $c=\lambda$ and $t=\frac{(m-n)\lambda}{2}-\beta$. We already have actions of the generators $\mathsf{x}_i^{\pm 1},\mathcal{U}_j$ and $\sigma\in S_l$. By the first part of Proposition~\ref{Prop87}, it is enough to show that those actions are compatible with \eqref{defrna-1} and \eqref{prop96-1}-\eqref{prop96-4}. The relations \eqref{defrna-1} (resp. \eqref{prop96-1}) follow since $M$ has a structure of $\mathbb{C}[W^{\text{aff}}_l]$ (resp. $H^{\deg}_\lambda(S_l)$)-right module.

Fix $1\leq j, k\leq l, j\neq k$. 
We take $\mathbf{v}=\bigotimes_{1\leq d\leq l}e_{i_d}\in\mathbb{C}(m|n)^{\otimes l}$, where 
\begin{align*}
i_d&=\begin{cases}
d+3&\text{ if }d<j,d\neq k,\\
d+2&\text{ if }d>j,d\neq k,\\
2&\text{ if }d=j,\\
1&\text{ if }d=k.
\end{cases}
\end{align*} 
Since
\begin{align*}
[E^{(r)}_{m+n,1},\varpi^-_2]
&=[E^{(r)}_{m+n,1},\dfrac{1}{2}\sum_{s,g=1}^l\limits\sum_{q\neq2,3}\limits (-1)^{|q|}\text{sign}(q-2)E^{(s)}_{3,q}E^{(g)}_{q,2}]\\
&=-\dfrac{1}{2}\sum_{g=1}^l\limits E^{(r)}_{3,1}E^{(g)}_{m+n,2}-\dfrac{1}{2}\sum_{s=1}^l\limits E^{(s)}_{3,1}E^{(r)}_{m+n,2}
\end{align*}
holds by a direct computation, we have
\begin{align*}
[E^{(r)}_{m+n,1},\varpi^-_2](\mathbf{v})&=
\begin{cases}
-\dfrac{1}{2}E^{(k)}_{3,1}E^{(j)}_{m+n,2}(\mathbf{v})&\text{ if }r=j,k\\
0&\text{ if }r\neq j,k.
\end{cases}
\end{align*}
Thus, we obtain
\begin{align}
(X^-_{2,1}X^+_{0,0}-&X^+_{0,0}X^-_{2,1})(\mathbf{m}\ot\mathbf{v})\nonumber\\
&=\sum_{r=1}^l\limits\sum_{s=1}^l\limits\left(\mathbf{m}\mathsf{x}_r\mathfrak{y}_s\otimes E^{(s)}_{3,2}E^{(r)}_{m+n,1}(\mathbf{v})-\mathbf{m}\mathfrak{y}_s\mathsf{x}_r\otimes E^{(r)}_{m+n,1}E^{(s)}_{3,2}(\mathbf{v})\right)-\lambda[\varpi^-_2,X^+_0](\mathbf{m}\otimes\mathbf{v})\nonumber\\
&=\sum_{r=1}^l\limits\sum_{s=1}^l\limits\mathbf{m}[\mathsf{x}_r,\mathfrak{y}_s]\otimes E^{(s)}_{3,2}E^{(r)}_{m+n,1}(\mathbf{v})+\lambda\sum_{r=1}^l\mathbf{m}\mathsf{x}_r\otimes [E^{(r)}_{m+n,1},\varpi^-_2](\mathbf{v})\nonumber\\
&=\mathbf{m}[\mathsf{x}_k,\mathfrak{y}_j]\otimes E^{(j)}_{3,2}E^{(k)}_{m+n,1}(\mathbf{v})-\dfrac{\lambda}{2}\mathbf{m}\mathsf{x}_j\otimes E^{(k)}_{3,1}E^{(j)}_{m+n,2}(\mathbf{v})-\dfrac{\lambda}{2}\mathbf{m}\mathsf{x}_k\otimes E^{(k)}_{3,1}E^{(j)}_{m+n,2}(\mathbf{v})\nonumber\\
&=\mathbf{m}\left([\mathsf{x}_k,\mathfrak{y}_j]-\dfrac{\lambda}{2}\mathsf{x}_j\sigma_{j,k}-\dfrac{\lambda}{2}\mathsf{x}_k\sigma_{j,k}\right)\otimes E^{(j)}_{3,2}E^{(k)}_{m+n,1}(\mathbf{v}).\label{eeee1}
\end{align}
By \eqref{Eq2.3}, we find that the left hand side of \eqref{eeee1} is equal to zero. By the definition of $\mathbf{v}$, we deduce that the components of $E^{(j)}_{3,2}E^{(k)}_{m+n,1}(\mathbf{v})$ are distinct. By Lemma~\ref{chp}, we find that
\begin{equation*}
\mathbf{m}\left([\mathsf{x}_k,\mathfrak{y}_j]-\dfrac{\lambda}{2}\mathsf{x}_j\sigma_{j,k}-\dfrac{\lambda}{2}\mathsf{x}_k\sigma_{j,k}\right)=0.
\end{equation*}
Thus, the action of the generators of $\mathbb{H}_{t,c}(S_l)$ is compatible with \eqref{prop96-3}. It remains to check compatibility with \eqref{prop96-4}.

Let us take $\mathbf{w}=\bigotimes_{1\leq d\leq l}e_{k_d}\in\mathbb{C}(m|n)^{\otimes l}$, where
\begin{align*}
k_d&=\begin{cases}
d+2&\text{ if }d<k,\\
d+1&\text{ if }d>k,\\
1&\text{ if }d=k.
\end{cases}
\end{align*}
By a direct computation, we obtain
\begin{align*}
[\vartheta_{1},E^{(r)}_{m+n,1}]
&=\dfrac{m-n-1}{2}E^{(r)}_{m+n,1}+\left[\dfrac{1}{2}\sum_{s,g=1}^l\limits\sum_{q\neq 1}\limits (-1)^{|q|}E^{(s)}_{1,q}E^{(g)}_{q,1},E^{(r)}_{m+n,1}\right]\\
&\quad-\left[\dfrac{1}{2}\sum_{s,g=1}^l\limits\sum_{q\neq 2}\limits (-1)^{|q|}\text{sign}(q-2)E^{(s)}_{2,q}E^{(g)}_{q,2},E^{(r)}_{m+n,1}\right]\\
&=\dfrac{m-n-1}{2}E^{(r)}_{m+n,1}-\dfrac{1}{2}\sum_{g=1}^l\limits\sum_{q\neq 1}\limits (-1)^{|q|}E^{(r)}_{m+n,q}E^{(g)}_{q,1}+\dfrac{1}{2}\sum_{s=1}^l\limits E^{(s)}_{1,1}E^{(r)}_{m+n,1}\\
&\quad-\sum_{s=1}^l\limits E^{(s)}_{2,1}E^{(r)}_{m+n,2}-\sum_{g=1}^l\limits E^{(r)}_{2,1}E^{(g)}_{m+n,2}.
\end{align*}
Thus, we have
\begin{align*}
[E^{(r)}_{m+n,1},\vartheta_1](\mathbf{w})&=\dfrac{1}{2}\sum_{q=3}^{m+n}\limits(-1)^{|q|} E^{(r)}_{m+n,q}E^{(k)}_{q,1}(\mathbf{w})=\dfrac{1}{2}\sigma_{k,r}E^{(k)}_{m+n,1}(\mathbf{w})\text{ if }r\neq k,\\
[E^{(k)}_{m+n,1},\vartheta_1](\mathbf{w})&=0.
\end{align*}
Let us compute 
\begin{align}
(\widetilde{H}_{1,1}X^+_0-&X^+_0\widetilde{H}_{1,1})(\mathbf{m}\ot\mathbf{w})\nonumber\\
&=\sum_{r=1}^l\limits\sum_{s=1}^l\limits\mathbf{m}\mathsf{x}_r\mathfrak{y}_s\otimes h^{(s)}_{1}E^{(r)}_{m+n,1}(\mathbf{w})-\sum_{r=1}^l\limits\sum_{s=1}^l\limits\mathbf{m}\mathfrak{y}_s\mathsf{x}_r\otimes E^{(r)}_{m+n,1}h^{(s)}_{1}(\mathbf{w})-\lambda[\vartheta_1,x^+_0](\mathbf{m}\otimes\mathbf{w})\nonumber\\
&=-\mathbf{m}\mathfrak{y}_k\mathsf{x}_k\otimes E^{(k)}_{m+n,1}h^{(k)}_{1}(\mathbf{w})+\lambda\sum_{r=1}^l\limits \mathbf{m}\mathsf{x}_r\otimes[E^{(r)}_{m+n,1},\vartheta_1](\mathbf{w})\nonumber\\
&=-\mathbf{m}\mathfrak{y}_k\mathsf{x}_k\otimes E^{(k)}_{m+n,1}(\mathbf{w})+\dfrac{\lambda}{2}\sum_{\substack{r=1\\r\neq k}}^l\mathbf{m}\mathsf{x}_r\sigma_{k,r}\otimes E^{(k)}_{m+n,1}(\mathbf{w}).\label{Eq2.5-1}
\end{align}
By the definition of $\vartheta_{m+n-1}$, we have
\begin{align*}
[&\vartheta_{m+n-1},E^{(r)}_{m+n,1}]\\
&=\dfrac{1}{2}E^{(r)}_{m+n,1}+\left[(-1)^{|m+n-1|}\dfrac{1}{2}\sum_{s,g=1}^l\limits\sum_{q\neq m+n-1}\limits (-1)^{|q|}\text{sign}(q-m-n+1)E^{(s)}_{m+n-1,q}E^{(g)}_{q,m+n-1},E^{(r)}_{m+n,1}\right]\\
&\quad-\left[(-1)^{|m+n|}\dfrac{1}{2}\sum_{s,g=1}^l\limits\sum_{q\neq m+n}\limits (-1)^{|q|}\text{sign}(q-m-n)E^{(s)}_{m+n,q}E^{(g)}_{q,m+n},E^{(r)}_{m+n,1}\right]\\
&=\dfrac{1}{2}E^{(r)}_{m+n,1}+\dfrac{1}{2}\sum_{s=1}^l\limits E^{(s)}_{m+n-1,1}E^{(r)}_{m+n,m+n-1}+\dfrac{1}{2}\sum_{g=1}^l\limits E^{(r)}_{m+n-1,1}E^{(g)}_{m+n,m+n-1}\\
&\quad-\dfrac{1}{2}\sum_{s=1}^l\limits\sum_{q\neq m+n}\limits (-1)^{|q|}E^{(s)}_{m+n,q}E^{(r)}_{q,1}-\dfrac{1}{2}\sum_{g=1}^l\limits E^{(r)}_{m+n,1}E^{(g)}_{m+n,m+n}.
\end{align*}
Then, we find that, since $m+n-1 > l+1$ and by our choice of $\mathbf{w}$,
\begin{align*}
[E^{(r)}_{m+n,1},\vartheta_{m+n-1}](\mathbf{w})&=0\text{ if }r\neq k,\\
[E^{(k)}_{m+n,1},\vartheta_{m+n-1}](\mathbf{w})&=-\dfrac{1}{2}E^{(k)}_{m+n,1}(\mathbf{w})+\dfrac{1}{2}\sum_{q=1}^{m+n-1}\limits\sum_{\substack{s=1}}^l\limits (-1)^{|q|}E^{(s)}_{m+n,q}E^{(k)}_{q,1}(\mathbf{w})\\
&=\dfrac{1}{2}\sum_{\substack{s=1\\s\neq k}}^{l}\limits \sigma_{k,s}E^{(k)}_{m+n,1}(\mathbf{w})+\dfrac{m-n}{2}\mathbf{m}\otimes E^{(k)}_{m+n,1}(\mathbf{w}).
\end{align*}
This allows us to compute the following:
\begin{align}
(\widetilde{H}_{m+n-1,1}X^+_0&-X^+_0\widetilde{H}_{m+n-1,0})(\mathbf{m}\otimes\mathbf{w})\nonumber\\
&=\sum_{r,s=1}^l\big(\mathbf{m}\mathsf{x}_r\mathfrak{y}_s\otimes h^{(s)}_{m+n-1}E^{(r)}_{m+n,1}(\mathbf{w})-\mathbf{m}\mathfrak{y}_s\mathsf{x}_r\otimes E^{(r)}_{m+n,1}h^{(s)}_{m+n-1}(\mathbf{w})\big)\nonumber\\
&\quad-\lambda[\vartheta_{m+n-1},X^+_0](\mathbf{m}\otimes\mathbf{w})\nonumber\\
&=\mathbf{m}\mathsf{x}_k\mathfrak{y}_k\otimes E^{(k)}_{m+n,1}(\mathbf{w})+\dfrac{\lambda}{2}\sum_{\substack{s=1\\s\neq k}}^l\mathbf{m}\mathsf{x}_k\sigma_{k,s}\otimes E^{(k)}_{m+n,1}(\mathbf{w})+\dfrac{(m-n)\lambda}{2}\mathbf{m}\otimes E^{(k)}_{m+n,1}(\mathbf{w}).\label{Eq2.7-1}
\end{align}
By \eqref{Eq2.5} and \eqref{Eq2.7}, we find that 
\begin{align}
\eqref{Eq2.5-1}+\eqref{Eq2.7-1}&=\beta \sum_{k=0}^l\limits\mathbf{m}\mathsf{x}_k\otimes E_{m+n,1}^{(k)}(\mathbf{w}).\label{eee2}
\end{align}
By \eqref{eee2}, we find that
\begin{align*}
\mathbf{m}\left([\mathsf{x}_k,\mathfrak{y}_k]+\dfrac{\lambda}{2}\sum_{\substack{s=1\\s\neq k}}^l(\mathsf{x}_k+\mathsf{x}_s)\sigma_{s,k}+\dfrac{(m-n)\lambda}{2}\mathsf{x}_k\right)\otimes E_{m+n,1}^{(k)}(\mathbf{w})
&=\beta \sum_{k=0}^l\limits\mathbf{m}\mathsf{x}_k\otimes E_{m+n,1}^{(k)}(\mathbf{w}).
\end{align*}
By the definition of $\mathbf{w}$, we can see that the components of $E_{m+n,1}^{(k)}(\mathbf{w})$ are distinct. By Lemma~\ref{chp}, we find that
\begin{equation*}
[\mathsf{x}_k,\mathfrak{y}_k]+\dfrac{\lambda}{2}\sum_{\substack{s=1\\s\neq k}}^l(\mathsf{x}_k+\mathsf{x}_s)\sigma_{s,k}+\left(\dfrac{(m-n)\lambda}{2}-\beta\right)\mathsf{x}_k=0.
\end{equation*}
By the assumption that $c=\lambda$ and $t=\dfrac{(m-n)\lambda}{2}-\beta$, we deduce that the action is compatible with \eqref{prop96-4}.
\end{proof}

\section{Shur-Weyl duality for the Cherednik algebra and the deformed double current superalgebra}

A Schur-Weyl functor was constructed in \cite{Gu2} between deformed double current algebras and rational Cherednik algebras. In this section, we introduce new quantum superalgebras $\mc{D}_{t,\kappa}(\mathfrak{sl}_{m|n})$ of double affine type. This is done by deforming the defining relations of the universal central extension of $\mathfrak{sl}_{m|n}(\C[u,v])$ in order to produce a Schur-Weyl functor from right $\mathsf{H}_{t,c}(S_l)$-modules to left $\mc{D}_{t,\kappa}(\mathfrak{sl}_{m|n})$-modules.

\subsection{Steinberg Lie Superalgebras}\label{Steinberg}

The following theorem is obtained by restricting the main result of Iohara-Koga \cite{IK} to the case when the Lie superalgebra $\mf{g}$ is of type $A(m,n)$ and the base field is $\C$. 
			\begin{theorem} [Theorem~4.7 in \cite{IK}]\label{gira}
			Let $A$ be an associate algebra over $\C$ and let $HC_1(A)$ denote the first cyclic homology group of $A$. 
			The universal central extension $\mf{g}(A)$ of $\mf{g}\otimes_{\C} A$ is given by
			$$ \mf{g}(A) \simeq \begin{cases}
			\mf{g} \otimes A \oplus HC_1(A) & \text{if $\mf{g}$ is not of type $A(n,n)$}, \\
			\mf{sl}_{n+1|n+1} \otimes A \oplus HC_1(A)  & \text{if $\mf{g}$ is of type $A(n,n)$ for $n> 1$.}
			\end{cases} $$
			\end{theorem}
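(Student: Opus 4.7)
The plan is to compute $H_2(\mf{g}\otimes A;\C)$ directly and identify it with $HC_1(A)$, up to the correction for type $A(n,n)$; this is the Lie-superalgebra analogue of Kassel--Loday's computation. Since $\mf{g}\otimes A$ is perfect (because $\mf{g}$ is perfect for any classical simple Lie superalgebra, and $x\otimes a=\sum_i[y_i\otimes 1,z_i\otimes a]$ whenever $x=\sum_i[y_i,z_i]$), its universal central extension exists with kernel $H_2(\mf{g}\otimes A;\C)$, so the theorem reduces to this homology computation.

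For $\mf{g}$ not of type $A(n,n)$, $\mf{g}$ carries a non-degenerate invariant supersymmetric bilinear form $(\cdot,\cdot)$. I would construct the candidate extension $\tilde{\mf{g}}:=(\mf{g}\otimes A)\oplus HC_1(A)$ with bracket
\begin{align*}
[x\otimes a,\,y\otimes b] &= [x,y]\otimes ab + (x,y)\cdot\overline{a\,db},\\
[z,\,\tilde{\mf{g}}] &= 0 \text{ for } z\in HC_1(A),
\end{align*}
where $\overline{a\,db}$ denotes the class of $a\otimes b$ in $HC_1(A)$. Super-invariance of $(\cdot,\cdot)$ combined with the cyclic relations in $HC_1(A)$ gives the super-Jacobi identity, while non-degeneracy of the form together with perfectness of $\mf{g}$ shows $\tilde{\mf{g}}$ is perfect. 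For universality, any central extension $0\to\mf{z}\to\hat E\to\mf{g}\otimes A\to 0$ yields, via a vector-space section, a 2-cocycle $\omega$; its restriction to $\mf{g}\otimes 1\cong\mf{g}$ vanishes in cohomology since $H^2(\mf{g};\C)=0$ for basic classical $\mf{g}$ outside type $A(n,n)$. After adjusting the section, a $\mf{g}$-equivariance argument for the adjoint action on $\mf{g}\otimes A$ forces $\omega(x\otimes a,y\otimes b)=(x,y)\cdot\psi(a,b)$ for some linear $\psi:A\otimes A\to\mf{z}$, and super-Jacobi then forces $\psi$ to factor through $HC_1(A)$, producing the desired morphism $\tilde{\mf{g}}\to\hat E$ over $\mf{g}\otimes A$.

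The main obstacle is the type $A(n,n)$ case with $n>1$. Here the supertrace form descends to $\mf{psl}_{n+1|n+1}$ degenerately (its radical is the class of $I_{n+1|n+1}$), so the previous cocycle construction fails; correspondingly, $H^2(\mf{psl}_{n+1|n+1};\C)$ is one-dimensional, spanned by the class of the extension $0\to\C I\to\mf{sl}_{n+1|n+1}\to\mf{psl}_{n+1|n+1}\to 0$. Tensoring this class with $A$ produces an $A$-parameterised family of central cocycles that precisely upgrades $\mf{psl}_{n+1|n+1}\otimes A$ to $\mf{sl}_{n+1|n+1}\otimes A$. After performing this upgrade, the supertrace form on $\mf{sl}_{n+1|n+1}$ supports the previous $HC_1(A)$-construction; the delicate bookkeeping is to verify that no further central classes arise and, in particular, that no mixed cocycles combining the $\mf{sl}/\mf{psl}$-class with the cyclic-homology class appear, so that the total kernel is exactly $A\oplus HC_1(A)$ realised as $\mf{sl}_{n+1|n+1}\otimes A\oplus HC_1(A)\twoheadrightarrow \mf{psl}_{n+1|n+1}\otimes A$.
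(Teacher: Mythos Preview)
The paper does not give its own proof of this statement: it is quoted verbatim as Theorem~4.7 of Iohara--Koga \cite{IK} and used as input for the rest of Section~\ref{Steinberg}. So there is no in-paper argument to compare against; your sketch is being measured against the original Iohara--Koga proof, which indeed follows the Kassel--Loday strategy you outline (construct the $HC_1(A)$-valued cocycle from the invariant form, then show every $2$-cocycle is cohomologous to one of this shape).

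Your outline is correct in structure, but one step deserves a flag. In the non-$A(n,n)$ case you write that, after adjusting the section so that $\omega|_{\mf{g}\otimes 1}=0$, ``a $\mf{g}$-equivariance argument'' forces $\omega(x\otimes a,y\otimes b)=(x,y)\psi(a,b)$. In the ordinary Lie-algebra setting this step is powered by Weyl's complete reducibility: one chooses a $\mf{g}$-equivariant section of the extension, which automatically makes $\omega$ $\mf{g}$-invariant. For basic classical Lie superalgebras complete reducibility fails, so this shortcut is unavailable. What actually makes the step go through is the vanishing of the relevant low-degree cohomology of $\mf{g}$ itself: you need $H^2(\mf{g};\C)=0$ (which you invoke) \emph{and} $H^1(\mf{g};\mf{g}^*)\cong H^1(\mf{g};\mf{g})=0$, i.e.\ that every derivation of $\mf{g}$ is inner, in order to kill the component of $\omega$ pairing $\mf{g}\otimes 1$ with $\mf{g}\otimes A$ and hence obtain $\mf{g}$-invariance from the cocycle identity. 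This holds for basic classical simple $\mf{g}$ outside type $A(n,n)$, and its failure for $\mf{psl}_{n+1|n+1}$ is exactly what produces the extra $A$-worth of central classes you identify in the second case. Making this dependence explicit (rather than appealing to a generic ``equivariance argument'') is the one place your sketch would need tightening to become a proof.

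Your $A(n,n)$ discussion is on target: the one-dimensional $H^2(\mf{psl}_{n+1|n+1};\C)$ tensored with $A$ accounts precisely for the upgrade to $\mf{sl}_{n+1|n+1}\otimes A$, after which the supertrace form is non-degenerate on $\mf{sl}_{n+1|n+1}$ and supports the $HC_1(A)$ cocycle; centrality of $I\otimes A$ follows since $(I,y)=\mathrm{str}(y)=0$ for $y\in\mf{sl}_{n+1|n+1}$.
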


For example, $\mf{g}\otimes\C[u]$ is its own UCE (universal central extension). In the rest of this subsection, we will consider the universal central extensions when $\mf{g}= \mathfrak{sl}_{m|n}$ and $m\neq n$. 

One goal is to find a reasonable definition of the deformed double current superalgebra associated with $\mfsl_{m|n}$, given that it should be a deformation of the universal central extension of $\mfsl_{m|n} \otimes \C[u,v]$.
	 By Theorem \ref{gira}, the universal central extension of $\mfsl_{m|n} \otimes \C[u,v]$ is $\mfsl_{m|n} \otimes \C[u,v] \oplus HC_1(\C[u,v] )$.
As discussed more generally in \cite{Ka}, we have
\begin{equation*}
HC_1 (\C[u,v] ) \cong \Omega^1  (\C[u,v] )  / d(\C[u,v] ),
\end{equation*}
where $\Omega^1  (\C[u,v] ) =\C[u,v] du \oplus \C [u,v] dv$. The superbracket of  $\mfsl_{m|n} \otimes \C[u,v] \oplus HC_1(\C[u,v] )$ is given by
	 	$$[X \otimes p_1(u,v), Y \otimes p_2(u,v)]=[X,Y] \otimes p_1(u,v)p_2(u,v) + (X,Y)\overline{p_2(u,v)d  p_1(u,v)} $$
	 for $p_1(u,v), p_2(u,v) \in \C[u,v]$, $X,Y \in \mfsl_{m|n}$ and where $(\cdot,\cdot)$ is the Killing form on $\mfsl_{m|n}$, $d$ is the differential, and $\overline{\phantom{a}\cdot\phantom{a}}$ is the quotient homomorphism from $\Omega^1$ to  $\Omega^1/ d(\C[u,v] )$.
	Another description of this universal central extension is given by the Steinberg Lie superalgebra, which is defined below.

	\begin{definition}[Definition 4.1 in \cite{CSUCE}]\label{stndef}
		Let $A$ be an associative superalgebra over $\C$. For $m+n \ge 3$, the Steinberg Lie superalgebra $\mf{st}_{m|n}(A)$ is defined to be the Lie superalgebra over $\C$ generated by the homogeneous elements $F_{i,j}(a)$, $a \in A$ homogeneous, for $1 \le i \ne j \le m+n$ with $\mathrm{deg}(F_{i,j}(a))=|i|+|j|+|a|\in \Z_2$, subject to the following relations for $a,b \in A$:
			\begin{align}
			&a \mapsto F_{i,j}(a) \textrm{ is a $\C$-linear map,} \\
			& [F_{i,j}(a),  F_{j,k}(b)]=F_{i,k}(ab), \textrm{ for distinct $i$, $j$, $k$,}
			\\ & [F_{i,j}(a), F_{k,l}(b)] = 0 \textrm{ for $i\ne j\ne k \ne l \ne i$, i.e. when $[E_{i,j},E_{k,l}]=0$}.
			\end{align}
	\end{definition}

	\begin{theorem}[Proposition 4.1 and Theorem 6.1 in \cite{CSUCE}] Let $$ \varphi: \mf{st}_{m|n} (A) \to \mf{sl}_{m|n}(A)$$ be the map given by $F_{i,j}(a) \mapsto E_{i,j}(a)$. If $m+n\ge 5$, this is a universal central extension and its kernel is isomorphic to $HC_1(A)$ as $\C$-modules.
	\end{theorem}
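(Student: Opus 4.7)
The plan is to follow the classical Kassel--Loday strategy, adapted to the super setting. First, I would check that $\varphi$ is a well-defined homomorphism of Lie superalgebras by verifying directly that the matrix units $E_{i,j}(a)$ satisfy the Steinberg relations of Definition~\ref{stndef}, and that $\varphi$ is surjective: off-diagonal $E_{i,j}(a)$ are obviously in the image, while any supertraceless diagonal element can be reached using brackets of the form $[E_{i,j}(a), E_{j,i}(1)]$, which recover $E_{i,i}(a) \mp (-1)^{(|i|+|j|)|a|}E_{j,j}(a)$. Centrality of $K := \ker\varphi$ would then be established by taking $z \in K$, writing $F_{i,j}(a) = [F_{i,k}(a), F_{k,j}(1)]$ for a third index $k$, and applying the super Jacobi identity to $[z, F_{i,j}(a)]$; iterating this reduction and exploiting the relation $[F_{i,j}(a), F_{k,l}(b)] = 0$ for $\{i,j\}\cap\{k,l\} = \emptyset$ shows that $[z, F_{i,j}(a)] = 0$, where the bound $m+n\geq 5$ supplies enough distinct auxiliary indices to carry out the reduction.

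Next, I would build the explicit identification $K \cong HC_1(A)$. For distinct $i,j,k$ and any $a,b \in A$, set $\ell_{i,j}(a,b) := [F_{i,j}(a), F_{j,i}(b)]$; the differences $\ell_{i,j}(a,b) - \ell_{i,k}(a,b)$ map under $\varphi$ to supertraceless diagonal combinations that lie in $K$ after a small correction. Using the Steinberg relations together with the Kassel--Loday presentation of cyclic homology, namely $HC_1(A) = (A \otimes A)/\langle ab\otimes c + (-1)^{|a|(|b|+|c|)} bc\otimes a + (-1)^{|c|(|a|+|b|)} ca \otimes b,\ a\otimes 1,\ 1 \otimes a\rangle$, I would verify that $\langle a,b\rangle \mapsto \ell_{i,j}(a,b) - \ell_{i,k}(a,b)$ descends to a well-defined $\C$-linear map $HC_1(A) \to K$ independent of the chosen indices, and construct an inverse: every element of $K$, being a sum of brackets of generators that vanishes under $\varphi$, reduces via the Steinberg relations to such canonical kernel elements. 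Bi-additivity and the cyclic relation can be checked directly from repeated application of the defining identities.

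Finally, I would address universality. Given a central extension $\pi: E \twoheadrightarrow \mathfrak{sl}_{m|n}(A)$, I would lift each $F_{i,j}(a)$ by picking an auxiliary index $k \notin \{i,j\}$, choosing set-theoretic lifts $\widetilde{E}_{p,q}(c) \in E$ of the $E_{p,q}(c)$, and defining $\widetilde{\varphi}(F_{i,j}(a)) := [\widetilde{E}_{i,k}(a), \widetilde{E}_{k,j}(1)]$; centrality of $\ker\pi$ makes this independent of the chosen lifts, and comparing two choices of $k$ via a fifth auxiliary index (available since $m+n\geq 5$) shows independence of $k$ as well. The main obstacle, in my view, is verifying that these $\widetilde{\varphi}(F_{i,j}(a))$ satisfy the Steinberg relations: each relation must be rederived inside $E$ by super Jacobi, and every such computation incurs a central correction that must be shown to vanish; the hypothesis $m+n \geq 5$ is exactly what guarantees enough distinct indices to decompose every auxiliary bracket and make all central corrections cancel. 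Once $\widetilde{\varphi}$ is constructed, it provides a splitting through any central extension, yielding universality.
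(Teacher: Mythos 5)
You should note first that the paper itself gives no proof of this statement: it is imported verbatim from Proposition 4.1 and Theorem 6.1 of \cite{CSUCE}, so there is nothing internal to compare against. Your outline is the standard Kassel--Loday strategy (centrality of $\ker\varphi$ via the root-lattice grading, identification of the kernel with $HC_1(A)$ through the elements $[F_{i,j}(a),F_{j,i}(b)]$, and lifting through an arbitrary central extension using auxiliary indices supplied by $m+n\ge 5$), which is exactly the route taken in the cited source in the super setting; as a blind reconstruction it is the right approach, with the caveat that it remains a sketch whose substance lies in the sign-sensitive verifications you defer (e.g.\ the bracket $[E_{i,j}(a),E_{j,i}(1)]$ produces $E_{i,i}(a)-(-1)^{|i|+|j|}E_{j,j}(a)$, not the $|a|$-dependent sign you wrote, and the super-graded presentation of $HC_1(A)$ needs to be stated carefully).
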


Specifically, we will consider the case that $A=\C[u,v]$. Since all elements are even in $\C[u,v]$, we find that $\mathrm{deg}(F_{i,j}(a))=|i|+|j|$. The previous theorem provides the motivation for finding another presentation for the Steinberg Lie superalgebra. 

 				\begin{proposition}\label{242}
			Let $\widetilde{\mf{st}}_{m|n}(\C[u,v]) $ be the Lie superalgebra generated by elements $\widetilde{F}_{a,b}(1)$, $\widetilde{F}_{a,b}(u)$, and $\widetilde{F}_{a,b}(v)$ for $ 1 \le a \ne b \le m+n$ and $\mathrm{deg}(\widetilde{F}_{a,b}(1))=\mathrm{deg}(\widetilde{F}_{a,b}(u))=\mathrm{deg}(\widetilde{F}_{a,b}(v))=|a|+|b|\in\Z_2$. These generators are subject to the following relations for $a\ne b \ne c \ne d \ne a$:
				\begin{align*}
					[\widetilde{F}_{a,b}(u), \widetilde{F}_{c,d}(u)] =0 ,&&
					[\widetilde{F}_{a,b}(u), \widetilde{F}_{c,d}(v) ]=0, \\
					[\widetilde{F}_{a,b}(v), \widetilde{F}_{c,d}(v) ]=0 ,&&
					[\widetilde{F}_{a,b}(u), \widetilde{F}_{c,d}(1)] =0 ,\\
					[\widetilde{F}_{a,b}(v), \widetilde{F}_{c,d}(1) ]=0 ,&&
					[\widetilde{F}_{a,b}(1), \widetilde{F}_{c,d}(1)] =0 .\end{align*}
				 For $a,b,c$ and $a,c,d$ all distinct, 
					\begin{align*}
					[\widetilde{F}_{a,b}(u), \widetilde{F}_{b,c}(u)] = [\widetilde{F}_{a,d}(u), \widetilde{F}_{d,c}(u)] ,&&
					[\widetilde{F}_{a,b}(u), \widetilde{F}_{b,c}(v)] = [\widetilde{F}_{a,d}(v), \widetilde{F}_{d,c}(u)], \\
					[\widetilde{F}_{a,b}(v), \widetilde{F}_{b,c}(v)] = [\widetilde{F}_{a,d}(v), \widetilde{F}_{d,c}(v)] ,&&[\widetilde{F}_{a,b}(u^i), \widetilde{F}_{b,c}(u^j)] =\widetilde{F}_{a,c}(u^{i+j})\\
[\widetilde{F}_{a,b}(v^i), \widetilde{F}_{b,c}(v^j)] =\widetilde{F}_{a,c}(v^{i+j}),&&
				\end{align*}
					where $i+j=0,1$ and $m+n \ge 5$. Then, we have an isomorphism $\widetilde{\mf{st}}_{m|n}(\C[u,v]) \simeq \mf{st}_{m|n}(\C[u,v])$.	
		\end{proposition}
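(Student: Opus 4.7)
The plan is to build mutually inverse Lie superalgebra homomorphisms
\[
\Psi : \widetilde{\mf{st}}_{m|n}(\C[u,v]) \longrightarrow \mf{st}_{m|n}(\C[u,v]), \qquad \Phi : \mf{st}_{m|n}(\C[u,v]) \longrightarrow \widetilde{\mf{st}}_{m|n}(\C[u,v]).
\]
The forward direction $\Psi$ is essentially a check: setting $\Psi(\widetilde{F}_{a,b}(f)) = F_{a,b}(f)$ for $f \in \{1,u,v\}$, each of the relations listed in the definition of $\widetilde{\mf{st}}_{m|n}(\C[u,v])$ is an immediate consequence of the three Steinberg relations in Definition~\ref{stndef}. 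The only slightly non-trivial point is an identity like $[\widetilde{F}_{a,b}(u), \widetilde{F}_{b,c}(u)] = [\widetilde{F}_{a,d}(u), \widetilde{F}_{d,c}(u)]$, which under $\Psi$ becomes the tautology $F_{a,c}(u^2) = F_{a,c}(u^2)$ in $\mf{st}_{m|n}(\C[u,v])$.

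The substantive work is constructing $\Phi$. For each monomial $u^r v^s$ and each pair $a \neq b$, one needs to produce an element $\widehat{F}_{a,b}(u^r v^s) \in \widetilde{\mf{st}}_{m|n}(\C[u,v])$ and show that setting $\Phi(F_{a,b}(u^r v^s)) = \widehat{F}_{a,b}(u^r v^s)$ (extended $\C$-linearly in the polynomial argument) yields a well-defined Lie superalgebra homomorphism. I would proceed by induction on the total degree $N = r+s$: for $N \leq 1$ the elements are the given generators, and for $N \geq 2$ I would set
\[
\widehat{F}_{a,b}(u^r v^s) := [\widehat{F}_{a,c}(u^{r_1} v^{s_1}), \widehat{F}_{c,b}(u^{r_2} v^{s_2})]
\]
for any auxiliary index $c \neq a, b$ (available since $m+n \geq 5$) and any decomposition $u^r v^s = u^{r_1}v^{s_1} \cdot u^{r_2}v^{s_2}$ whose two factors have strictly smaller total degree.

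The heart of the proof is to show that this definition does not depend on the choices of $c$ or of the decomposition. Independence of $c$ would follow from a Jacobi argument introducing a fourth index $d \notin \{a,b,c\}$ and using the commutation relations between generators with disjoint index pairs. Independence of the decomposition in a single variable rests on the relations $[\widetilde{F}_{a,b}(u^i), \widetilde{F}_{b,c}(u^j)] = \widetilde{F}_{a,c}(u^{i+j})$ and $[\widetilde{F}_{a,b}(u), \widetilde{F}_{b,c}(u)] = [\widetilde{F}_{a,d}(u), \widetilde{F}_{d,c}(u)]$, combined with Jacobi to shift a factor of $u$ between the two bracket entries; the $v$-case is symmetric. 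The mixed case $u^r v^s$ then requires the "cross" relations $[\widetilde{F}_{a,b}(u), \widetilde{F}_{b,c}(v)] = [\widetilde{F}_{a,d}(v), \widetilde{F}_{d,c}(u)]$ to interchange a $u$-factor and a $v$-factor.

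Once the elements $\widehat{F}_{a,b}(u^r v^s)$ are well-defined, the full Steinberg relations propagate to arbitrary monomial degrees by a further induction: the product relation $[\widehat{F}_{a,b}(p), \widehat{F}_{b,c}(q)] = \widehat{F}_{a,c}(pq)$ follows from the definitional bracket by a telescoping Jacobi identity, and the disjoint-index vanishing $[\widehat{F}_{a,b}(p), \widehat{F}_{c,d}(q)] = 0$ for $a,b,c,d$ pairwise distinct is inherited from the low-degree case via an auxiliary index inserted by Jacobi. The main obstacle, in my view, is exactly the well-definedness step: one must track super-signs through every Jacobi rearrangement (especially because $\widetilde{F}_{a,b}(f)$ has parity $|a|+|b|$ independently of $f$, so the signs are exactly those of the original Steinberg algebra) and ensure that at every stage the hypothesis $m+n \geq 5$ supplies the extra indices needed to perform the requisite changes of intermediate label. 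The bound $N=2$ is the base case where the cross-type relations must be used directly; once that is cleared, the induction runs through uniformly.
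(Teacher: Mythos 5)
Your proposal is correct and follows essentially the same route as the paper: one observes that the tautological map $\widetilde{F}_{a,b}(X)\mapsto F_{a,b}(X)$ is a surjection, and the real work is to construct the inverse by inductively defining elements $\widetilde{F}_{a,b}(u^r v^s)$ as brackets of lower-degree elements and verifying the full Steinberg relations $[\widetilde{F}_{a,b}(u^{r_1}v^{s_1}),\widetilde{F}_{c,d}(u^{r_2}v^{s_2})]=\delta_{b,c}\widetilde{F}_{a,d}(u^{r_1+r_2}v^{s_1+s_2})$ by nested inductions using Jacobi, the disjoint-index vanishing, and the cross relation $[\widetilde{F}_{a,b}(u),\widetilde{F}_{b,c}(v)]=[\widetilde{F}_{a,d}(v),\widetilde{F}_{d,c}(u)]$. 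The only (cosmetic) difference is that the paper fixes a canonical decomposition --- first building $\widetilde{F}_{a,b}(u^l)$ and $\widetilde{F}_{a,b}(v^k)$, then peeling off a single factor of $u$ at a time to reach $\widetilde{F}_{a,b}(u^{l+1}v^k)=[\widetilde{F}_{a,c_{a,b}}(u),\widetilde{F}_{c_{a,b},b}(u^l v^k)]$ for a fixed auxiliary index $c_{a,b}$ --- so that independence of all choices emerges as a corollary of the general product relation rather than being proved as a separate well-definedness step up front.
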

The rest of this subsection is devoted to the proof of Proposition~\ref{242} and we will assume that $m+n\ge 5$.

There is a surjective homomorphism of Lie superalgebras $$\widetilde{\mf{st}}_{m|n}(\C[u,v])  \twoheadrightarrow \mf{st}_{m|n}(\C[u,v])$$ given by $$ \widetilde{F}_{a,b}(X) \mapsto F_{a,b}(X)$$ for $X=1,u,v$.  To show that this is an isomorphism, we need to find an inverse map. This is equivalent to constructing elements $\widetilde{F}_{a,b}(u^r v^s)$ in $\widetilde{\mf{st}}_{m|n}(\C[u,v]) $ that could serve as the images of $F_{a,b}(u^r v^s)$ under the inverse map $$\mf{st}_{m|n}(\C[u,v]) \to \widetilde{\mf{st}}_{m|n}(\C[u,v]).  $$
It is enough to construct inductively these elements $\widetilde{F}_{a,b}(u^r v^s)$ (where $a\ne b$, $r,s\ge 0$) satisfying
 $$ [\widetilde{F}_{a,b}(u^{r_1} v^{s_1}  ), \widetilde{F}_{c,d}(u^{r_2} v^{s_2}) ]= \delta_{b,c}\widetilde{F}_{a,d}(u^{r_1+r_2} v^{s_1+s_2})$$
for $a\ne b$, $c \ne d \ne a$, $r_1,r_2,s_1,s_2 \ge 0$.

\begin{lemma}\label{raylem}
\begin{enumerate}
\item For $1\leq a\neq b\leq m+n$ and $l\geq1$, we fix  $c_{a,b}\neq a,b$ and set inductively
\begin{align}
\widetilde{F}_{a,b}(u^{l+1})&=[\widetilde{F}_{a,c_{a,b}}(u),\widetilde{F}_{c_{a,b},b}(u^{l})],\label{242-3}\\
\widetilde{F}_{a,b}(v^{l+1})&=[\widetilde{F}_{a,c_{a,b}}(v),\widetilde{F}_{c_{a,b},b}(v^{l})]\label{242-4}
\end{align}
For $d\neq a,b,c$, 
\begin{align}
[\widetilde{F}_{a,b}(u), \widetilde{F}_{c,d}(u^l)] =0\text{ if }b\neq c,\label{242-7}\\
 [\widetilde{F}_{a,b}(v), \widetilde{F}_{c,d}(v^l)] =0\text{ if }b\neq c,\label{242-8}\\
[\widetilde{F}_{a,b}(1),\widetilde{F}_{c,d}(u^{l})]=\delta_{b,c}\widetilde{F}_{a,d}(u^{l}),\label{242-5}\\
[\widetilde{F}_{a,b}(1),\widetilde{F}_{c,d}(v^{l})]=\delta_{b,c}\widetilde{F}_{a,d}(v^{l}),\label{242-6}\\
[\widetilde{F}_{a,c}(u),\widetilde{F}_{c,b}(u^{l})]=[\widetilde{F}_{a,d}(u),\widetilde{F}_{d,b}(u^{l})]\text{ if }c\neq a,b,\label{242-1}\\
[\widetilde{F}_{a,c}(v),\widetilde{F}_{c,b}(v^{l})]=[\widetilde{F}_{a,d}(v),\widetilde{F}_{d,b}(v^{l})]\text{ if }c\neq a,b.\label{242-2}
\end{align}
In particular, the definitions of $\widetilde{F}_{a,b}(u^{l})$ and $\widetilde{F}_{a,b}(v^{l})$ are independent of the choice of $c_{a,b}$.
\item
For $a \ne b$, $c \ne d \ne a$, $k,l \ge 0$,
\begin{align}
[\widetilde{F}_{a,b}(u^k), \widetilde{F}_{c,d}(u^l)] =\delta_{b,c}\widetilde{F}_{a,d}(u^{k+l}),\label{rayquaza}\\
 [\widetilde{F}_{a,b}(v^k), \widetilde{F}_{c,d}(v^l)] =\delta_{b,c}\widetilde{F}_{a,d}(v^{k+l}). \label{rayquaza-1}
\end{align}
\end{enumerate}
\end{lemma}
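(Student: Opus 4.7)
The strategy is to prove Part (1) by strong induction on $l$, establishing all of \eqref{242-7}--\eqref{242-2} simultaneously at each level, and to deduce Part (2) from Part (1) by a further induction on $k$. The base case $l=1$ reduces each of these identities to a defining relation of $\widetilde{\mf{st}}_{m|n}(\C[u,v])$: \eqref{242-7} and \eqref{242-8} are among the zero-bracket relations (valid under $a\ne b\ne c\ne d\ne a$, which is ensured by $b\ne c$, $d\ne a,b,c$, and $a\ne b$); \eqref{242-5} and \eqref{242-6} split into the sub-case $b=c$, handled by $[\widetilde{F}_{a,b}(u^0),\widetilde{F}_{b,d}(u^j)]=\widetilde{F}_{a,d}(u^j)$ with $j=1$, and the sub-case $b\ne c$, handled by the zero-bracket relations; and \eqref{242-1}, \eqref{242-2} are exactly the associativity relations present in the defining list.

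At the inductive step, the proof proceeds in the order: first \eqref{242-1} and \eqref{242-2} (which provides the well-definedness of $\widetilde{F}_{a,b}(u^{l+1})$ and $\widetilde{F}_{a,b}(v^{l+1})$), then \eqref{242-7}, \eqref{242-8}, and finally \eqref{242-5}, \eqref{242-6}. The model computation, illustrated for \eqref{242-1}, is to pick an auxiliary index $e$ distinct from $a,b,c,d$ (possible since $m+n\ge 5$), to use the inductive instance of \eqref{242-1} at level $l$ to rewrite $\widetilde{F}_{c,b}(u^{l+1}) = [\widetilde{F}_{c,e}(u),\widetilde{F}_{e,b}(u^{l})]$, and then to apply super-Jacobi:
\[
[\widetilde{F}_{a,c}(u),[\widetilde{F}_{c,e}(u),\widetilde{F}_{e,b}(u^{l})]] = [[\widetilde{F}_{a,c}(u),\widetilde{F}_{c,e}(u)],\widetilde{F}_{e,b}(u^{l})] \pm [\widetilde{F}_{c,e}(u),[\widetilde{F}_{a,c}(u),\widetilde{F}_{e,b}(u^{l})]].
\]
The cross term vanishes by the inductive \eqref{242-7} (applied to the disjoint indices $a,c,e,b$), leaving $[\widetilde{F}_{a,e}(u^{2}),\widetilde{F}_{e,b}(u^{l})]$, an expression that no longer involves the original choice $c$; the same manipulation starting from $d$ yields the identical expression, which forces \eqref{242-1} at level $l+1$. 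The remaining identities \eqref{242-7}, \eqref{242-8}, \eqref{242-5}, \eqref{242-6} are established by the same recipe: expand one of the two arguments using the inductive definition \eqref{242-3} or \eqref{242-4}, apply super-Jacobi, and dispatch the two resulting inner brackets using lower-level instances of the hypothesis.

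Part (2) then follows by induction on $k$: the cases $k=0,1$ are covered by Part (1), and for $k\ge 2$ one writes $\widetilde{F}_{a,b}(u^k) = [\widetilde{F}_{a,e}(u),\widetilde{F}_{e,b}(u^{k-1})]$ with $e\ne a,b,c,d$, expands the bracket with $\widetilde{F}_{c,d}(u^l)$ via super-Jacobi, and identifies each summand using the inductive hypothesis at $k-1$ together with Part (1); this yields $\widetilde{F}_{a,d}(u^{k+l})$ when $b=c$ and $0$ otherwise. The main obstacle throughout is bookkeeping: tracking the super-Jacobi signs that arise from the $\Z_2$-parities $|a|+|b|$ of the generators, and verifying in every sub-case of the induction that an auxiliary index $e$ with all the required distinctness properties can indeed be chosen, which is precisely where the hypothesis $m+n\ge 5$ is used.
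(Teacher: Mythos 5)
Your proposal is correct and follows essentially the same route as the paper: prove Part (1) by induction on $l$, with the base case given by the defining relations and the inductive step carried out via the super-Jacobi identity using an auxiliary fifth index $e\notin\{a,b,c,d\}$ (which is where $m+n\ge 5$ enters), then deduce Part (2) by a further induction on $k$. The only divergence is cosmetic: for \eqref{242-1} the paper decomposes the degree-one factor as $\widetilde{F}_{a,b}(u)=[\widetilde{F}_{a,d}(u),\widetilde{F}_{d,b}(1)]$ and invokes the freshly proven level-$p$ instances of \eqref{242-7} and \eqref{242-5}, whereas you decompose the degree-$l$ factor and reduce both sides to the common expression $[[\widetilde{F}_{a,c}(u),\widetilde{F}_{c,e}(u)],\widetilde{F}_{e,b}(u^{l})]$ — both are valid.
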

\begin{proof}
\begin{enumerate}
\item We only show \eqref{242-7}, \eqref{242-5} and \eqref{242-1} by induction on $l$, the relations \eqref{242-8}, \eqref{242-6} and \eqref{242-2} can be proven in a similar way.  In the case when $l=0,1$, the relations \eqref{242-7}, \eqref{242-5} and \eqref{242-1} are nothing but the defining relation of $\widetilde{\mf{st}}_{m|n}(\C[u,v])$. Suppose that they hold in the case when $l=p-1\ge 1$. Let us take $e\neq a,b,c,d$. We obtain
\begin{align}
[\widetilde{F}_{a,d}(u), \widetilde{F}_{b,c}(u^{p})] = {} & [\widetilde{F}_{a,d}(u),[\widetilde{F}_{b,e}(u), \widetilde{F}_{e,c}(u^{p-1})]] \nonumber\\
= {} &[[\widetilde{F}_{a,d}(u),\widetilde{F}_{b,e}(u)], \widetilde{F}_{e,c}(u^{p-1})] + (-1)^{(|a|+|d|)(|b|+|e|)} [\widetilde{F}_{b,e}(u) ,[\widetilde{F}_{a,d}(u) , \widetilde{F}_{e,c}(u^{p-1})]] \nonumber\\
= {} & 0,\label{242-10}
\end{align}
where the first and third equalities are due to the induction hypothesis. Similarly, we have
\begin{align}
[\widetilde{F}_{a,b}(1), \widetilde{F}_{c,d}(u^{p})] &= [\widetilde{F}_{a,b}(1), [\widetilde{F}_{c,e}(u), \widetilde{F}_{e,d}(u^{p-1})]]\nonumber\\
&=[ [\widetilde{F}_{a,b}(1), \widetilde{F}_{c,e}(u)], \widetilde{F}_{e,d}(u^{p-1})] + (-1)^{(|a|+|b|)(|c|+|e|)}[\widetilde{F}_{c,e}(u), [\widetilde{F}_{a,b}(1), \widetilde{F}_{e,d}(u^{p-1})]]\nonumber\\
&=\delta_{b,c} [\widetilde{F}_{a,e}(u), \widetilde{F}_{e,d}(u^{p-1})]=\delta_{b,c} \widetilde{F}_{a,d}(u^{p}),\label{242-11}
\end{align}
where the first and third equalities are due to the induction hypothesis. We also obtain
\begin{align}
[\widetilde{F}_{a,b}(u), \widetilde{F}_{b,c}(u^p)] = {} & [[\widetilde{F}_{a,d}(u), \widetilde{F}_{d,b}(1)],\widetilde{F}_{b,c}(u^p)]\nonumber\\
= {} & [\widetilde{F}_{a,d}(u), [ \widetilde{F}_{d,b}(1), \widetilde{F}_{b,c}(u^p)]] - (-1)^{(|a|+|d|)(|d|+|b|)} [ \widetilde{F}_{d,b}(1), [ \widetilde{F}_{a,d}(u), \widetilde{F}_{b,c}(u^p)]]\nonumber\\
= {} & [\widetilde{F}_{a,d}(u), \widetilde{F}_{d,c}(u^p)],\label{242-12}
\end{align}
where the third equality is due to \eqref{242-10} and \eqref{242-11}. By induction, we find that \eqref{242-7}, \eqref{242-5} and \eqref{242-1} hold.

\item We prove \eqref{rayquaza} by the induction on $k$, the cases when $k=0,1$ being \eqref{242-7}, \eqref{242-5} and \eqref{242-1}. Suppose that \eqref{rayquaza} holds when $k=p-1$. Let us take $e\neq a,b,c,d$. We obtain 
\begin{align*}
[\widetilde{F}_{a,b}(u^p),\widetilde{F}_{c,d}(u^l)]&= [[\widetilde{F}_{a,e}(u),\widetilde{F}_{e,b}(u^{p-1})],\widetilde{F}_{c,d}(u^l)] \\
&=[\widetilde{F}_{a,e}(u),[ \widetilde{F}_{e,b}(u^{p-1}), \widetilde{F}_{c,d}(u^l)]] - (-1)^{(|a|+|e|)(|e|+|b|)}[\widetilde{F}_{e,b}(u^{p-1}), [\widetilde{F}_{a,e}(u), \widetilde{F}_{c,d}(u^l)]]\\
&= \delta_{b,c}[\widetilde{F}_{a,e}(u), \widetilde{F}_{e,d}(u^{k+l-1})]=\delta_{b,c}\widetilde{F}_{a,d}(u^{k+l}),
			\end{align*}
where the first and fourth equalities are due to \eqref{242-1} and the second equality is due to \eqref{242-7}, \eqref{242-1} and the induction hypothesis. Thus, we find that (\ref{rayquaza}) holds when $k=p$. By induction, \eqref{rayquaza} holds for any $k,l\ge 0$. \end{enumerate} \end{proof}
Finally, we introduce elements $\widetilde{F}_{a,b}(u^r v^s)$ while proving Proposition \ref{dopey} below.
\begin{proposition} \label{dopey}
\begin{enumerate}
\item For $1\leq a\neq b\leq m+n$ and $l\geq1$. We choose $c_{a,b}\neq a,b$ and set
\begin{align}
\widetilde{F}_{a,b}(u^{l+1}v^k)&=[\widetilde{F}_{a,c_{a,b}}(u),\widetilde{F}_{c_{a,b},b}(u^{l}v^k)]\label{243-3}
\end{align}
inductively.  Then, we have
\begin{align}
[\widetilde{F}_{a,b}(u), \widetilde{F}_{c,d}(u^lv^k)] & = 0 \text{ if }b\neq c\text{ and }d\neq a,c,\label{243-7}\\
[\widetilde{F}_{a,b}(1),\widetilde{F}_{c,d}(u^{l}v^k)] & = \delta_{b,c}\widetilde{F}_{a,d}(u^{l}v^k)\text{ if }d\neq a,c,\label{243-5}\\
[\widetilde{F}_{a,c}(u),\widetilde{F}_{c,b}(u^{l}v^k)] & = [\widetilde{F}_{a,d}(u),\widetilde{F}_{d,b}(u^{l}v^k)]\text{ if }c\neq a,b\text{ and }d\neq a,b.\label{243-2}
\end{align}
In particular, the definition of $\widetilde{F}_{a,b}(u^{l}v^k)$ is independent of the choice of $c_{a,b}$.
\item For $a\ne b$, $c \ne d \ne a$, 
\begin{equation}
[\widetilde{F}_{a,b}(u^{k_1}), \widetilde{F}_{c,d} (u^{k_2} v^{l_2}) ]= \delta_{b,c} \widetilde{F}_{a,d} (u^{k_1+k_2} v^{l_2}).\label{gazz1}
\end{equation}
\item For $a\ne b$, $c \ne d \ne a$, 
\begin{align} [\widetilde{F}_{a,b}(v), \widetilde{F}_{c,d} (u^{k_2} v^{l_2}) ]= \delta_{b,c} \widetilde{F}_{a,d} (u^{k_2} v^{1+l_2}). 
	\label{grazz2}
	\end{align}
\item For $a\ne b$, $c \ne d \ne a$, and $k_1,k_2,l_1,l_2 \ge0$, 
	\begin{align} [\widetilde{F}_{a,b}(u^{k_1} v^{l_1}), \widetilde{F}_{c,d} (u^{k_2} v^{l_2}) ]= \delta_{b,c} \widetilde{F}_{a,d} (u^{k_1+k_2} v^{l_1+l_2}). 
	\label{grazz}
	\end{align}
\end{enumerate}
\end{proposition}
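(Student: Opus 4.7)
The plan follows the inductive pattern used in Lemma \ref{raylem}, now adapted to mixed monomials in $u$ and $v$. We treat the four parts in sequence.

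For Part 1, we define $\widetilde{F}_{a,b}(u^{l+1}v^k)$ by \eqref{243-3}, with the base case supplied by Lemma \ref{raylem} (either $\widetilde{F}_{c_{a,b},b}(v^k)$ when $l=0$ or $\widetilde{F}_{c_{a,b},b}(u^l)$ when $k=0$). We then prove \eqref{243-7}, \eqref{243-5} and \eqref{243-2} simultaneously by induction on $l$; the base case $l=0$ follows from Lemma \ref{raylem}. The inductive step exactly mimics the computations \eqref{242-10}, \eqref{242-11} and \eqref{242-12}: we choose an auxiliary index $e\notin\{a,b,c,d\}$ (which exists because $m+n\ge 5$), expand $\widetilde{F}_{c,d}(u^{l+1}v^k)=[\widetilde{F}_{c,e}(u),\widetilde{F}_{e,d}(u^lv^k)]$, and apply the super-Jacobi identity. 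Each resulting inner bracket is handled either by the inductive hypothesis, by Lemma \ref{raylem}, or by the defining relations of $\widetilde{\mf{st}}_{m|n}(\C[u,v])$. Independence of the definition from the choice of $c_{a,b}$ then follows from \eqref{243-2}.

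Part 2 is a routine induction on $k_1$ modeled on the proof of \eqref{rayquaza}. The cases $k_1=0,1$ are precisely the relations from Part 1. For the inductive step, we write $\widetilde{F}_{a,b}(u^{k_1})=[\widetilde{F}_{a,e}(u),\widetilde{F}_{e,b}(u^{k_1-1})]$ for $e\notin\{a,b,c,d\}$, apply super-Jacobi, and invoke the inductive hypothesis together with Part 1.

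Part 3 is the main obstacle, since the definition \eqref{243-3} uses only $u$-brackets while we must compute a $v$-bracket. We proceed by induction on $k_2$. When $k_2=0$, the identity reduces to \eqref{rayquaza-1} of Lemma \ref{raylem}. When $k_2\ge 1$, we expand $\widetilde{F}_{c,d}(u^{k_2}v^{l_2})=[\widetilde{F}_{c,e}(u),\widetilde{F}_{e,d}(u^{k_2-1}v^{l_2})]$ with $e\notin\{a,b,c,d\}$ and apply super-Jacobi to obtain
\begin{equation*}
[\widetilde{F}_{a,b}(v),\widetilde{F}_{c,d}(u^{k_2}v^{l_2})]=[[\widetilde{F}_{a,b}(v),\widetilde{F}_{c,e}(u)],\widetilde{F}_{e,d}(u^{k_2-1}v^{l_2})]\pm[\widetilde{F}_{c,e}(u),[\widetilde{F}_{a,b}(v),\widetilde{F}_{e,d}(u^{k_2-1}v^{l_2})]].
\end{equation*}
The crucial case is $b=c$; here the first bracket is identified, via the defining relation $[\widetilde{F}_{a,b}(u),\widetilde{F}_{b,c}(v)]=[\widetilde{F}_{a,d}(v),\widetilde{F}_{d,c}(u)]$ and Lemma \ref{raylem}, with $\widetilde{F}_{a,e}(uv)=[\widetilde{F}_{a,c}(u),\widetilde{F}_{c,e}(v)]$ as specified by \eqref{243-3}, while the second bracket is handled by the inductive hypothesis on $k_2$. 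Matching the resulting sum against $\widetilde{F}_{a,d}(u^{k_2}v^{1+l_2})=[\widetilde{F}_{a,e}(u),\widetilde{F}_{e,d}(u^{k_2-1}v^{1+l_2})]$ using Part 1 completes the induction. The cases $b\ne c$ are easier, since the first bracket vanishes by the defining relations for $e$ chosen appropriately.

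For Part 4 we induct on $l_1$, with $l_1=0$ provided by Part 2. For the inductive step, Part 3 (read as a \emph{definition} rather than as an evaluation) allows us to write $\widetilde{F}_{a,b}(u^{k_1}v^{l_1})=[\widetilde{F}_{a,e}(v),\widetilde{F}_{e,b}(u^{k_1}v^{l_1-1})]$ for $e\notin\{a,b,c,d\}$. Applying super-Jacobi and invoking Part 3 together with the inductive hypothesis yields the desired identity \eqref{grazz}.
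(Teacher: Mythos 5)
Your proposal is correct and follows essentially the same route as the paper: Parts 1 and 2 repeat the inductions of Lemma \ref{raylem}, Part 3 is the paper's induction on $k_2$ whose key step is the $u\leftrightarrow v$ swap via the defining relation $[\widetilde{F}_{a,b}(u),\widetilde{F}_{b,c}(v)]=[\widetilde{F}_{a,d}(v),\widetilde{F}_{d,c}(u)]$ followed by a second application of the super-Jacobi identity, and Part 4 peels off one factor of $v$ using Part 3 exactly as the paper does (the paper formally inducts on $l_1+l_2$ but only ever decrements $l_1$, so your induction on $l_1$ is the same computation). The one point to make explicit is that the $l=0$ instances of \eqref{243-7} and \eqref{243-2} are mixed brackets against $\widetilde{F}_{c,d}(v^k)$ and are not literally contained in Lemma \ref{raylem}; they need one further induction on $k$ of the same shape, a detail the paper also leaves implicit.
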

\begin{proof}
\begin{enumerate}
\item The first part can be proved as \eqref{242-7}, \eqref{242-5} and \eqref{242-1} above. 

\item This can be proved as \eqref{rayquaza}.

\item We prove by the induction on $k_2$. The case when $k_2=0$ is nothing but \eqref{242-2}. Suppose that \eqref{grazz2} holds when $k_2=p-1$. Let us take $e\neq a,b,c,d$. We obtain
\begin{align*}
[\widetilde{F}_{a,b}(v), & \widetilde{F}_{c,d} (u^{p} v^{l_2}) ] =[\widetilde{F}_{a,b}(v), [\widetilde{F}_{c,e}(u),\widetilde{F}_{e,d} (u^{p-1} v^{l_2}) ]]\\
&=[[\widetilde{F}_{a,b}(v), \widetilde{F}_{c,e}(u)],\widetilde{F}_{e,d} (u^{p-1} v^{l_2}) ]+(-1)^{(|a|+|e|)(|e|+|b|)}[\widetilde{F}_{c,e}(u),[\widetilde{F}_{a,b}(v), \widetilde{F}_{e,d} (u^{p-1} v^{l_2}) ]]\\
&=[[\widetilde{F}_{a,b}(u), \widetilde{F}_{c,e}(v)],\widetilde{F}_{e,d} (u^{p-1} v^{l_2}) ]\\
&=[\widetilde{F}_{a,b}(u), [\widetilde{F}_{c,e}(v),\widetilde{F}_{e,d} (u^{p-1} v^{l_2}) ]]-(-1)^{(|a|+|e|)(|e|+|b|)}[\widetilde{F}_{c,e}(v),[\widetilde{F}_{a,b}(u), \widetilde{F}_{e,d} (u^{p-1} v^{l_2}) ]]\\
&=[\widetilde{F}_{a,b}(u), \widetilde{F}_{c,d} (u^{p-1} v^{l_2+1}) ] =\delta_{b,c}\widetilde{F}_{a,d} (u^{p} v^{l_2+1}),
\end{align*}
where the first and last equality are due to part 1 above and the third equality is due to the definition of $\widetilde{\mf{st}}_{m|n}(\C[u,v])$ and the induction hypothesis. Then, by induction, we obtain \eqref{grazz2}.

\item The proof proceeds by induction on $l_1+l_2$. The case when $l_1+l_2=0$ is nothing but Lemma~\ref{raylem}. Suppose that \eqref{grazz} holds in the case that $l_1+l_2=p$.
Let us consider the case that $l_1 \ge 1$ and $l_1+l_2=p+1$. We obtain
	\begin{align*} 
		[&\widetilde{F}_{a,b}(u^{k_1}v^{l_1}), \widetilde{F}_{c,d}(u^{k_2} v^{l_2})] = [ [ \widetilde{F}_{a,e}(v), \widetilde{F}_{e,b}(u^{k_1} v^{l_1 -1})],\widetilde{F}_{c,d}(u^{k_2} v^{l_2})]\\
&=[\widetilde{F}_{a,e}(v), [ \widetilde{F}_{e,b}(u^{k_1} v^{l_1-1}), \widetilde{F}_{c,d}(u^{k_2} v^{l_2})]- (-1)^{(|a|+|e|)(|e|+|b|)} [ \widetilde{F}_{e,b}(u^{k-1} v^{l_1-1}),  [\widetilde{F}_{a,e}(v), \widetilde{F}_{c,d}(u^{k_2} v^{l_2})] \\
& =\delta_{b,c} [\widetilde{F}_{a,e}(v), \widetilde{F}_{e,d}(u^{k_1+k_2} v^{l_1+l_2-1})] = \delta_{b,c} \widetilde{F}_{a,d} (u^{k_1+k_2} v^{l_1+l_2}),
	\end{align*}
where the first and last equalities are due to \eqref{grazz2} and the third equalites and the third equality is due to \eqref{grazz2} and the induction hypothesis. Then, by induction, we obtain \eqref{grazz}.
\end{enumerate}
\end{proof}
\subsection{Deformed Double Current Superalgebras}
\begin{definition} \label{moneypowerglory}
Let $t,\kappa$ be complex numbers. The deformed double current superalgebra $\mc{D}_{t,\kappa}(\mathfrak{sl}_{m|n})$ is the associative superalgebra over $\C$ generated by 
\begin{gather*}
\{E_{a,b},\msk(E_{a,b}), \msq(E_{a,b})\mid 1\leq a\neq b\leq m+n\}
\end{gather*}
with the following commutator relations:
\begin{align}
[E_{a,b},E_{c,d}] & =\delta_{b,c}E_{a,d}-(-1)^{(|a|+|b|)(|c|+|d|)}\delta_{a,d}E_{c,b},\label{def-1}\\
[E_{a,b},\msk(E_{c,d})] & =\delta_{b,c}\msk(E_{a,d})\text{ for }a\neq d,\label{def-2}\\
[\msk(E_{a,b}),\msk(E_{c,d})] & =0\text{ for }a\neq d,b\neq c,\label{def-3}\\
[\msk(E_{a,c}),\msk(E_{c,d})] & =[\msk(E_{a,b}),\msk(E_{b,d})],\label{def-4}\\
[E_{a,b},\msq(E_{c,d})] & =\delta_{b,c}\msq(E_{a,d})\text{ for }a\neq d,\label{def-5}\\
[\msq(E_{a,b}),\msq(E_{c,d})] & =0\text{ for }a\neq d,b\neq c,\label{def-6}\\
[\msq(E_{a,c}),\msq(E_{c,d})] & =[\msq(E_{a,b}),\msq(E_{b,d})],\label{def-7}
\end{align}
\begin{align}
[\msk(E_{a,b}),\msq(E_{b,c})]&-[\msq(E_{a,d}),\msk(E_{d,c})]\nonumber\\ 
&= tE_{a,c} -\kappa((-1)^{|b|}E_{b,b}+(-1)^{|d|}E_{d,d})E_{a,c}\nonumber\\
&\quad+\kappa\sum_{e=1}^{m+n}\limits (-1)^{|e|+(|e|+|c|)(|a|+|e|)}  E_{e,c} E_{a,e} \text{ for distinct }a,b,c\text{ and distinct }a,c,d,\label{def-8}
\end{align} 
\begin{align}
[\msk(E_{a,b}), \msq(E_{c,d})]& =- \kappa (-1)^{ |a||b| + |a||c|+|b||c|} E_{c,b} E_{a,d} \text{ when } a\neq b \neq c \neq d \neq a.\label{def-9}
\end{align} 
\end{definition}
By Proposition~\ref{242}, $\mc{D}_{t,\kappa}(\mathfrak{sl}_{m|n})$ is 
a deformation of the Steinberg Lie superalgebra $\mf{st}_{m|n}(\C[u,v])$: $$\mc{D}_{t=0,\kappa=0}(\mathfrak{sl}_{m|n}) \cong \mf{st}_{m|n}(\C[u,v]) \text{ via } E_{a,b}\longleftrightarrow F_{a,b}(1), \;\msk(E_{a,b}) \longleftrightarrow F_{a,b}(u), \;\msq(E_{a,b}) \longleftrightarrow F_{a,b}(v).$$
\begin{remark}
We note that the right-hand side of \eqref{def-8} contains $E_{b,b}$ and $E_{d,d}$. However, considering separately the cases $e=a$ and $e=c$ in the large summation in \eqref{def-8}, we can rewrite it as:
\begin{align*}
\kappa((-1)^{|c|}E_{c,c}-(-1)^{|b|}E_{b,b})E_{a,c}+\kappa E_{a,c}((-1)^{|a|}E_{a,a}&-(-1)^{|d|}E_{d,d})\\
&+\kappa\sum_{\substack{1\leq e\leq m+n\\e\neq a,c}}\limits (-1)^{|e|+(|a|+|e|)(|c|+|e|)}E_{e,c}E_{a,e}+tE_{a,c}.
\end{align*}
Thus, the right hand side of \eqref{def-8} is also contained in $U(\mathfrak{sl}_{m|n})$.
\end{remark}
\begin{proposition}
Let $M$ be a right module over the rational Cherednik algebra $\mathsf{H}_{t,\kappa}(S_l)$. Then, $M\otimes_{\C[S_l]} \C(m|n)^{\otimes l}$ is a left module over $\mc{D}_{t,\kappa}(\mathfrak{sl}_{m|n})$ which we denote $SW(M)$. Consequently, we obtain a functor $SW:M\mapsto M\otimes_{\C[S_l]} \C(m|n)^{\otimes l}$ and $f\mapsto f\otimes 1^{\otimes l}$ for any right $\mathsf{H}_{t,\kappa}(S_{\ell})$-module $M$ and any homomorphism of right $\mathsf{H}_{t,\kappa}(S_{\ell})$-modules $f$. 
\end{proposition}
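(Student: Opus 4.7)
The plan is to define a left action of $\mc{D}_{t,\kappa}(\mathfrak{sl}_{m|n})$ on $SW(M) = M \otimes_{\C[S_l]} \C(m|n)^{\otimes l}$ on the three families of generators by
\begin{gather*}
E_{a,b}(\mathbf{m}\otimes\mathbf{v}) = \mathbf{m}\otimes E_{a,b}\mathbf{v}, \\
\msk(E_{a,b})(\mathbf{m}\otimes\mathbf{v}) = \sum_{k=1}^l \mathbf{m}\mathsf{x}_k \otimes E_{a,b}^{(k)}\mathbf{v}, \qquad
\msq(E_{a,b})(\mathbf{m}\otimes\mathbf{v}) = \sum_{k=1}^l \mathbf{m}\mathsf{y}_k \otimes E_{a,b}^{(k)}\mathbf{v},
\end{gather*}
where $E_{a,b}$ acts on $\mathbf{v}$ via the super-action of $\mathfrak{gl}_{m|n}$ recalled in Section~2 (carrying the appropriate Koszul signs when $|a|+|b| = \bar 1$). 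Well-definedness modulo the $\C[S_l]$-tensor relation follows from the $S_l$-equivariance $\sigma\mathsf{x}_k\sigma^{-1} = \mathsf{x}_{\sigma(k)}$ and $\sigma\mathsf{y}_k\sigma^{-1} = \mathsf{y}_{\sigma(k)}$ in $\mathsf{H}_{t,\kappa}(S_l)$ (\eqref{defrna-1}--\eqref{defrna-2}) combined with Lemma~\ref{lem38}. Functoriality $SW(f) = f\otimes 1^{\otimes l}$ is then immediate, since each operator above acts on $M$ only by right multiplication by elements of $\mathsf{H}_{t,\kappa}(S_l)$.

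The remainder of the proof is checking the defining relations \eqref{def-1}--\eqref{def-9}. Relation \eqref{def-1} is the super-action of $\mathfrak{gl}_{m|n}$. Relations \eqref{def-2} and \eqref{def-5} reduce to the single-slot identity $[E_{a,b}, E_{c,d}^{(k)}] = \delta_{b,c}E_{a,d}^{(k)}$ (the other Kronecker term vanishes by $a \neq d$), since $\mathsf{x}_k,\mathsf{y}_k$ act only on the $M$-factor. Relations \eqref{def-3} and \eqref{def-6} rest on the commutativity $[\mathsf{x}_k,\mathsf{x}_l] = 0 = [\mathsf{y}_k,\mathsf{y}_l]$ in $\mathsf{H}_{t,\kappa}(S_l)$ together with the supercommutation of matrix units on distinct slots and the vanishing of $[E_{a,b},E_{c,d}]$ at coincident slots under the hypothesis $a \neq d$, $b \neq c$. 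Relations \eqref{def-4} and \eqref{def-7} collapse to the manifestly $b$-independent expression $\sum_k \mathbf{m}\mathsf{x}_k^2 \otimes E_{a,d}^{(k)}\mathbf{v}$ (resp.~with $\mathsf{y}_k^2$), the $k \neq l$ cross-terms cancelling by the same supercommutation.

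The heart of the proof lies in the mixed relations \eqref{def-8} and \eqref{def-9}, which probe the Cherednik brackets \eqref{defrna-3}--\eqref{defrna-4}. For \eqref{def-9}, with $a,b,c,d$ pairwise distinct, both products $E_{a,b}E_{c,d}$ and $E_{c,d}E_{a,b}$ vanish at a single slot, so only the $k \neq l$ part of the double sum survives:
\[
[\msk(E_{a,b}),\msq(E_{c,d})](\mathbf{m}\otimes\mathbf{v}) = \sum_{k\neq l}\mathbf{m}[\mathsf{x}_k,\mathsf{y}_l]\otimes E_{a,b}^{(k)}E_{c,d}^{(l)}\mathbf{v}.
\]
Applying $[\mathsf{x}_k,\mathsf{y}_l] = \kappa\sigma_{k,l}$ from \eqref{defrna-3}, pushing $\sigma_{k,l}$ across the tensor, and relabelling indices converts this into the action of $-\kappa(-1)^{|a||b|+|a||c|+|b||c|}E_{c,b}E_{a,d}$. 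Relation \eqref{def-8} is the most delicate: the $k=l$ diagonal contribution, evaluated using \eqref{defrna-4}, furnishes the $tE_{a,c}$ term together with the Cartan-type corrections $\kappa((-1)^{|b|}E_{b,b}+(-1)^{|d|}E_{d,d})E_{a,c}$, while the off-diagonal $k\neq l$ terms, after application of \eqref{defrna-3} and careful rearrangement using $E_{a,b}E_{b,c}=E_{a,c}$ and $E_{a,d}E_{d,c}=E_{a,c}$, collapse onto the quadratic expression $\kappa\sum_e (-1)^{|e|+(|e|+|c|)(|a|+|e|)}E_{e,c}E_{a,e}$. The independence of the intermediate indices $b$ and $d$, which is the content of the specific difference $[\msk(E_{a,b}),\msq(E_{b,c})] - [\msq(E_{a,d}),\msk(E_{d,c})]$ appearing in \eqref{def-8}, is the crucial algebraic consistency that emerges from the subtraction.

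The main obstacle will be the sign bookkeeping and the careful collation of the many terms appearing in \eqref{def-8}. A structural sanity check is that at $t=\kappa=0$ our formulas recover the Steinberg-superalgebra identity $F_{a,c}(uv) = F_{a,c}(vu)$ obtained in Proposition~\ref{dopey}, so the entire content of \eqref{def-8} is the deformation accounting for the Cherednik commutator on $M$. The calculation otherwise mirrors its non-super prototype in \cite{Gu2}: Koszul signs are inserted wherever matrix units are transposed past one another or across the action on $\mathbf{m}$, and the signs on the right-hand sides of \eqref{def-8}--\eqref{def-9} are then forced by super-compatibility with the $\mathfrak{gl}_{m|n}$-action already ensured by \eqref{def-1}--\eqref{def-2}.
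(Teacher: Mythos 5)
Your overall strategy is the same as the paper's: define the action by $\msk(E_{a,b})(\mathbf{m}\otimes\underline{\mathbf{v}})=\sum_{k}\mathbf{m}\mathsf{x}_k\otimes E^{(k)}_{a,b}(\underline{\mathbf{v}})$ and $\msq(E_{a,b})(\mathbf{m}\otimes\underline{\mathbf{v}})=\sum_{k}\mathbf{m}\mathsf{y}_k\otimes E^{(k)}_{a,b}(\underline{\mathbf{v}})$, dispose of \eqref{def-1}--\eqref{def-7} using commutativity of the $\mathsf{x}_k$ (resp.\ $\mathsf{y}_k$) and supercommutation of matrix units in distinct slots, and then split the double sum in $[\msk(E_{a,b}),\msq(E_{c,d})]$ into its diagonal ($k=r$) and off-diagonal ($k\neq r$) parts and feed in \eqref{defrna-3} and \eqref{defrna-4}. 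Your treatment of \eqref{def-9} is exactly the paper's, and your remark that relations \eqref{def-4}, \eqref{def-7} reduce to the manifestly intermediate-index-free expression $\sum_k\mathbf{m}\mathsf{x}_k^2\otimes E^{(k)}_{a,d}$ is correct.

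However, your description of how the two halves of \eqref{def-8} arise is reversed, and as stated that step would not go through. The diagonal contribution to $[\msk(E_{a,b}),\msq(E_{b,c})]$ involves only $E^{(k)}_{a,b}E^{(k)}_{b,c}=E^{(k)}_{a,c}$, so it cannot produce the Cartan-type corrections $-\kappa\bigl((-1)^{|b|}E_{b,b}+(-1)^{|d|}E_{d,d}\bigr)E_{a,c}$: all dependence on the intermediate indices $b$ and $d$ is erased at a single slot. Those corrections in fact come from the off-diagonal terms, since $[\mathsf{y}_r,\mathsf{x}_k]=-\kappa\sigma_{k,r}$ by \eqref{defrna-3} and composing $E^{(k)}_{a,b}E^{(r)}_{b,c}$ with the transposition $\sigma_{k,r}=\sum_{e,f}(-1)^{|f|}E^{(k)}_{e,f}E^{(r)}_{f,e}$ produces, up to sign, $E^{(k)}_{b,b}E^{(r)}_{a,c}$; this is where $E_{b,b}$ (resp.\ $E_{d,d}$) enters. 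Conversely, your claim that the off-diagonal terms collapse onto the quadratic sum $\kappa\sum_e(-1)^{|e|+(|e|+|c|)(|a|+|e|)}E_{e,c}E_{a,e}$ ``using $E_{a,b}E_{b,c}=E_{a,c}$'' cannot be right, because for $k\neq r$ the units $E^{(k)}_{a,b}$ and $E^{(r)}_{b,c}$ sit in different tensor slots and do not multiply. That sum over $e$ is produced by the \emph{diagonal} terms: $[\mathsf{y}_k,\mathsf{x}_k]=t+\kappa\sum_{r\neq k}\sigma_{r,k}$ by \eqref{defrna-4}, and expanding each transposition as a sum of matrix units and applying it to $E^{(k)}_{a,c}$ yields exactly the terms $E^{(r)}_{e,c}E^{(k)}_{a,e}$ summed over $e$. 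With these two attributions swapped back, your plan is the paper's proof; note also that the Cartan corrections appear on the right-hand side of \eqref{def-8} with coefficient $-\kappa$, not $+\kappa$ as you wrote.
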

\begin{proof} Any homomorphism $f$ of right $\mathsf{H}_{t,\kappa}(S_l)$-modules $M_1\rightarrow M_2$ is also a homomorphism of right $\C[S_l]$-modules, hence induces a linear map $f\otimes 1^{\otimes l}: SW(M_1)\rightarrow SW(M_2)$ which is also a homomorphism of left $\mc{D}_{t,\kappa}(\mathfrak{sl}_{m|n})$-modules if we let $\mc{D}_{t,\kappa}(\mathfrak{sl}_{m|n})$ act on $M\otimes_{\C[S_l]}  \C(m|n)^{\otimes l}$ as follows:
$$ \msk(E_{a,b})(\mathbf{m}\otimes\underline{\mathbf{v}}) =  \sum_{k=1}^l \mathbf{m}\mathsf{x}_k \otimes E_{a,b}^{(k)} (\underline{\mathbf{v}}), \  \msq(E_{a,b})(\mathbf{m}\otimes\underline{\mathbf{v}}) =  \sum_{k=1}^l \mathbf{m}\mathsf{y}_k \otimes E_{a,b}^{(k)} (\underline{\mathbf{v}}),$$
for any $\mathbf{m}\in M$ and $\underline{\mathbf{v}}\in\C(m|n)^{\otimes l}$.
It is enough to show the compatibility with \eqref{def-1}-\eqref{def-9}. The compatibility with \eqref{def-1}-\eqref{def-7} follows from the definition of the action of $\mc{D}_{t,\kappa}(\mathfrak{sl}_{m|n})$, so we will focus on \eqref{def-8} and \eqref{def-9}.

We need to compute $[\msk(E_{a,b}), \msq(E_{c,d})] (\mathbf{m}\otimes \underline{\mathbf{v}} )$ for any choice of distinct indices: $a\neq b$ and $c\neq d$. By the definition of the action of $\mc{D}_{t,\kappa}(\mathfrak{sl}_{m|n})$, we have
\begin{align} 
[\msk(E_{a,b}), \msq(E_{c,d})] (\mathbf{m}\otimes \underline{\mathbf{v}} )=\sum_{k,r=1}^l\limits \mathbf{m}\mathsf{y}_r \mathsf{x}_k \otimes E_{a,b}^{(k)} E_{c,d}^{(r)} (\underline{\mathbf{v}}) -(-1)^{(|a|+|b|)(|c|+|d|)} \sum_{k,r=1}^l\limits \mathbf{m}\mathsf{x}_k  \mathsf{y}_r \otimes E_{c,d}^{(r)} E_{a,b}^{(k)}  (\underline{\mathbf{v}})\label{1004}.
\end{align}
By separating the right hand side of \eqref{1004} into the two cases $k=r$ and $k\neq r$, we obtain:
\begin{align}
[\msk(E_{a,b}), \msq(E_{c,d})] (\mathbf{m}\otimes \underline{\mathbf{v}} ) &=\sum_{\substack{1\leq k,r\leq l\\k\neq r}}\limits\mathbf{m}\mathsf{y}_r \mathsf{x}_k \otimes E_{a,b}^{(k)} E_{c,d}^{(r)} (\underline{\mathbf{v}}) -\sum_{\substack{1\leq k,r\leq l\\k\neq r}}\limits\mathbf{m}\mathsf{x}_k  \mathsf{y}_r \otimes E_{a,b}^{(k)}E_{c,d}^{(r)}   (\underline{\mathbf{v}})\nonumber\\
&\quad+\sum_{k=1}^l\limits \mathbf{m}\mathsf{y}_k \mathsf{x}_k \otimes E_{a,b}^{(k)} E_{c,d}^{(k)} (\underline{\mathbf{v}}) -(-1)^{(|a|+|b|)(|c|+|d|)} \sum_{k=1}^l\limits \mathbf{m}\mathsf{x}_k  \mathsf{y}_k \otimes E_{c,d}^{(k)} E_{a,b}^{(k)}  (\underline{\mathbf{v}})\nonumber\\
&=\sum_{\substack{1\leq k,r\leq l\\k\neq r}}\limits \mathbf{m}[\mathsf{y}_r,\mathsf{x}_k] \otimes E_{a,b}^{(k)}E_{c,d}^{(r)} (\underline{\mathbf{v}})+\delta_{b,c}\sum_{k=1}^l\limits \mathbf{m}\mathsf{y}_k \mathsf{x}_k \otimes E_{a,d}^{(k)} (\underline{\mathbf{v}}) \nonumber \\
& \quad-\delta_{a,d}(-1)^{(|a|+|b|)(|c|+|d|)} \sum_{k=1}^l\limits \mathbf{m}\mathsf{x}_k  \mathsf{y}_k \otimes E_{c,b}^{(k)}  (\underline{\mathbf{v}}).
\label{1004-1}
\end{align}
By \eqref{defrna-3}, we can rewrite the first term of the right hand side of \eqref{1004-1} as: 
\begin{align}
\sum_{\substack{1\leq k,r\leq l\\k\neq r}}\limits \mathbf{m}[\mathsf{y}_r,\mathsf{x}_k] \otimes E_{a,b}^{(k)}E_{c,d}^{(r)} (\underline{\mathbf{v}})&=-\kappa\sum_{\substack{1\leq k,r\leq l\\k\neq r}}\limits \mathbf{m}\sigma_{k,r}\otimes E_{a,b}^{(k)}E_{c,d}^{(r)}(\underline{\mathbf{v}})\nonumber\\
&=-(-1)^{|a||b|+|b||c|+|c||a|}\kappa\sum_{\substack{1\leq k,r\leq l\\k\neq r}}\limits \mathbf{m}\otimes E_{c,b}^{(k)}E_{a,d}^{(r)}(\underline{\mathbf{v}}).\label{1004-2}
\end{align}

Now, we prove the compatibility with \eqref{def-9}. If $a\neq b \neq c \neq d \neq a$ then, by \eqref{1004-1} and \eqref{1004-2}, we obtain:
\begin{align*}
[\msk(E_{a,b}), \msq(E_{c,d})] (\mathbf{m}\otimes \underline{\mathbf{v}} )
&=-\kappa\sum_{\substack{1\leq k,r\leq l\\k\neq r}}\limits  \mathbf{m} \otimes  (-1)^{|a||b|+|b||c|+|c||a|}E^{(k)}_{c,b}E^{(r)}_{a,d}(\underline{\mathbf{v}})\nonumber\\
&=-\kappa(-1)^{|a||b|+|b||c|+|c||a|}E_{c,b}E_{a,d}(\mathbf{m}\otimes \underline{\mathbf{v}}),
\end{align*}
where the last equality is due to $b\neq a$.

Next, we show the compatibility with \eqref{def-8}. For distinct indices $a,b,c$ and $a,c,d$, by \eqref{1004-1} and \eqref{1004-2}, we have
\begin{align}
[\msk(E_{a,b}), \msq(E_{b,c})](\mathbf{m}\otimes \underline{\mathbf{v}} )&=-(-1)^{|b|}\kappa\sum_{\substack{1\leq k,r\leq l,\\k\neq r}} \mathbf{m} \otimes  E^{(k)}_{b,b}E^{(r)}_{a,c}(\underline{\mathbf{v}})+\sum_{k=1}^l \mathbf{m}\mathsf{y}_k \mathsf{x}_k \otimes  E^{(k)}_{a,c}(\underline{\mathbf{v}}),\label{1006-1}\\
[\msq(E_{a,d}), \msk(E_{d,c})](\mathbf{m}\otimes \underline{\mathbf{v}} )&=-(-1)^{|d|}\kappa\sum_{\substack{1\leq k,r\leq l,\\k\neq r}} \mathbf{m} \otimes E^{(k)}_{a,c}E^{(r)}_{d,d}(\underline{\mathbf{v}})-\sum_{k=1}^l \mathbf{m}\mathsf{x}_k  \mathsf{y}_k \otimes E^{(k)}_{a,c}(\underline{\mathbf{v}}).\label{1006-2}
\end{align}
By \eqref{defrna-4}, we can rewrite the difference between the second terms of \eqref{1006-1} and \eqref{1006-2} as
\begin{align}
\sum_{k=1}^l \mathbf{m}(\mathsf{y}_k\mathsf{x}_k-\mathsf{x}_k\mathsf{y}_k) \otimes E^{(k)}_{a,c}(\underline{\mathbf{v}})
&=t\sum_{k=1}^l \mathbf{m}\otimes E^{(k)}_{a,c}(\underline{\mathbf{v}})+\kappa\sum_{\substack{1\leq r,k\leq l\\r\neq k}} \mathbf{m}\sigma_{r,k} \otimes E^{(k)}_{a,c}(\underline{\mathbf{v}})\nonumber\\
&=t\sum_{k=1}^l \mathbf{m} \otimes E^{(k)}_{a,c}(\underline{\mathbf{v}})+\kappa\sum_{\substack{1\leq r,k\leq l\\r\neq k}}\sum_{e=1}^{m+n}\mathbf{m} \otimes (-1)^{|e|}E^{(k)}_{a,e}E^{(r)}_{e,c}(\underline{\mathbf{v}}).\label{1006.1}
\end{align}
Applying \eqref{1006.1} to $\eqref{1006-1}+\eqref{1006-2}$, we obtain:
\begin{align}
([\msk(E_{a,b}), \msq(E_{b,c})] -& [\msq(E_{a,d}), \msk(E_{d,c})]) (\mathbf{m}\otimes \underline{\mathbf{v}} ) \nonumber\\
& =-\kappa  (-1)^{|b|}E_{b,b}E_{a,c}(\mathbf{m} \otimes\underline{\mathbf{v}})-\kappa  (-1)^{|d|}E_{d,d}E_{a,c}(\mathbf{m} \otimes\underline{\mathbf{v}}) \nonumber \\
&\quad+tE_{a,c}(\mathbf{m}\otimes\underline{\mathbf{v}})+\kappa\sum_{e=1}^{m+n}(-1)^{|e|+(|e|+|c|)(|a|+|e|)}  E_{e,c} E_{a,e}(\mathbf{m} \otimes \underline{\mathbf{v}}),
\end{align}
because of the assumption that $a,b,c$ are distinct and so are $a,c,d$. We have thus proven the compatibility with \eqref{def-8}.
\end{proof}
\begin{definition}
We say that $N$ is integrable of level $l$ as a left $\mc{D}_{t,\kappa}(\mathfrak{sl}_{m|n})$-module if $N$ is integrable of level $l$ as a $\mathfrak{sl}_{m|n}$-module.
\end{definition}
\begin{theorem}\label{syth}
    The functor $SW$ provides an equivalence between the category of right $\mathsf{H}_{t,\kappa}(S_l)$-modules and the category of integrable left $\mc{D}_{t,\kappa}(\mathfrak{sl}_{m|n})$-modules of level $l$ if $m,n>l$.
\end{theorem}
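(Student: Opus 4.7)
The plan is to mimic the pattern already established in this paper for Theorem~\ref{floridakilos-1}, Proposition~\ref{thm-sup}, Theorem~\ref{flor}, and Theorem~\ref{reverse}: first establish that $SW$ is fully faithful, then essentially surjective. Throughout the argument, the hypothesis $m,n>l$ is used to guarantee the existence of tensor vectors $\bigotimes_{s=1}^{l}e_{i_s}\in\C(m|n)^{\otimes l}$ with $l$ pairwise distinct indices, which together with Lemma~\ref{chp} lets us transfer identities from a single well-chosen vector to arbitrary ones.

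For full faithfulness, fix right $\mathsf{H}_{t,\kappa}(S_l)$-modules $M_1,M_2$ and $\varphi\in\Hom_{\mc{D}_{t,\kappa}(\mathfrak{sl}_{m|n})}(SW(M_1),SW(M_2))$. Since the generators $E_{a,b}$ of $U(\mfsl_{m|n})$ sit inside $\mc{D}_{t,\kappa}(\mathfrak{sl}_{m|n})$ by \eqref{def-1}, $\varphi$ is in particular a $U(\mfsl_{m|n})$-homomorphism, so Theorem~\ref{chengbook} forces $\varphi=f\otimes 1^{\otimes l}$ for a unique $f\in\Hom_{\C[S_l]}(M_1,M_2)$. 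Exactly as in the proof of Proposition~\ref{thm-sup}, applying $\msk(E_{a,b})$ (respectively $\msq(E_{a,b})$) to $\mathbf{m}\otimes\bigotimes_{s}e_{i_s}$ with $i_k$ the unique occurrence of $b$ and the remaining $i_s$ pairwise distinct and different from $a$ (possible because $m,n\ge l+1$), and using that $\varphi$ commutes with $\msk(E_{a,b})$ and $\msq(E_{a,b})$, one reads off $f(\mathbf{m}\mathsf{x}_k)=f(\mathbf{m})\mathsf{x}_k$ and $f(\mathbf{m}\mathsf{y}_k)=f(\mathbf{m})\mathsf{y}_k$, so $f$ is a morphism of $\mathsf{H}_{t,\kappa}(S_l)$-modules.

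For essential surjectivity, fix an integrable $\mc{D}_{t,\kappa}(\mathfrak{sl}_{m|n})$-module $N$ of level $l$. By Theorem~\ref{chengbook} there is a right $\C[S_l]$-module $M$ with $N\cong M\otimes_{\C[S_l]}\C(m|n)^{\otimes l}$. Reproducing verbatim the arguments of Lemma~\ref{idbits-lem} and Proposition~\ref{Prop256} (with $t^{(2)}_{a,b}$ replaced by $\msk(E_{a,b})$ throughout, and separately by $\msq(E_{a,b})$), using only relations \eqref{def-2}--\eqref{def-4} and their $\msq$-analogues \eqref{def-5}--\eqref{def-7} plus weight considerations, one produces linear endomorphisms $\chi_k,\psi_k\colon M\to M$, independent of the indices $a,b$ chosen, such that
\begin{align*}
\msk(E_{a,b})(\mathbf{m}\otimes\underline{\mathbf{v}}) &= \sum_{k=1}^l\chi_k(\mathbf{m})\otimes E^{(k)}_{a,b}(\underline{\mathbf{v}}),\\
\msq(E_{a,b})(\mathbf{m}\otimes\underline{\mathbf{v}}) &= \sum_{k=1}^l\psi_k(\mathbf{m})\otimes E^{(k)}_{a,b}(\underline{\mathbf{v}})
\end{align*}
for every $a\ne b$. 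Declare $\mathbf{m}\mathsf{x}_k:=\chi_k(\mathbf{m})$ and $\mathbf{m}\mathsf{y}_k:=\psi_k(\mathbf{m})$; by Proposition~\ref{Prop9800} it is then enough to verify the defining relations \eqref{defrna-1}--\eqref{defrna-4} of $\mathsf{H}_{t,\kappa}(S_l)$.

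The compatibility with \eqref{defrna-1}--\eqref{defrna-2} is identical to the compatibility with \eqref{gather159} checked at the end of the proof of Theorem~\ref{floridakilos-1}, and $[\mathsf{x}_i,\mathsf{x}_j]=0=[\mathsf{y}_i,\mathsf{y}_j]$ is extracted from relations \eqref{def-3}--\eqref{def-4} and \eqref{def-6}--\eqref{def-7} by the same double-index argument used in \eqref{gazz551}--\eqref{11r5}. Relation \eqref{defrna-3}, giving $[\mathsf{y}_j,\mathsf{x}_i]=-\kappa\sigma_{i,j}$ for $i\ne j$, is read off from \eqref{def-9}: pick pairwise distinct $a,b,c,d$ and a tensor vector with pairwise distinct components placing $b$ in position $i$ and $d$ in position $j$, then equate the coefficient of the unique surviving basis tensor after applying $[\msk(E_{a,b}),\msq(E_{c,d})]$. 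The main obstacle, where the proof becomes delicate, is relation \eqref{defrna-4}: the right-hand side of \eqref{def-8} is a genuine element of $U(\mfsl_{m|n})$ containing $E_{b,b}$, $E_{d,d}$, and a quadratic sum $\sum_e (-1)^{|e|+\cdots}E_{e,c}E_{a,e}$. Applying $[\msk(E_{a,b}),\msq(E_{b,c})]-[\msq(E_{a,d}),\msk(E_{d,c})]$ to $\mathbf{m}\otimes\underline{\mathbf{v}}$ with $\underline{\mathbf{v}}$ chosen as in the verification of \eqref{Eq2.7-1} (all tensor components distinct and equal to $a$ in a fixed position $i$, none equal to $b$, $c$, or $d$ elsewhere, feasible because $m+n\ge 2l+2$), the quadratic piece reduces via $E^{(k)}_{e,c}E^{(r)}_{a,e}$ to a sum of transpositions $\sigma_{k,r}$ acting on a single basis tensor. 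Comparing the resulting scalar combination with the left-hand side, computed directly from the definitions of $\chi_k,\psi_k$, and appealing once more to Lemma~\ref{chp}, yields exactly $[\mathsf{y}_i,\mathsf{x}_i]=t+\kappa\sum_{k\ne i}\sigma_{k,i}$, finishing the verification and hence the theorem.
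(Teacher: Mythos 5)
Your proposal is correct in outline, and its two decisive verifications --- extracting \eqref{defrna-3} from \eqref{def-9} and \eqref{defrna-4} from \eqref{def-8} by evaluating on a tensor with pairwise distinct entries and invoking Lemma \ref{chp} --- are exactly the computations the paper performs. Where you genuinely diverge is in how the polynomial actions on $M$ are produced. The paper does not rerun the Chari--Pressley argument: it uses the homomorphism $U(\mathfrak{sl}_{m|n}\otimes\C[u])\to\mc{D}_{t,\kappa}(\mathfrak{sl}_{m|n})$, $E_{a,b}\otimes u\mapsto\msk(E_{a,b})$, to view $N$ as an integrable level-$l$ module over the current superalgebra and then invokes the affine Schur--Weyl duality of Theorem \ref{Flicker} to obtain in one stroke the right $\C[\mathsf{x}_1,\dots,\mathsf{x}_l]\rtimes\C[S_l]$-module structure on $M$ (and likewise with $v$ and the $\mathsf{y}_k$); this buys, for free, \eqref{defrna-1}, \eqref{defrna-2} and the commutativity of the $\mathsf{x}_k$ among themselves and of the $\mathsf{y}_k$ among themselves, all of which you must check by hand. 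Your more self-contained route --- redoing Lemma \ref{idbits-lem} and Proposition \ref{Prop256} with $\msk(E_{a,b})$ and $\msq(E_{a,b})$ in place of $t^{(2)}_{a,b}$ --- is viable but not quite ``verbatim'': the induction in Lemma \ref{idbits-lem} relies on $[t^{(1)}_{b,c},[t^{(1)}_{b,c},t^{(2)}_{a,b}]]=0$, and the analogue $(\ad E_{b,c})^2\msk(E_{a,b})=0$ is not literally an instance of \eqref{def-2}--\eqref{def-4}, because \eqref{def-2} only prescribes $[E_{a,b},\msk(E_{c,d})]$ when $a\neq d$ and the bracket $[E_{b,c},\msk(E_{a,b})]$ falls outside that range; you first need the full $\mathfrak{sl}_{m|n}$-equivariance of $E_{a,b}\mapsto\msk(E_{a,b})$, i.e.\ essentially the current-algebra homomorphism the paper invokes anyway. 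Two small corrections: the reduction to \eqref{defrna-1}--\eqref{defrna-4} is just Definition \ref{defrc}, not Proposition \ref{Prop9800} (which concerns the localization $\mathbb{H}_{t,c}(S_\ell)$); and in your test vector for \eqref{defrna-4} the distinguished position $i$ should carry the index $c$ (the source of the composite $E_{a,c}$), as in the paper's choice of $\underline{\mathbf{v}}'$, rather than $a$.
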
 
\begin{proof}
Take $m,n>l$. As in the proof of Theorem \ref{reverse}, it is enough to show that $SW$ is essentially surjective. Let $N$ be an integrable left-module over the deformed double current superalgebra $\mc{D}_{t,\kappa}(\mathfrak{sl}_{m|n})$ of level $l$. 
In a similar way to the proof of Theorem~\ref{floridakilos-1}, by Theorem~\ref{chengbook}, we find that there exists a right $S_l$-module $M$ satisfying that $N\cong M\otimes_{\C[S_l]}\C(m|n)^{\otimes l}$. 

We need to determine what should be the actions of $\mathsf{x}_k$ and of $\mathsf{y}_k$ on $M$. Since there exists a homomorphism $U(\mathfrak{sl}_{m|n}\otimes\C[u]) \rightarrow \mc{D}_{t,\kappa}(\mathfrak{sl}_{m|n})$ sending $E_{a,b} \otimes u \mapsto \msk(E_{a,b})$, we can regard $N$ as an integrable module over $U(\mathfrak{sl}_{m|n}\otimes\C[u])$ of level $l$. By Theorem~\ref{Flicker}, $M$ can be made into a right module over $\C[\mathsf{x}_1,\dots,\mathsf{x}_l] \rtimes \C[S_l]$. The analogous result holds replacing $u$ with $v$  and instead sending $E_{a,b} \otimes v \mapsto \msq(E_{a,b})$, hence $M$ can also be viewed as a right module over $\C[\mathsf{y}_1,\dots,\mathsf{y}_l] \rtimes \C[S_l]$.  It is enough to show that these actions are compatible with the defining relations \eqref{defrna-3}-\eqref{defrna-4}. 

Let us set  $\underline{\mathbf{v}}=\bigotimes_{1\leq d\leq l}\limits e_d$. 
By \eqref{def-9} and the assumption that $m,n>l$, we have
\begin{align}
[\msk(E_{l+1,i}),\msq(E_{l+2,j})](\mathbf{m}\otimes\underline{\mathbf{v}})&= -\kappa (-1)^{|l+1||i|+|l+1||l+2|+|i||l+2|} E_{l+2,i}E_{l+1,j}(\mathbf{m}\otimes\underline{\mathbf{v}})\label{1011}
\end{align}
for $\mathbf{m}\in M$. By the definition of the action of $\mathsf{x}_k$ and $\mathsf{y}_k$, we can rewrite the left-hand side of \eqref{1011} as follows:
\begin{align}
[\msk(E_{l+1,i}),\msq(E_{l+2,j})]&(\mathbf{m}\otimes\underline{\mathbf{v}}) \nonumber\\
&=\sum_{k,r=1}^l \mathbf{m}\mathsf{y}_r \mathsf{x}_k \otimes E_{l+1,i}^{(k)} E_{l+2,j}^{(r)} (\underline{\mathbf{v}}) - (-1)^{(|l+1|+|i|)(|l+2|+|j|)}  \mathbf{m}\mathsf{x}_k \mathsf{y}_r \otimes E_{l+2,j}^{(r)} E_{l+1,i}^{(k)} (\underline{\mathbf{v}}) \nonumber\\
&=\mathbf{m}\left( \mathsf{y}_j \mathsf{x}_i-\mathsf{x}_i\mathsf{y}_j\right)\otimes E_{l+1,i}^{(i)} E_{l+2,j}^{(j)} (\underline{\mathbf{v}}),\label{1012}
\end{align}
where the last equality is due to the definition of $\underline{\textbf{v}}$. Similarly, by the definition of $\underline{\textbf{v}}$, we also have
\begin{align}
(-1)^{|l+1||i|+|l+1||l+2|+|i||l+2|} E_{l+2,i}E_{l+1,j}(\mathbf{m}\otimes\underline{\mathbf{v}})&= (-1)^{|l+1||i|+|l+1||l+2|+|i||l+2|} \mathbf{m}\otimes E_{l+2,i}^{(i)}E_{l+1,j}^{(j)}(\underline{\mathbf{v}})\nonumber\\
&=\mathbf{m}\otimes \sigma_{i,j}E_{l+1,i}^{(i)} E_{l+2,j}^{(j)} (\underline{\mathbf{v}})=\mathbf{m}\sigma_{i,j}\otimes E_{l+1,i}^{(i)} E_{l+2,j}^{(j)} (\underline{\mathbf{v}}),\label{1013}
\end{align}
where the second equality is due to $\sigma_{i,j}=\sum_{a,b=1}^{m+n}\limits(-1)^{|b|}E^{(i)}_{a,b}E^{(j)}_{b,a}$. Combining \eqref{1012} and \eqref{1013} with \eqref{1011}, we obtain
\begin{align}
\mathbf{m}\left( \mathsf{y}_j \mathsf{x}_i-\mathsf{x}_i\mathsf{y}_j\right)\otimes E_{l+1,i}^{(i)} E_{l+2,j}^{(j)} (\underline{\mathbf{v}})&=-\kappa\mathbf{m}\sigma_{i,j}\otimes E_{l+1,i}^{(i)} E_{l+2,j}^{(j)} (\underline{\mathbf{v}}).\label{fallenfruit}
\end{align}
Since the entries of $\underline{\mathbf{v}}$ are distinct, by Lemma \ref{chp}, \eqref{fallenfruit} implies that $\mathbf{m}(\mathsf{y}_j \mathsf{x}_i-\mathsf{x}_i \mathsf{y}_j+\kappa \sigma_{i,j} )=0$. We have proven the compatibility with \eqref{defrna-3}. 

We are left to show the compatibility with \eqref{defrna-4}. Let us set
\begin{align*}
\underline{\mathbf{v}}'&=\left(\bigotimes_{2\leq g\leq i}e_g\right)\otimes e_{m+n-1}\otimes\left(\bigotimes_{i+2\leq g\leq l+1}e_g\right).
\end{align*}
By \eqref{def-8} and the definition of $\underline{\mathbf{v}}'$, we have
\begin{align}
&\quad([\msk(E_{m+n,1}), \msq(E_{1,m+n-1})]-[\msq(E_{m+n,1}), \msk(E_{1,m+n-1})]) (\mathbf{m}\otimes \underline{\mathbf{v}}' )\nonumber\\
&=tE_{m+n,m+n-1}(\mathbf{m}\otimes \underline{\mathbf{v}}' )-\kappa((-1)^{|1|}E_{1,1}E_{m+n,m+n-1})(\mathbf{m}\otimes \underline{\mathbf{v}}' )-\kappa((-1)^{|1|}E_{m+n,m+n-1}E_{1,1})(\mathbf{m}\otimes \underline{\mathbf{v}}' )\nonumber\\
&\quad+\kappa\sum_{e=1}^{m+n}\limits (-1)^{|e|+(|m+n-1|+|e|)(|m+n|+|e|)}E_{e,m+n-1}E_{m+n,e}(\mathbf{m}\otimes \underline{\mathbf{v}}' )\nonumber\\
&= t\mathbf{m}\otimes E^{(k)}_{m+n,m+n-1}(\underline{\mathbf{v}}')+\kappa\mathbf{m}\otimes \sum_{\substack{1\leq r,k\leq l\\r\neq k}}\sum_{e=1}^{m+n}\sigma_{r,k}E_{e,e}^{(r)}E_{m+n,m+n-1}^{(k)}(\underline{\mathbf{v}}')\nonumber\\
&= t\mathbf{m}\otimes E_{m+n,m+n-1}^{(i)}(\underline{\mathbf{v}}')+\kappa\sum_{\substack{r=1\\r\neq i}}^l\mathbf{m}\otimes\sigma_{r,i}E_{m+n,m+n-1}^{(i)}(\underline{\mathbf{v}}').\label{env0}
\end{align}
By the definition of $\underline{\mathbf{v}}'$ and the action of $\mathsf{x}_k$ and $\mathsf{y}_k$, we can rewrite 
\begin{align}
[\msk(E_{m+n,1}), \msq(E_{1,m+n-1})](\mathbf{m}\otimes \underline{\mathbf{v}}' )&=\mathbf{m}\mathsf{y}_i\mathsf{x}_i \otimes E^{(i)}_{m+n,m+n-1}(\underline{\mathbf{v}'}),\label{env1-1}\\
[\msq(E_{m+n,1}), \msk(E_{1,m+n-1})] (\mathbf{m}\otimes \underline{\mathbf{v}}' )&=\mathbf{m}\mathsf{x}_i\mathsf{y}_i \otimes E^{(i)}_{m+n,m+n-1}(\underline{\mathbf{v}'}).\label{env1-2}
\end{align}
By applying \eqref{env1-1} and \eqref{env1-2} to \eqref{env0}, we obtain
\begin{align}
\mathbf{m}(\mathsf{y}_i\mathsf{x}_i-\mathsf{x}_i\mathsf{y}_i) \otimes E^{(i)}_{m+n,m+n-1}(\underline{\mathbf{v}'})&=\mathbf{m}\left( t+\kappa\sum_{\substack{r=1\\r\neq i}}^l\sigma_{r,i}\right)\otimes E_{m+n,m+n-1}^{(i)}(\underline{\mathbf{v}}').\label{env3}
\end{align}
As the entries of $E^{(i)}_{m+n,m+n-1}(\underline{\mathbf{v}}')$ are distinct, by Lemma \ref{chp}, the compatibility with \eqref{defrna-4} follows from \eqref{env3}.
\end{proof}

\end{document}